\documentclass{amsart}
\usepackage{latexsym,amscd,amssymb,url}
\usepackage{extarrows}
\usepackage{verbatim}
\usepackage{mathtools}
\usepackage{comment}
\usepackage{appendix}
\usepackage[shortlabels]{enumitem}

\usepackage{bbm}
\usepackage{dsfont}
\usepackage{tikz-cd}
\usepackage[all,cmtip]{xy}  
\usepackage{graphicx}
\usepackage{xcolor}
\usepackage{enumitem} 
\definecolor{dblue}{rgb}{0,0,.6}
\usepackage[colorlinks=true, linkcolor=dblue, citecolor=dblue, filecolor = dblue, menucolor = dblue, urlcolor = dblue]{hyperref}

\numberwithin{equation}{section}

\newtheorem{theorem}{Theorem}[section]

\theoremstyle{plain}

\newtheorem{question}[theorem]{Question}

\newtheorem{corollary}[theorem]{Corollary}

\newtheorem{definition}[theorem]{Definition}
\newtheorem{example}[theorem]{Example}

\newtheorem{lemma}[theorem]{Lemma}

\newtheorem{proposition}[theorem]{Proposition}
\newtheorem{remark}[theorem]{Remark}

\newcommand{\del}{\partial}
\newcommand{\Z}{\mathbb Z}
\newcommand{\Q}{\mathbb Q}

\newcommand{\C}{\mathbb C}
\newcommand{\N}{\mathbb N}
\newcommand{\R}{\operatorname{R}}

\newcommand{\CP}{\mathbb P}

\newcommand{\im}{\operatorname{im}}

\newcommand{\Hom}{\operatorname{Hom}}

\newcommand{\id}{\operatorname{id}}

\newcommand{\Spec}{\operatorname{Spec}}

\newcommand{\Gal}{\operatorname{Gal}}

\newcommand{\CH}{\operatorname{CH}}

\newcommand{\cl}{\operatorname{cl}}

\newcommand{\F}{\mathbb F}

  \newcommand{\Griff}{\operatorname{Griff}}

\newcommand{\alg}{\operatorname{alg}}

\newcommand{\et}{\text{\'et}}
\newcommand{\zar}{\text{Zar}}
\newcommand{\proet}{\text{pro\'et}}
\newcommand{\Ab}{\operatorname{Ab}}

\newcommand{\colim}{\operatorname{colim}}

\newcommand{\nr}{\operatorname{nr}}
\newcommand{\sep}{\operatorname{sep}}
\newcommand{\cons}{cons}

\newcommand{\dashedlongrightarrow}{\xymatrix@1@=15pt{\ar@{-->}[r]&}}
\renewcommand{\longrightarrow}{\xymatrix@1@=15pt{\ar[r]&}}
\renewcommand{\mapsto}{\xymatrix@1@=15pt{\ar@{|->}[r]&}}
\renewcommand{\twoheadrightarrow}{\xymatrix@1@=15pt{\ar@{->>}[r]&}}
\newcommand{\hooklongrightarrow}{\xymatrix@1@=15pt{\ar@{^(->}[r]&}}
\newcommand{\congpf}{\xymatrix@1@=15pt{\ar[r]^-\sim&}}
\renewcommand{\cong}{\simeq}

\usepackage{xcolor}

\begin{document}

%\title[Divisibility properties of some motivic invariants]{Divisibility properties of some motivic invariants}
%\title[Divisibility properties of some motivic invariants]{Divisibility properties of unramified motivic cohomology and the motivic Bloch--Ogus spectral sequence}
%\title[Divisibility properties of some motivic invariants]{Divisibility properties of some motivic invariants in motivic Bloch--Ogus theory} 
\title[Divisibility phenomena in motivic Bloch--Ogus theory]{Divisibility phenomena in motivic Bloch--Ogus theory}

\author{Jean-Louis Colliot-Thélène} 
\address{Universit\'e Paris-Saclay, CNRS, Laboratoire de math\'ematiques d’Orsay, 91405, Orsay, France}
\email{jean-louis.colliot-thelene@universite-paris-saclay.fr} 

\author{Stefan Schreieder} 
\address{Institute of Algebraic Geometry, Leibniz University Hannover, Welfengarten 1, 30167 Hannover, Germany.}
\email{schreieder@math.uni-hannover.de}

\date{\today}  
\subjclass[2020]{Primary 14C25; Secondary 19E15, 14F42, 14F20}
% MSC 2020 classifications used:
% 14C25  Algebraic cycles
% 19E15  Algebraic cycles and motivic cohomology (K-theoretic aspects)
% 14F42  Motivic cohomology; motivic homotopy theory
% 14F43  Other algebro-geometric (co)homologies
%           (e.g., intersection, equivariant, Lawson, Deligne (co)homologies)
% 14C35 -- Applications of methods of algebraic K-theory in algebraic geometry
% 
%  14C15   (Equivariant) Chow groups and rings; motives 
%  14F20   Étale and other Grothendieck topologies and (co)homologies

\keywords{Milnor K-theory, motivic cohomology, higher Chow groups, refined unramified cohomology, motivic Bloch--Ogus theory}

\begin{abstract} 
Let $X$ be a smooth projective variety over a field $k$.
For $k$ separably closed, we prove that the subgroup of unramified classes in the Milnor K-group $K^M_i(k(X))$ of the function field of $X$ is contained in the subgroup of $n$-divisible elements of $K^M_i(k(X))$ for any integer $n$ invertible in $k$.  
This generalizes to a statement for unramified motivic cohomology of arbitrary bidegree. 
We further show that whenever $k$ is finite or separably closed and $\ell$ is a prime invertible in $k$, then all but the last step in the Bloch--Ogus filtration of the motivic cohomology of $X$ are $\ell$-divisible up to torsion.  
Generalizations of this last result to arbitrary quasi-projective $k$-schemes are also proven.
 \end{abstract}

\maketitle 

\tableofcontents

\section{Introduction}

Let $X$ be a smooth projective variety over a separably closed field $k$ and consider the Chow group $\CH^i(X)$ of codimension $i$ cycles modulo rational equivalence.
For $i=0,1,\dim X$, this group is the extension of a finitely generated group by a divisible group.
Schoen \cite{schoen-modn} showed that this fails in general for all other codimensions $1< i< \dim X $; important refinements and generalizations of this result appeared in \cite{RS,totaro-chow,diaz,Sch-griffiths,alexandrou,scavia,alexandrou-zhou}, see also Appendix \ref{sec:Chow-tensor-Q/Z} below.
All those results rely on work of Bloch--Esnault \cite{BE}, where the first example of a cycle that is homologically trivial but not divisible was discovered.

The purpose of this paper is to show, to the contrary, that various natural motivic invariants, especially those associated to motivic Bloch--Ogus theory, do in fact satisfy somewhat surprising divisibility phenomena.
 
\subsection{Unramified Milnor K-theory}
For a ring $R$,  we denote by $K^M_i(R)$ the quotient of the free tensor algebra $T^\ast(R^\ast)$ on the units of $R$, by the two-sided ideal generated by $a\otimes (1-a)$ with $a\in R\setminus \{0,1\}$, see \cite[Definition 2.1]{kerz}.
For a smooth $k$-scheme $X$, we then define the Milnor K-theory sheaf  $\mathcal K^M_i$ as the sheaf associated to the presheaf which maps an open subset $U\subset X$ to $K^M_i(\mathcal O(U))$.
Similarly, the Milnor K-theory sheaf modulo some integer $n$ is defined as 
$\mathcal K^M_i/n\coloneq \mathcal K^M_i \otimes \Z/n$.
Both sheaves play a crucial role in the Gersten conjecture and in Bloch--Ogus theory  for Milnor K-theory, see \cite{BO,kerz}.

\begin{theorem} \label{thm:unramified-Milnor-divisible}
Let $X$ be a smooth projective variety over a separably closed field $k$ and let $n$ be an integer invertible in $k$.
For $i\geq 1$, the natural reduction map $H^0(X,\mathcal K^M_i)\to H^0(X,\mathcal K^M_i/n)$ is zero. 
\end{theorem}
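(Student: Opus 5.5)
The plan is to reduce the statement to a divisibility property of $\ell$-adic unramified cohomology, and then to prove that property by a weight argument after spreading out over a finitely generated field. Write $n=\prod \ell^{r}$. Since the target is a product of its $\ell$-primary parts, it suffices to treat $n=\ell^r$ with $\ell$ a prime invertible in $k$.

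\textbf{Step 1: reduction to unramified cohomology.} By the Gersten conjecture for Milnor K-theory (Kerz), $H^0(X,\mathcal K^M_i)$ and $H^0(X,\mathcal K^M_i/n)$ are the unramified subgroups of $K^M_i(k(X))$ and $K^M_i(k(X))/n$. By the norm residue isomorphism (Bloch--Kato), $K^M_i(F)/n\cong H^i(F,\mu_n^{\otimes i})$ for every field $F$ in question. Together with Bloch--Ogus this identifies $H^0(X,\mathcal K^M_i/n)$ with $H^0(X,\mathcal H^i(\mu_n^{\otimes i}))=H^i_{nr}(X,\mu_n^{\otimes i})$. So it suffices to show that the image of an unramified symbol class $\alpha\in H^0(X,\mathcal K^M_i)$ in $H^i_{nr}(X,\mu_{\ell^r}^{\otimes i})$ vanishes. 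By construction, $\alpha$ gives a compatible system of classes $\alpha_s\in H^i_{nr}(X,\mu_{\ell^s}^{\otimes i})$ for all $s$, and $\alpha_r$ is the image of $\alpha_{s}$ under reduction for every $s\ge r$.

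\textbf{Step 2: spreading out.} Choose a finitely generated subfield $k_0\subset k$ over which $X$, the finitely many symbols defining $\alpha$ on a dense open, and the data witnessing unramifiedness are all defined. Then $X=X_0\times_{k_0}k$ and $\alpha$ comes from $\alpha_0\in H^0(X_0,\mathcal K^M_i)$. Hence every $\alpha_s$ is invariant under $G=\Gal(k/k_0)$. Moreover $\alpha_s$ comes from a $G$-equivariant class over $k_0$, i.e.\ it lies in the image of $H^i_{nr}(X_0,\mu_{\ell^s}^{\otimes i})$.

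\textbf{Step 3: weight argument (the main obstacle).} We must show that a class which is compatibly defined for all $\ell^s$ and Galois-invariant with twist $i\ge1$ dies modulo $\ell^r$. My first attempt would be as follows.
\begin{enumerate}[(a)]
\item Compare $H^i_{nr}(X,\mu_{\ell^s}^{\otimes i})$ via the Bloch--Ogus spectral sequence with $H^i_{\et}(X,\mu_{\ell^s}^{\otimes i})$ and its coniveau pieces. Each term of this spectral sequence is built from cohomology of smooth varieties with twist $\Z_\ell(i-c)$ in degree $i-2c$ (compactly supported or via purity), with $c\ge0$.
\item Specialize $k_0$ to a finite field.
\item Use Deligne's theorem: Frobenius acts on these groups with nonzero weights $(i-2c)-2(i-c)=-i\ne0$. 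Consequently the $G$-invariants of the $\Z_\ell$-modules involved are torsion of bounded exponent, say killed by $\ell^{N}$, with $N$ independent of $s$.
\end{enumerate}
Then $\alpha_{r+N}$ lies in a group whose Galois-invariant part, after reduction to level $\ell^r$, is killed. Concretely, the transition map multiplies by $\ell^N$ on the relevant invariant lattice, so its image modulo $\ell^r$ vanishes, and hence $\alpha_r=0$.

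I expect the difficulty to be making this rigorous. Unramified cohomology is only a subquotient of étale cohomology groups, and the Bloch--Ogus differentials need not preserve finiteness. So the uniform bound $N$ must be obtained by controlling $\varprojlim_s H^i_{nr}(X,\mu_{\ell^s}^{\otimes i})$ directly: it is torsion-free modulo a finite group and sits in a Galois-equivariant way between subquotients of pure $\ell$-adic cohomology of weight $-i$. As a sanity check, for $i=1$ the argument reduces to $H^0(X,\mathcal K^M_1)=\mathcal O(X)^*=k^*$, which is $n$-divisible because $k$ is separably closed.
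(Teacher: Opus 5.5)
\textbf{There is a genuine gap at the end of Step 3.} Your reductions in Steps 1--2 are fine and agree with the paper's: prime powers, Kerz plus Bloch--Kato to land in $H^i_{nr}(X,\mu_{\ell^r}^{\otimes i})$, and spreading out so the classes span a weight-$0$ Galois module. The problem is what a weight argument can deliver. It can only show that the $\ell$-adic image of $\alpha$ is \emph{torsion}: its $\Q_\ell$-span is a weight-$0$ submodule of an ind-mixed module with negative weights, so it dies rationally.

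A torsion class in $\varprojlim_s H^i_{nr}(X,\mu_{\ell^s}^{\otimes i})$ can still reduce to a nonzero class modulo $\ell^r$, and ``torsion-free modulo a finite group'' does not exclude this. Likewise, knowing that $\ell^N$ kills the image only gives $\ell^N\alpha\equiv 0$ modulo $\ell^{r+N}$. To deduce $\alpha\equiv 0$ modulo $\ell^r$ you need the map $\ell^N\colon K^M_i(k(X))/\ell^r\to K^M_i(k(X))/\ell^{r+N}$ to be injective, and nothing in your Step 3 provides this. Your uniform $N$ is also neither available nor needed. It is not available because these $\Z_\ell$-modules need not be finitely generated: already $H^3_{nr}(X,\mu_{\ell^r}^{\otimes 3})$ surjects onto $\ker(\CH^2(X)/\ell^r\to H^4_{\et}(X,\mu_{\ell^r}^{\otimes 2}))$, which is infinite in Schoen's and Totaro's examples. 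It is not needed because an exponent depending on $\alpha$ suffices.

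The missing input is Bloch--Kato in degree $i-1$. Via the Bockstein sequence it makes $H^i(F,\mu_{\ell^r}^{\otimes i-1})\to H^i(F,\mu_{\ell^{r+1}}^{\otimes i-1})$ injective for every field $F$. Hence $\varprojlim_r H^i(F,\mu_{\ell^r}^{\otimes i-1})$ is a Tate module and so torsion-free. Since $\mu_{\ell^\infty}\subset k$, you may change the twist to $i$, and the inverse limit of the unramified groups embeds into this group for $F=k(X)$. Torsion then implies zero. This is exactly how the paper concludes: via torsion-freeness of $H^i_{0,nr}(X,\Z_\ell(n))$ in the proof of Theorem \ref{thm:H_M-unramified-divisible}, via Propositions \ref{prop:Tate-module} and \ref{prop:torsion-free} in the alternative proof, or via Merkurjev's conjecture (Theorem \ref{thm:merkurjev-conjecture}) in Appendix \ref{sec:merkurjev-conjecture}.

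\textbf{Your weight count is also inaccurate.} The relevant groups are not pure of weight $-i$. For example, $H^i(F_0X,\Q_\ell(i))$ can contain weight-$0$ classes, such as symbols $\{f_1,\dots,f_i\}$ of functions defined over $k_0$. Negativity comes from unramifiedness together with projectivity. An unramified class lifts to an open $U\subset X$ whose complement $Z$ has codimension $\geq 2$. The Gysin sequence, Deligne's purity for $H^i(X)$, and Proposition \ref{prop:weights} for $Z$ then put the weights of $H^i_{0,nr}(X,\Q_\ell(i))$ in $[-i,\max(-i,-2)]$; this is Proposition \ref{prop:weights-H_j,nr}. Your Bloch--Ogus spectral-sequence route can give the same bound (cf.\ Proposition \ref{prop:weights-H^p(X,curly-H^q)}). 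But it has to be run rationally, with $\ell$-adic (pro-\'etale) coefficients in the ind-mixed setting, not on the finite-level groups.
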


The case $i=1$ in the above theorem is easy because $H^0(X,\mathcal K^M_1)=k^\ast$; the case $i=2$ is due to Colliot-Th\'el\`ene--Raskind \cite[Corollary 1.7]{CTR}. 

By the Gersten conjecture for Milnor K-theory,  proven by Kerz \cite{kerz},  $H^0(X,\mathcal K^M_i)\cong K^M_i(k(X))_{nr}$ agrees with the subgroup of unramified classes of the Milnor K-group $K^M_i(k(X))$.
Similarly,  $H^0(X,\mathcal K^M_i/n) $ can be identified with the subgroup of $K^M_i(k(X))/n$ of classes that are unramified over $k$.
We thus obtain the following result.

\begin{corollary} \label{cor:unramified-Milnor-divisible} 
In the notation of Theorem \ref{thm:unramified-Milnor-divisible}, the subgroup $ K^M_i(k(X))_{nr} \subset K^M_i(k(X))$ of classes that are unramified over $k$ is contained in the maximal $n$-divisible subgroup: 
$$ 
K^M_i(k(X))_{nr}\subset K^M_i(k(X))_{n-\operatorname{div}}.
$$ 
\end{corollary}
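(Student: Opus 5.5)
Corollary~\ref{cor:unramified-Milnor-divisible} follows formally from Theorem~\ref{thm:unramified-Milnor-divisible} once the two sheaf cohomology groups are identified with subgroups of Milnor K-groups of the function field. The plan has three steps.

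\emph{Step 1: identify $H^0(X,\mathcal K^M_i)$.} By Kerz's proof of the Gersten conjecture \cite{kerz}, the Gersten complex
\[
0\to \mathcal K^M_i \to \iota_*K^M_i(k(X)) \to \bigoplus_{x\in X^{(1)}} \iota_{x*}K^M_{i-1}(\kappa(x))\to\cdots
\]
is exact. Taking global sections identifies $H^0(X,\mathcal K^M_i)$ with the kernel of the residue maps, which is $K^M_i(k(X))_{nr}$. Under this identification, the restriction map $H^0(X,\mathcal K^M_i)\to K^M_i(k(X))$ is the inclusion.

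\emph{Step 2: identify $H^0(X,\mathcal K^M_i/n)$.} The Gersten conjecture holds equally for $\mathcal K^M_i/n$ (again by \cite{kerz}, or via Bloch--Kato together with Bloch--Ogus). Hence $H^0(X,\mathcal K^M_i/n)\to K^M_i(k(X))/n$ is injective, with image the classes unramified along all codimension-one points. The relevant injectivity is that of $H^0(X,\mathcal K^M_i/n)\to H^0(\operatorname{Spec} k(X),K^M_i/n)$, and this is what I will use.

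\emph{Step 3: chase the square.} Consider the commutative square
\[
\begin{array}{ccc}
H^0(X,\mathcal K^M_i) & \longrightarrow & H^0(X,\mathcal K^M_i/n)\\
\downarrow & & \downarrow\\
K^M_i(k(X)) & \longrightarrow & K^M_i(k(X))/n .
\end{array}
\]
Take $\alpha\in K^M_i(k(X))_{nr}$, viewed as a global section. By Theorem~\ref{thm:unramified-Milnor-divisible}, its image in $H^0(X,\mathcal K^M_i/n)$ is zero. Since the square commutes, the image of $\alpha$ in $K^M_i(k(X))/n$ is also zero, so $\alpha\in nK^M_i(k(X))$.

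Step 3 only shows $\alpha$ is $n$-divisible once. The corollary asks for containment in the maximal $n$-divisible subgroup, which I read as the elements divisible by every power $n^m$. Since the theorem holds for every integer invertible in $k$, I apply it with $n^m$ in place of $n$ and get $\alpha\in n^mK^M_i(k(X))$ for all $m$. If instead the intended meaning is the largest subgroup $D$ with $nD=D$, more work is needed: one must show the unramified subgroup is itself $n$-divisible, which would require the $n$-th roots to be unramified. That cannot be read off from the theorem directly, so I interpret the statement in the first sense.

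The case $i=0$ is trivial and excluded. The only genuine input, and hence the main obstacle, is the injectivity in Step 2 for the mod-$n$ sheaf. I would take it from Kerz's theorem for finite coefficients, which also covers the case of infinite or finite residue fields as needed.
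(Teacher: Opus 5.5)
Your argument is the paper's proof. Kerz's Gersten resolution identifies $H^0(X,\mathcal K^M_i)$ with $K^M_i(k(X))_{nr}$ and makes $H^0(X,\mathcal K^M_i/n)\to K^M_i(k(X))/n$ injective, so Theorem~\ref{thm:unramified-Milnor-divisible}, applied to every power $n^r$, places each unramified class in $\bigcap_r n^rK^M_i(k(X))$; the paper's one-line proof gives no more than this.

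Your side remark about the stricter reading is wrong, though: the $n$-th roots need not be unramified. Apply Theorem~\ref{thm:merkurjev-conjecture} to each prime $\ell\mid n$, which is allowed since $k(X)$ contains all $\ell$-power roots of unity; it shows the $n$-primary torsion of $K=K^M_i(k(X))$ is $n$-divisible. Hence if $a=nb=n^{r+1}b_r$, then $b-n^rb_r\in K[n]\subset n^rK$, so $b\in n^rK$ for all $r$. This shows $\bigcap_r n^rK$ is itself $n$-divisible, and therefore equals the largest subgroup $D\subset K$ with $nD=D$.
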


Recall the natural map $K^M_i(k(X))\to H^i(k(X),\mu_{n}^{\otimes i})$, which is surjective by the Bloch--Kato conjecture, proven by Rost and Voevodsky \cite{Voevodsky}.
Despite this surjection, the above corollary shows that for any integer $n\geq 2$ invertible in $k$, a nonzero class in the unramified cohomology group $ H^i_{nr}(X_{\et},\mu_{n}^{\otimes i})$ can never be lifted to an unramified class in $K^M_i(k(X))$.
For more details on unramified cohomology and interesting examples of unramified classes, we refer to the surveys \cite{CT,Sch-survey}.

\subsection{Unramified motivic cohomology} 
Let $X$ be a smooth equi-dimensional algebraic scheme over a field $k$. 
The motivic cohomology of $X$ with values in an abelian group $A$ is defined as the hypercohomology 
$$
H^i_M(X,A(n))\coloneq  H^i(X_\zar, A(n)) 
$$
of Bloch's cycle complex $A(n)=\Z(n)\otimes^{\mathbb L} _{\Z}A \in D(X_{\zar})$ with values in $A$, cf.\ Section \ref{subsec:motivic-cohomology} below.  
For $A=\Z$, these groups agree canonically with Bloch's higher Chow groups:
$$
H^i_M(X,\Z(n))\cong \CH^n(X,2n-i),
$$ 
see \cite[p.\ 269, (iv)]{bloch-motivic} and \cite{bloch-JAG}. 
In particular, $H^{2i}_M(X,\Z(i))\cong \CH^i(X)$ agrees with ordinary Chow groups.

Assume now that $X$ is irreducible.
We may then consider the motivic cohomology $H^i_M(k(X),A(n))\coloneq H^i_M(\Spec k(X),A(n))$ of the function field $k(X)$; 
this agrees with $\colim_U H^i_M(U,A(n))$, where $U$ runs through all dense open subsets of $X$. 
The  unramified motivic cohomology of $X$ is then given by
\begin{align} \label{def:H_nr-motivic}
H^i_{M,nr}(X,A(n))\coloneq \{\alpha\in H^i_M(k(X),A(n))\mid \del_x\alpha=0 \quad \text{for all $x\in X^{(1)}$}\} ,
\end{align}
where $\del_x\alpha\in H^{i-1}_M(\kappa(x),A(n-1))$ denotes the residue of $\alpha$ at $x$,
see Section \ref{subsec:H_M,nr} below for more details. 

\begin{theorem} \label{thm:H_M-unramified-divisible}
    Let $k$ be a separably closed field and let $i,n$ be integers with $n\geq 1$.
    Let $X$ be a smooth projective variety over $k$ and let $m$ be an integer invertible in $k$.
    Then the natural maps
    $$
    H^i_{M,nr}(X,\Z(n))\longrightarrow H^i_{M,nr}(X,\Z/m(n)) \quad \text{and}\quad 
    H^i_{M,nr}(X,\Z(n))\longrightarrow H^i_{nr}(X_\et,\mu_{m}^{\otimes n}) 
    $$
    are zero.
\end{theorem}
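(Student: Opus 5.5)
Since $k$ is separably closed and $m$ is invertible in $k$, we have $\mu_m\cong \Z/m$ over $k$. The plan is to reduce to the étale statement and then use a weight argument coming from the fact that every unramified class is defined on all of $X$. First, by the Beilinson--Lichtenbaum conjecture (a consequence of Bloch--Kato, \cite{Voevodsky}) we have $H^i_M(F,\Z/m(n))\cong H^i(F,\mu_m^{\otimes n})$ for fields $F$ when $i\le n$, and $H^i_M(F,\Z/m(n))=0$ for $i>n$. Similarly, $H^i_M(F,\Z(n))=0$ for $i>n$. These isomorphisms are compatible with residues. Hence the first map in the statement factors through the second, and it suffices to treat the case $i\le n$ and show that the composite
$$
H^i_{M,nr}(X,\Z(n))\longrightarrow H^i_{nr}(X_\et,\mu_m^{\otimes n})
$$
is zero. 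For $i=n$ this recovers Theorem \ref{thm:unramified-Milnor-divisible}, via $H^n_M(F,\Z(n))\cong K^M_n(F)$.

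The key step is to show that an unramified motivic class comes from a global class. Using the Gersten (Bloch--Ogus type) resolution for motivic cohomology of smooth schemes, the unramified classes are identified with $H^0(X,\mathcal H^i_M(\Z(n)))$. Via the coniveau spectral sequence, this group receives an edge map from $H^i_M(X,\Z(n))$, with obstructions living in higher coniveau terms $E_2^{p,q}$, $p\ge 2$. These obstructions need not vanish integrally. My plan is therefore to show that they are killed after comparison with $\ell$-adic cohomology. Concretely, for $\alpha$ unramified, its image $\bar\alpha\in H^0(X,\mathcal H^i_\et(\mu_m^{\otimes n}))$ lies in the image of $H^i_M(X,\Z(n))\to H^i_\et(X,\mu_m^{\otimes n})\to H^0(X,\mathcal H^i)$, or at least lifts compatibly for all powers $m=\ell^r$. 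This compatibility would give a class in the $\ell$-adic limit $H^i(X,\Z_\ell(n))$ whose restriction to the generic point has the prescribed image.

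Next comes the weight argument. If $k$ is the separable closure of a finitely generated field, spread $X$ and $\alpha$ out to a finitely generated field $k_0$ of definition, and specialize to a finite field. The cycle class map from motivic cohomology $H^i_M(X,\Z(n))$ with $i<2n$ lands in $H^i(X,\Z_\ell(n))$. The Galois action on this target has weight $i-2n<0$, so its Galois invariants vanish up to torsion. Since a motivic class is Galois invariant up to finite extension, its $\ell$-adic image is torsion, say killed by $\ell^c$. Because $\alpha\in H^i_M(F,\Z(n))$ is divisible in $H^i_M(F,\Z(n))\otimes\Z_\ell$ relative to classes coming from $X$, one replaces $\alpha$ by $\alpha$ itself and uses the $\ell^{r+c}$-version to deduce that the mod $\ell^r$ image vanishes. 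For general separably closed $k$, one reduces to this case by a rigidity or specialization argument (Suslin rigidity for torsion coefficients) to pass between algebraically closed fields, and by limit arguments.

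The main obstacle is the lifting step: producing a global (motivic or $\ell$-adic) class restricting to $\alpha$, or at least making the torsion bound $c$ uniform so that the mod $\ell^{r+c}$ reduction can be used. I would try first to bound the obstruction in the coniveau spectral sequence by a fixed power of $\ell$, independent of $r$, using that the groups involved are finite in the finite-field model.
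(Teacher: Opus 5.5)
\textbf{There is a genuine gap, at the lifting step you flag yourself.} Your reduction to the étale map for $i\le n$ is right. So is your weight mechanism: a class defined over a finitely generated field lands, after $\otimes\Q$, in a weight-$0$ piece, which must vanish in a module of negative weights. But nothing in the proposal shows that the image of an unramified class $\alpha$ in $H^0(X,\mathcal H^i_\et(\mu_{\ell^r}^{\otimes n}))$ comes, compatibly in $r$, from $H^i_M(X,\Z(n))$ or from $H^i(X_\et,\Z_\ell(n))$.

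The Bloch--Ogus differentials out of $E_2^{0,i}$ are nonzero in general. Already $d_2\colon H^0(X,\mathcal H^3_\et(\mu_{\ell^r}^{\otimes 2}))\to H^2(X,\mathcal H^2_\et(\mu_{\ell^r}^{\otimes 2}))=\CH^2(X)/\ell^r$ is nonzero as soon as the cycle class map on $\CH^2(X)/\ell^r$ fails to be injective. Your remedy, a bound on the obstruction independent of $r$ extracted from finiteness over a finite-field model, is not an argument. Finiteness for each $r$ gives no uniform bound, and unramified classes over $k$ do not obviously specialize to such a model. As formulated, the lifting claim carries essentially the full content of the theorem. The sentence about $\alpha$ being ``divisible in $H^i_M(F,\Z(n))\otimes\Z_\ell$ relative to classes coming from $X$'' is circular.

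\textbf{The missing idea is that no global lift is needed.} Since all residues of $\alpha$ at points of $X^{(1)}$ vanish, the localization sequence extends $\alpha$ to a class $\alpha_U\in H^i_M(U,\Z(n))$. Here $U\subset X$ is open and its complement $Z$ has codimension at least $2$ (Proposition \ref{prop:H_j,nr}). Run your weight argument on $U$ instead of $X$:
\begin{enumerate}
\item Enlarge $Z$ to pure codimension $2$. The Gysin sequence $H^i(X_\et,\Q_\ell(n))\to H^i(U_\et,\Q_\ell(n))\to H^{i-3}_{BM}(Z,\Q_\ell(n-2))$, purity of $H^i(X_\et,\Q_\ell(n))$, and Proposition \ref{prop:weights} put the weights of $H^i(U_\et,\Q_\ell(n))$ in $[\,i-2n,\max(i-2n,2i-2n-2)\,]$.
\item This interval is strictly negative for $i\le n$ and $n\ge 1$. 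This is exactly where projectivity and codimension $2$ are used: for non-proper $X$, or for $Z$ of codimension one, weight $0$ can occur when $i=n$.
\item Hence the $\ell$-adic image of $\alpha_U$ in the finitely generated $\Z_\ell$-module $H^i(U_\et,\Z_\ell(n))$ is killed by some $\ell^c$.
\end{enumerate}

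To get vanishing modulo $\ell^r$ you then need an input you never state. $\ell^c$ kills the image of $\alpha$ in $H^i(k(X),\mu_{\ell^{r+c}}^{\otimes n})$. Moreover, $\ell^c$ times that image equals the image of ($\alpha$ mod $\ell^r$) under $H^i(k(X),\mu_{\ell^r}^{\otimes n})\to H^i(k(X),\mu_{\ell^{r+c}}^{\otimes n})$. This map is injective by Bloch--Kato in degree $i-1$ together with $\mu_{\ell^\infty}\subset k$; equivalently, $H^i(F_0X,\Z_\ell(n))$ is torsion-free (cf.\ Proposition \ref{prop:Tate-module}). That is presumably what your ``$\ell^{r+c}$-version'' was aiming at.

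Finally, the reduction to $k$ being the separable closure of a finitely generated field only needs the limit argument \eqref{eq:limit-HiM}; no rigidity is required.
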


The second map in the above theorem is induced by the natural comparison map $H^i_{M,nr}(X,\Z/m(n))\to  H^i_{nr}(X_\et,\mu_{m}^{\otimes n}) $ from the Zariski to the \'etale site, see \cite[Theorem 1.5]{geisser-levine}. 
In fact, as a consequence of the Beilinson--Lichtenbaum conjectures, proven by Rost and Voevodsky \cite{Voevodsky}, we have $H^i_{M,nr}(X,\Z/m(n))\cong H^i_{nr}(X_\et,\mu_{m}^{\otimes n})$ if $i\leq n$ and $H^i_{M,nr}(X,\Z/m(n))=0$ otherwise, see Lemma \ref{lem:H_M,nrZ/m}.

A strengthening of Theorem \ref{thm:H_M-unramified-divisible} in the case $i\neq n$ is proven in Theorem \ref{thm:strengthen-thm:H_M-unramified-divisible} below.

\subsection{Motivic coniveau filtration} 
Let  $\mathcal H_M^i(\Z(n))$ denote the Zariski sheaf associated to the presheaf $U\mapsto H^i_M(U,\Z(n))$.
Following \cite{BO}, we have the motivic coniveau spectral sequence
$$
E_2^{p,q}=H^p(X,\mathcal H_M^q(\Z(n))) \Longrightarrow H^{p+q}_M(X,\Z(n)) ,
$$  
see \cite[p.\ 269, (iv)]{bloch-motivic}.
This spectral sequence induces the Bloch--Ogus filtration
\begin{align} \label{eq:filtration-L}
\cdots \subset  L_0\subset L_1\subset \dots \subset L_{n-1}\subset L_n=H^i_M(X,\Z(n)) ,
\end{align} 
given by 
$$
L_jH^i_M(X,\Z(n))\coloneq \im(H^i(X_\zar, \tau_{\leq j}\Z(n)) \to H^i(X_\zar, \Z(n)) ) .
$$
By Lemma \ref{lem:L-versus-N}, 
$$
L_{j}H^i_M(X,\Z(n))=N^{i-j}H^i_M(X,\Z(n))
$$ 
agrees with the coniveau filtration, where  $N^cH^i_M(X,\Z(n))$ consists of all classes of $H^i_M(X,\Z(n))$ that vanish outside a closed subset of codimension at least $c$.
For $i\leq 2n$, this filtration is of the form
\begin{align} \label{eq:filtration-N-intro}
H^i_M(X,\Z(n))=N^{i-n}H^i_M(X,\Z(n)) \supset  N^{i-n+1}\supset  N^{i-n+2} \supset   \dots \supset N^{n}\supset N^{n+1}=0,
\end{align}
see Lemma \ref{lem:N^{i-n}} below.
From this description we see that the filtration is of length $2n+1-i$, hence it is trivial if  $i=2n$, but it is interesting in general.\footnote{Note that for $i=2n$ the filtration $N^\ast$ does  not coincide with Bloch's coniveau filtration on $\CH^n(X)$, which measures the dimension of a closed subset on which a given class is homologically trivial, see \cite{bloch-coniveau} and \cite[\S 1.1]{Sch-refined}.}

\begin{theorem} \label{thm:filtration-L-intro}
Let $k$ be either a finite field or a separably closed field and let $X$ be a smooth equi-dimensional quasi-projective scheme over $k$. 
Then,  for all primes $\ell$ invertible in $k$, we have
\begin{align} \label{eq:L^jH^iotimesQl/Zl}
L_jH^i_M(X,\Z(n))\otimes_\Z \Q_\ell/\Z_\ell=0\quad \text{ for $j<n$;}
\end{align} 
\begin{align} \label{eq:L^jH^iotimesQl/Zl-N}
N^cH^i_M(X,\Z(n))\otimes_\Z \Q_\ell/\Z_\ell=0\quad \text{ for $c>i-n$.}
\end{align} 
\end{theorem}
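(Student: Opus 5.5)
The plan is to reduce the statement to a vanishing result for motivic cohomology of smooth varieties in degree strictly below the weight, and then prove that vanishing by a weight argument in $\ell$-adic cohomology.

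\textbf{Reduction to (\ref{eq:L^jH^iotimesQl/Zl-N}) and to smooth pieces.} By Lemma \ref{lem:L-versus-N}, $L_j=N^{i-j}$, and $j<n$ means $c=i-j>i-n$. So (\ref{eq:L^jH^iotimesQl/Zl}) follows from (\ref{eq:L^jH^iotimesQl/Zl-N}), and it suffices to prove the latter. Since $-\otimes\Q_\ell/\Z_\ell$ is right exact, it is enough to write $N^cH^i_M(X,\Z(n))$ as a quotient of a group $G$ with $G\otimes\Q_\ell/\Z_\ell=0$. By definition and the localization sequence for higher Chow groups, $N^c$ is the union over closed $Z\subset X$ of codimension $\geq c$ of the images of $\CH_{d-n}(Z,2n-i)\to \CH^n(X,2n-i)$, where $d=\dim X$. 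We may take $Z$ of pure codimension $c$ and reduce from $Z$ to a smooth scheme as follows:
\begin{itemize}
\item Use Gabber's $\ell'$-alterations: there is a proper $\tilde Z\to Z$ with $\tilde Z$ smooth and generic degree prime to $\ell$.
\item Use induction on $\dim Z$ via localization to handle the locus where $\tilde Z\to Z$ fails to be finite flat.
\end{itemize}
Push-pull then shows that, after $\otimes\Z_{(\ell)}$, the image of $\CH_{d-n}(Z,2n-i)$ is contained in the image of $H^{i-2c}_M(\tilde Z,\Z(n-c))$ plus classes supported in smaller dimension. Setting $q=i-2c$ and $m=n-c$, the hypothesis $c>i-n$ is exactly $q<m$. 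We are therefore reduced to the following key claim: \emph{for $Y$ smooth over $k$ and $q<m$, we have $H^q_M(Y,\Z(m))\otimes\Q_\ell/\Z_\ell=0$.}

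\textbf{Key claim via Beilinson--Lichtenbaum.} From the triangle $\Z(m)\to\Q(m)\to\Q/\Z(m)$, the $\ell$-primary part gives an injection
$$H^q_M(Y,\Z(m))\otimes\Q_\ell/\Z_\ell\hookrightarrow H^q_M(Y,\Q_\ell/\Z_\ell(m))\cong H^q_\et(Y,\Q_\ell/\Z_\ell(m)),$$
where the isomorphism is Beilinson--Lichtenbaum, valid since $q<m$. The image is a divisible group that dies under the Bockstein into $H^{q+1}_{\mathrm{cont}}(Y,\Z_\ell(m))$, because the motivic classes lift to $\Z_\ell$-coefficients. It therefore lies in the image of $H^q_{\mathrm{cont}}(Y,\Q_\ell(m))$ (or $H^q_\et(Y,\Q_\ell(m))$ in the separably closed case). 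More precisely, it lies in the image of the part of that group spanned by classes coming from $H^q_M(Y,\Z(m))\otimes\Q$. It thus suffices to show that the cycle class map $H^q_M(Y,\Q(m))\to H^q(Y,\Q_\ell(m))$ is zero for $q<m$.

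\textbf{Weights.} For $k$ finite, Deligne's theory gives that $H^q(Y_{\bar k},\Q_\ell)$ has weights in $[q,2q]$. After twisting by $m$, the weights lie in $[q-2m,2q-2m]$, and $2q-2m<0$ when $q<m$. Hence Frobenius has no invariants or coinvariants, and Hochschild--Serre gives $H^q_{\mathrm{cont}}(Y,\Q_\ell(m))=0$. For $k$ separably closed, a given motivic class and $Y$ are defined over a finitely generated subfield $F\subset k$. The $\ell$-adic image of a motivic class is then invariant under an open subgroup of $\Gal(\bar F/F)$. I would spread $Y$ out over a finite-type base over $\F_p$ or $\Z$, and use specialization to closed points with finite residue fields, together with smooth and proper base change on a suitable open where $Y$ has good compactifications. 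This shows that such invariant classes have weight $0$ after twisting, which contradicts the weight bound above, so the class vanishes.

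I expect this separably closed step, namely making the weight argument work uniformly over a finitely generated field and controlling the spreading out, to be the main obstacle. The alteration bookkeeping in the first step is the second delicate point.
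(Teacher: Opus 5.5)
\paragraph{Where the proposal breaks.} Your key claim for smooth $Y$ is sound, and it is essentially the paper's core argument. Beilinson--Lichtenbaum gives injectivity of $H^q_M(Y,\Z(m))\otimes\Q_\ell/\Z_\ell\to H^q_\et(Y,\Q_\ell/\Z_\ell(m))$ for $q<m$, and negative weights kill the $\Q_\ell$-cycle class.

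The gap is the reduction from the possibly singular support $Z$ to a smooth alteration $\tilde Z$.
\begin{enumerate}
\item You are working with higher Chow groups $\CH_{d-n}(Z,r)$, $r=2m-q>0$, of a singular scheme. These admit pullbacks only along flat (or l.c.i.) morphisms, and the alteration $p\colon\tilde Z\to Z$ is in general neither.
\item On the finite flat locus $V\subset Z$ you do have $e\cdot\alpha|_V=p_*p^*(\alpha|_V)$. But $p^*(\alpha|_V)\in\CH_*(p^{-1}V,r)$ need not extend to $\tilde Z$. The obstruction $\partial\, p^*(\alpha|_V)\in\CH_*(\tilde Z\setminus p^{-1}V,r-1)$ is only known to vanish after applying $p_*$. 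So ``push-pull'' does not give the asserted containment of images.
\item Induction on $\dim Z$ via localization cannot repair this. The property $G\otimes\Q_\ell/\Z_\ell=0$ passes to quotients and extensions but not to subgroups (think of $\Z\subset\Q$). In $H^q_{BM,M}(Z)\to H^q_{BM,M}(V)\xrightarrow{\partial}\cdots$, what you would need to control is the subgroup $\ker\partial$.
\end{enumerate}

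\paragraph{The missing idea.} Stop trying to propagate the statement ``$\otimes\Q_\ell/\Z_\ell=0$'', which does not survive localization sequences. Instead propagate the two ingredients of your smooth argument, which do survive them, and apply them directly to the singular $Z$ in Borel--Moore theory.
\begin{enumerate}
\item \emph{Weights.} For any equi-dimensional $Z$, the weights of $H^q_{BM}(Z,\Q_\ell(m))$ still lie in $[q-2m,2q-2m]$. This is Proposition \ref{prop:weights}, proved by induction through the Gysin sequence, since weight bounds pass through exact sequences.
\item \emph{Injectivity.} The cycle class map $H^q_{BM,M}(Z,\Z/\ell^r(m))\to H^q_{BM}(Z,\Z/\ell^r(m))$ is injective for $q\leq m+1$, for arbitrary such $Z$. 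This is Kok--Zhou, Proposition \ref{prop:kok-zhou}. It is deduced from the smooth Beilinson--Lichtenbaum statement using a cycle class map compatible with localization and the five lemma over a stratification.
\end{enumerate}
With these, your argument runs verbatim on $Z$ and gives $H^{i-2c}_{BM,M}(Z,\Z(n-c))\otimes\Q_\ell/\Z_\ell=0$. Over a finite field one uses instead that $H^{q}_{BM}(Z,\Z_\ell(m))$ is finite for $q<m$, by Hochschild--Serre and weights. The surjection $\varinjlim_Z H^{i-2c}_{BM,M}(Z,\Z(n-c))\to N^cH^i_M(X,\Z(n))$ and right exactness of $\otimes\Q_\ell/\Z_\ell$ then finish the proof. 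No alterations are needed.
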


Note that the above result covers all but the last (resp.~first) filtration step in \eqref{eq:filtration-L} (resp.~\eqref{eq:filtration-N-intro}).  

In the case of smooth projective varieties over a finite field,  Parshin's conjecture on the algebraic K-theory of smooth projective schemes over finite fields predicts that 
$H^i_M(X,\Z(n))\otimes \Q_\ell$ is zero for $i\neq 2n$, see e.g.~\cite[pp.~189-190]{jannsen-book}.  
The above theorem proves the weaker assertion that, for $j<n$, the subgroup $L_jH^i_M(X,\Z(n))$ vanishes after tensoring with $\Q_\ell/\Z_\ell$.  

Recall that an abelian group $G$ satisfies $G\otimes \Q/\Z=0$ if and only if $G\otimes \Q_\ell/\Z_\ell=0$ for all primes $\ell$.
Moreover,  $G\otimes \Q_\ell/\Z_\ell=0$ is equivalent to asking that $G$ is $\ell$-divisible up to torsion, i.e.\ it is the extension of an $\ell$-divisible group by an $\ell$-primary torsion group, see Lemma \ref{lem:A-tensor-Q/Z-elementary} below. 
This is a restrictive property; for instance, such groups do not admit non-trivial maps to $\Z$.

By work of Kerz, for smooth varieties over an infinite ground field $k$, there is a canonical isomorphism $\mathcal K_n^M \stackrel{\cong}\to \mathcal H^n(\Z(n))$, where  $\mathcal K^M_n$ denotes the  Milnor K-theory sheaf on $X_\zar$, see \cite[Theorem 1.1]{kerz}. 
Since $\mathcal H^j(\Z(n))=0$ for $j>n$, the hypercohomology spectral sequence induces an edge map $H^i_M(X,\Z(n))\to H^{i-n}(X,\mathcal K_n^M)$ and we consider its image 
$$
H^{i-n}(X,\mathcal K_n^M)_\infty \coloneq \im (H^i_M(X,\Z(n))\longrightarrow H^{i-n}(X,\mathcal K_n^M)).
$$
Theorem \ref{thm:filtration-L-intro} proves that all but this last filtration step of $L_\ast H^i_M(X,\Z(n))$ are $\ell$-divisible up to torsion and so we obtain the following corollary.

\begin{corollary} \label{cor:filtration-L-intro-1}
Let $X$ be a smooth quasi-projective equi-dimensional scheme over a separably closed field $k$. 
Then the natural map
$$
H^i_M(X,\Z(n))\otimes_\Z \Q_\ell/\Z_\ell
\stackrel{\cong} \longrightarrow
H^{i-n}(X,\mathcal K_n^M)_\infty \otimes_\Z \Q_\ell/\Z_\ell
$$
is an isomorphism for all primes $\ell$ invertible in $k$. 
In particular, $H^i_M(X,\Z(n))\otimes_\Z \Q_\ell/\Z_\ell =0$ for $i<n$. 
\end{corollary}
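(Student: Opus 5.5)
The corollary follows from Theorem \ref{thm:filtration-L-intro} together with right exactness of $-\otimes_\Z \Q_\ell/\Z_\ell$.

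First I identify the kernel of the edge map. Since $\mathcal H^j(\Z(n))=0$ for $j>n$, we have $\tau_{\leq n}\Z(n)\simeq \Z(n)$. The truncation triangle
$$
\tau_{\leq n-1}\Z(n)\to \Z(n)\to \mathcal H^n(\Z(n))[-n]
$$
then gives an exact sequence
$$
H^i(X_\zar,\tau_{\leq n-1}\Z(n))\to H^i_M(X,\Z(n))\to H^{i-n}(X,\mathcal H^n(\Z(n))).
$$
Here the second map is the edge map. Since $k$ is separably closed, hence infinite, Kerz's isomorphism $\mathcal K^M_n\cong\mathcal H^n(\Z(n))$ identifies the target with $H^{i-n}(X,\mathcal K^M_n)$. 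By exactness and the definition of $L_{n-1}$, the kernel of $H^i_M(X,\Z(n))\to H^{i-n}(X,\mathcal K^M_n)$ is exactly $L_{n-1}H^i_M(X,\Z(n))$. This yields a short exact sequence
$$
0\to L_{n-1}H^i_M(X,\Z(n))\to H^i_M(X,\Z(n))\to H^{i-n}(X,\mathcal K_n^M)_\infty\to 0 .
$$

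Next I tensor this sequence with $\Q_\ell/\Z_\ell$. Right exactness gives an exact sequence
$$
L_{n-1}\otimes\Q_\ell/\Z_\ell\to H^i_M(X,\Z(n))\otimes\Q_\ell/\Z_\ell\to H^{i-n}(X,\mathcal K_n^M)_\infty\otimes\Q_\ell/\Z_\ell\to 0 .
$$
The first term vanishes by \eqref{eq:L^jH^iotimesQl/Zl} with $j=n-1<n$, so the second map is an isomorphism.

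For the final claim, take $i<n$. Then $H^{i-n}(X,\mathcal K^M_n)=0$ because sheaf cohomology vanishes in negative degrees. Alternatively, for $i<n$ one has $L_{n-1}=L_n$, which gives the same conclusion. The only point requiring care is matching the kernel of the edge map with $L_{n-1}$ through the truncation triangle; that step is routine.
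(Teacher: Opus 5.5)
Your proposal is correct and follows the paper's approach, whose proof is the one-line deduction from Theorem \ref{thm:filtration-L-intro} (with $j=n-1$) together with Kerz's identification $\mathcal K^M_n\cong\mathcal H^n_M(\Z(n))$. You spell out the two details the paper leaves implicit: the truncation triangle identifies the kernel of the edge map with $L_{n-1}H^i_M(X,\Z(n))$, and right exactness of $-\otimes_\Z\Q_\ell/\Z_\ell$ then gives the isomorphism.
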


In the body of the paper, we prove a version of Theorem \ref{thm:filtration-L-intro} which does not require the smoothness assumption on $X$.
To state the result,  for any equi-dimensional quasi-projective scheme $X$ over $k$ we denote by $H^i_{BM,M}(X,\Z(n))$ the hypercohomology of Bloch's cycle complex in the Zariski topology.
In other words, if $d_X=\dim X$, then
\begin{align} \label{eq:H^i-BM,M-intro}
H^i_{BM,M}(X,\Z(n))=H^{BM,M}_{2d_X-i}(X,\Z(d_X-n))\cong \CH_{d_X-n}(X,2n-i)
\end{align}
agrees with motivic Borel--Moore homology (cf.~\cite[\S 1.1]{levine}); we use the cohomological indexing for convenience, as it makes various statements and arguments in this paper independent of the dimension of $X$.
If $X$ is smooth and equi-dimensional, then $H^i_{BM,M}(X,\Z(n))=H^i_{M}(X,\Z(n))$. 
We further define $N^jH^i_{BM,M}(X,\Z(n))\subset H^i_{BM,M}(X,\Z(n))$ as the subgroup of classes that vanish outside a closed subset of codimension at least $j$.
Then we have the following generalization of Theorem \ref{thm:filtration-L-intro}.

\begin{theorem} \label{thm:filtration-intro-BM}
Let $k$ be a field that is either separably closed or finite and let $\ell$ be a prime invertible in $k$.
Let $X$ be an equi-dimensional quasi-projective variety over $k$.
Then the following holds:
\begin{enumerate}
\item $N^cH^i_{BM,M}(X,\Z(n))\otimes \Q_\ell/\Z_\ell=0$ for $c>i-n$;\label{item:thm:Hi_MXQ/Z-vanishing-finite:1}
\item $H^i_{BM,M}(X,\Z(n))\otimes \Q_\ell/\Z_\ell=0$ for $i<n$.\label{item:thm:Hi_MXQ/Z-vanishing-finite:2}
\end{enumerate} 
\end{theorem}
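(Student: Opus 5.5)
We will deduce (2) from (1) and prove (1) by reducing, via localization, to the motivic cohomology of fields in the range where Beilinson--Lichtenbaum gives control.

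\emph{Deducing (2) from (1).} If $i<n$, then $c=0>i-n$ is allowed in (1). Since $N^0H^i_{BM,M}(X,\Z(n))=H^i_{BM,M}(X,\Z(n))$, statement (2) is the case $c=0$ of (1).

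\emph{Reducing (1) to fields.} By definition, a class in $N^cH^i_{BM,M}(X,\Z(n))$ is pushed forward from $H^{i-2c}_{BM,M}(Z,\Z(n-c))$ for some closed $Z\subset X$ of codimension $c$; here we use the localization sequence for Bloch's cycle complex, i.e.\ Borel--Moore indexing with the shift by the codimension. Since $-\otimes \Q_\ell/\Z_\ell$ is right exact and commutes with colimits, it suffices to show
\[
H^{i'}_{BM,M}(Z,\Z(n'))\otimes\Q_\ell/\Z_\ell=0 \quad\text{whenever } i'<n',
\]
for all equi-dimensional quasi-projective $Z$. Indeed $i-2c<n-c$ holds exactly when $c>i-n$, so this is (2) for $Z$. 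The two statements are therefore equivalent, and we prove (2) by induction on $\dim X$. For a dense open $U\subset X$ with complement $W$, localization gives an exact sequence
\[
H^{i-2}_{BM,M}(W,\Z(n-1))\to H^i_{BM,M}(X,\Z(n))\to H^i_{BM,M}(U,\Z(n)),
\]
where the indexing accounts for the codimension of $W$. The left term satisfies $i-2<n-1$, so it is killed by $\otimes\Q_\ell/\Z_\ell$ by induction. After tensoring, the sequence stays exact at the middle term up to this image. Passing to the colimit over $U$ therefore reduces the problem to the generic points, i.e.\ to
\[
H^i_M(K,\Z(n))\otimes\Q_\ell/\Z_\ell=0 \quad\text{for } i<n,
\]
where $K$ is the function field of a component. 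We must control a whole colimit rather than a single element. This works because an element killed in $\colim_U$ is killed on some $U$, and then comes from $W$.

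\emph{The field case, which is the main obstacle.} For a group $G$, $G\otimes\Q_\ell/\Z_\ell=0$ iff $G/\ell^r\to G/\ell^{r+s}$ is eventually zero in the appropriate sense; equivalently, every element becomes $\ell$-divisible modulo torsion. Use the Bockstein sequence
\[
0\to H^i_M(K,\Z(n))/\ell^r\to H^i_M(K,\Z/\ell^r(n))\to H^{i+1}_M(K,\Z(n))[\ell^r].
\]
For $i<n$, Beilinson--Lichtenbaum identifies $H^i_M(K,\Z/\ell^r(n))\cong H^i(K,\mu_{\ell^r}^{\otimes n})$. Passing to the colimit over $r$ gives an injection
\[
H^i_M(K,\Z(n))\otimes\Q_\ell/\Z_\ell\hookrightarrow H^i(K,\Q_\ell/\Z_\ell(n)),
\]
so we must show the image is zero. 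Here the hypothesis on $k$ and weights enter. The idea is to spread the class out again and use weight arguments on a smooth model $U$ of $K$. The image in the colimit of $H^i_{\et}(U,\Q_\ell/\Z_\ell(n))$ comes from integral motivic classes. Such classes factor through $\varprojlim$-type lattices, namely $H^i_{\et}(U,\Z_\ell(n))\otimes\Q_\ell/\Z_\ell$. For $k$ finite, Deligne's weights show that $H^i_{\et}(U_{\bar k},\Q_\ell(n))$ has weights $\geq i-2n<0$ and $\le 2i-2n<0$ for $i<n$. Frobenius invariants and coinvariants then vanish rationally, so $H^i_\et(U,\Z_\ell(n))$ is torsion, and the image is $0$. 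For $k$ separably closed, we would spread out over a finitely generated subfield and specialize to a finite field, arguing that the motivic class is defined over a finitely generated base where the same weight argument applies to the relevant Galois invariants. This twist-by-weight step is the one we expect to require the most care, especially making the lattice/colimit argument uniform in $U$ and handling separably closed $k$ via a trace or specialization argument.
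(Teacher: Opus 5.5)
There is a genuine gap in your reduction to the generic point. You claim that $H^{i-2}_{BM,M}(W,\Z(n-1))\to H^i_{BM,M}(X,\Z(n))\to H^i_{BM,M}(U,\Z(n))$ ``stays exact at the middle term'' after applying $-\otimes\Q_\ell/\Z_\ell$. This is false in general, because the functor is only right exact. What your induction plus right exactness actually gives is $H^i_{BM,M}(X,\Z(n))\otimes\Q_\ell/\Z_\ell\cong B\otimes\Q_\ell/\Z_\ell$, where $B$ is the image of $H^i_{BM,M}(X,\Z(n))$ in $H^i_M(K,\Z(n))$. But $B$ is only a subgroup of $H^i_M(K,\Z(n))$, and the property $(-)\otimes\Q_\ell/\Z_\ell=0$ does not pass to subgroups (think of $\Z\subset\Q$).

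Concretely, suppose $\alpha|_K=\ell\gamma+t$ with $t$ torsion. Then $\gamma$ lives on some $U$, and its residue $\del\gamma\in H^{i-1}_{BM,M}(W,\Z(n-1))$ is only known to be torsion. Your inductive hypothesis controls only $(-)\otimes\Q_\ell/\Z_\ell$ and says nothing about such torsion classes. So you cannot extend $\gamma$ to $X$, and you cannot conclude that $\alpha$ is $\ell$-divisible up to torsion on $X$. The field statement itself is not the issue: for function fields of smooth varieties over separably closed $k$ it even follows from Theorem \ref{thm:strengthen-thm:H_M-unramified-divisible}. The issue is that it does not imply the statement for $X$.

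The missing idea is to run your Bockstein/weight argument globally on $X$, not at $K$. You need singular $X$, since the $Z$'s in your reduction of (1) to (2) are singular. This requires the Borel--Moore cycle class map $H^i_{BM,M}(X,\Z/\ell^r(n))\to H^i_{BM}(X,\Z/\ell^r(n))$. It is compatible with localization, so the five lemma gives injectivity for $i\le n+1$ (Proposition \ref{prop:kok-zhou}). The five lemma is legitimate here because with finite coefficients the localization sequences are genuinely exact, and the smooth case is Beilinson--Lichtenbaum. This yields an injection $H^i_{BM,M}(X,\Z(n))\otimes\Q_\ell/\Z_\ell\hookrightarrow H^i_{BM}(X,\Q_\ell/\Z_\ell(n))$, and it remains to show this map is zero.

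\emph{Finite field.} The map factors through $H^i_{BM}(X,\Z_\ell(n))\otimes\Q_\ell/\Z_\ell$. This vanishes because $H^i_{BM}(X,\Z_\ell(n))$ is finite for $i<n$, by Hochschild--Serre and the bound $w\le 2i-2n<0$ on the weights of $H^{i}_{BM}(X_{\bar k},\Q_\ell(n))$ and $H^{i-1}_{BM}(X_{\bar k},\Q_\ell(n))$ (Proposition \ref{prop:weights}). This is exactly your finite-field argument, moved from $U$ to $X$.

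\emph{Separably closed field.} No specialization to finite fields is needed. Spread out to $k=k_0^{\sep}$ with $k_0$ finitely generated. The image of a motivic class in $H^i_{BM}(X,\Q_\ell(n))$ is fixed by an open subgroup of $\Gal(k/k_0)$, so it has weight $0$ and therefore vanishes. Since $H^i_{BM}(X,\Z_\ell(n))$ is a finitely generated $\Z_\ell$-module, the image of the integral cycle class map is killed by a fixed integer $N$. Hence the induced map on the divisible group $H^i_{BM,M}(X,\Z(n))\otimes\Q_\ell/\Z_\ell$ is zero.

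Item (1) then follows from (2) exactly as in your second paragraph.
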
 

Using the localization exact sequence, item \eqref{item:thm:Hi_MXQ/Z-vanishing-finite:1} can be deduced from item \eqref{item:thm:Hi_MXQ/Z-vanishing-finite:2}.
The latter will in turn be proven via careful weight arguments, combined with consequences of the proof of the Beilinson--Lichtenbaum conjecture, due to Rost and Voevodsky \cite{Voevodsky}. 
 
For arbitrary smooth quasi-projective varieties, the vanishing results in
Theorems \ref{thm:filtration-L-intro}, \ref{thm:filtration-intro-BM} and
Corollary \ref{cor:filtration-L-intro-1} are sharp,   see Examples \ref{example:sharp-1} and \ref{example:sharp-2} below. 

  It is natural to ask whether these results are sharp for smooth projective varieties as well.
Certainly, the existence of a (non-canonical) degree map on cycles in $H_M^{2n}(X,\Z(n))=\CH^n(X)$ shows that at least for $i=2n\leq 2\dim X$, the condition $j<n$ is necessary in \eqref{eq:L^jH^iotimesQl/Zl}. 
More generally, based on \cite{BE,schoen-modn,RS,totaro-chow}, it turns out that for any $n\geq 3$, there is a smooth complex projective variety of dimension $n$ such that for any integer $2\leq i\leq n-1$ and any subgroup $M\subset \CH^i(X)$ with finitely generated cokernel, $M\otimes \Q_\ell/\Z_\ell\neq 0$ for all primes $\ell$, see Corollary \ref{cor:Chow-tensor-Ql/Zl} in Appendix \ref{sec:Chow-tensor-Q/Z}.   

\begin{question} \label{question:intro}
Let $k$ be either a finite field or a separably closed field and let $X$ be a smooth projective variety over $k$. 
Is it true that the group $H^i_M(X,\Z(n))\otimes_\Z \Q_\ell/\Z_\ell$ from Corollary \ref{cor:filtration-L-intro-1} vanishes for $i\neq 2n$?
\end{question}

In Appendix \ref{sec:appendix-B}, 
we show that the above question has a positive answer for special values of $(i,n)$, see Propositions \ref{prop:appendix:separable-closed} and \ref{prop:appendix-finite-field} below.
These results rely on divisibility results for Lichtenbaum motivic cohomology, as studied by Geisser, Kahn, Rosenschon--Srinivas and others, see e.g.\ \cite{geisser,kahn,RS-JIMJ}.
As a consequence, Question \ref{question:intro} has a positive answer whenever the \'etale cycle class map 
\begin{align} \label{eq:cl-injective}
\cl\colon H^i_M(X,\Q_\ell/\Z_\ell(n))\longrightarrow H^i(X_\et,\Q_\ell/\Z_\ell(n)) 
\end{align}
is injective.
By work of Suslin--Voevodsky \cite{suslin-voevodsky}, Geisser--Levine \cite{geisser-levine}, and the proof of the Beilinson--Lichtenbaum conjecture due to Rost and Voevodsky \cite{Voevodsky},  
{\it this holds for instance for $i\leq n+1$}.

In contrast, we note that for $i\geq n+2$, this injectivity  fails in general.
In fact, the injectivity of \eqref{eq:cl-injective} implies that $H^i_M(X,\Q_\ell/\Z_\ell(n))$ is of cofinite type, i.e.\ a product of a finite group with finitely many copies of $\Q_\ell/\Z_\ell$.
However, for $k=\C$ and $i\geq n +2$, the group $H^i_M(X,\Q_\ell/\Z_\ell(n))$ is in general not of cofinite type: This follows for instance from \cite[Corollary 1.2]{alexandrou-zhou} together with the Bockstein sequence for motivic cohomology; similar results for Chow groups were previously proven in  \cite{schoen-modn,RS,totaro-chow,diaz,Sch-griffiths,alexandrou,scavia}.

\subsection{Structure of the paper}  
We fix notation and collect some preliminary material in Section \ref{sec:prelim}.

In Section \ref{sec:mixed-and-ind-mixed}, following work of Deligne \cite{deligneII} (see also \cite{Huber} and \cite{morel}), we discuss the weights of some natural mixed and ind-mixed Galois modules.
This includes in particular the computation of the weights of the ind-mixed Galois modules given by refined unramified $\ell$-adic cohomology (see \cite{Sch-refined}).  
We also collect some consequences for the cohomology and refined unramified cohomology of quasi-projective schemes over finite fields.

In Section \ref{sec:proof-of-thm:H_M-unramified} we discuss the notion of refined unramified motivic cohomology, following \cite{Sch-refined}.
We compare this to $\ell$-adic refined unramified cohomology and use the aforementioned weight computations to prove Theorems \ref{thm:unramified-Milnor-divisible} and \ref{thm:H_M-unramified-divisible}, and Corollary \ref{cor:unramified-Milnor-divisible}. 

In Section \ref{sec:thm:filtration-L-intro} we prove Theorems \ref{thm:filtration-L-intro} and \ref{thm:filtration-intro-BM} from the introduction. 
This relies on a cycle class map from Borel--Moore motivic cohomology to $\ell$-adic Borel--Moore pro-\'etale homology, see Lemma \ref{lem:cycle-class-H_BM}, together with weight arguments and an extension of the Beilinson--Lichtenbaum conjecture to singular schemes, due to Kok and Zhou \cite{kok-zhou}. 

In Section \ref{sec:factoring-the-map}, we discuss some divisibility phenomena for the Bloch--Ogus groups $H^i(X_\zar,\mathcal H^j_M(\Z(n)))$; our main result in that section is Theorem \ref{thm:Hi-curlyHj}.
This will be used in Section \ref{subsec:thm:unramified-Milnor-divisible-2} to give an alternative proof of Theorem \ref{thm:H_M-unramified-divisible}.

This paper contains three appendices.
In Appendix \ref{sec:Chow-tensor-Q/Z}, we explain how the main result in \cite{totaro-chow} implies that Theorem \ref{thm:filtration-L-intro} fails for $i=2n$ and $j=n\geq 2$ in a strong sense, see Corollary \ref{cor:Chow-tensor-Ql/Zl}.
In Appendix \ref{sec:merkurjev-conjecture}, we use the Bloch--Kato conjecture, as proved by Rost and Voevodsky, to show that the $\ell$-primary torsion subgroup of Milnor K-theory of a field of characteristic different from $\ell$ is $\ell$-divisible, if the field contains all $\ell$-primary roots of unity; the result had been conjectured by Merkurjev in \cite{merkurjev}.
We use the result to give yet another proof of Theorem \ref{thm:unramified-Milnor-divisible} in Section \ref{subsec:thm:unramified-Milnor-divisible-2}.
Finally, in Appendix \ref{sec:appendix-B},  
for smooth projective varieties, we give a concise description of divisibility results for Lichtenbaum motivic cohomology groups 
(following e.g.\ \cite{geisser,kahn,RS-JIMJ}) and deduce 
further positive answers to Question \ref{question:intro}.  

\section{Preliminaries} \label{sec:prelim}

\subsection{Conventions}
An algebraic scheme is a separated scheme of finite type over a field.
A variety is an integral algebraic scheme. 
A finitely generated field is a field that is finitely generated over its prime field.
We say that an algebraic scheme $X$ over a field $k$ has (or admits) a model over $k_0$ if $X\cong X_0\times_{k_0}k$ for some algebraic scheme $X_0$ over $k_0$.

For an abelian group $A$ and a prime $\ell$, we denote the $\ell^r$-torsion subgroup of $A$ by $A[\ell^r]$; moreover, the $\ell$-primary torsion subgroup of $A$ is denoted by $A[\ell^\infty]$. 

If $\tau$ denotes a Grothendieck topology on $X$, then we denote by $\Ab(X_\tau)$ the abelian category of sheaves of abelian groups on $X_\tau$.
We further denote by $D(X_\tau)\coloneq D(\Ab(X_\tau))$ the (unbounded) derived category of sheaves of abelian groups on $X_\tau$.

\subsection{Elementary division properties for abelian groups}

 \begin{lemma}\label{lem:A-tensor-Q/Z-elementary}
 Let $A$ be an abelian group and $\ell$ a prime number.  
 Then the following properties are equivalent:
 \begin{enumerate}
\item  $A \otimes \Q_{\ell}/\Z_{\ell} = 0$; \label{item:lem:A-tensor-Q/Z-elementary:1}
 \item $A \subset \ell A+ A[\ell^\infty]$;\label{item:lem:A-tensor-Q/Z-elementary:2}
 \item For all $n>0$,  $A \subset \ell^nA+ A[\ell^\infty]$;\label{item:lem:A-tensor-Q/Z-elementary:3}
\item For all $n>0$, the map $A[\ell^\infty] \to  A/\ell^nA$ is surjective;\label{item:lem:A-tensor-Q/Z-elementary:4}
\item The group $A$ is an extension of an $\ell$-divisible group by an $\ell$-primary torsion group.\label{item:lem:A-tensor-Q/Z-elementary:5}
\end{enumerate}

 \end{lemma}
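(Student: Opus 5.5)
I will prove the cycle of implications (1)$\Rightarrow$(4)$\Rightarrow$(3)$\Rightarrow$(2)$\Rightarrow$(5)$\Rightarrow$(1). The basic tool is the presentation $\Q_\ell/\Z_\ell=\colim_n \Z/\ell^n$, with transition maps multiplication by $\ell$. Since tensor products commute with filtered colimits, this gives
$$
A\otimes \Q_\ell/\Z_\ell\cong \colim_n A/\ell^n A,
$$
with transition maps $A/\ell^n\to A/\ell^{n+1}$ induced by multiplication by $\ell$. Consequently, $A\otimes\Q_\ell/\Z_\ell=0$ holds if and only if for every $a\in A$ and every $n$ there is $m\geq 0$ with $\ell^m a\in \ell^{n+m}A$.

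\emph{(1)$\Rightarrow$(4).} Take $a\in A$ and fix $n$. By the criterion above there are $m$ and $b\in A$ with $\ell^m a=\ell^{n+m}b$. Then $t\coloneq a-\ell^n b$ satisfies $\ell^m t=0$, so $t\in A[\ell^\infty]$, and $t\equiv a \bmod \ell^nA$. Hence $A[\ell^\infty]\to A/\ell^n A$ is surjective.

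\emph{(4)$\Rightarrow$(3)$\Rightarrow$(2).} The first implication is a restatement, since surjectivity of $A[\ell^\infty]\to A/\ell^nA$ says exactly $A=\ell^nA+A[\ell^\infty]$. The second is the case $n=1$.

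\emph{(2)$\Rightarrow$(5).} Set $T=A[\ell^\infty]$ and $B=A/T$. Note that $B$ has no $\ell$-torsion: if $\ell x\in T$, then $x\in T$. By (2) we have $A=\ell A+T$, so $B=\ell B$, i.e.\ $B$ is $\ell$-divisible. Thus $0\to T\to A\to B\to 0$ exhibits $A$ as an extension of the $\ell$-divisible group $B$ by the $\ell$-primary torsion group $T$.

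\emph{(5)$\Rightarrow$(1).} Let $0\to T\to A\to B\to 0$ be such an extension. Right exactness of $\otimes$ gives an exact sequence
$$
T\otimes\Q_\ell/\Z_\ell\to A\otimes\Q_\ell/\Z_\ell\to B\otimes\Q_\ell/\Z_\ell\to 0.
$$
The right-hand term vanishes: any $b\otimes x$ can be written as $\ell^r b'\otimes x=b'\otimes \ell^r x$, and choosing $r$ with $\ell^r x=0$ shows it is zero. The left-hand term vanishes by the dual argument: for $t\otimes x$ with $\ell^r t=0$, write $x=\ell^r y$ (possible since $\Q_\ell/\Z_\ell$ is divisible), so $t\otimes x=\ell^r t\otimes y=0$. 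Therefore the middle term vanishes.

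Every step is elementary. The only point needing care is the colimit description in (1)$\Rightarrow$(4): one must verify that an element of a filtered colimit is zero exactly when it dies at some later stage, which here means $\ell^m a\in\ell^{n+m}A$ for some $m$.
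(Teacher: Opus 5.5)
Your proof is correct and follows the paper's approach: the colimit description $A\otimes\Q_\ell/\Z_\ell=\colim_n A/\ell^n A$, the extension $0\to A[\ell^\infty]\to A\to A/A[\ell^\infty]\to 0$, and right exactness of the tensor product for (5)$\Rightarrow$(1). The only difference is the order of implications: you prove (1)$\Rightarrow$(4) directly for every $n$ and deduce (5) from (2), whereas the paper proves (1)$\Rightarrow$(2) and leaves implicit the iteration $A=\ell A+A[\ell^\infty]\Rightarrow A=\ell^nA+A[\ell^\infty]$ needed for (2)$\Rightarrow$(3); your ordering avoids that step.
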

 \begin{proof}
 Note that $$A \otimes \Q_{\ell}/\Z_{\ell} = A \otimes  \lim_{\substack{\longrightarrow\\ n}}  \Z/\ell^n,$$
 where the maps $\Z/\ell^n \to \Z/\ell^{n+1}$ are given by multiplication by $\ell$.
 Assume \eqref{item:lem:A-tensor-Q/Z-elementary:1}. 
 For any $a\in A$, the image of its class in $A/\ell$ under
 some map $\ell^n : A/\ell A \to  A/\ell^{n+1}$ vanishes. 
 Hence $\ell^n a = \ell^{n+1} b$ for  some $b \in A$ and so $\ell^n (a- \ell b)=0 \in A$.
 Thus \eqref{item:lem:A-tensor-Q/Z-elementary:1} implies \eqref{item:lem:A-tensor-Q/Z-elementary:2}, which is equivalent to \eqref{item:lem:A-tensor-Q/Z-elementary:3}, which is equivalent to \eqref{item:lem:A-tensor-Q/Z-elementary:4}.
 Let $B= A/A[\ell^\infty]$ and assume \eqref{item:lem:A-tensor-Q/Z-elementary:4}.
 Then $B/\ell^n=0$ for all $n>0$.
 The exact sequence 
 $$ 
 0 \longrightarrow A[\ell^\infty]\longrightarrow A \longrightarrow   B \longrightarrow 0 ,
 $$
 thus shows that \eqref{item:lem:A-tensor-Q/Z-elementary:4} implies \eqref{item:lem:A-tensor-Q/Z-elementary:5}.
 Let us finally assume \eqref{item:lem:A-tensor-Q/Z-elementary:5}.
 That is, there is an exact sequence
 $$
 0\longrightarrow T\longrightarrow A\longrightarrow D\longrightarrow 0
 $$
 for an $\ell$-primary torsion group $T$ and an $\ell$-divisible group $D$.
 Since $T\otimes \Q_\ell/\Z_\ell=0$ and $D\otimes \Q_\ell/\Z_\ell=0$, we find that \eqref{item:lem:A-tensor-Q/Z-elementary:1} holds, which concludes the proof of the lemma.
 \end{proof}

 \begin{remark} {\rm
     There is a straightforward analogue of Lemma \ref{lem:A-tensor-Q/Z-elementary} where one replaces $A\otimes \Q_\ell/\Z_\ell=0$  by the hypothesis $A\otimes \Q/\Z=0$.}
 \end{remark}
 
\begin{remark} {\rm
If an abelian group $A$ satisfies  $A \otimes {\Q/\Z}=0$ (i.e.~$A \otimes {\Q_{\ell}/\Z_{\ell}}=0$ for all primes $\ell$),  it need not be an extension of a torsion group  by a divisible group.
Indeed, let $A= \prod_{p} \Z/p$, where $p$ runs through all primes.
Let $\ell$ be a prime. 
For any $n>0$, we have $A \otimes \Z/\ell^n = \Z/\ell$ and the natural inclusion $\Z/\ell^n \hookrightarrow \Z/\ell^{n+1}$ given  by multiplication by $\ell$ induces the map
$A \otimes \Z/\ell^n  \to  A \otimes \Z/\ell^{n+1}$ which is multiplication by $\ell$ on $\Z/\ell$, hence zero.
Taking direct limits, we get $A \otimes \Q_{\ell} /\Z_{\ell}=0$.
However, no nonzero element of $A= \prod_{p} \Z/p$ is divisible by all primes. Thus $A$ contains no divisible subgroup.
If $A$ was an extension of a torsion group by a divisible group, it would thus be a torsion group. But the diagonal element $(1, \dots , 1, \dots) \in A$ is not a torsion element.} 
 \end{remark}

\subsection{$\ell$-adic (pro-)\'etale cohomology}
Let $X$ be a scheme endowed with some Grothendieck topology $\tau$ on $X$.
Important examples are the small \'etale site $\tau=\et$ and the small pro-\'etale site $\tau=\proet$, see \cite{BS}.
We denote by $\nu:X_\proet\to X_\et$ the natural map of sites.

For a prime $\ell$ invertible on $X$, we will write
$$
H^i(X_\et,\Z/\ell^r(n))\coloneq H^i(X_\et,\mu_{\ell^r}^{\otimes n})\coloneq \R^i\Gamma(X_\et,\mu_{\ell^r}^{\otimes n})\quad \text{and}\quad H^i(X_\et,\Q_\ell/\Z_\ell(n))\coloneq \lim_{\substack{\longrightarrow\\r}}H^i(X_\et,\Z/\ell^r(n)).
$$
Sometimes we also write $H^i_\et(X,A(n))$ instead of $H^i(X_\et,A(n))$.
By \cite[Proposition 5.2.6.(2)]{BS}, the adjunction map $\id\to \R\nu_\ast \nu^\ast$ is an equivalence and so we may compute the above cohomology groups in the pro-\'etale site as well.

We further consider the sheaves
$$
\widehat \Z_\ell(n)\coloneq \lim_{\substack{\longleftarrow\\ r}} \nu^\ast \mu_{\ell^r}^{\otimes n} \in \Ab(X_\proet)\quad \text{and}\quad \widehat \Q_\ell(n)\coloneq \widehat \Z_\ell(n)\otimes_{\widehat \Z_\ell} \widehat \Q_\ell \in \Ab(X_\proet),
$$
where $\widehat \Q_\ell$ is the sheaf on $X_\proet$ associated to the topological ring $\Q_\ell$ via \cite[Lemma 4.2.12]{BS}. 
Sometimes we will also drop the hat in the above notations; in particular, we will frequently write 
$$
H^i(X_\proet,\Z_\ell(n))\coloneq \R^i\Gamma(X_\proet,\widehat \Z_\ell(n))\quad \text{and}\quad 
H^i(X_\proet,\Q_\ell(n))\coloneq \R^i\Gamma(X_\proet,\widehat \Q_\ell(n)).
$$ 
These groups agree with Jannsen's continuous \'etale cohomology groups from \cite{jannsen}, see \cite[Proposition 5.6.2]{BS}.
Moreover, $H^i(X_\proet,\Q_\ell(n))=H^i(X_\proet,\Z_\ell(n))\otimes_{\Z_\ell} \Q_\ell$.

If $X$ is an algebraic scheme over a field $k$ with $k$ separably closed or finite, then the above groups coincide with the usual \'etale cohomology groups: 
\begin{align} \label{eq:H_proet=H_et}
H^i(X_\proet,\Z_\ell(n))=H^i(X_\et,\Z_\ell(n))\quad \text{and}\quad H^i(X_\proet,\Q_\ell(n))= H^i(X_\et,\Q_\ell(n)) ,
\end{align}
because $H^i(X_\et,\mu_{\ell^r}^{\otimes n})$ is finite in this case and so the Mittag--Leffler condition is satisfied, see \cite[p.\ 208, (0.2)]{jannsen}.
 
 \subsection{$\ell$-adic (pro-)\'etale Borel--Moore homology}
 Let $X$ be an algebraic scheme of dimension $d_X$ over a field $k$ with structure morphism $\pi_X\colon X\to \Spec k$.
 Let $\ell$ be a prime invertible in $k$.
 We denote the Borel--Moore homology of $X$ with values in $\Z_\ell$ by
$$
H^{i}_{BM}(X, \Z_\ell(n))\coloneq H_{2d_X-i}^{BM}(X, \Z_\ell(d_X-n))= H^{i-2d_X} (X_{\proet},\pi_X^!\widehat \Z_\ell(n-d_X))  ,
$$ 
see \cite{BS} and \cite[Section 4 and Proposition 6.6]{Sch-refined}.
The analogue with $\Z/\ell^r$-coefficients may directly be defined on the \'etale site of $X$ as follows:
$$
H^{i}_{BM}(X, \Z/\ell^r(n))\coloneq H^{i}_{BM}(X, \mu_{\ell^r}^{\otimes n})\coloneq H_{2d_X-i}^{BM}(X, \mu_{\ell^r}^{\otimes d_X-n})=H^{i-2d_X} (X_{\et},\pi_X^!\mu_{\ell^r}^{\otimes n-d_X}) . 
$$ 
We further define
$$
H^{i}_{BM}(X, \Q_\ell(n))\coloneq H^{i}_{BM}(X, \Z_\ell(n))\otimes_{\Z_\ell} \Q_\ell\quad \text{and}\quad H^{i}_{BM}(X, \Q_\ell/\Z_\ell(n))\coloneq \lim_{\substack{\longrightarrow \\ r}}H^{i}_{BM}(X, \Z/\ell^r(n)) .
$$ 
The above defined groups are contravariantly functorial for \'etale maps of schemes of the same dimension and hence in particular for open immersions $U\hookrightarrow X$ with $\dim U=\dim X$, see e.g.\ \cite[Proposition 6.6]{Sch-refined}.

Let $A\in \{\Z/\ell^r,\Z_\ell,\Q_\ell,\Q_\ell/\Z_\ell\}$.
If $Z\subset X$ is closed of codimension $c=\dim X-\dim Z$ and with complement $U=X\setminus Z$, then we have a Gysin (or localization) sequence
\begin{align} \label{les:Gysin}
\dots \to H^{i-2c}_{BM}(Z,A(n-c))\to  H^{i}_{BM}(X, A(n))\to H^{i}_{BM}(U, A(n))\to H^{i+1-2c}_{BM}(Z,A(n-c))\to \dots ,
\end{align}
see e.g.\ \cite[\S 4, (P2) and Proposition 6.6]{Sch-refined}.

If $X$ is smooth and equi-dimensional, then we have $\pi_X^! \cong \pi_X^\ast(d_X)[2d_X]$, by Poincar\'e duality, and so
\begin{align} \label{eq:H_BM=H_proet}
H^{i}_{BM}(X, A(n))=H^{i} (X, A(n))\coloneq H^{i} (X_{\proet},A(n)), 
\end{align}
for all $A\in \{\Z/\ell^r,\Z_\ell,\Q_\ell,\Q_\ell/\Z_\ell\}$, see e.g.\ \cite[Lemma 6.5]{Sch-refined}.
 
\begin{remark}
{\rm
The groups $H^i_{BM}(X,A(n))$  agree up to a reindexing with Borel--Moore homology.
 In this paper we use the cohomological indexing convention, as it makes several of the arguments and statements in this paper, such as Theorem \ref{thm:filtration-intro-BM} or Proposition \ref{prop:weights}, independent of the dimension of $X$.
Since $H^i_{BM}(X,A(n))$ agrees with ordinary cohomology if $X$ is smooth and equi-dimensional, this convention may also make various generalizations from the case of smooth varieties to arbitrary equi-dimensional quasi-projective schemes more transparent.
}
\end{remark}

\subsection{Duality and base change}
 In this subsection we recall some consequences of the six-functor formalism as developed in \cite[Section 6.7]{BS} (see also \cite{Eke} for the analogous results for Jannsen's continuous \'etale cohomology).
The results are well-known and we give some details for convenience of the reader.

To begin with, we denote as usual the compactly supported $\ell$-adic pro-\'etale cohomology of a qcqs $\Z[1/\ell]$-scheme $X$ by
$$
H^i_c(X,\Z_\ell(n))\coloneq H^{i}(\Spec k,R\pi_{X !} \widehat \Z_\ell(n))\quad \text{and}\quad H^i_c(X,\Q_\ell(n))\coloneq H^i_c(X,\Z_\ell(n))\otimes \Q_\ell .
$$ 
\begin{lemma} \label{lem:H_BM-H_c}
Let $X$ be an algebraic scheme of dimension $d_X$ over a separably closed field $k$ and let $\ell$ be a prime invertible in $k$.
Then there is a canonical isomorphism
$$
H^i_{BM}(X,\Q_\ell(n))\cong \Hom_{\Q_\ell}( H^{2d_X-i}_c(X,\Q_\ell(d_X-n)),\Q_\ell)=H^{2d_X-i}_c(X,\Q_\ell(d_X-n))^\ast .
$$
\end{lemma}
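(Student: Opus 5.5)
We will deduce the statement from Verdier duality in the pro-\'etale six-functor formalism of \cite[Section 6.7]{BS}, applied to the structure morphism $\pi_X\colon X\to \Spec k$.

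First we work integrally. By definition, $H^i_{BM}(X,\Z_\ell(n))=H^{i-2d_X}(X_\proet,\pi_X^!\widehat\Z_\ell(n-d_X))$. Verdier duality gives the adjunction $R\pi_{X!}\dashv \pi_X^!$ on the relevant derived categories of $\widehat\Z_\ell$-modules. From it we obtain
$$
R\Gamma(X_\proet,\pi_X^!\widehat\Z_\ell(m))\cong R\Hom_{\Z_\ell}\big(R\pi_{X!}\widehat\Z_\ell,\widehat\Z_\ell(m)\big),
$$
where $m=n-d_X$. Here we use that $k$ is separably closed, so the pro-\'etale site of $\Spec k$ is equivalent to that of a point. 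Sheaves on $\Spec k$ are then just (solid, or here derived $\ell$-complete) $\Z_\ell$-modules, and $\Z_\ell(m)\cong\Z_\ell$ non-canonically; the Tate twist is canonically accounted for by moving it into the compactly supported side. The twists combine as $\widehat\Z_\ell(n-d_X)$ against $R\pi_{X!}\widehat\Z_\ell$, which after twisting is $R\Hom(R\pi_{X!}\widehat\Z_\ell(d_X-n),\Z_\ell)$. Taking cohomology in degree $i-2d_X$ yields
$$
H^i_{BM}(X,\Z_\ell(n))\cong H^{i-2d_X}R\Hom_{\Z_\ell}\big(R\Gamma_c(X,\Z_\ell(d_X-n)),\Z_\ell\big).
$$

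Next we need finiteness. The complex $R\Gamma_c(X,\Z_\ell(d_X-n))$ is a perfect complex of $\Z_\ell$-modules. This holds because its reductions mod $\ell^r$ are the \'etale compactly supported cohomologies with $\Z/\ell^r$-coefficients, which are finite over a separably closed field and compatible with reduction (Mittag--Leffler, as in \eqref{eq:H_proet=H_et}). There is then a universal coefficient short exact sequence
$$
0\to \operatorname{Ext}^1_{\Z_\ell}(H^{2d_X-i+1}_c,\Z_\ell)\to H^{i}_{BM}(X,\Z_\ell(n))\to \Hom_{\Z_\ell}(H^{2d_X-i}_c,\Z_\ell)\to 0 .
$$
The Ext term is a finite torsion group, since the $H_c^j$ are finitely generated $\Z_\ell$-modules.

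Finally, we tensor with $\Q_\ell$. This kills the finite Ext term, and because $H^{2d_X-i}_c(X,\Z_\ell(d_X-n))$ is finitely generated we have $\Hom_{\Z_\ell}(M,\Z_\ell)\otimes\Q_\ell=\Hom_{\Q_\ell}(M\otimes\Q_\ell,\Q_\ell)$. This gives the claimed isomorphism. Canonicity holds because every map used is a natural duality or adjunction map.

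The main obstacle is the first step. We must make precise that $\pi_X^!$ on $\widehat\Z_\ell$-sheaves in the pro-\'etale setting is right adjoint to $R\pi_{X!}$, and that global sections over $\Spec k$ identify with $R\Hom_{\Z_\ell}$. If this is awkward in \cite{BS}, the fallback is to argue with $\Z/\ell^r$-coefficients. There, classical Verdier duality over a separably closed field gives the perfect pairing of finite groups $H^i_{BM}(X,\Z/\ell^r(n))\cong H^{2d_X-i}_c(X,\Z/\ell^r(d_X-n))^\vee$. We then pass to the inverse limit over $r$ (legitimate by finiteness and Mittag--Leffler) to get the integral statement up to the same Ext term, and invert $\ell$ as above.
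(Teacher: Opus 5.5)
Your proposal is correct and follows essentially the paper's argument: Verdier duality $R\pi_{X!}\dashv\pi_X^!$ from \cite[Section 6.7]{BS} applied to $\pi_X$, moving the Tate twist to the compactly supported side, and using that $R\Gamma_c$ is a perfect $\Z_\ell$-complex before inverting $\ell$. The paper packages the first step as the sheafified duality $Rf_\ast\mathcal{RH}om_X(K,f^!L)\cong\mathcal{RH}om_Y(Rf_!K,L)$ obtained via Yoneda, and leaves your universal-coefficient step implicit; the only imprecision in your write-up is the claim that $(\Spec k)_\proet$ is equivalent to the site of a point (it is the site of profinite sets), but what you actually use is true, namely that Homs between constructible complexes on $\Spec k$ are computed by $R\Hom_{\Z_\ell}$ of the underlying perfect complexes.
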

\begin{proof}   
In \cite[Section 6.7]{BS}, Bhatt and Scholze prove the six functor formalism for constructible complexes of $\widehat \Z_\ell$-modules on qcqs schemes (see \cite[Definition 6.5.1]{BS}) on which $\ell$ is invertible.
We apply this to a morphism $f\colon X\to Y$ of algebraic schemes over $k$, where $k$ is a separably closed field such that $\ell$ is invertible in $k$. 
The formalism then yields the natural identity
\begin{align} \label{eq:duality}
Rf_\ast \mathcal{RH}om_X(K,f^!L)\stackrel{\cong}\longrightarrow \mathcal{RH}om_Y(Rf_!K,L).
\end{align}
Indeed, for every $M\in D_{\cons}(Y_{\proet},\widehat{\Z}_\ell)$ we have the following natural identities, where we write $\Hom_X\coloneq \Hom_{D_{\cons}(X_\proet,\widehat \Z_\ell)}$ and $\Hom_Y\coloneq \Hom_{D_{\cons}(Y_\proet,\widehat \Z_\ell)}$:
\begin{align*}
\mathrm{Hom}_Y\!\left(M, Rf_\ast \mathcal{RH}om_X(K,f^!L)\right) &\cong \mathrm{Hom}_X\!\left(f^\ast_{comp}  M, \mathcal{RH}om_X(K,f^!L)\right)\\
&\cong \mathrm{Hom}_X\!\left(f^\ast_{comp}  M\widehat \otimes K, f^!L\right) \\
&\cong \mathrm{Hom}_Y\!\left(Rf_!(f^\ast_{comp}  M\widehat \otimes K), L\right) \\
&\cong \mathrm{Hom}_Y\!\left(M\widehat \otimes Rf_!K, L\right) \\
&\cong \mathrm{Hom}_Y\!\left(M, \mathcal{RH}om_Y(Rf_!K,L)\right).
\end{align*}
Here the first and third isomorphisms use the adjunctions
$f^\ast_{comp} \dashv Rf_\ast$ and $Rf_! \dashv f^!$ (see \cite[Lemmas 6.7.2 and 6.7.19]{BS}), the second and last use the tensor--Hom adjunction (see \cite[Lemmas 3.4.11,  6.7.12 and 6.7.13]{BS}), and the fourth is the projection formula (see \cite[Lemma 6.7.14]{BS}). 
(We note that in the above computation, the completed tensor product could be replaced by the ordinary tensor product by \cite[Lemma 6.5.5]{BS}.)
The claim now follows from the Yoneda Lemma. 

We apply \eqref{eq:duality} to the structure morphism $f\coloneq \pi_X\colon X\to \Spec k$ and the locally constant pro-\'etale sheaves $K=\widehat \Z_\ell$ and $L=\widehat \Z_\ell(n-d_X)[-2d_X]$.
We then get
\begin{align*}
H^i_{BM}(X,\Q_\ell(n))\stackrel{\cong}\longrightarrow \mathcal{R}^{i-2d_X}\mathcal{H}om_{\Spec k}(Rf_!\widehat \Z_\ell,\widehat \Z_\ell(n-d_X))\otimes \Q_\ell .
\end{align*}
Moreover,
\begin{align*} 
\mathcal{R}^{i-2d_X}\mathcal{H}om_{\Spec k}(Rf_!\widehat \Z_\ell,\widehat \Z_\ell(n-d_X))\otimes \Q_\ell
&=\mathcal{R}^{i-2d_X}\mathcal{H}om_{\Spec k}(Rf_!\widehat \Z_\ell(d_X-n),\widehat \Z_\ell)\otimes \Q_\ell \\
&=  \Hom_{\Q_\ell}(R^{2d_X-i}f_!\widehat \Z_\ell(d_X-n) \otimes \Q_\ell,  \Q_\ell  )\\ 
&=\Hom_{\Q_\ell}(H_c^{2d_X-i}(X, \Q_\ell(d_X-n)),  \Q_\ell  ).
\end{align*}
This completes the proof.
\end{proof}  
 
\begin{lemma}\label{lem:BM-base-change}
Let
\[
\xymatrix{
X'\ar[r]^-{g'} \ar[d]_{f'} & X \ar[d]^f \\
T \ar[r]^-g & S
}
\]
be a Cartesian diagram of qcqs schemes on which $\ell$ is invertible, with $f$ separated of finite presentation.  
Let $A\coloneq \widehat {\Z}_\ell$ (resp.\ $A=\Z/\ell^r$) and denote the corresponding locally constant pro-\'etale (resp.~\'etale) sheaf on $S$ and $T$ by $A_S$ and $A_T$, respectively.
Then for every $n\in\Z$ there is a natural isomorphism in
$D_{\cons}(T_{\proet},\widehat{\Z}_\ell)$ (resp.~in $D_{\cons}(T_{\et}, \Z/\ell^r)$)
\[
g^* Rf_* f^!A_S(n) \cong Rf'_{*} {f'}^!A_T(n).
\] 
\end{lemma}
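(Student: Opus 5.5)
The plan is to rewrite both sides via the duality isomorphism \eqref{eq:duality} and to reduce the claim to proper base change together with a compatibility of $g^*$ with internal $\mathcal{RH}om$.

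\textbf{Step 1 (duality).} Apply \eqref{eq:duality} with $K=A_X$ and $L=A_S(n)$, and likewise on $T$. This gives natural isomorphisms
$$
Rf_*f^!A_S(n)\cong \mathcal{RH}om_S(Rf_!A_X,A_S(n)),\qquad Rf'_*{f'}^!A_T(n)\cong \mathcal{RH}om_T(Rf'_!A_{X'},A_T(n)).
$$
For $A=\Z/\ell^r$ the same chain of adjunctions works in $D_{\cons}(\,\cdot_{\et},\Z/\ell^r)$.

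\textbf{Step 2 (proper base change).} Since $g'^*A_X=A_{X'}$, proper base change for $Rf_!$ (\cite[Section 6.7]{BS}) gives $g^*Rf_!A_X\cong Rf'_!A_{X'}$. Moreover $g^*A_S(n)=A_T(n)$.

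\textbf{Step 3 (comparison map).} There is a canonical map
$$
g^*\mathcal{RH}om_S(M,A_S(n))\longrightarrow \mathcal{RH}om_T(g^*M,A_T(n)),
$$
adjoint to $M\otimes Rg_*g^*\mathcal{RH}om(M,A)\to Rg_*g^*(M\otimes \mathcal{RH}om(M,A))\to Rg_*g^*A$. I will take $M=Rf_!A_X$, which is constructible. Combined with Steps 1 and 2, the lemma reduces to showing that this map is an isomorphism. I would also check that the resulting isomorphism is the one induced by the base change transformation $g^*f_*\to f'_*g'^*$ together with $g'^*f^!\to f'^!g^*$, so that it is natural.

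\textbf{Step 4 (the obstacle).} Step 3 is where everything hinges, and it is exactly where the statement as worded needs care.

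If $g$ is étale, or more generally weakly étale / pro-étale, then $g^*$ commutes with $\mathcal{RH}om$ from constructible objects, because $g^*$ is just restriction to a slice of the site. If $S$ is the spectrum of a field, every constructible complex is locally constant with perfect stalks, so it is dualizable, and $\mathcal{RH}om(M,-)=M^\vee\otimes -$ commutes with any $g^*$. These are the cases used in the paper: $S=\Spec k$, $T=\Spec k'$, or $T\to S$ pro-étale, for instance passing to a separable closure.

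For completely arbitrary $g$ the comparison map can fail to be an isomorphism. Take $S=\mathbb A^1_k$, $X=T=X'$ a rational point, and $f=g$ the inclusion. By absolute purity, $g^*Rf_*f^!A=A(-1)[-2]$, whereas $Rf'_*{f'}^!A=A$.

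So in writing the proof I would first try to prove Step 3 by dévissage on $M$ (reducing to $M=j_!\mathcal L$ with $\mathcal L$ locally constant on a locally closed stratum). Where that dévissage breaks, namely through $Rj_*$ failing to commute with $g^*$, I would add the hypothesis actually needed: $g$ weakly étale, or $S$ a field. I would then record that this is the generality in which the lemma is applied later.
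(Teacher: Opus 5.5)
Your counterexample is correct, and it shows that the lemma is false in the stated generality. For $f=g=i\colon\{0\}\hookrightarrow\mathbb A^1_k$ one has $X'=T$ and $f'=g'=\id$. Hence $g^*Rf_*f^!A_S(n)=i^*i_*i^!A(n)=i^!A(n)\cong A(n-1)[-2]$, whereas $Rf'_*{f'}^!A_T(n)=A(n)$. The paper's proof is exactly your Steps 1--3: duality \eqref{eq:duality} for $f$, pull back along $g$, proper base change, then duality for $f'$. The step you isolate is the one the paper dispatches with the phrase ``using compatibility of $g^*$ with internal Hom''. Since $f$ is proper in your example, that is precisely where it breaks: $g^*\mathcal{RH}om_S(i_*A,A_S)=i^*i_*i^!A=i^!A\cong A(-1)[-2]$, while $\mathcal{RH}om_T(g^*i_*A,A_T)=\mathcal{RH}om_T(A,A)=A$.

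What needs correcting is your proposed repair, and your claim about how the lemma is used. It is not only applied over a field or along pro-\'etale maps. In the proof of Proposition \ref{prop:BM-base-change}, and in the mixedness part of Proposition \ref{prop:weights}, it is applied with $T=\bar s$ a geometric point of a Noetherian integral base $S$ that is in general positive-dimensional. Such a $g$ is not flat, hence not weakly \'etale, so your added hypotheses would leave exactly that application unsupported.

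The hypothesis actually needed is the one your field-case argument already exploits: that $M=Rf_!A_X$ be locally constant with perfect values. Then $\mathcal{RH}om_S(M(-n),A_S)\cong M^\vee(n)$. Since $g^*$ is symmetric monoidal, it preserves duals of dualizable objects for \emph{every} $g$, so Steps 1--3 go through for arbitrary $g$ without any d\'evissage. By \cite[Proposition 6.6.11]{BS} this hypothesis holds after replacing $S$ by a dense open subset, which is the kind of shrinking Proposition \ref{prop:BM-base-change} already performs. One should shrink so that $Rf_!\widehat\Z_\ell$ is locally constant with perfect values; its twists then are too (by the projection formula), and so is its reduction modulo $\ell^r$. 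After that, $Rf_*f^!\widehat\Z_\ell(n-d)\cong\bigl(Rf_!\widehat\Z_\ell(d-n)\bigr)^\vee$ is automatically locally constant with perfect values, and base change holds at every geometric point of $S$.
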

\begin{proof}
We treat only the case $A=\widehat \Z_\ell$; the case $A=\Z/\ell^r$ follows via the same argument.  

Let $A\coloneq \widehat \Z_\ell$.
For any $M,N\in D_{\cons}(S_{\proet},\widehat{\Z}_\ell)$ we have $f^!(M\widehat \otimes N)=(f^!M)\widehat \otimes f^*_{comp}N$, which follows from the projection formula \cite[Lemma 6.7.14]{BS} and adjunction.
Hence, $(f^!A_S)(n)=f^!(A_S(n))$ and we get
\begin{align*} %\label{eq:lem:BM-base-change:1}
\mathcal{RH}om_X(A_X(-n),f^!A_S)=\mathcal{RH}om_X(A_X,f^!A_S(n)) =f^!A_S(n).
\end{align*}
Therefore, applying \eqref{eq:duality} to $K=A_X(n)$ and $L=A_S$ yields
\[
Rf_* f^!A_S(n) \cong \mathcal{RH}om_S(Rf_!A_X(-n),A_S).
\]
Pulling back along $g$ and using compatibility of $g^*$ with internal Hom, we obtain
\[
g^*Rf_* f^!A_S(n) \cong \mathcal{RH}om_T(g^*Rf_!A_X(-n),A_T).
\]
By proper base change for $Rf_!$ (see \cite[Lemma 6.7.10]{BS}),
\[
g^*Rf_!A_X(-n)\cong Rf'_{!}A_{X'}(-n),
\]
and therefore
\[
g^*Rf_* f^!A_S(n) \cong \mathcal{RH}om_T(Rf'_{!}A_{X'}(-n),A_T).
\]
Applying  \eqref{eq:duality} again, now to ${f'}$, gives
\[
\mathcal{RH}om_T(Rf'_{!}A_{X'}(-n),A_T)\cong Rf'_{*} \mathcal{RH}om_{X'}(A_{X'}(-n),{f'}^!A_{T}) \cong Rf'_{*}{f'}^!A_T(n),
\]
which proves the claim.
\end{proof}

\begin{proposition} \label{prop:BM-base-change}
Let $\ell$ be a prime and let $A\in \{\Z_\ell,\Q_\ell,\Q_\ell/\Z_\ell,\Z/\ell^r\}$.
Let $f\colon X\to S$ be a separated morphism of finite presentation between qcqs schemes on which $\ell$ is invertible.
Assume that $S$ is Noetherian and integral with function field $k_0$ and geometric generic point $\bar \eta=\Spec k$, where $k/k_0$ is a separable closure.
Then there is a dense open subset $U\subset S$ such that for each geometric point $\bar s\to U\subset S$ and each specialization $\bar \eta\leadsto \bar s$,
there is a natural isomorphism
$$
H^i_{BM}(X_{\bar s},A(n)) \stackrel{\cong}\longrightarrow H^i_{BM}(X_{\bar \eta},A(n)) 
$$
which is compatible with Galois actions in the following sense:
Let $s\in S$ denote the image of $\bar s$ with decomposition group $G_s\subset G_{k_0}=\Gal(k/k_0)$  at $s$.
Then the inertia group $I_s\subset G_s$ at $s$ acts trivially on $H^i_{BM}(X_{\bar \eta},A(n))$ and the induced action of the absolute Galois group $G_{\kappa(s)}\cong G_s/I_s$ of $\kappa(s)$ agrees via the above isomorphism with the natural action on $H^i_{BM}(X_{\bar s},A(n))$. 
\end{proposition}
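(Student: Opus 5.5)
The plan is to realize both sides as stalks of a single constructible complex on $S$ and then use generic local constancy. Put
$$
K\coloneq Rf_\ast f^!A_S(n-d)[-2d],
$$
where $d$ is the relative dimension. If $X$ is not equidimensional over $S$, I first shrink $S$ and decompose so that all fibres have the same dimension $d=d_{X_{\bar s}}$; this is possible generically by Chevalley/generic flatness. Apply Lemma \ref{lem:BM-base-change} to the geometric points $g\colon \bar s\to S$ and $\bar\eta\to S$. Since $X_{\bar s}$ is an algebraic scheme over a separably closed field, this gives
$$
(\mathcal H^{i}K)_{\bar s}\cong H^{i-2d}(X_{\bar s},\pi^!A(n-d))=H^i_{BM}(X_{\bar s},A(n)),
$$
and similarly at $\bar\eta$. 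For $A=\Z_\ell$ I work on the pro-\'etale site; for $A=\Q_\ell$ I tensor with $\Q_\ell$; for $A=\Q_\ell/\Z_\ell$ I pass to the colimit over $r$. The stalk of a $\widehat\Z_\ell$-complex at a geometric point of a separably closed field computes its global sections, so this identification is legitimate.

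Next, $K$ lies in $D_{\cons}(S_\proet,\widehat\Z_\ell)$ (resp.\ $D_{\cons}(S_\et,\Z/\ell^r)$). This follows from the six-functor formalism of \cite[Section 6.7]{BS}, since $f^!$ and $Rf_\ast$ preserve constructibility over a Noetherian base. Each cohomology sheaf $\mathcal H^iK$ is therefore lisse on a dense open subset of $S$. Only finitely many $i$ matter, by boundedness of constructible complexes, so I choose one dense open $U\subset S$ on which all $\mathcal H^iK$ are lisse.

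For a lisse sheaf $\mathcal F$ on a connected normal $U$, any specialization $\bar\eta\leadsto\bar s$ induces the cospecialization isomorphism $\mathcal F_{\bar s}\to\mathcal F_{\bar\eta}$. I want $U$ normal, and since $S$ is integral I can shrink $U$ to be regular, hence normal. This gives the required isomorphism. The Galois compatibility is then standard: $\mathcal F|_U$ corresponds to a continuous representation of $\pi_1(U,\bar\eta)$, and $G_{k_0}$ acts through the surjection $G_{k_0}\to\pi_1(U)$. The decomposition group $G_s$ maps to the image of $\pi_1(\Spec\kappa(s)^{h})\to\pi_1(U)$, and the inertia $I_s$ dies in $\pi_1$ of the henselization because the henselian local ring is strictly henselian after unramified extension. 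So $I_s$ acts trivially, and the induced action of $G_s/I_s\cong G_{\kappa(s)}$ is the one obtained by restricting $\mathcal F$ to $\Spec\kappa(s)$, namely the natural action on $\mathcal F_{\bar s}=H^i_{BM}(X_{\bar s},A(n))$. The fact that $\Q_\ell/\Z_\ell$ coefficients are only ind-lisse is harmless, because the colimit of compatible isomorphisms at each finite level is again compatible.

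I expect the main obstacle to be naturality: checking that the base-change isomorphisms of Lemma \ref{lem:BM-base-change} commute with the cospecialization maps and with the Galois actions, i.e.\ that the action of $G_{k_0}$ on $H^i_{BM}(X_{\bar\eta})$ by transport of structure matches the monodromy action on the stalk $(\mathcal H^iK)_{\bar\eta}$. I would handle this by noting that all isomorphisms in Lemma \ref{lem:BM-base-change} are built from adjunctions and proper base change, which are functorial in the geometric point and hence equivariant for automorphisms of $\bar\eta$ over $\eta$. A secondary point is making the choice of $U$ uniform in $r$ for $\Z/\ell^r$ and $\Q_\ell/\Z_\ell$. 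Here I would use that $K_{\Z/\ell^r}=K_{\Z_\ell}\otimes^{\mathbb L}\Z/\ell$ via a d\'evissage: lisseness of $K\otimes^{\mathbb L}\Z/\ell$ implies lisseness for all $\ell^r$.
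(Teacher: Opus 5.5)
Your proposal is correct and follows essentially the same route as the paper: identify both sides with stalks of $Rf_\ast f^!A(n-d)$ via Lemma \ref{lem:BM-base-change}, shrink $S$ using \cite[Proposition 6.6.11]{BS} so that the $\widehat\Z_\ell$-version of this complex is locally constant, which also settles the $\Z/\ell^r$ and $\Q_\ell/\Z_\ell$ cases uniformly in $r$, and then read off the isomorphism and its Galois compatibility from the chosen specialization. The only difference is packaging: the paper restricts the locally constant complex to $T=\Spec \mathcal O^{sh}_{S,\bar s}$, where it is constant by \cite[Corollary 6.5.7]{BS}, and this avoids both your $\pi_1$ formalism and your shrinking of $U$ to a regular scheme (that shrinking is not automatic for an arbitrary Noetherian integral $S$, but it is also not needed).
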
 
\begin{proof}
Up to shrinking $S$, we can assume that $f$ is flat of relative dimension $d$.
Consider the constructible complex $Rf_\ast f^!\widehat{\Z}_\ell(n-d)\in D_{\cons}(S_\proet,\widehat \Z_\ell)$.
By \cite[Proposition 6.6.11]{BS}, we can, up to shrinking $S$ further, assume that this complex is locally constant with perfect values, i.e.~locally isomorphic to $\widehat {\underline{L}}\cong \underline{L}\otimes_{\Z_\ell} \widehat{\Z}_\ell$ for some perfect complex $L$ of $\Z_\ell$-modules.
We claim that under this assumption, the base change assertions claimed in the proposition hold true.

By Lemma \ref{lem:BM-base-change}, and because taking stalks of complexes commutes with taking cohomology, we have  canonical isomorphisms
\begin{align} \label{eq:cor:BM-base-change:1}
H^i_{BM}(X_{\bar s},\widehat{\Z}_\ell(n)) \stackrel{\cong}\longrightarrow \left(R^{i-2d}f_\ast f^!\widehat{\Z}_\ell(n-d) \right)_{\bar s} 
\end{align}
and
\begin{align} \label{eq:cor:BM-base-change:2}
H^i_{BM}(X_{\bar \eta},\widehat{\Z}_\ell(n)) \stackrel{\cong}\longrightarrow \left(R^{i-2d}f_\ast f^!\widehat{\Z}_\ell(n-d) \right)_{\bar \eta} .
\end{align}

The choice of a specialization $\bar \eta\leadsto \bar s$ corresponds to the choice of an $S$-morphism $\bar \eta\to T\coloneq \Spec \mathcal O^{sh}_{S,\bar s}$ to the spectrum of the strict Henselization of $S$ at $\bar s$.
Since $Rf_\ast f^!\widehat{\Z}_\ell(n-d)$ is locally constant by assumption, its restriction to $T$ is constant by \cite[Corollary 6.5.7]{BS}. 
The choice of $T$ therefore induces, in view of \eqref{eq:cor:BM-base-change:1} and \eqref{eq:cor:BM-base-change:2}, a canonical identification
$$
H^i_{BM}(X_{\bar s},\widehat{\Z}_\ell(n)) \stackrel{\cong}\longrightarrow H^i_{BM}(X_{\bar \eta},\widehat{\Z}_\ell(n)) 
$$
which is compatible with the respective Galois actions, as claimed in the proposition.
This proves the proposition for $A=\Z_\ell$ and we obtain the result for $A=\Q_\ell$ by applying $\otimes_{\Z_\ell} \Q_\ell$.

Let us deal with the case $A=\Z/\ell^r$ next.
For each $r$, there is a distinguished triangle
\begin{align*} %\label{eq:cor:BM-base-change-triangle}
Rf_\ast f^!\widehat{\Z}_\ell(n-d)\stackrel{\cdot \ell^r}\longrightarrow Rf_\ast f^!\widehat{\Z}_\ell(n-d)\longrightarrow Rf_\ast f^!\Z/\ell^r(n-d) \stackrel{+1}\longrightarrow .
\end{align*}
Since $Rf_\ast f^!\widehat{\Z}_\ell(n-d)\in D_{\cons}(S_\proet,\widehat \Z_\ell)$ is locally constant with perfect values, the same holds true for $ Rf_\ast f^!\Z/\ell^r(n-d) $. 
The same argument as above then yields an isomorphism
$$
H^i_{BM}(X_{\bar s},\Z/\ell^r(n)) \stackrel{\cong}\longrightarrow H^i_{BM}(X_{\bar \eta},\Z/\ell^r(n)) ,
$$
that is compatible with Galois actions.
Taking direct limits, we obtain the same result for $A=\Q_\ell/\Z_\ell$. 
(This step uses the fact that we do not have to shrink $S$ further, once we have ensured that $Rf_\ast f^!\widehat{\Z}_\ell(n-d)$ is locally constant; in particular, our choice of $S$ does not depend on $r$.)
This concludes the proof. 
\end{proof}

\subsection{Refined unramified $\ell$-adic cohomology}

Let $X$ be an algebraic scheme over a field $k$.
We let $F_jX$ be the pro-scheme that consists of all open subsets $U\subset X$ whose complement $Z=X\setminus U$ has codimension $\dim X-\dim Z \geq j+1$.
We then define 
\begin{align} \label{eq:H_BM(FjX)}
H^i_{BM}(F_jX,A(n))\coloneq \lim_{\substack{\longrightarrow \\ U\subset X}}H^i_{BM}(U,A(n)) ,
\end{align}
where $U\subset X$ runs through all open subsets with $F_jX\subset U$, i.e.\ all open subsets that belong to the pro-scheme $F_jX$.
The $j$-th refined unramified cohomology of $X$ is then defined by
\begin{align} \label{eq:H_j,nr(X)}
H^i_{j,\nr}(X,A(n))\coloneq \im(H^i_{BM}(F_{j+1}X,A(n))\to H^i_{BM}(F_{j}X,A(n))) .
\end{align}

Let $k_0\subset k$ be a subfield such that $k/k_0$ is Galois with group $G$ and such that $X=X_0\times_{k_0}k$.
If $U\subset X$ is open with $F_jX\subset U$, then we can replace the complement $Z\coloneq X\setminus U$ by its Galois orbit to construct an open subset $U'\subset X$ which is defined over $k_0$ and satisfies $U'\subset U$ and $F_jX\subset U'$.
This shows that  in the limit \eqref{eq:H_BM(FjX)} we may  run only over those open subsets that are defined over $k_0$.
Hence, $H^i_{BM}(F_jX,A(n))$ and $H^i_{j,\nr}(X,A(n))$ carry natural $A$-linear $G$-actions and hence are $A$-$G$-modules.

\subsection{Motivic cohomology} \label{subsec:motivic-cohomology}
Let $X$ be a smooth equi-dimensional scheme over a field $k$ and let $A$ be an abelian group. 
We consider Bloch's cycle complex
\begin{align} \label{def:A(n)_zar}
A(n)\coloneq A(n)_\zar\coloneq z^n(-_{\zar},\bullet)[-2n]\otimes^{\mathbb L} A  \in D(X_\zar) 
\end{align}
with values in $A$, which is a complex of sheaves in the Zariski topology of $X$, see \cite{bloch-motivic}.
  By convention, this complex is zero for $n<0$.
We denote the $i$-th cohomology sheaf of $A(n)_\zar$ by $\mathcal H^i_M(A(n))$ and define the motivic cohomology of $X$ with values in $A$ by
$$
H^i_M(X,A(n))\coloneq  H^i(X_\zar, A(n)_\zar) .
$$
By \cite[page 269, (iv)]{bloch-motivic}, motivic cohomology with integral coefficients is canonically isomorphic to higher Chow groups:
\begin{align} \label{eq:motivic=higher-Chow}
H^i_M(X,\Z(n))\cong \CH ^n(X,2n-i) .
\end{align}
In particular,  $H^i_M(X,\Z(n))$ agrees with Voevodsky's definition of motivic cohomology, see \cite{voevodsky-imrn}.

We list two more consequences of \eqref{eq:motivic=higher-Chow}.
Firstly,
\begin{align} \label{eq:Hi_M(X,n)=0-for-i}
H^i_M(X,\Z(n))=0\quad \text{for $i>\dim X+n$}
\end{align}
because classes in $\CH ^n(X,2n-i)$ are represented by codimension $n$ cycles on $X\times \Delta^{2n-i}$.
Secondly, if $X$ admits a model $X_0$ over a subfield $k_0\subset k$, then
\begin{align} \label{eq:limit-HiM}
H^i_M(X,\Z(n))\cong \lim_{\substack{\rightarrow\\
L/k_0}}H^i_M(X_0\times_{k_0}L,\Z(n))
\end{align}
where the direct limit runs through all subfields $L\subset k$ which contain $k_0$ and such that $L/k_0$ is finitely generated.

\begin{lemma} \label{lem:curlyH^iZ_motivic(n)=0}
Let $X$ be a smooth variety over a field $k$ and let $m$ be a positive integer invertible in $k$.
Then, for $i>n$, we have $\mathcal H^i_M(\Z(n))=0$ and $\mathcal H^i_M((\Z/m)(n))=0$.
\end{lemma}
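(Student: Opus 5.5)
The plan is to reduce the vanishing of a Zariski sheaf to the vanishing of its stalks. These stalks are motivic cohomology of local rings. Then use Gersten-type injectivity to pass to the function field, where the vanishing is immediate from the cycle complex.

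Since the stalk of $\mathcal H^i_M(A(n))$ at a point $x\in X$ is $H^i_M(\mathcal O_{X,x},A(n))=\colim_{U\ni x}H^i_M(U,A(n))$, it suffices to show $H^i_M(\mathcal O_{X,x},A(n))=0$ for $i>n$, with $A=\Z$ or $A=\Z/m$.

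\textbf{Step 1: the field case.} For a field $F$, the isomorphism \eqref{eq:motivic=higher-Chow} gives $H^i_M(F,\Z(n))\cong \CH^n(\Spec F,2n-i)$. This group is generated by codimension $n$ cycles on $\Delta^{2n-i}_F$, which has dimension $2n-i<n$ when $i>n$. So there are no such cycles and the group vanishes; this is exactly \eqref{eq:Hi_M(X,n)=0-for-i} with $\dim X=0$. For $\Z/m$-coefficients, the universal coefficient triangle $\Z(n)\xrightarrow{m}\Z(n)\to \Z/m(n)$ gives an exact sequence
$$
H^i_M(F,\Z(n))\to H^i_M(F,\Z/m(n))\to H^{i+1}_M(F,\Z(n))[m].
$$
Both outer terms vanish for $i>n$, hence so does the middle one.

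\textbf{Step 2: injectivity into the function field.} For a smooth variety $X$ and a point $x$, I would use the Gersten conjecture for Bloch's higher Chow groups of the semi-local (here local) essentially smooth ring $\mathcal O_{X,x}$. This is Bloch's moving lemma, and Gersten's conjecture is known for higher Chow groups over any field (Bloch; Levine; over arbitrary fields via Gabber's presentation lemma / Colliot-Th\'el\`ene--Hoobler--Kahn). It gives an injection
$$
H^i_M(\mathcal O_{X,x},A(n))\hookrightarrow H^i_M(k(X),A(n)).
$$
Combined with Step 1, this yields the vanishing of all stalks, and hence of the sheaves. Equivalently, the Gersten complex is a resolution of $\mathcal H^i_M(A(n))$ whose first term is the constant sheaf $H^i_M(k(X),A(n))$, which is zero. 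With $\Z/m$-coefficients one applies the same Gersten statement, or deduces it from the integral case via the exact sequence above applied to the local ring. That sequence shows $H^i_M(\mathcal O_{X,x},\Z/m(n))$ is sandwiched between the groups $H^i$ and $H^{i+1}[m]$ with $\Z$-coefficients, which both vanish once the integral claim is known for all $i>n$.

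\textbf{Main obstacle.} The main obstacle is citing the correct Gersten/effacement result for the cycle complex over an arbitrary (possibly finite) field $k$. If that is to be avoided, there is an alternative using the localization sequence for higher Chow groups. Any class on $U\ni x$ restricts to zero on some open $V\subset U$ by Step 1 and the limit formula, so it comes from $\CH^{n-c}(Z,2n-i)$ with $Z=U\setminus V$ of codimension $c\ge1$. One then argues by induction on dimension, shrinking around $x$ to make $Z$ smooth near $x$. The delicate point there is that $Z$ need not be smooth at $x$. This is why I would rely on the Gersten injectivity, which handles this uniformly.
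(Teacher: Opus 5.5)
Your proposal is correct and follows essentially the same route as the paper: the Gersten conjecture for higher Chow groups (\cite[Theorem 10.1]{bloch-motivic}) reduces the integral vanishing to $\CH^n(\Spec F,2n-i)=0$ for fields $F$ when $i>n$, and the $\Z/m$ case is then deduced from the Bockstein sequence. The only difference is cosmetic: you argue stalkwise via the local rings $\mathcal O_{X,x}$, whereas the paper argues directly with the sheaves.
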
 
\begin{proof}
The vanishing of $\mathcal H^i_M(\Z(n))$ for $i>n$ follows from the Gersten Conjecture for higher Chow groups \cite[Theorem 10.1]{bloch-motivic} and the fact that $\CH^n(\Spec \kappa(x), 2n-i)=0$ for all $i>n$ and all $x\in X$.
The vanishing of  $\mathcal H^i_M((\Z/m)(n))$ follows from this via the long exact Bockstein sequence $$
\dots \longrightarrow \mathcal H^i_M(\Z(n))\stackrel{\times m}\longrightarrow \mathcal H^i_M(\Z(n))\longrightarrow \mathcal H^i_M(\Z/m(n))\longrightarrow \mathcal H^{i+1}_M(\Z(n))\longrightarrow \dots\ .
$$
This proves the lemma. 
\end{proof}

The pullback of $A(n)_\zar$ to the small \'etale site of $X$ is denoted by $A(n)_\et$.
If $m\geq 2$ denotes an integer invertible in $k$, then, by \cite[Theorem 1.5]{geisser-levine}, we have 
\begin{align} \label{eq:geisser-levine}
\Z/m(n)_\et\cong \mu_{m}^{\otimes n} .
\end{align}

\begin{theorem}  \label{thm:prelim-comparison-H_m-to-H_et}
Let $X$ be a smooth algebraic scheme over a field $k$ and let $\ell$ be invertible in $k$.
Let $\pi\colon X_\et\to X_\zar$ be the natural map of sites. 
Then, for $j\leq n$, the natural map
$$
\tau_{\leq j} \Z/\ell^r(n)_\zar \longrightarrow \tau_{\leq j}\R\pi_\ast \Z/\ell^r(n)_\et 
$$
is an isomorphism. 
\end{theorem}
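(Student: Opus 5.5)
This is the Beilinson--Lichtenbaum comparison for Bloch's cycle complex. The plan is to deduce it from the Rost--Voevodsky theorem \cite{Voevodsky}, together with the identification \eqref{eq:geisser-levine} of $\Z/\ell^r(n)_\et$ with $\mu_{\ell^r}^{\otimes n}$ due to Geisser--Levine, and the Gersten-type vanishing of Lemma \ref{lem:curlyH^iZ_motivic(n)=0}. First I would construct the map. Adjunction $\pi^\ast\dashv \R\pi_\ast$ gives a canonical morphism $\Z/\ell^r(n)_\zar\to \R\pi_\ast\pi^\ast \Z/\ell^r(n)_\zar=\R\pi_\ast \Z/\ell^r(n)_\et$. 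Applying $\tau_{\leq j}$ then yields the map in the statement. Since $\tau_{\leq j}$ commutes with taking cohomology sheaves in degrees $\leq j$, it suffices to show that the induced map on cohomology sheaves
$$
\mathcal H^q_M(\Z/\ell^r(n))\longrightarrow R^q\pi_\ast \mu_{\ell^r}^{\otimes n}
$$
is an isomorphism for all $q\leq n$ (in particular for $q\le j$). This can be checked on Zariski stalks, that is, for local rings $\mathcal O_{X,x}$ of the smooth scheme $X$.

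Next I reduce to fields. On the motivic side, the Gersten resolution for higher Chow groups \cite[Theorem 10.1]{bloch-motivic} (with $\Z/\ell^r$-coefficients, via the Bockstein sequence as in the proof of Lemma \ref{lem:curlyH^iZ_motivic(n)=0}) shows that $H^q_M(\mathcal O_{X,x},\Z/\ell^r(n))$ injects into $H^q_M(k(X),\Z/\ell^r(n))$. On the étale side, the Bloch--Ogus Gersten resolution \cite{BO} (Gabber's version in positive characteristic) does the same for $H^q_\et(\mathcal O_{X,x},\mu_{\ell^r}^{\otimes n})$. Both Gersten complexes are exact and functorial, and the comparison map is compatible with residues, since these are built from the localization sequences on each side. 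Consequently, if the comparison is an isomorphism for all fields $F$ finitely generated over $k$, in all degrees $q\leq n$ and twists $n$ (the residue fields require weight $n-c$ in degree $q-c$, which still satisfies $q-c\leq n-c$), then the five lemma applied term by term to the two Gersten complexes gives the isomorphism on stalks.

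For fields $F$, the claim $H^q_M(F,\Z/\ell^r(n))\cong H^q_\et(F,\mu_{\ell^r}^{\otimes n})$ for $q\leq n$ is exactly the field case of the Beilinson--Lichtenbaum conjecture. Suslin--Voevodsky \cite{suslin-voevodsky} and Geisser--Levine \cite{geisser-levine} showed it is equivalent to the Bloch--Kato conjecture, and Rost--Voevodsky proved the latter. I expect the main obstacle to be this reduction step: verifying that the Zariski-to-étale comparison is compatible with the two Gersten complexes, in particular with the purity and residue isomorphisms. If that compatibility is awkward to check, I would instead cite the sheaf-level form of Beilinson--Lichtenbaum directly, as in \cite[Theorem 1.5]{geisser-levine} combined with \cite{Voevodsky}, which asserts the quasi-isomorphism $\Z/\ell^r(n)_\zar\simeq\tau_{\leq n}\R\pi_\ast\mu_{\ell^r}^{\otimes n}$. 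The statement for $j\leq n$ then follows by applying $\tau_{\leq j}$.
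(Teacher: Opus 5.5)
Your proposal is correct, and your fallback is exactly the paper's proof. The paper does not argue at all: it cites the Suslin--Voevodsky and Geisser--Levine \cite[Theorem 1.5]{geisser-levine} theorem that Bloch--Kato implies Beilinson--Lichtenbaum at the level of complexes of Zariski sheaves, together with Rost--Voevodsky \cite{Voevodsky} for Bloch--Kato (see also \cite[Theorem 6.6]{Voe-milnor}). Applying $\tau_{\leq j}$ for $j\leq n$ then finishes the proof, as you say.

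Your primary route differs. You reduce the sheaf statement to the field case via the Gersten resolutions on both sides and then cite the field case. The reduction is sound: it is essentially how the sheaf-level statement is obtained from the field case in the literature. It does not, however, reduce the input. The field case in degrees $q<n$ is not Bloch--Kato for $F$ itself; it is already part of the Suslin--Voevodsky/Geisser--Levine theorem you cite, so this route reorganizes the citation rather than replacing it.

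It also costs you the one point you flagged. The comparison morphism in $D(X_\zar)$ automatically induces a map of coniveau spectral sequences. What is not automatic is that the induced maps on $E_1$-terms are the comparison maps for the residue fields. That requires compatibility with the purity isomorphisms on both sides, which you would take from the compatibility of the cycle class map with localization sequences (cf.\ Lemma \ref{lem:cycle-class-H_BM} and \cite[Proposition 4.9]{kok-zhou}).

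One smaller correction: Gersten exactness with $\Z/\ell^r$-coefficients does not follow from the integral statement via the Bockstein sequence. Reducing an exact complex modulo $\ell^r$ need not preserve exactness, so the argument of Lemma \ref{lem:curlyH^iZ_motivic(n)=0}, which only concerns vanishing of sheaves, does not carry over. Cite the mod $\ell^r$ Gersten statement directly instead, e.g.\ \cite[Theorem 24.11]{MVW} after passing to the perfect closure, or Bloch's effaceability argument run with coefficients.
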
 
\begin{proof}
By the work of Suslin--Voevodsky \cite{suslin-voevodsky} and Geisser--Levine \cite[Theorem 1.5]{geisser-levine}, this is a consequence of the Beilinson--Lichtenbaum conjecture proven by Rost and Voevodsky \cite{Voevodsky}, see also \cite[Theorem 6.6]{Voe-milnor}. 
\end{proof}

\section{Mixed and ind-mixed Galois modules} \label{sec:mixed-and-ind-mixed}

\subsection{Mixed Galois modules}
Let $k_0$ be a finitely generated field with separable closure $k=k_0^{\sep}$.
Let $G_{k_0}=\Gal(k/k_0)$ be the absolute  Galois group of $k_0$.
Pick a normal finite type $\Z$-scheme $S$ with function field $k_0$.
For a closed point $s\in S$ we have the decomposition group $G_s\subset G_{k_0}$ and a surjection $\varphi_s:G_s\twoheadrightarrow G_{\kappa(s)}$ with inertia group $I_s=\ker(\varphi_s)$.
 
We say that a continuous $\Q_\ell$-$G_{k_0}$-module $M$ is pure of weight $w$ if it is finite-dimensional as $\Q_\ell$-vector space,  
and the following holds up to shrinking $S$ (and possibly replacing $S$ by a purely inseparable dominant cover): 
for all closed points $s\in S$, 
the inertia group $I_s$ acts trivially on $M$ and the induced $G_{\kappa(s)}$-action has the property that the geometric Frobenius element in $G_{\kappa(s)}$ acts with eigenvalues of absolute value $q_s^{w/2}$ (where $\kappa(s)=\F_{q_s}$) with respect to any embedding $\Q_\ell\hookrightarrow \C$. 

More generally, a $\Q_\ell$-$G_{k_0}$-module $M$ is mixed if it is finite-dimensional as a $\Q_\ell$-vector space and if it has a descending weight filtration $W_\ast$ whose graded quotients are pure $\Q_\ell$-$G_{k_0}$-modules of some weight. 
We say that the weights $w$ of $M$ are contained in a subset $B\subset \Z$ if the weights of the nonzero graded quotients of $M$ are contained in $B$.

We recall the following simple lemma for the comfort of the reader.

\begin{lemma} \label{lem:endo-eigenvalues-sequence}
In the above notation, let  $M_{1} \to M_{2} \to M_{3}$ be an exact sequence of mixed $\Q_\ell$-$G_{k_0}$-modules.
Then the weights of $M_2$ are contained in the union of the weights of $M_1$ and $M_3$. 
\end{lemma}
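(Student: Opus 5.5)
We reduce to a statement about eigenvalues of a single Frobenius element acting on finite-dimensional $\Q_\ell$-vector spaces. After shrinking $S$ (and possibly passing to a purely inseparable dominant cover), only finitely many modules are involved, so we may assume the following holds simultaneously for $M_1$, $M_2$, $M_3$. Each carries its weight filtration $W_\ast$. For every closed point $s\in S$, the inertia $I_s$ acts trivially on all graded pieces of each $M_j$. Moreover, the geometric Frobenius $F_s$ acts on $\operatorname{gr}^W_w M_j$ with eigenvalues of absolute value $q_s^{w/2}$ under any embedding $\Q_\ell\hookrightarrow\C$.

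Fix one such closed point $s$ and a lift $\tilde F_s\in G_s$ of the geometric Frobenius. Since $I_s$ acts trivially on the graded pieces, the characteristic polynomial of $\tilde F_s$ on $M_j$ does not depend on the choice of lift. It equals the product of the characteristic polynomials on the graded pieces $\operatorname{gr}^W_w M_j$. Hence the multiset of absolute values $|\alpha|$ of eigenvalues of $\tilde F_s$ on $M_j$ is exactly $\{q_s^{w/2}\}$, with $w$ running over the weights of $M_j$ counted with multiplicity. Since $q_s>1$, the weights of $M_j$ are therefore determined by these absolute values.

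Now take the exact sequence $M_1\xrightarrow{f}M_2\xrightarrow{g}M_3$. It is $G_{k_0}$-equivariant, so it gives an $\tilde F_s$-equivariant short exact sequence
$$0\to \im f\to M_2\to \im g\to 0 .$$
Here $\im f$ is a quotient of $M_1$ and $\im g$ is a subspace of $M_3$, both stable under $\tilde F_s$. The eigenvalues of $\tilde F_s$ on $M_2$ are the union, with multiplicity, of those on $\im f$ and on $\im g$. The eigenvalues on a stable subspace or on a quotient are among those on the ambient space. Hence every eigenvalue of $\tilde F_s$ on $M_2$ is an eigenvalue on $M_1$ or on $M_3$.

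Let $w$ be a weight of $M_2$. Then $\operatorname{gr}^W_w M_2\neq 0$, so $\tilde F_s$ has an eigenvalue of absolute value $q_s^{w/2}$ on $M_2$. By the previous step, this eigenvalue also occurs on $M_1$ or on $M_3$. So $q_s^{w/2}=q_s^{w'/2}$ for some weight $w'$ of $M_1$ or $M_3$, and therefore $w=w'$. This proves the lemma.

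The only delicate point is the bookkeeping: we must shrink $S$ uniformly for the three modules and use that inertia acts trivially on the graded pieces, so that the Frobenius eigenvalues are well defined. No compatibility of the maps with the weight filtrations is needed, because the argument only uses the eigenvalues of a single group element.
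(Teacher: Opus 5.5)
Your proof is correct and follows the same route as the paper. The paper applies the linear-algebra fact that, for an endomorphism of an exact sequence $V_1\to V_2\to V_3$ of finite-dimensional vector spaces, the eigenvalues on $V_2$ lie among those on $V_1$ and $V_3$, with the endomorphism taken to be Frobenius. You additionally make explicit the bookkeeping the paper leaves implicit: shrinking $S$ uniformly for the three modules, fixing a Frobenius lift, and reading off the weights from the absolute values of its eigenvalues via the graded pieces.
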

\begin{proof}
This follows from the elementary fact that for an exact sequence of finite-dimensional vector spaces $V_{1} \to V_{2} \to V_{3}$ over a field and an endomorphism $\phi$ on this sequence,
the eigenvalues of $\phi$ on $V_{2}$ are contained in the union of the eigenvalues of $\phi$ on $V_{1}$ and those of $\phi$  on $V_{3}$. 
\end{proof}

The following result provides an important source of mixed Galois modules,  cf.\ \cite[Lemma 3.11]{balkan-Sch}. 

\begin{proposition} \label{prop:weights}
Let $k_0$ be a finitely generated field, let $k$ be the separable closure of $k_0$ and let $G=\Gal(k/k_0)$.
Let $Z_0$ be an equi-dimensional algebraic scheme over $k_0$ with base change $Z\coloneq Z_0\times_{k_0}k$.
Then the $\Q_\ell$-$G$-module $H^i_{BM}(Z,\Q_\ell(n))$ is mixed with weights contained in $\{i-2n,i-2n+1,\dots, i-2n+i\}$.
\end{proposition}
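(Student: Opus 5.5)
We reduce to Deligne's Weil II by duality. Write $d=d_Z$. By Lemma \ref{lem:H_BM-H_c},
$$H^i_{BM}(Z,\Q_\ell(n))\cong H^{2d-i}_c(Z,\Q_\ell(d-n))^\ast,$$
and this isomorphism is $G$-equivariant. The isomorphism comes from the duality \eqref{eq:duality} for $\pi_Z$, which is defined over $k_0$ and therefore commutes with Galois actions. Deligne \cite{deligneII} shows that $H^j_c(Z,\Q_\ell)$ is mixed with weights in $[0,j]$. Hence $H^j_c(Z,\Q_\ell(m))$ has weights in $[-2m,j-2m]$.

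Put $j=2d-i$ and $m=d-n$. Then $H^{2d-i}_c(Z,\Q_\ell(d-n))$ has weights in $[2n-2d,\,2n-i]$. Dualizing negates weights, so $H^i_{BM}$ has weights in $[i-2n,\,2d-2n]$. This is not yet the claimed range $\{i-2n,\dots,2i-2n\}$, since $2d$ may exceed $i$, so we need a sharper input.

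The sharper input is the bound $w\ge 2j-2d$ for $H^j_c$, for which I see two routes. The first is to cite the refined form of Deligne's theorem: $H^j_c(Z)$ has weights $\geq 2(j-d)$ when $j\geq d$. The second is to prove it by induction on $\dim Z$ via the Gysin sequence \eqref{les:Gysin}, together with Lemma \ref{lem:endo-eigenvalues-sequence}. For the induction, choose a dense open $U\subset Z$, defined over $k_0$, that is smooth of pure dimension $d$, and let $W=Z\setminus U$, of codimension $c\ge 1$. The sequence
$$H^{i-2c}_{BM}(W,\Q_\ell(n-c))\to H^i_{BM}(Z,\Q_\ell(n))\to H^i_{BM}(U,\Q_\ell(n))$$
is $G$-equivariant. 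By induction on dimension, applied to the equi-dimensional components of $W$ (handling lower-dimensional components by a further stratification), the left term has weights in
$$[\,i-2c-2(n-c),\;2(i-2c)-2(n-c)\,]=[\,i-2n,\;2i-2n-2c\,]\subset[\,i-2n,\;2i-2n\,].$$
For the smooth $U$, Poincaré duality \eqref{eq:H_BM=H_proet} gives $H^i_{BM}(U,\Q_\ell(n))=H^i(U,\Q_\ell(n))$. Deligne's result that $H^i$ of a smooth variety has weights in $[i,2i]$ then yields weights in $[i-2n,2i-2n]$.

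In both routes we use Proposition \ref{prop:BM-base-change} to check that the spreading-out definition of weights applies. After shrinking $S$, the inertia groups act trivially and Frobenius at each closed point $s$ acts as on $H^i_{BM}(Z_{\bar s})$. Weights can therefore be read off over finite fields, where Weil II applies; this includes the purely inseparable cover needed for a smooth dense open. The main obstacle is this compatibility in the mixed case. Deligne's filtrations must spread out uniformly over $S$, and the Gysin sequences must specialize compatibly. I expect this to follow by also shrinking $S$ so that $Rf_\ast f^!$ is locally constant for $W$ and $U$.
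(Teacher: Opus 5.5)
Your weight bookkeeping is right, and your second route is essentially the paper's own argument: the Gysin sequence \eqref{les:Gysin} for a $G$-stable smooth dense open $U$, induction on dimension for $W$, and Lemma~\ref{lem:endo-eigenvalues-sequence}. There are two small differences. First, the paper picks $U_0$ affine with $W_0=Z_0\setminus U_0$ of pure codimension one, which avoids your extra stratification of a non-equidimensional $W$. Second, the paper derives the range $[i,2i]$ for smooth $U$ instead of citing it, via de Jong alterations \cite{deJong}, $f_\ast f^\ast=\deg(f)\cdot\id$, and the weight spectral sequence $E_2^{p,q}=H^p(E^{[q]},\Q_\ell(n-q))$ of a simple normal crossing compactification.

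Your first route is a genuinely different and shorter way to the numerical range. After Lemma~\ref{lem:H_BM-H_c}, the lower bound $i-2n$ is dual to Weil~II's bound $w\le j$ on $H^j_c$. The upper bound $2i-2n$ is dual to Deligne's integrality theorem (SGA~7, Exp.~XXI): Frobenius eigenvalues on $H^j_c$ are algebraic integers, divisible by $q^{j-d}$ when $j\ge d$. This replaces the geometric d\'evissage entirely.

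What is missing, in both routes, is the other half of the statement: that $H^i_{BM}(Z,\Q_\ell(n))$ is \emph{mixed} as a $G$-module, i.e.\ carries a single $G$-stable filtration whose graded quotients are pure. You flag this as the main obstacle, but your proposed fix does not resolve it. Shrinking $S$ until $Rf_\ast f^!$ is locally constant gives, via Proposition~\ref{prop:BM-base-change}, trivial inertia and an identification of the $\mathrm{Frob}_s$-action with the action on $H^i_{BM}(Z_{\bar s},\Q_\ell(n))$. Weil~II then bounds the eigenvalues of every Frobenius. That is only pointwise information. The weight filtration of each special fibre is $G_{\kappa(s)}$-stable and a priori depends on $s$, and nothing in your argument produces one filtration of the generic stalk that is stable under all of $G$.

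The paper obtains this from Deligne's theory via \cite[Proposition 3.2]{Huber} and \cite[\S 2.1, \S 2.6]{morel}. These give mixedness of the cohomology sheaves of the constructible complex $Rf_\ast f^!\widehat{\Q}_\ell(n)$ on the finite type $\Z$-scheme $S$, hence of their generic stalks once they are locally constant.

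Alternatively, your second route can build the filtration by hand if you prove the smooth case as the paper does rather than cite it:
\begin{enumerate}
\item the weight spectral sequence gives $H^i(U')$ a $G$-stable filtration whose graded pieces $E_\infty^{i-q,q}$ are subquotients of the pure modules $E_2^{i-q,q}$;
\item $H^i(U)$ is a $G$-stable direct summand of $H^i(U')$;
\item under the paper's definition (graded quotients pure of \emph{some} weight), mixedness is stable under subquotients and extensions, so it propagates to the middle term of the Gysin sequence.
\end{enumerate}
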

\begin{proof}  
Let $S=\Spec A$ for a finite type $\Z$-algebra $A$ with fraction field $k_0$, such that there is a separated scheme of finite type $f\colon \mathcal Z\to S$ whose generic fibre is isomorphic to $Z$.
Then the complex $Rf_\ast f^!\widehat{\Q_\ell}(n)$ is constructible.
By the base change result in Lemma \ref{lem:BM-base-change}, it follows that we may, by \cite[Proposition 6.6.11]{BS} and up to shrinking $S$, assume that $Rf_\ast f^!\widehat{\Q_\ell}(n)$ is locally constant.
By a result of Deligne \cite{deligneII}, the cohomology sheaves of $Rf_\ast f^!\widehat{\Q_\ell}(n)$ are pointwise mixed, see \cite[Proposition 3.2]{Huber} and \cite[\S 2.1, \S 2.6]{morel}.
By the base change result in Lemma \ref{lem:BM-base-change} this implies  that the $\Q_\ell$-$G$-module $H^i_{BM}(Z,\Q_\ell(n))$ is mixed, cf.~Proposition \ref{prop:BM-base-change}. 

It remains to compute the weights of $H^i_{BM}(Z,\Q_\ell(n))$.
Up to replacing $Z$ by its reduction, we can, by the topological invariance of the pro-\'etale site (see \cite[Lemma 5.4.2]{BS}), assume that it is reduced.
By a similar argument we can, up to replacing $k_0$ by a purely inseparable extension, assume that $Z$ is geometrically reduced.
In particular, $Z_0$ is generically smooth.
We pick a smooth affine open subset $U_0\subset Z_0$ such that the complement $W_0\subset Z_0$ is of pure codimension one.
(This can always be done by replacing components of the wrong codimension by the closure of a suitable hyperplane section in an affine chart that contains the generic point of that component.)
We denote by $U=U_0\times_{k_0}k$ and $W=W_0\times_{k_0}k$ the base changes of $U_0$ and $W_0$.
Then $U\subset Z$ is a smooth dense open subset which is stable under the $G$-action, such that $W=Z\setminus U$ is equi-dimensional of codimension one in $Z$.
By \eqref{les:Gysin}, there is an exact sequence
of finite dimensional $\Q_\ell$-vector spaces with a Galois action 
$$
H^{i-2}_{BM}(W,\Q_\ell(n-1))\longrightarrow 
H^i_{BM}(Z,\Q_\ell(n))\longrightarrow H^i_{BM}(U,\Q_\ell(n)) .
$$
By induction on the dimension of $Z$, we may assume that $H^{i-2}_{BM}(W,\Q_\ell(n-1))$ is mixed with weights contained in
$ 
\{i-2n,i-2n+1,\dots ,2i-2n-2\}
$.
In order to show that $H^i_{BM}(Z,\Q_\ell(n))$ is mixed with weights contained in $\{i-2n,i-2n+1,\dots ,2i-2n\}$, it thus suffices by Lemma \ref{lem:endo-eigenvalues-sequence} to prove the same for $H^i_{BM}(U,\Q_\ell(n))$.
Since $U$ is smooth and equi-dimensional, we have $H^i_{BM}(U,\Q_\ell(n))\cong H^i(U_\proet,\Q_\ell(n))$, see \eqref{eq:H_BM=H_proet}.
Moreover, since $k$ is separably closed, $H^i(U_\proet,\Q_\ell(n))\cong H^i(U_\et,\Q_\ell(n))$ (see \eqref{eq:H_proet=H_et}) and so it suffices to deal with ordinary $\ell$-adic \'etale cohomology of $U$.
This case is well-known; we include some details for convenience of the reader.
Recall first that for a proper generically finite morphism $f$ between smooth varieties, we have $f_\ast f^\ast=\deg(f)\cdot \id$, see e.g.~\cite[Lemma A.11]{Sch-moving}.
Via de Jong's alterations \cite[Theorem 4.1 and Remark 4.2]{deJong}, the problem can therefore be reduced to the case where $U$ admits a smooth projective compactification $U\subset Y$ whose complement $E=Y\setminus U$ is a simple normal crossing divisor.
(More precisely, to obtain a smooth alteration and not just a regular one, we apply de Jong's theorem to the base change of $U$ to the perfect closure of $k$ and then descend the result to a finite purely inseparable extension of $k_0$---in this last step we have to replace the scheme $S$ above by a purely inseparable finite cover and $k_0$ by a purely inseparable finite field extension.)
Let $E_i$ with $i\in I$ be the irreducible components of $E$.
For a subset $J\subset I$ we further put $E_J\coloneq \bigcap_{j\in J}E_j$ and let
$$
E^{[q]}=\bigcup_{J\subset I, |J|=q}E_J .
$$
We have a convergent spectral sequence
$$
E_2^{p,q}=H^p(E^{[q]}_\et,\Q_\ell(n-q))\Longrightarrow H^{p+q}(U_\et,\Q_\ell(n)) ,
$$
see e.g.\ \cite[(2.4)]{jannsen-weights}.
By \cite{deligne}, $E_2^{p,q}$ is pure of weight $p-2n+2q$.
Moreover, we have $E_2^{p,q}=0$ for $p<0$ or $q<0$ and so only the terms $E_2^{i-q,q}$ for $q=0,\dots ,i$ contribute to $H^i(U_\et,\Q_\ell(n))$.
This implies that the weights of $H^i(U_\et,\Q_\ell(n))$ are contained in 
$$
\{i-2n+q \mid q=0,\dots ,i\}=\{i-2n,i-2n+1,\dots ,i-2n+i\}.
$$
This concludes the proof of the proposition.
\end{proof}

\subsection{Ind-mixed Galois modules}
By definition, a mixed $\Q_\ell$-$G_{k_0}$-module is finite-dimensional as a $\Q_\ell$-vector space.
It is convenient to have the following generalization to infinite-dimensional vector spaces.

\begin{definition} \label{def:ind-mixed}
A $\Q_\ell$-$G_{k_0}$-module $M$ is ind-mixed with weights contained in a subset $B\subset \Z$, if 
$$
M\cong \lim_{\substack{\longrightarrow\\ i\in I}} M_i
$$ 
is isomorphic to a direct limit of $\Q_\ell$-$G_{k_0}$-modules $M_i$, such that the following holds for all $i\in I$:
\begin{enumerate}
\item $M_i$ is finite-dimensional  as $\Q_\ell$-vector space;
\item $M_i$ is mixed with weights contained in $B$.
\end{enumerate} 
\end{definition}

In view of Proposition \ref{prop:weights}, we have the following example.

\begin{example} \label{ex:H^i(F_jX)-H_j,nr-weights}
Let $X$ be an algebraic $k$-scheme with model over $k_0$.
Then the group $H^i_{BM}(F_jX,\Q_\ell(n))$ from \eqref{eq:H_BM(FjX)} and the refined unramified $\ell$-adic cohomology group $H^i_{j,\nr}(X,\Q_\ell(n))$ from \eqref{eq:H_j,nr(X)} are ind-mixed $\Q_\ell$-$G_{k_0}$-modules.
\end{example}

For $X$ smooth and projective, we shall compute the weights of $H^i_{j,\nr}(X,\Q_\ell(n))$ in Proposition \ref{prop:weights-H_j,nr} below.
To this end, we will need a couple of general results on ind-mixed Galois modules that we collect next.

\begin{lemma} \label{lem:ind-mixed}
A $\Q_\ell$-$G_{k_0}$-module $M$ is ind-mixed (with weights contained in $B\subset \Z$) if and only if $M$ is the union of its finite-dimensional  $\Q_\ell$-$G_{k_0}$-submodules and each of these submodules is mixed (of weights contained in $B$).
\end{lemma}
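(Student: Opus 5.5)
The ``if'' direction is immediate. If $M$ is the union of its finite-dimensional $\Q_\ell$-$G_{k_0}$-submodules, then $M$ is the direct limit of the directed system of these submodules under inclusion. The system is directed because the sum of two finite-dimensional submodules is again a finite-dimensional submodule. Since each member is mixed with weights in $B$, $M$ is ind-mixed with weights in $B$ in the sense of Definition \ref{def:ind-mixed}.

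For the ``only if'' direction, write $M\cong\colim_{i\in I}M_i$ as in Definition \ref{def:ind-mixed}, with transition maps and the maps $\phi_i\colon M_i\to M$ being $G_{k_0}$-equivariant and $\Q_\ell$-linear. Every element $m\in M$ lies in the image $\phi_i(M_i)$ for some $i$, so $M$ is the union of the submodules $\phi_i(M_i)$. Each of these is finite-dimensional and $G_{k_0}$-stable. Now let $N\subset M$ be an arbitrary finite-dimensional $G_{k_0}$-submodule. Choose a basis of $N$. By directedness of $I$, there is a single index $i$ with $N\subset\phi_i(M_i)$. It therefore suffices to show two things: that quotients of mixed modules are mixed with weights in $B$, and that submodules of mixed modules are mixed with weights in $B$. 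Then $\phi_i(M_i)$ is mixed as a quotient of $M_i$, and $N$ is mixed as a submodule of $\phi_i(M_i)$.

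This closure under subquotients is the main point. Take a mixed module $P$ with weight filtration $W_\ast$ whose graded pieces are pure of weights in $B$, and a submodule $N\subset P$ with quotient $Q=P/N$. Equip $N$ with the induced filtration $W_\ast\cap N$ and $Q$ with the image filtration. The graded pieces of $N$ are submodules of the pure pieces $\mathrm{gr}^W P$, and those of $Q$ are quotients of them. One must still check that a submodule or quotient of a pure module of weight $w$ is pure of weight $w$. We shrink $S$ so that inertia acts trivially on $P$; it then also acts trivially on any subquotient. The eigenvalues of geometric Frobenius on a $\mathrm{Frob}$-stable subspace or quotient form a subset of those on the whole space, so they still have absolute value $q_s^{w/2}$. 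This is the same elementary eigenvalue argument as in Lemma \ref{lem:endo-eigenvalues-sequence}. Finite-dimensionality is clear. Nonzero graded pieces only acquire weights that already occur in $P$, so all weights remain in $B$.

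One subtlety needs care: the continuity of the action on $N$, and the fact that the finite set of ``bad'' places shrunk away for $P$ also works for $N$. Both are inherited from $P$, since we are only restricting the same action to a subspace or passing to a quotient. This completes the proof.
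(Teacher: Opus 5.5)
Your argument is correct and is essentially the paper's proof. You replace each $M_i$ by its image in $M$, which is a quotient and hence mixed with weights in $B$; you use directedness to place any finite-dimensional submodule inside a single such image; and you conclude by closure of mixed modules under submodules, a step the paper leaves implicit and which you verify via induced filtrations and the eigenvalue argument.

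One small slip: when you shrink $S$ so that inertia acts trivially, this should be applied to the pure graded piece rather than to the mixed module $P$. On a mixed module inertia need only act unipotently. This does not affect the argument.
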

\begin{proof}
If $M$ is the union of its finite-dimensional  $\Q_\ell$-$G_{k_0}$-submodules and each of these submodules is mixed of weights contained in $B$, then clearly $M$ is ind-mixed with weights contained in $B$.
For the converse, assume that $M$ is ind-mixed.
In the notation of Definition \ref{def:ind-mixed}, we can replace each $M_i$ by its image in $M$, which is still a mixed $\Q_\ell$-$G_{k_0}$-module with weights contained in $B$.
But then the direct limit in Definition \ref{def:ind-mixed} turns into a union of mixed modules and we see that an ind-mixed $\Q_\ell$-$G_{k_0}$-module with weights contained in $B$ is a union of mixed $\Q_\ell$-$G_{k_0}$-modules with weights contained in $B$. 
It also shows that any finite-dimensional $\Q_\ell$-$G_{k_0}$-submodule of  $M$ is contained in some mixed  $\Q_\ell$-$G_{k_0}$-submodule with weights contained in $B$ and hence it is itself mixed with weights contained in $B$.
This proves the lemma. 
\end{proof} 

A morphism of ind-mixed $\Q_\ell$-$G_{k_0}$-modules is a morphism of the underlying $\Q_\ell$-$G_{k_0}$-modules.

Lemma \ref{lem:ind-mixed} implies that images and kernels of morphisms of ind-mixed $\Q_\ell$-$G_{k_0}$-modules are again ind-mixed $\Q_\ell$-$G_{k_0}$-modules. 
More generally, one easily sees that the category of ind-mixed $\Q_\ell$-$G_{k_0}$-modules is abelian; in fact, this category is nothing but the ind-completion of the (Tannakian) category of mixed $\Q_\ell$-$G_{k_0}$-modules.

We will also need the following simple lemma.

\begin{lemma} \label{lem:im(f:MtoN)}
Let $f\colon M\to N$ be a morphism of ind-mixed $\Q_\ell$-$G_{k_0}$-modules.
If $M$ has weights contained in $B\subset \Z$, then the same holds for the image $f(M)\subset N$.
\end{lemma}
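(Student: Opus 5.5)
The plan is to reduce to finite-dimensional submodules via Lemma \ref{lem:ind-mixed} and then use that the weight filtration is preserved by morphisms. By Lemma \ref{lem:ind-mixed}, $f(M)$ is ind-mixed as a submodule of the ind-mixed module $N$. It therefore suffices to show that every finite-dimensional $\Q_\ell$-$G_{k_0}$-submodule $V\subset f(M)$ is mixed with weights in $B$. Such a $V$ is mixed by Lemma \ref{lem:ind-mixed} applied to $N$, so only the location of its weights needs to be checked.

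Fix a finite-dimensional submodule $V\subset f(M)$ and choose a $\Q_\ell$-basis $v_1,\dots,v_r$ of $V$. Pick preimages $m_1,\dots,m_r\in M$. By Lemma \ref{lem:ind-mixed}, $M$ is the union of its finite-dimensional submodules. Each $m_j$ lies in one of them, and the sum of finitely many finite-dimensional submodules is again finite-dimensional. Hence all $m_j$ lie in a single finite-dimensional submodule $M'\subset M$, which is mixed with weights in $B$, again by Lemma \ref{lem:ind-mixed}.

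The key step is to show that $f(M')$ is mixed with weights in $B$. Since $f(M')$ is finite-dimensional and contained in $N$, Lemma \ref{lem:ind-mixed} shows it is mixed. To control its weights, apply Lemma \ref{lem:endo-eigenvalues-sequence} to the exact sequence
\[
M'\longrightarrow f(M')\longrightarrow 0 .
\]
This gives that the weights of $f(M')$ are contained in those of $M'$, hence in $B$.

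Finally, $V\subset f(M')$ because $V$ is spanned by the images $f(m_j)$. A submodule of a mixed module with weights in $B$ is again mixed with weights in $B$. One way to see this is Lemma \ref{lem:endo-eigenvalues-sequence} applied to $0\to V\to f(M')$, or directly by restricting Frobenius eigenvalues on the graded pieces of the induced filtration. Therefore $V$ has weights in $B$, and the claim follows.

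The one point to check carefully is the use of Lemma \ref{lem:endo-eigenvalues-sequence} in this way. Its proof compares Frobenius eigenvalues at closed points after shrinking $S$. This is legitimate here because only finitely many modules are involved at each step, so a common shrinking of $S$ exists.
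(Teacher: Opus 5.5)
Your proof is correct and is essentially the paper's argument: $f(M)$ is exhausted by images of finite-dimensional mixed pieces of $M$, and the image of a mixed module with weights in $B$ is again mixed with weights in $B$. The paper writes $M$ as a direct limit of the $M_i$, notes that $f(M)$ is then the direct limit of the $f(M_i)$, and concludes directly from Definition~\ref{def:ind-mixed}; your extra step of treating an arbitrary finite-dimensional $V\subset f(M)$ and passing to submodules is valid but not needed.
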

\begin{proof}
If 
$$
M\cong \lim_{\substack{\longrightarrow \\ i\in I}} M_i\quad \text{then}\quad f(M)\cong \lim_{\substack{\longrightarrow \\ i\in I}} f(M_i)
$$
and so the lemma follows from the fact that the image of a mixed $\Q_\ell$-$G_{k_0}$-module with weights contained in $B$ is again mixed with weights contained in $B$. 
\end{proof}

We conclude this section with the following two results which compute the weights of certain natural ind-mixed $\Q_\ell$-$G$-modules.

\begin{proposition} \label{prop:weights-H_j,nr}
Let $k_0$ be a finitely generated field and let $k$ be the separable closure of $k_0$ with Galois group $G=\Gal(k/k_0)$.
Let $X_0$ be a smooth projective variety over $k_0$ and set $X\coloneq X_0\times_{k_0} k$. 
Then the weights $w$ of the ind-mixed $\Q_\ell$-$G$-module $H^i_{j,nr}(X,\Q_\ell(n))$ satisfy
$$
i-2n\leq w\leq \max(i-2n, 2i-2n-2j-2) .
$$ 
\end{proposition}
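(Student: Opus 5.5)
The lower bound and the upper bound come from two different sources. For the lower bound, I would use that $H^i_{j,nr}(X,\Q_\ell(n))$ is, by definition \eqref{eq:H_j,nr(X)}, a quotient of $H^i_{BM}(F_{j+1}X,\Q_\ell(n))$. That group is the direct limit of the groups $H^i_{BM}(U,\Q_\ell(n))$ over $G$-stable opens $U$ defined over $k_0$. By Proposition \ref{prop:weights}, each $H^i_{BM}(U,\Q_\ell(n))$ is mixed with weights in $\{i-2n,\dots,2i-2n\}$. Lemma \ref{lem:im(f:MtoN)} then shows that every weight $w$ of $H^i_{j,nr}$ satisfies $w\geq i-2n$.

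For the upper bound I would use the image of $H^i(X,\Q_\ell(n))$. Since $X$ is smooth projective, $H^i(X,\Q_\ell(n))$ is pure of weight $i-2n$ by Deligne. I claim that
$$
H^i_{j,nr}(X,\Q_\ell(n))\subset \im\bigl(H^i(X,\Q_\ell(n))\to H^i_{BM}(F_jX,\Q_\ell(n))\bigr)+ C,
$$
where $C$ has weights $\leq 2i-2n-2j-2$. Let $U$ contain $F_{j+1}X$ with complement $Z$ of codimension $\geq j+2$. The Gysin sequence \eqref{les:Gysin} gives
$$
H^i(X)\to H^i(U)\to H^{i+1-2c}_{BM}(Z,\Q_\ell(n-c)),
$$
with $c\geq j+2$. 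By Proposition \ref{prop:weights}, the last term has weights contained in $\{i+1-2n,\dots,2i+2-2n-2c\}$, and for $c\geq j+2$ the upper end is at most $2i-2n-2j-2$. Applying Lemma \ref{lem:endo-eigenvalues-sequence} to finite-dimensional pieces, every finite-dimensional $G$-stable subspace of $H^i(U,\Q_\ell(n))$ has weights in $\{i-2n\}\cup[i+1-2n,\,2i-2n-2j-2]$. Passing to the image in $H^i_{BM}(F_jX)$ via Lemma \ref{lem:im(f:MtoN)} gives $w\leq \max(i-2n,2i-2n-2j-2)$.

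Some care is needed when $Z$ is not equidimensional. I would stratify $Z$ into equidimensional pieces, or filter it by closed subsets, and iterate the Gysin sequence. Each piece has codimension $c\geq j+2$, and a larger $c$ only lowers the bound $2i+2-2n-2c$. This gives the estimate for $H^i(U)$ and hence, in the limit, for $H^i_{BM}(F_{j+1}X)$ and its image $H^i_{j,nr}$. The lower bound from the first step applies as well, so both bounds hold.

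The main obstacle is this bookkeeping: handling non-equidimensional complements, making sure the Galois-equivariant Gysin sequences are available when the opens are defined over $k_0$, and checking that these bounds survive the passage to the direct limit via Lemma \ref{lem:ind-mixed}. The weight arithmetic itself is routine.
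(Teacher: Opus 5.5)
Your argument is correct and is essentially the paper's proof: you reduce, via Lemma \ref{lem:im(f:MtoN)}, to bounding the weights of $H^i(U,\Q_\ell(n))$ for $U\supset F_{j+1}X$, and you get that bound from the Gysin sequence $H^i(X)\to H^i(U)\to H^{i+1-2c}_{BM}(Z,\Q_\ell(n-c))$ together with Deligne's purity and Proposition \ref{prop:weights}. The only differences are minor: the paper avoids your stratification by enlarging $Z$ to a complete intersection of pure codimension $j+2$ (shrinking $U$ is harmless), and your displayed decomposition $\im(\dots)+C$ should be dropped, since such a $G$-stable complement need not exist, whereas the exact-sequence weight argument you actually carry out suffices on its own.
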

\begin{proof}
It suffices to show that any finitely generated $\Q_\ell$-$G$-module $M\subset  H^i_{j,nr}(X,\Q_\ell(n))$ that is finitely generated as a $\Q_{\ell}$-vector space has weights as claimed in the proposition.
There is an open subset $U\subset X$ with $F_{j+1}X\subset U$ such that 
$$
M\subset \im( H^i(U_\et,\Q_\ell(n))\longrightarrow H^i(F_jX,\Q_\ell(n)) ) .
$$ 
Using Lemma \ref{lem:im(f:MtoN)}, it thus suffices to show that the weights of the Galois module $H^i(U_\et,\Q_\ell(n))$ are as claimed.
To show this, let $Z\coloneq X\setminus U$.
Replacing $Z$ by a suitable complete intersection that contains it, we can without loss of generality assume that  $Z$ is pure-dimensional of codimension $j+2$ in $X$.
Then the localization sequence \eqref{les:Gysin} yields a short exact sequence
$$
H^i(X_\et,\Q_\ell(n))\longrightarrow H^i(U_\et,\Q_\ell(n))
\longrightarrow H^{i-2j-3}_{BM}(Z,\Q_\ell(n-j-2)) .
$$
By \cite{deligne}, $H^i(X_\et,\Q_\ell(n))$ is pure of weight $i-2n$.
Moreover, by Proposition \ref{prop:weights} above,  $H^{i-2j-3}_{BM}(Z,\Q_\ell(n-j-2))$ has weights $w$ in the interval 
$$
i-2j-3-2n+2j+4=i-2n+1\leq w \leq i-2n+1+i-2j-3=2i-2j-2n-2 .
$$
This proves the proposition.
\end{proof}

\begin{proposition} \label{prop:weights-H^p(X,curly-H^q)}
Let $k_0$ be a finitely generated field and let $k$ be the separable closure of $k_0$ with Galois group $G=\Gal(k/k_0)$.
Let $X_0$ be a smooth quasi-projective variety over $k_0$ and set $X\coloneq X_0\times_{k_0} k$.  
Then the $\Q_\ell$-$G$-module
$$
H^p(X_\zar, \R^q \pi^{\proet}_\ast \widehat\Q_\ell(n))
$$ 
is ind-mixed and its weights $w$ satisfy
$$
p+q-2n\leq w\leq \max(p+q-2n, 2q-2n) . % \quad \text{if $p\geq 2$}
$$
If $X$ is projective and $p\leq 1$, then the stronger conclusion 
$$
p+q-2n\leq w\leq \max(p+q-2n,2q-2n-2)  
$$ 
holds true.
\end{proposition}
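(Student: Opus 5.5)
The plan is to compute $H^p(X_\zar,\R^q\pi^{\proet}_\ast\widehat\Q_\ell(n))$ with the Bloch--Ogus/Gersten resolution of the Zariski sheaf $\mathcal H^q\coloneq\R^q\pi^{\proet}_\ast\widehat\Q_\ell(n)$, and to bound its weights term by term. The resolution is Galois equivariant, because the Gersten complex can be written as a colimit over open subsets defined over $k_0$, exactly as for $H^i_{BM}(F_jX)$. Concretely, $H^p(X_\zar,\mathcal H^q)$ is the $p$-th cohomology of the complex
$$
\bigoplus_{x\in X^{(0)}}H^{q}(\kappa(x),\Q_\ell(n))\longrightarrow \bigoplus_{x\in X^{(1)}}H^{q-1}(\kappa(x),\Q_\ell(n-1))\longrightarrow \cdots .
$$
The sum in degree $p$ is the colimit $\lim_{\to}H^{p+q}_{BM}(F_pX\setminus F_{p-1}X,\Q_\ell(n))$, i.e. the colimit over pairs of closed subsets $Z\supset Z'$ of codimension $p$ and $p+1$ of $H^{q-p}_{BM}(Z\setminus Z',\Q_\ell(n-p))$.

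\textbf{Step 1 (ind-mixedness and the upper bound).} By Proposition \ref{prop:weights}, each term $H^{q-p}_{BM}(Z\setminus Z',\Q_\ell(n-p))$ is mixed with weights in $[q-p-2n+2p,\ (q-p)-2(n-p)+(q-p)]=[q+p-2n,\ 2q-2n]$. The term in degree $p$ is therefore ind-mixed with weights in $[p+q-2n,2q-2n]$. Subquotients of ind-mixed modules are again ind-mixed with weights in the same set (Lemma \ref{lem:ind-mixed} and Lemma \ref{lem:im(f:MtoN)}, together with the abelian category remark). This already gives ind-mixedness and both inequalities in the general statement.

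\textbf{Step 2 (the projective case, $p\le 1$).} For $p=0$, $H^0(X_\zar,\mathcal H^q)=H^q_{0,nr}(X,\Q_\ell(n))$ is the unramified cohomology, so Proposition \ref{prop:weights-H_j,nr} with $i=q$, $j=0$ gives $w\le\max(q-2n,2q-2n-2)$. For $p=1$, I will use the Bloch--Ogus identification $H^1(X,\mathcal H^q)\cong$ the refined unramified group from \cite{Sch-refined}, namely a subquotient of $H^{q+1}_{1,nr}(X,\Q_\ell(n))$. More precisely, there is a surjection from $\ker\bigl(H^{q+1}_{BM}(F_1X)\to H^{q+1}_{BM}(F_0X)\bigr)\cap H^{q+1}_{1,nr}$-type group onto $H^1(X,\mathcal H^q)$, or equivalently the exact sequences of \cite[Section 5]{Sch-refined}. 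Applying Proposition \ref{prop:weights-H_j,nr} with $i=q+1$, $j=1$ gives $w\le\max(q+1-2n,2q-2n-2)$, which is the claimed bound $\max(p+q-2n,2q-2n-2)$. The lower bound $p+q-2n$ comes from Step 1.

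\textbf{Main obstacle.} The delicate point is Step 2 for $p=1$. One must pin down precisely that $H^1(X_\zar,\mathcal H^q)$ is a Galois-equivariant subquotient of $H^{q+1}_{1,nr}(X)$, the image of $H^{q+1}(F_2X)\to H^{q+1}(F_1X)$. If the direct identification is awkward, I will argue as follows instead. Classes in $H^1(X,\mathcal H^q)$ are represented by cycles in $\bigoplus_{x\in X^{(1)}}H^{q-1}(\kappa(x))$ killed by the next differential, and these are exactly the images of $H^{q+1}_{Z}(F_1X)$. Such elements lift to $H^{q+1}(F_1X)$ modulo the image of $H^{q+1}(F_0X)$, and the residue condition says they extend over $F_2X$. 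So $H^1$ is a quotient of a submodule of $H^{q+1}_{1,nr}(X)$, and Lemma \ref{lem:im(f:MtoN)} with the abelian-category property transports the weight bound.
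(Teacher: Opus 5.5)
Your proposal is correct and follows the paper's proof. Step 1 is the paper's argument: by the Gersten resolution, $H^p$ is a subquotient of the ind-mixed degree-$p$ term, whose weights Proposition \ref{prop:weights} places in $[p+q-2n,2q-2n]$. Step 2 uses Proposition \ref{prop:weights-H_j,nr} after embedding $H^p(X_\zar,\R^q\pi^{\proet}_\ast\widehat\Q_\ell(n))$ into $H^{p+q}_{p,nr}(X,\Q_\ell(n))$ for $p\le 1$; the paper takes this embedding from the exact sequence of \cite[Proposition 7.35]{Sch-refined}. Your localization-sequence computation in fact gives the isomorphism $H^1(X_\zar,\R^q\pi^{\proet}_\ast\widehat\Q_\ell(n))\cong \ker\bigl(H^{q+1}_{1,nr}(X,\Q_\ell(n))\to H^{q+1}_{BM}(F_0X,\Q_\ell(n))\bigr)$, which is a genuine submodule, so the hedge to ``subquotient'' is unnecessary.
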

\begin{proof}
Note that the Gersten conjecture holds for $\ell$-adic pro-\'etale cohomology (see e.g.\ \cite{Sch-moving}).
It follows that $H^p(X_\zar, \R^q \pi^{\proet}_\ast \widehat \Q_\ell(n))$ is a subquotient of $\bigoplus_{x\in X^{(p)}} H^{q-p}(x,\Q_\ell(n-p))$, where 
$$
H^{q-p}(x,\Q_\ell(n-p))=H_{BM}^{q-p}(F_0\overline{\{x\}},\Q_\ell(n-p)). 
$$ 
By Example \ref{ex:H^i(F_jX)-H_j,nr-weights}, this is an ind-mixed $\Q_\ell$-$G_{k_0}$-module, whose weights are contained in 
$$
\{q+p-2n,q+p-2n+1,\dots , 2q-2n\} ,
$$
see Proposition \ref{prop:weights}.
Hence, the same holds for $\bigoplus_{x\in X^{(p)}} H^{q-p}(x,\Q_\ell(n-p))$ and hence also for the subquotient $H^p(X,\R^q \pi^{\proet}_\ast \widehat \Q_\ell(n))$.
This proves the first part of the proposition. % in the case where $p\geq 2$.

Let now $X$ be smooth projective.
By \cite[Proposition 7.35]{Sch-refined}, there is an exact sequence 
$$
H^{p+q-1}_{p-2,nr}(X,\Q_\ell(n)) \longrightarrow H^p(X_\zar, \R^q \pi^{\proet}_\ast \widehat\Q_\ell(n))\longrightarrow H^{p+q}_{p,nr}(X,\Q_\ell(n)) .
$$ 
By Proposition \ref{prop:weights-H_j,nr}, the weights $w$ of $H^{p+q}_{p,\nr}(X,\Q_\ell(n))$ satisfy: 
$$
p+q-2n\leq w\leq \max(p+q-2n ,2q-2n-2) .
$$
If $p<2$, then $H^{p+q-1}_{p-2,nr}(X,\Q_\ell(n))=0$ and so the second assertion in the proposition follows.
\end{proof}

\begin{remark}
{\rm
Since $k$ is separably closed,
the  sheaf $\R^q \pi^{\proet}_\ast \widehat \Q_\ell(n)$ agrees by \eqref{eq:H_proet=H_et} with the sheaf $\mathcal H^q_{\et}(\Q_\ell(n))$ associated to the presheaf $U\mapsto H^q(U_\et,\Q_\ell(n))$.
It follows that Proposition \ref{prop:weights-H^p(X,curly-H^q)} applies to $H^p(X,\mathcal H^q_{\et}(\Q_\ell(n)))$.
}
\end{remark}
 
 \subsection{Some consequences over finite fields} \label{sec:vanishing-H_j,nr-finite-field}

\begin{proposition} \label{prop:H_BM-vanishing-finite}
Let $X$ be a quasi-projective scheme over a finite field $k$ and let $\ell$ be a prime invertible in $k$.
Then $H^{i}_{BM}(X,\Z_\ell(n))$ is finite for $i<n$.
\end{proposition}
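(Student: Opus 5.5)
The plan is to pass to the algebraic closure $\bar k$ and use the Hochschild--Serre spectral sequence for the Galois group $G=\Gal(\bar k/k)\cong \widehat{\Z}$, which has cohomological dimension one. Write $\bar X\coloneq X\times_k\bar k$. Applying the six-functor formalism of \cite[Section 6.7]{BS} to $R\pi_{X\ast}\pi_X^!\widehat\Z_\ell(n-d_X)$ on $\Spec k$, and identifying its geometric stalk with $R\Gamma(\bar X_\proet,\pi_{\bar X}^!\widehat\Z_\ell(n-d_X))$ via Lemma \ref{lem:BM-base-change}, I expect a short exact sequence
$$
0\longrightarrow H^{i-1}_{BM}(\bar X,\Z_\ell(n))_G\longrightarrow H^i_{BM}(X,\Z_\ell(n))\longrightarrow H^i_{BM}(\bar X,\Z_\ell(n))^G\longrightarrow 0 .
$$
This is the usual continuous Hochschild--Serre sequence of Jannsen \cite{jannsen} for $\widehat\Z$. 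The groups $H^j_{BM}(\bar X,\Z_\ell(n))$ are finitely generated $\Z_\ell$-modules, by constructibility together with \eqref{eq:H_proet=H_et}-type Mittag--Leffler arguments. Hence $H^i_{BM}(X,\Z_\ell(n))$ is a finitely generated $\Z_\ell$-module, and it is finite if and only if it vanishes after $\otimes\Q_\ell$.

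After tensoring with $\Q_\ell$, it therefore suffices to show two things. First, the Frobenius invariants of $H^i_{BM}(\bar X,\Q_\ell(n))$ vanish. Second, the Frobenius coinvariants of $H^{i-1}_{BM}(\bar X,\Q_\ell(n))$ vanish. Both follow once I know that the geometric Frobenius does not have eigenvalue $1$ on these finite-dimensional spaces; for coinvariants I use that the kernel and cokernel of $F-1$ have the same dimension. An eigenvalue $1$ would give weight $0$. Now I apply Proposition \ref{prop:weights} with $k_0=k$ finite, so that $S=\Spec k$ and the only closed point has trivial inertia. It says the weights of $H^j_{BM}(\bar X,\Q_\ell(n))$ lie in $\{j-2n,\dots,2j-2n\}$. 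For $j=i$ the largest weight is $2i-2n<0$, and for $j=i-1$ it is $2i-2n-2<0$, because $i<n$. So weight $0$ never occurs, and both terms vanish.

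Proposition \ref{prop:weights} is stated for equi-dimensional schemes, so I must handle the general case. If $X$ is not equi-dimensional, I would first reduce to that case. One route is to use the localization sequence \eqref{les:Gysin} with the cohomological indexing relative to $d_X$: this splits off lower-dimensional components, whose contributions appear with shifted indices $(i-2c,n-c)$, and these still satisfy $i-2c<n-c$. Alternatively, I would simply note that the proof of Proposition \ref{prop:weights} only uses the Gysin sequence and so works verbatim. The step I expect to require the most care is the Hochschild--Serre sequence for pro-\'etale Borel--Moore homology with $\Z_\ell$-coefficients, together with finite generation. I would deduce it from the $\Z/\ell^r$ versions, which hold because all groups involved are finite and $\lim^1$ vanishes, and then pass to the limit.
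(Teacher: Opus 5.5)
Your proposal is correct and follows essentially the same route as the paper: the Hochschild--Serre short exact sequence for $G\cong\widehat{\Z}$, followed by Proposition~\ref{prop:weights}, which shows that for $i<n$ both $H^{i-1}_{BM}(X_{\bar k},\Q_\ell(n))$ and $H^{i}_{BM}(X_{\bar k},\Q_\ell(n))$ have only negative weights, so the coinvariants and invariants are finite. Your extra details fill in points the paper's short proof leaves implicit: finite generation over $\Z_\ell$, the equality $\dim\ker(F-1)=\dim\operatorname{coker}(F-1)$, and the reduction of a non-equi-dimensional $X$ to the equi-dimensional case via localization.
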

\begin{proof}
Let $\bar k$ be an algebraic closure of $k$ and let $G=\Gal(\bar k/k)$ be the absolute Galois group of $k$.
Since $k$ is a finite field, $G\cong \widehat \Z$ has cohomological dimension 1.
By the Hochschild--Serre spectral sequence (given by the composed functor spectral sequence), we thus get a short exact sequence
$$
0\longrightarrow H^{i-1}_{BM}(X_{\bar k},\Z_\ell(n))_G\longrightarrow H^{i}_{BM}(X,\Z_\ell(n))\longrightarrow H^{i}_{BM}(X_{\bar k},\Z_\ell(n))^G\longrightarrow 0 .
$$
By Proposition \ref{prop:weights}, $ H^{i-1}_{BM}(X_{\bar k},\Q_\ell(n))$ and $ H^{i}_{BM}(X_{\bar k},\Q_\ell(n))$ are mixed Galois modules of negative weights, because $i<n$.
This implies that $H^{i-1}_{BM}(X_{\bar k},\Z_\ell(n))_G$ and $H^{i}_{BM}(X_{\bar k},\Z_\ell(n))^G$ are finite, hence so is $H^{i}_{BM}(X,\Z_\ell(n))$.
This concludes the proof.
\end{proof}

\begin{proposition} \label{prop:weights-H_j,nr-finite-field}
Let $k$ be a finite field and let $X$ be a smooth projective equi-dimensional scheme over $k$. 
Then $H^i_{j,nr}(X,\Q_\ell(n))=0$ for $i<\min(2n,n+j+1)$. 
\end{proposition}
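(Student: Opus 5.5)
The plan is to prove the stronger statement that, in the given range, already $H^i(U,\Q_\ell(n))=0$ for every open $U\subset X$ with $F_{j+1}X\subset U$. Since $H^i_{j,nr}(X,\Q_\ell(n))$ is the image of $\colim_U H^i_{BM}(U,\Q_\ell(n))$ in $H^i_{BM}(F_jX,\Q_\ell(n))$, with $U$ running over such opens, this gives the claim. The tools are the Hochschild--Serre sequence over the finite field, as in the proof of Proposition \ref{prop:H_BM-vanishing-finite}, and the weight bounds of Proposition \ref{prop:weights}.

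\emph{Step 1: reduce to a standard $U$.} Fix $U\subset X$ with $F_{j+1}X\subset U$ and complement $Z$ of codimension $\geq j+2$. As in the proof of Proposition \ref{prop:weights-H_j,nr}, replace $Z$ by a complete intersection of pure codimension $j+2$ defined over $k$ that contains it. This shrinks $U$ but keeps $F_{j+1}X\subset U$, so it suffices to treat such $U$. Let $\bar k$ be an algebraic closure of $k$, $G=\Gal(\bar k/k)\cong\widehat\Z$, and $\bar U=U_{\bar k}$, $\bar Z=Z_{\bar k}$, $\bar X=X_{\bar k}$. Since $G$ has cohomological dimension one and $U$ is smooth, Hochschild--Serre (tensored with $\Q_\ell$) gives
$$
0\longrightarrow H^{i-1}(\bar U,\Q_\ell(n))_G\longrightarrow H^i(U,\Q_\ell(n))\longrightarrow H^i(\bar U,\Q_\ell(n))^G\longrightarrow 0 .
$$
For a finite-dimensional mixed $\Q_\ell$-$G$-module without weight $0$, geometric Frobenius has no eigenvalue $1$. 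Hence both its invariants and its coinvariants vanish. It therefore suffices to show that $H^{i-1}(\bar U,\Q_\ell(n))$ and $H^i(\bar U,\Q_\ell(n))$ have only negative weights.

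\emph{Step 2: weights via the Gysin sequence.} For $m\in\{i-1,i\}$, the sequence \eqref{les:Gysin} gives an exact sequence of Galois modules
$$
H^m(\bar X,\Q_\ell(n))\longrightarrow H^m(\bar U,\Q_\ell(n))\longrightarrow H^{m+1-2(j+2)}_{BM}(\bar Z,\Q_\ell(n-j-2)) .
$$
By Deligne, the left term is pure of weight $m-2n$, which is negative because $i<2n$. By Proposition \ref{prop:weights}, the right term has weights in the interval from $(m-2j-3)-2(n-j-2)=m-2n+1$ to $2(m-2j-3)-2(n-j-2)=2m-2n-2j-2$.

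For $m=i$ these bounds become $i-2n+1\leq 0$ and $2i-2n-2j-2<0$, the latter by $i<n+j+1$. If $i-2n+1=0$, then $2i-2n-2j-2=-2j-2+(i-2n)+i$ is still negative, so the interval contains only negative integers. The case $m=i-1$ gives even smaller weights. Lemma \ref{lem:endo-eigenvalues-sequence} then shows that all weights of $H^m(\bar U,\Q_\ell(n))$ are negative. One caveat: the weight notions were set up for finitely generated base fields, so here weight means the absolute values of Frobenius eigenvalues over $k$ itself. This is exactly Deligne's theorem, and Proposition \ref{prop:weights} specializes accordingly.

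\emph{Step 3: conclude.} By Steps 1 and 2, $H^i(U,\Q_\ell(n))=0$ for every such $U$. Hence the colimit over $U$ vanishes, and so does its image $H^i_{j,nr}(X,\Q_\ell(n))$.

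The main point of care is Step 2 together with the reduction in Step 1. One must bound the weights of $H^\ast(\bar U)$ for an honest open $U$ rather than of the refined unramified group itself. The reason is that Hochschild--Serre does not commute with taking images, and coinvariants of $H^{i-1}(F_j\bar X)$ could otherwise contribute. Choosing $Z$ to be of pure codimension $j+2$ makes both relevant degrees land in negative weights.
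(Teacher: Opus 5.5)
Your proposal is correct and follows the paper's proof essentially step for step. The paper also represents a class by some $\alpha\in H^i(U_\et,\Q_\ell(n))$ with $F_{j+1}X\subset U$ and applies the Hochschild--Serre short exact sequence over the finite field. It then uses the weight bounds $i-2n\leq w\leq \max(i-2n,2(i-n-j-1))$ for $H^i(\bar U_\et,\Q_\ell(n))$, and the analogous bounds in degree $i-1$. These come from the proof of Proposition \ref{prop:weights-H_j,nr} exactly as in your Step 2: complete intersection of pure codimension $j+2$, Deligne's purity for $\bar X$, and Proposition \ref{prop:weights} via the Gysin sequence.

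One small slip: your opening sentence claims vanishing for every $U$ with $F_{j+1}X\subset U$. Step 1 only proves it for the cofinal family of $U$ whose complement is a pure codimension $j+2$ complete intersection. That cofinality is all the colimit argument needs, so the proof is complete.
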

\begin{proof}
By \eqref{eq:H_BM(FjX)} and \eqref{eq:H_j,nr(X)}, any class in $H^i_{j,nr}(X,\Q_\ell(n))$ is represented by a cohomology class $\alpha\in H^i(U_\et,\Q_\ell(n))$ for some open subset $U\subset X$ with $F_{j+1}X\subset U$.
Let $\bar U=U\times_k \bar k$, where $\bar k$ is the algebraic closure of $k$.
Let $G=\Gal(\bar k/k)$ be the Galois group of $k$.
By the Hochschild--Serre spectral sequence, which degenerates by cohomological dimension reasons, we have an exact sequence
$$
0\longrightarrow H^{i-1}(\bar U_\et,\Q_\ell(n))_G\longrightarrow H^i(U_\et,\Q_\ell(n))\longrightarrow H^i(\bar U_\et,\Q_\ell(n))^G\longrightarrow 0 .
$$
The weights $w$ of $H^i(\bar U_\et,\Q_\ell(n))$ satisfy (see the proof of Proposition \ref{prop:weights-H_j,nr}), 
$$
i-2n\leq w\leq \max(i-2n,2(i-n-j-1)).
$$

Let now $i<\min(2n,n+j+1)$.
Then the above inequality shows that $w<0$ and so $H^i(\bar U_\et,\Q_\ell(n))^G=0$.
Arguing similarly, we see that the weights $w$ of $H^{i-1}(\bar U_\et,\Q_\ell(n))$ satisfy
$$
i-1-2n\leq w\leq \max(i-1-2n,2(i-1-n-j-1))\leq \max(i-2n,2(i-n-j-2))<0.
$$
Hence, $H^{i-1}(\bar U_\et,\Q_\ell(n))_G=0$ and so $H^i(U_\et,\Q_\ell(n))=0$, as we want.
This concludes the proof of the proposition.
\end{proof}
 
\begin{corollary} \label{cor:weights-truncated-proetal-finite-field}
Let $k$ be a finite field and let $X$ be a smooth projective equi-dimensional scheme over $k$. 
Assume $j< n$ and $i< 2n$.  
Then $H^i(X_\zar,\tau_{\leq j} \R\pi^{\proet}_\ast \Q_\ell(n)_{\proet})=0$. 
\end{corollary}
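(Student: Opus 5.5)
We argue via the hypercohomology spectral sequence of the truncated complex, reducing the vanishing to the vanishing of the Zariski cohomology groups $H^p(X_\zar,\R^q\pi^{\proet}_\ast\widehat\Q_\ell(n))$ for $q\le j$ and $p+q=i$. The spectral sequence
$$
E_2^{p,q}=H^p(X_\zar,\mathcal H^q(\tau_{\leq j}\R\pi^{\proet}_\ast \Q_\ell(n)))\Longrightarrow H^{p+q}(X_\zar,\tau_{\leq j}\R\pi^{\proet}_\ast\Q_\ell(n))
$$
has $E_2^{p,q}=H^p(X_\zar,\R^q\pi^{\proet}_\ast\widehat\Q_\ell(n))$ for $q\le j$ and zero otherwise. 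It is bounded, since $X$ is Noetherian of finite Krull dimension and only finitely many $q$ occur. It therefore suffices to show that $H^p(X_\zar,\R^q\pi^{\proet}_\ast\widehat\Q_\ell(n))=0$ whenever $p+q=i<2n$ and $0\le q\le j<n$.

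To prove this vanishing, we use the Gersten resolution for $\ell$-adic pro-\'etale cohomology, as in the proof of Proposition \ref{prop:weights-H^p(X,curly-H^q)}. It exhibits $H^p(X_\zar,\R^q\pi^{\proet}_\ast\widehat\Q_\ell(n))$ as a subquotient of $\bigoplus_{x\in X^{(p)}}H^{q-p}(x,\Q_\ell(n-p))$. Since $p\ge0$ on this term, we may assume $q\ge p$. Each summand is a direct limit over dense opens $V$ of $\overline{\{x\}}$ of groups $H^{q-p}_{BM}(V,\Q_\ell(n-p))$, where $V$ is defined over the finite field $k$.

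Since $q<n$, we have $q-p<n-p$, so the degree is strictly smaller than the twist. Proposition \ref{prop:H_BM-vanishing-finite} then shows that $H^{q-p}_{BM}(V,\Z_\ell(n-p))$ is finite. Hence $H^{q-p}_{BM}(V,\Q_\ell(n-p))=0$, the direct limit vanishes, and so does the subquotient.

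This argument never uses $i<2n$ or projectivity. That is plausible, because $q\le j<n$ already forces the vanishing term by term, and it is consistent with the fact that the corresponding BM statement for $i<n$ holds for arbitrary quasi-projective schemes.

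The main point needing care is the Gersten identification over a finite field. Proposition \ref{prop:weights-H^p(X,curly-H^q)} used it over separably closed fields, but the Gersten conjecture for pro-\'etale cohomology (\cite{Sch-moving}) holds over any field. If there is doubt, one can instead argue directly with the coniveau spectral sequence built from the groups $H^\ast_{BM}(F_jX,\Q_\ell(\ast))$, whose terms are again direct limits of BM groups of locally closed subschemes of the form covered by Proposition \ref{prop:H_BM-vanishing-finite}.

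An alternative route closer to the surrounding material would combine \cite[Proposition 7.35]{Sch-refined} with Proposition \ref{prop:weights-H_j,nr-finite-field}. That route needs the hypotheses $i<2n$ and projectivity. I would use the direct Gersten argument above and keep the refined unramified route as a backup.
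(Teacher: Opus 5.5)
Your argument is correct, and it takes a different and more direct route than the paper's.

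\textbf{The paper's route.} The paper never looks at the $E_2$-page of $\tau_{\leq j}$. It uses the truncation triangle $\tau_{\leq j}\to \R\pi^{\proet}_\ast\Q_\ell(n)\to\tau_{\geq j+1}$ and combines three inputs:
\begin{enumerate}
\item $H^i(X_\et,\Q_\ell(n))=0$ for $i\notin\{2n,2n+1\}$, by Hochschild--Serre and Deligne's purity;
\item the identification of $H^{i-1}(X_\zar,\tau_{\geq j+1}\R\pi^{\proet}_\ast\Q_\ell(n))$ with the refined unramified group $H^{i-1}_{i-j-2,nr}(X,\Q_\ell(n))$ from \cite[Theorem 1.2]{AS24};
\item the vanishing of that group by the weight bound of Proposition \ref{prop:weights-H_j,nr-finite-field}.
\end{enumerate}
Both vanishing inputs need projectivity, and the first needs $i<2n$. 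So, as you suspected, these hypotheses come from the method rather than from the statement.

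\textbf{Your route.} You kill every $E_2$-term $H^p(X_\zar,\R^q\pi^{\proet}_\ast\widehat\Q_\ell(n))$ with $q\leq j<n$ directly. This proves the stronger statement: the group vanishes for all $i$ and for every smooth equi-dimensional quasi-projective $X$ over a finite field. The only real input is Proposition \ref{prop:H_BM-vanishing-finite}.

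\textbf{You can drop Gersten.} The Gersten step you flagged as delicate can be avoided entirely. Apply Proposition \ref{prop:H_BM-vanishing-finite}, together with \eqref{eq:H_BM=H_proet}, to the Zariski open subsets $U\subset X$ rather than to opens of closures of points. This shows that the presheaf $U\mapsto H^q(U_\proet,\widehat\Q_\ell(n))$ is identically zero for $q<n$. Hence its sheafification $\R^q\pi^{\proet}_\ast\widehat\Q_\ell(n)$ vanishes for $q<n$. Therefore $\tau_{\leq j}\R\pi^{\proet}_\ast\widehat\Q_\ell(n)\simeq 0$ in $D(X_\zar)$ whenever $j<n$, and you need neither a spectral sequence nor a Gersten resolution.

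\textbf{Comparison.} For this particular statement, the paper's detour through refined unramified cohomology buys nothing beyond illustrating the link with \cite{AS24}. It yields a strictly weaker result.
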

\begin{proof} 
The canonical truncation triangle gives an exact sequence
$$
H^{i-1}(X_\zar,\tau_{\geq j+1} \R\pi^{\proet}_\ast \Q_\ell(n)_{\proet})\longrightarrow H^i(X_\zar,\tau_{\leq j} \R\pi^{\proet}_\ast \Q_\ell(n)_{\proet})\longrightarrow H^i(X_\zar,\R\pi^{\proet}_\ast \Q_\ell(n)_{\proet})
$$
Here,
$$
H^i(X_\zar,\R\pi^{\proet}_\ast \Q_\ell(n)_{\proet})\cong H^i(X_\et,\Q_\ell(n))
$$ 
(see \eqref{eq:H_proet=H_et}) vanishes for $i\notin \{2n,2n+1\}$ by the Hochschild--Serre spectral sequence and Deligne's results on weights,  see \cite[p.\ 780, (28)]{CTSS}. 
By \cite[Theorem 1.2]{AS24}, we have a canonical isomorphism
$$
H^{i-1}(X_\zar,\tau_{\geq j+1} \R\pi^{\proet}_\ast \Q_\ell(n)_{\proet})\cong H^{i-1}_{i-j-2,nr}(X,\Q_\ell(n)) .
$$
It thus suffices to show that
$$
H^{i-1}_{i-j-2,nr}(X,\Q_\ell(n))=0
$$
for $j<n$ and $i< 2n$.
By Proposition \ref{prop:weights-H_j,nr-finite-field}, the above group vanishes if 
$$
i-1<\min(2n,n+i-j-2+1)=\min(2n,n+i-1-j).
$$
Equivalently, we need that $i<2n+1$ and $i<n+i-j$.
The former holds because $i\leq 2n$.
The latter is equivalent to $j<n$, which holds by assumption.
This concludes the proof of the corollary.
\end{proof}

\section{Refined unramified motivic cohomology} 
\label{sec:proof-of-thm:H_M-unramified}

\subsection{Unramified and refined unramified motivic cohomology} \label{subsec:H_M,nr}

In this subsection it is convenient to work with higher Chow groups of singular varieties, i.e.~with Borel--Moore motivic homology.
As in \eqref{eq:H^i-BM,M-intro}, we adopt the convention that for any equi-dimensional quasi-projective $k$-scheme $X$ of dimension $d_X$, we write 
$$
H^i_{BM,M}(X,A(n))\coloneq H_{2d_X-i}^{BM,M}(X,A(d_X-n))\coloneq H^{i}(X_\zar,A^{BM}(n)),
$$
where 
\begin{align} \label{eq:A^BM}
A^{BM}(n)=z^n(-_{\zar},\bullet)[-2n]\in D(X_\zar)
\end{align} 
is Bloch's cycle complex from \eqref{def:A(n)_zar}.\footnote{The superscript BM in the complex $A^{BM}(n)$ is only used to distinguish this complex from Voevodsky's motivic complex; both complexes are quasi-isomorphic if $X$ is smooth and equi-dimensional but not in general.}
If $X$ is smooth and equi-dimensional, then $H^i_{BM,M}(X,A(n))=H^i_{M}(X,A(n))$ by our previous definition.

By \cite[p.\ 269, (iv)]{bloch-motivic} (see also \cite{bloch-JAG}), we have a canonical isomorphism
\begin{align}\label{eq:H_BM,M=Chow}
H^i_{BM,M}(X,A(n))\cong \CH^n(X,A;2n-i)\coloneq H_{2n-i}(z^{n}(X_\zar,\bullet)\otimes_{\Z}^{\mathbb L} A). 
\end{align}
(The derived tensor product on the right could be replaced by an ordinary one, because $z^{n}(X_\zar,\bullet)$ is a complex of sheaves of free $\Z$-modules.)

We list some consequences of \eqref{eq:H_BM,M=Chow} in what follows.
Firstly,
\begin{align}\label{eq:H_BM,M-Ql/Zl}
H^i_{BM,M}(X,\Q_\ell/\Z_\ell(n))\cong \colim_r  H^i_{BM,M}(X,\Z/\ell^r(n)) .
\end{align}
Secondly,   
if $Z\subset X$ is a closed equi-dimensional subscheme  of codimension $c$ with complement $U=X\setminus Z$, then, by \cite{bloch-JAG}, there is a functorial long exact localization sequence
\begin{align}\label{eq:localization-H_BM,M}
H^i_{BM,M}(X,A(n))\longrightarrow H^i_{BM,M}(U,A(n))\stackrel{\del} \longrightarrow H^{i+1-2c}_{BM,M}(Z,A(n-c))\longrightarrow H^{i+1}_{BM,M}(X,A(n)) .
\end{align} 
Thirdly, for a variety $X$ over a field $k$, we have
\begin{align}\label{eq:H_BM,M-fields}
H^i_M(k(X),A(n))\coloneq H^i_M(\Spec k(X),A(n))\cong \lim_{\substack{\longrightarrow\\ U}}  H^i_{BM,M}(U,A(n))
\end{align}
where $U$ runs through all dense open subsets of $X$.
To see the above isomorphism, note first that in the definition of $H^i_M(k(X),A(n))$ in \eqref{eq:H_BM,M-fields}, we view $\Spec k(X)$ as a smooth scheme over $k(X)$; in particular, $H^i_M(k(X),A(n))\cong \CH^n(k(X),A;2n-i)$. 
Hence, the isomorphism in question follows from \eqref{eq:H_BM,M=Chow} and 
$$
\CH^n(k(X),A;2n-i)\cong \lim_{\substack{\longrightarrow\\  U\subset X,\ \text{dense}}}  \CH^n(U,A;2n-i).
$$
We also note that if $X$ is generically smooth, then, after restricting to the
cofinal system of smooth dense open subsets $U\subset X$, the terms
$H^i_{BM,M}(U,A(n))$ in \eqref{eq:H_BM,M-fields} may be replaced by
$H^i_M(U,A(n))$.

Combining \eqref{eq:localization-H_BM,M} and \eqref{eq:H_BM,M-fields}, we obtain, for all $x\in X^{(1)}$, residue maps
$$
\del_x\colon H^i_M(k(X),A(n))\longrightarrow H^{i-1}_M(\kappa(x),A(n-1)) .
$$ 
The unramified motivic cohomology with values in $A$ is then defined by
$$
H^i_{M,nr}(X,A(n))\coloneq\{\alpha\in H^i_M(k(X),A(n))\mid \del_x\alpha=0\quad \forall x\in X^{(1)}\}.
$$

If $X$ is smooth and equi-dimensional, 
equivalent definitions may be given analogously to \cite[Theorem 4.1.1]{CT}.
For instance, by the Gersten conjecture proven by Bloch \cite{bloch-motivic,bloch-JAG}, we have a canonical isomorphism
\begin{align} \label{eq:H_nr=H^0-curlyH}
H^i_{M,nr}(X,A(n))\cong H^0(X,\mathcal H^i(A(n))) .
\end{align}

We have the following characterization for $A=\Z/m$.

\begin{lemma} \label{lem:H_M,nrZ/m}
Let $X$ be a smooth variety over a  field $k$ and let $m$ be an integer invertible in $k$.
Then 
$$
H^i_{M,nr}(X,\Z/m(n))=\begin{cases}
    H^i_{nr}(X_\et,\mu_{m}^{\otimes n})\quad &\text{if $i\leq n$};\\
    0\quad &\text{otherwise} .
\end{cases}
$$
\end{lemma}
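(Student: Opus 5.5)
We start from the identification \eqref{eq:H_nr=H^0-curlyH}, which gives $H^i_{M,nr}(X,\Z/m(n))\cong H^0(X_\zar,\mathcal H^i_M(\Z/m(n)))$. This reduces the statement to a computation of the Zariski sheaf $\mathcal H^i_M(\Z/m(n))$, which we treat separately in the two ranges $i>n$ and $i\leq n$.

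For $i>n$ the claim is immediate. Lemma \ref{lem:curlyH^iZ_motivic(n)=0} gives $\mathcal H^i_M((\Z/m)(n))=0$ for $i>n$, hence its global sections vanish.

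For $i\leq n$ we use Theorem \ref{thm:prelim-comparison-H_m-to-H_et}. It is stated for $m=\ell^r$, but the general case follows by decomposing $\Z/m$ into its primary parts, since both sides are additive in the coefficients; alternatively one cites the Beilinson--Lichtenbaum statement directly for $\Z/m$. Applying it with $j=n$ gives $\tau_{\leq n}\Z/m(n)_\zar\cong\tau_{\leq n}\R\pi_\ast\Z/m(n)_\et$. Combining with \eqref{eq:geisser-levine}, $\Z/m(n)_\et\cong\mu_m^{\otimes n}$, we obtain for $i\leq n$ an isomorphism of cohomology sheaves
$$
\mathcal H^i_M(\Z/m(n))\cong R^i\pi_\ast\mu_m^{\otimes n}=\mathcal H^i_\et(\mu_m^{\otimes n}).
$$
Taking global sections, $H^0(X_\zar,\mathcal H^i_\et(\mu_m^{\otimes n}))$ is, by the Bloch--Ogus theorem (the Gersten conjecture for étale cohomology), the subgroup of $H^i(k(X),\mu_m^{\otimes n})$ of classes with vanishing residues at all $x\in X^{(1)}$. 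This is the unramified cohomology $H^i_{nr}(X_\et,\mu_m^{\otimes n})$.

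The only delicate point is compatibility. The motivic definition of $H^i_{M,nr}$ uses motivic residues, and one must check that the isomorphism just constructed intertwines these with the étale residues. Both residues arise from the localization sequences, and the Geisser--Levine comparison is compatible with localization, so the two descriptions of the unramified subgroup match. Since the argument above passes entirely through $H^0$ of sheaves, where Gersten exactness on each side identifies $H^0$ with the kernel of all residues, it suffices to know that the comparison map of Gersten complexes commutes in degree $0$ and $1$. I expect this compatibility check to be the only real obstacle; it is standard.
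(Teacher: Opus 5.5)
Your proof is correct and uses the same ingredients as the paper's: Theorem \ref{thm:prelim-comparison-H_m-to-H_et} together with \eqref{eq:geisser-levine} for $i\leq n$, and a vanishing statement for $i>n$. The only cosmetic difference is that for $i>n$ you invoke the sheaf vanishing of Lemma \ref{lem:curlyH^iZ_motivic(n)=0}, whereas the paper uses $H^i_M(k(X),\Z/m(n))=0$; in addition, the residue-compatibility check you flag as the main obstacle is not actually needed in your route, because once both unramified groups are identified with $H^0$ of their Zariski cohomology sheaves, the sheaf isomorphism directly induces the isomorphism on global sections, compatibly with restriction to the generic point.
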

\begin{proof}
If $i\leq n$, then the claim is a direct consequence of the Beilinson--Lichtenbaum conjecture, proven by Rost and Voevodsky \cite{Voevodsky}, see Theorem \ref{thm:prelim-comparison-H_m-to-H_et}, and the fact that $\Z/m(n)_{\et}\cong \mu_m^{\otimes n}$ by Geisser--Levine, see \eqref{eq:geisser-levine}.
If $i>n$, then $H^i_M(k(X),A(n))=0$
 (see \eqref{eq:Hi_M(X,n)=0-for-i})
and the result is clear.
\end{proof}

For any abelian group $A$ we also have the following description which holds without any smoothness assumption.

\begin{lemma} \label{lem:H_M-nr-lift}
Let $X$ be an equi-dimensional quasi-projective variety over a field and let $\alpha\in H^i_{M,nr}(X,A(n))$.
Then, for all $x\in X^{(1)}$, there is a representative $\alpha'\in H^i_{BM,M}(U,A(n))$ of $\alpha$ via \eqref{eq:H_BM,M-fields} for some open subset $U\subset X$ with $x\in U$.
\end{lemma}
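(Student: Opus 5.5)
Start from an arbitrary representative and repair it near $x$. By \eqref{eq:H_BM,M-fields} there is a dense open $V\subset X$ and a class $\beta\in H^i_{BM,M}(V,A(n))$ that maps to $\alpha$ in $H^i_M(k(X),A(n))$. If $x\in V$ we are done, so assume $x\notin V$. Shrinking $V$ (by removing further closed subsets not containing the generic point of $\overline{\{x\}}$), we may assume that $Z\coloneq X\setminus V$ is equi-dimensional of codimension one. Then the closure $D\coloneq\overline{\{x\}}$ is an irreducible component of $Z$. Write $Z'$ for the union of the other components and $W\coloneq X\setminus Z'$, which is an open neighbourhood of $x$. Restricting to $W$, we have $W\cap Z=D\cap W$, which is a closed equi-dimensional subscheme of $W$ of codimension one with complement $V\cap W$. (If $\beta$ lives on $V$, we replace it by its restriction to $V\cap W$; this does not change its image in the function field.)

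Apply the localization sequence \eqref{eq:localization-H_BM,M} to $D\cap W\subset W$:
$$
H^i_{BM,M}(W,A(n))\longrightarrow H^i_{BM,M}(V\cap W,A(n))\stackrel{\del}\longrightarrow H^{i-1}_{BM,M}(D\cap W,A(n-1)).
$$
Set $\gamma\coloneq\del(\beta|_{V\cap W})$. The hypothesis $\del_x\alpha=0$ means, by definition of the residue via \eqref{eq:localization-H_BM,M} and \eqref{eq:H_BM,M-fields} and by compatibility of localization sequences with restriction to opens, that the image of $\gamma$ in $\colim H^{i-1}_{BM,M}(D',A(n-1))=H^{i-1}_M(\kappa(x),A(n-1))$ vanishes. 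Here $D'$ runs over the dense opens of $D\cap W$. Hence there is a closed subset $T\subsetneq D\cap W$ such that $\gamma|_{(D\cap W)\setminus T}=0$.

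Now set $U\coloneq W\setminus T$. This is open, it contains $x$ (since $T$ is a proper closed subset of the irreducible $D\cap W$, it misses the generic point $x$), and $U\cap V=(V\cap W)\setminus T=V\cap W$ because $T\subset D$. By functoriality of \eqref{eq:localization-H_BM,M} under the open immersion $U\subset W$, the residue of $\beta|_{V\cap W}$ along $D\cap U$ equals $\gamma|_{D\cap U}=0$. Exactness then gives a lift $\alpha'\in H^i_{BM,M}(U,A(n))$ restricting to $\beta$ on $U\cap V$, so $\alpha'$ represents $\alpha$.

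The main obstacle is the bookkeeping around the localization sequence. We need it for Bloch's complex on singular quasi-projective schemes, which \cite{bloch-JAG} supplies. We must also check that it is compatible with restriction to opens, including the boundary map, so that the boundary identifies in the colimit with the residue $\del_x$. A further point is the equi-dimensionality hypothesis on the closed subset. This is arranged by shrinking $V$ so that $Z$ has pure codimension one, and then working on $W$, where only the component $D$ remains.
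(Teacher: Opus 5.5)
Your proof is correct and follows the paper's approach: the paper proves this in one line, saying it follows from \eqref{eq:H_BM,M-fields} and the localization sequence \eqref{eq:localization-H_BM,M} on a sufficiently small open neighbourhood of $x$, and you have written out that bookkeeping in full (localize on $W$, kill the residue on a dense open of $D\cap W$, then lift on $U=W\setminus T$). Your preliminary step making $X\setminus V$ of pure codimension one is harmless but unnecessary, since passing to $W$ already discards every component other than $D$.
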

\begin{proof}
This follows from \eqref{eq:H_BM,M-fields} and the localization sequence \eqref{eq:localization-H_BM,M} applied to a sufficiently small open subset of $X$ which contains $x$.
\end{proof}

As in \cite[Definition 5.1]{Sch-refined}, we define the refined unramified motivic cohomology as follows,  cf.~\cite[\S 4.2]{kok-zhou} and \cite[Section 2]{AS24}.

\begin{definition} \label{def:H_nr-motivic-refined} 
Let $X$ be a smooth equi-dimensional quasi-projective scheme over a field $k$ and let $A$ be an abelian group.
For $j\geq 0$, we define $H^i_M(F_jX,A(n))$ as the direct limit of $H^i_{M}(U,A(n))$ where $U$ runs through all open subsets of $X$ such that $U$ contains all codimension $j$ points of $X$.
We further define the $j$-th refined unramified motivic cohomology of $X$ as
$$
H^i_{M,j,nr}(X,A(n))\coloneq \im(H^i_M(F_{j+1}X,A(n))\to H^i_M(F_jX,A(n))) .
$$ 
\end{definition}

If $X$ is integral, there is a canonical isomorphism $H^i_M(F_0X,\Z(n))\stackrel{\cong}\to H^i_M(k(X),\Z(n))$.
In view of this, the unramified motivic cohomology identifies 
to the 0-th refined unramified motivic cohomology as follows.

\begin{proposition} \label{prop:H_j,nr}
    Let $X$ be a smooth projective variety over a field $k$.
    Then there is a canonical isomorphism $$
    H_{M,0,nr}^i(X,\Z(n))\stackrel{\cong}\longrightarrow H_{M,nr}^i(X,\Z(n)) .
    $$
\end{proposition}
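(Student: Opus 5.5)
The plan is to realize both groups as subgroups of $H^i_M(k(X),\Z(n))=H^i_M(F_0X,\Z(n))$ and show the subgroups coincide.

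\emph{The map.} By definition, $H^i_{M,0,nr}(X,\Z(n))$ is the image of $H^i_M(F_1X,\Z(n))\to H^i_M(F_0X,\Z(n))\cong H^i_M(k(X),\Z(n))$, so it is already a subgroup of $H^i_M(k(X),\Z(n))$. The map in the proposition is then the inclusion of this image into $H^i_{M,nr}(X,\Z(n))$, once we know it lands there.

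\emph{Inclusion $\subseteq$.} Let $\alpha$ be the restriction of a class $\beta\in H^i_M(U,\Z(n))$, where $U\supset F_1X$, so $U$ contains every $x\in X^{(1)}$. Fix such an $x$ and shrink $U$ to an open $V\ni x$ with $V\cap\overline{\{x\}}$ smooth. Apply the localization sequence \eqref{eq:localization-H_BM,M} to the divisor $Z=V\cap\overline{\{x\}}$ with complement $V\setminus Z$. Since $\beta|_{V\setminus Z}$ comes from $H^i_M(V,\Z(n))$, its boundary in $H^{i-1}_{BM,M}(Z,\Z(n-1))$ vanishes. Passing to the limit over neighbourhoods, this boundary is compatible with $\del_x$, so $\del_x\alpha=0$. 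Hence the image lies in $H^i_{M,nr}(X,\Z(n))$.

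\emph{Inclusion $\supseteq$.} Take $\alpha$ unramified. By Lemma \ref{lem:H_M-nr-lift}, for each $x\in X^{(1)}$ there is an open $U_x\ni x$ and a lift $\alpha_x\in H^i_M(U_x,\Z(n))$ of $\alpha$. We must glue these to a class on some open $U\supset F_1X$. Two approaches are available.

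\begin{enumerate}
\item \emph{Gersten resolution.} By Bloch's Gersten conjecture \eqref{eq:H_nr=H^0-curlyH}, $H^i_{M,nr}(X,\Z(n))\cong H^0(X,\mathcal H^i_M(\Z(n)))$. It therefore suffices to show that $H^i_M(F_1X,\Z(n))\to H^0(F_1X,\mathcal H^i)$ is surjective, noting that $H^0(F_1X,\mathcal H^i)=H^0(X,\mathcal H^i)$ because codimension-two points impose no condition on sections of the Gersten complex. Consider the coniveau spectral sequence $H^p(U,\mathcal H^q)\Rightarrow H^{p+q}_M(U,\Z(n))$ and take the colimit over $U\supset F_1X$. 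By Gersten, $H^p(U,\mathcal H^q)$ is the cohomology of a complex supported in codimension $p$ points of $U$. In the limit $F_1X$ there are only points of codimension $\le 1$, so $H^p(F_1X,\mathcal H^q)=0$ for $p\ge2$. All differentials $d_r\colon E_r^{0,i}\to E_r^{r,i-r+1}$ with $r\ge2$ therefore vanish, and the edge map $H^i_M(F_1X)\to H^0(F_1X,\mathcal H^i)$ is surjective.
\item \emph{Direct gluing via localization.} Since $\alpha$ restricts to an element of $H^i_M(V,\Z(n))$ for some dense open $V$, the complement $X\setminus V$ contains only finitely many codimension-one points $x_1,\dots,x_r$. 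Set $D=\bigcup_j\overline{\{x_j\}}\cap W$ on an open $W\supset F_1X$ where $D$ is a disjoint union of smooth divisors, and $V'=W\setminus D$. By the localization sequence \eqref{eq:localization-H_BM,M}, the obstruction to extending $\alpha|_{V'}$ to $W$ is $\del\alpha\in H^{i-1}_{BM,M}(D,\Z(n-1))$. This class vanishes at each generic point $x_j$, hence vanishes after removing a closed subset of $D$ of codimension $\ge1$ in $D$, that is, of codimension $\ge2$ in $X$. Shrinking $W$ accordingly keeps $W\supset F_1X$, and $\alpha$ extends.
\end{enumerate}

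I would present the second approach, since it uses only \eqref{eq:localization-H_BM,M} and \eqref{eq:H_BM,M-fields}.

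\emph{Main obstacle.} The key step is the compatibility in that argument: one must check that a class in the colimit $H^{i-1}_{BM,M}(D,\Z(n-1))$ restricting to zero at the generic points of $D$ vanishes on some dense open of $D$. This holds because Bloch's complex commutes with filtered colimits of opens, which is \eqref{eq:H_BM,M-fields} applied to the components of $D$.

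\emph{Injectivity.} This is automatic, since both groups are subgroups of $H^i_M(k(X),\Z(n))$ and the map is the inclusion. Projectivity of $X$ is not actually needed.
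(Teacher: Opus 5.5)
Your proof is correct. Your preferred approach (2) is essentially the paper's first proof written out by hand: the paper deduces the statement from exactness in the middle of the limit localization sequence \eqref{eq:localization-H_BM,M-Fj} for $j=0$, namely $H^i_{BM,M}(F_1X,\Z(n))\to H^i_{BM,M}(F_0X,\Z(n))\to\bigoplus_{x\in X^{(1)}}H^{i-1}_M(\kappa(x),\Z(n-1))$, and your divisor-and-shrinking argument verifies exactly this. The paper's second proof instead glues the local lifts of Lemma \ref{lem:H_M-nr-lift} by Mayer--Vietoris.

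One small correction: over an imperfect field you cannot always make $V\cap\overline{\{x\}}$ or $D$ smooth (for example, the divisor $\{u^p=t\}\subset\mathbb A^2_{\F_p(t)}$ is nowhere smooth). Smoothness is never used, however, since \eqref{eq:localization-H_BM,M} holds for arbitrary closed equi-dimensional subschemes. It suffices to discard pairwise intersections so that $D$ is a disjoint union of its components.
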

\begin{proof}
The proof of \cite[Lemma 5.8]{Sch-refined} only requires a localization sequence which is compatible with respect to Zariski localization; this exists for higher Chow groups thanks to \cite{bloch-JAG}, see \eqref{eq:localization-H_BM,M}.
In particular, we get a long exact sequence
\begin{align}\label{eq:localization-H_BM,M-Fj}
\dots \longrightarrow H^i_{BM,M}(F_{j+1}X,A(n))\longrightarrow H^i_{BM,M}(F_jX,A(n))\stackrel{\del} \longrightarrow \bigoplus_{x\in X^{(j+1)}} H^{i-1-2j}_{M}(\kappa(x),A(n-j-1))\longrightarrow \dots.
\end{align}
The statement in the proposition is a direct consequence of this applied to $j=0$, because the canonical map $H^i_{BM,M}(F_0X,A(n))\to H^i_M(k(X),A(n))$ is an isomorphism, see \eqref{eq:H_BM,M-fields}.

For convenience of the reader, we give a second argument which involves the Mayer--Vietoris sequence (which in turn is a consequence of the localization sequence, respectively the corresponding exact triangle in the derived category).
We first note that there is a canonical morphism
$$
H_{M,0,nr}^i(X,\Z(n))\longrightarrow H_{M,nr}^i(X,\Z(n))
$$
which is injective because $H_M^i(F_0X,A(n))\cong H_M^i(k(X),A(n))$.
It thus suffices to prove that any unramified class 
$[\alpha]\in H^i_{M,nr}(k(X),A(n))$ lifts to a big open subset of $X$, i.e.\ to a Zariski open subset that contains all codimension one points of $X$.
To prove this, pick a representative $\alpha\in H^i_M(U,A(n))$ of $[\alpha]$ for some dense open subset $U\subset X$.
Let $S\subset  X^{(1)}$ be the set of codimension one points of $X$ that are not contained in $U$ and note that $S$ is a finite set.
If $S$ is empty, then $\alpha$ is a lift of $[\alpha]$ to $F_1X$ and we are done.
Otherwise, let $x\in S$.
By Lemma \ref{lem:H_M-nr-lift}, there is a class $\beta \in H^i_{M}(V,A(n))$ with $x\in V$ such that $\beta$ and $\alpha$ agree on some dense open subset $W\subset U\cap V$.
Up to removing from $V$ a closed subset that does not contain $x$ and removing from $U$ a closed subset of codimension $\geq 2$, we can assume $W=U\cap V$ and so $\alpha|_{U\cap V}=\beta|_{U\cap V}$.
We then consider the Mayer--Vietoris exact sequence (see e.g.\ \cite[(1.1), item (3)]{levine})
$$
\dots \longrightarrow H^i_M(U\cup V,A(n))\longrightarrow H^i_M(U,A(n))\oplus H^i_M(V,A(n)) \longrightarrow H^i_M(U\cap V,A(n)) \longrightarrow \dots 
$$
and conclude that there is a class $\gamma\in H^i_M(U\cup V,A(n))$ with $\gamma|_U=\alpha$.
Note that $X^{(1)}\setminus (U\cup V)^{(1)}\subset S\setminus \{x\}$.
Repeating this argument inductively thus yields a lift of $\alpha$ to some open subset of $X$ which contains all codimension one points of $X$, as we want.
\end{proof}

\subsection{Proof of Theorems \ref{thm:unramified-Milnor-divisible} and \ref{thm:H_M-unramified-divisible}}

\begin{proof}[Proof of Theorem \ref{thm:H_M-unramified-divisible}]
Using \eqref{eq:limit-HiM}, we reduce to the case where $k$ is the separable closure of a finitely generated subfield $k_0\subset k$.
Since $H^i_M(k(X),\Z(n))$ vanishes for $i>n$, see Lemma \ref{lem:curlyH^iZ_motivic(n)=0},   
we may also assume that $i\leq n$.
By the Chinese remainder theorem, it suffices to treat the case where $m=\ell^r$ is a power of a prime $\ell$ that is invertible in $k$.
By Lemma \ref{lem:H_M,nrZ/m}, it then suffices to prove that the natural map
$$
H_{M,nr}^i(X,\Z(n)) \longrightarrow H^i_{nr}(X_{\et},\mu_{\ell^r}^{\otimes n})
$$
is zero.
By Proposition \ref{prop:H_j,nr}, it suffices to prove that the natural map
    $$
    H_{M,0,nr}^i(X,\Z(n))\longrightarrow H_{0,nr}^i(X_\et,\mu_{\ell^r}^{\otimes n})
    $$
    is zero.
    This map factors through a natural map
    $$
    H_{M,0,nr}^i(X,\Z(n))\longrightarrow H_{0,nr}^i(X,\Z_\ell(n)) .
    $$
    By the Bloch--Kato conjecture, proven by Rost and Voevodsky \cite{Voevodsky}, $H_{0,nr}^i(X,\Z_\ell(i-1))$  is torsion-free (see e.g.\ \cite[Remark 5.14]{Sch-refined}).
    Since $k$ is separably closed, it contains all $\ell$-power roots of unity, and so $H_{0,nr}^i(X,\Z_\ell(n))\cong H_{0,nr}^i(X,\Z_\ell(i-1))$ is torsion-free as well.
    It thus suffices to prove that the natural map
    $$
    H_{M,0,nr}^i(X,\Q(n))\longrightarrow H_{0,nr}^i(X,\Q_\ell(n)) 
    $$
    is zero.
    The image of this map is an ind-mixed Galois submodule of weight $0$.
    We thus conclude by noting that the weights $w$ of the ind-mixed Galois module $H_{0,nr}^i(X,\Q_\ell(n))$ satisfy
$$
i-2n\leq w\leq \max(i-2n, 2i-2n-2) ,
$$
see Proposition \ref{prop:weights-H_j,nr}, and so $w\neq 0$, because $n\geq 1$ and $i\leq n$ by the above reduction step.
\end{proof}
 
We record  the following strengthening of the above result.

\begin{theorem} \label{thm:strengthen-thm:H_M-unramified-divisible}
Let $X$ be a smooth variety over a separably closed field $k$ and let $m$ be an integer that is invertible in $k$.
Then the following holds:
\begin{enumerate}
    \item For $i\neq n$, the natural map 
$ % \label{eq:thm:strengthen-thm:H_M-unramified-divisible}
H^i_M(k(X),\Z(n))\longrightarrow H^i_M(k(X),\Z/m(n))
$ is zero. \label{item:thm:strengthen-thm:H_M-unramified-divisible:1}
\item The natural map of Zariski sheaves $\mathcal H^i_M(\Z(n))\to \mathcal H^i_M(\Z/m(n))$ on $X$ is zero for all $i\neq n$.\label{item:thm:strengthen-thm:H_M-unramified-divisible:2}
\end{enumerate} 
\end{theorem}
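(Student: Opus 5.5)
The plan is to split into the cases $i>n$ and $i<n$, handle the function field statement \eqref{item:thm:strengthen-thm:H_M-unramified-divisible:1} first, and then deduce the sheaf statement \eqref{item:thm:strengthen-thm:H_M-unramified-divisible:2} from it via the Gersten conjecture.

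\emph{The case $i>n$ of \eqref{item:thm:strengthen-thm:H_M-unramified-divisible:1}.} Here the source $H^i_M(k(X),\Z(n))$ vanishes by Lemma \ref{lem:curlyH^iZ_motivic(n)=0} (equivalently \eqref{eq:Hi_M(X,n)=0-for-i} applied to $\Spec k(X)$), so there is nothing to prove.

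\emph{The case $i<n$: reductions.} As in the proof of Theorem \ref{thm:H_M-unramified-divisible}, I will make three reductions.
\begin{itemize}
\item By the Chinese remainder theorem, reduce to $m=\ell^r$ for a prime $\ell$ invertible in $k$.
\item By \eqref{eq:limit-HiM}, reduce to the case where $k$ is the separable closure of a finitely generated field $k_0$ and $X=X_0\times_{k_0}k$.
\item Shrinking $X$, assume $X$ is quasi-projective.
\end{itemize}
By Theorem \ref{thm:prelim-comparison-H_m-to-H_et} (with $j=i\le n$) and \eqref{eq:geisser-levine}, the target $H^i_M(k(X),\Z/\ell^r(n))$ identifies with $H^i(k(X),\mu_{\ell^r}^{\otimes n})=H^i_{BM}(F_0X,\Z/\ell^r(n))$. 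The map in question then factors through the $\ell$-adic cycle class map
$$
H^i_M(F_0X,\Z(n))\longrightarrow H^i_{BM}(F_0X,\Z_\ell(n)),
$$
followed by reduction modulo $\ell^r$.

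\emph{The case $i<n$: torsion-freeness and weights.} The group $H^i_{BM}(F_0X,\Z_\ell(n))$ is torsion-free. This is the Bloch--Kato input used in the proof of Theorem \ref{thm:H_M-unramified-divisible}: it holds for the twist $i-1$, and the twist is irrelevant because $k$ contains all $\ell$-power roots of unity. Hence it suffices to show that the rationalized map
$$
H^i_M(F_0X,\Q(n))\longrightarrow H^i_{BM}(F_0X,\Q_\ell(n))
$$
is zero. Indeed, an integral class that vanishes rationally is torsion, hence zero, and so is its reduction mod $\ell^r$.

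To prove this, I argue with weights. Any motivic class is defined on $U_0\times_{k_0}L$ for some open $U_0\subset X_0$ and some finite extension $L/k_0$. Its image therefore spans a finite-dimensional $\Q_\ell$-$G_{k_0}$-submodule on which an open subgroup acts trivially. Frobenius elements thus act with eigenvalues that are roots of unity, so the image is ind-mixed of weight $0$.

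On the other hand, $H^i_{BM}(F_0X,\Q_\ell(n))$ is a direct limit of the groups $H^i(U_\et,\Q_\ell(n))$ with $U$ smooth and quasi-projective. By Proposition \ref{prop:weights} and Example \ref{ex:H^i(F_jX)-H_j,nr-weights}, it is ind-mixed with weights in $\{i-2n,\dots,2i-2n\}$, and these are all negative since $i<n$. By Lemmas \ref{lem:ind-mixed} and \ref{lem:im(f:MtoN)}, a weight-$0$ submodule must therefore vanish, which finishes part \eqref{item:thm:strengthen-thm:H_M-unramified-divisible:1}.

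\emph{Part \eqref{item:thm:strengthen-thm:H_M-unramified-divisible:2}.} A morphism of Zariski sheaves is zero if it is zero on all stalks. By the Gersten conjecture for higher Chow groups \cite{bloch-motivic} (with both $\Z$ and $\Z/m$ coefficients), the stalk $\mathcal H^i_M(\Z/m(n))_x=H^i_M(\mathcal O_{X,x},\Z/m(n))$ injects into $H^i_M(k(X),\Z/m(n))$, compatibly with the corresponding map for $\Z$ coefficients. So the vanishing on stalks follows from part \eqref{item:thm:strengthen-thm:H_M-unramified-divisible:1}.

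The main obstacle I expect is making the weight-$0$ claim rigorous. One has to check that the cycle class map from motivic to continuous $\ell$-adic cohomology is Galois-equivariant and compatible with passing to $F_0X$. Once that is in place, the claim reduces to the observation that motivic classes are invariant under an open subgroup of $G_{k_0}$.
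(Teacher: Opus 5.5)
Your proposal is correct and follows the paper's proof essentially step for step: the trivial case $i>n$, the reduction to $\ell$-power coefficients over the separable closure of a finitely generated field, factoring through the $\ell$-adic cohomology of $F_0X$ (torsion-free by Bloch--Kato), and the clash between weight $0$ and weights $w\le 2i-2n<0$, followed by Gersten injectivity to pass to the sheaves. The only small difference is in part (2): the paper first passes to the perfect closure of $k$ (via \cite[Lemma 4.8]{kok-zhou}) before invoking the Gersten conjecture, whereas you apply Bloch's Gersten result directly over the possibly imperfect field $k$, which the paper elsewhere also treats as legitimate.
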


\begin{proof} 
Let us first prove the vanishing in item \eqref{item:thm:strengthen-thm:H_M-unramified-divisible:1}.
Since $H^i_M(k(X),\Z(n))$ vanishes for $i>n$ (see \eqref{eq:Hi_M(X,n)=0-for-i}), we can assume $i<n$.
Using \eqref{eq:limit-HiM}, we can further reduce to the case where $k$ is the separable closure of a finitely generated field $k_0$, such that $X$ admits a model over $k_0$, cf.~beginning of the proof of Theorem \ref{thm:Hi_MXQ/Z-vanishing}.
As a consequence of Theorem \ref{thm:prelim-comparison-H_m-to-H_et}, $H^i_M(k(X),\Z/m(n))\cong H^i(F_0X,\Z/m(n))$.
Hence,
 the same reduction steps as in the proof of Theorem \ref{thm:H_M-unramified-divisible} reduce us to showing that for a prime $\ell$ invertible in $k$, the natural map
$$
H^i_M(k(X),\Z(n))\longrightarrow H^i(F_0X,\Z_\ell(n))
$$
is zero.
By the Bloch--Kato conjecture, proven by Rost and Voevodsky \cite{Voevodsky}, $H^i(F_0X,\Z_\ell(i-1))$  is torsion-free; the same holds for $H^i(F_0X,\Z_\ell(n))$, because $k$ contains all $\ell$-power roots of unity.
It thus suffices to show that $H^i_M(k(X),\Q(n))\longrightarrow H^i(F_0X,\Q_\ell(n))$ is zero; since any class in $H^i_M(k(X),\Q(n))$ can be defined over $k'(X)$ for some finitely generated extension $k'/k_0$, we find that the image of this map is an ind-mixed Galois submodule of weight zero. 
The vanishing in \eqref{item:thm:strengthen-thm:H_M-unramified-divisible:1} therefore follows from the fact that for $i<n$, the  weights $w$ of $H^i(F_0X,\Q_\ell(n))$ satisfy $w\leq 2i-2n<0$, cf.~Proposition \ref{prop:weights}.

It remains to show that the vanishing in item \eqref{item:thm:strengthen-thm:H_M-unramified-divisible:1} implies that $\mathcal H^i_M(\Z(n))\to \mathcal H^i_M(\Z/m(n))$ vanishes for $i\neq n$.
Since $m$ is invertible in $k$, \cite[Lemma 4.8]{kok-zhou} allows us to pass to the perfect closure of $k$.
We may thus assume that $k$ is perfect.
Then the Gersten conjecture holds for $\mathcal H^i_M(\Z(n))$ and  $\mathcal H^i_M(\Z/m(n))$, see  \cite[Theorem 24.11]{MVW} or \cite{bloch-motivic}.
In particular, we have a commutative diagram
$$
\xymatrix{
\mathcal H^i_M(\Z(n))\ar[r]\ar[d]& \iota_{\eta, \ast}H^i_M(k(X),\Z(n)) \ar[d]\\
\mathcal H^i_M(\Z/m(n))\ar[r]\ar[r]& \iota_{\eta, \ast}H^i_M(k(X),\Z/m(n))
}
$$
where $\iota_\eta\colon \Spec k(X)\to X$ denotes the inclusion of the generic point, and where the horizontal maps are injections by the Gersten conjecture.
By item \eqref{item:thm:strengthen-thm:H_M-unramified-divisible:1} proven above, the vertical map on the right is zero, hence so is the vertical map on the left. 
This concludes the proof of the theorem.
\end{proof}

\begin{proof}[Proof of Theorem \ref{thm:unramified-Milnor-divisible}]
By the Gersten conjecture for motivic cohomology (see \cite[\S 10]{bloch-motivic} and \cite{bloch-JAG}), we have
$H^i_{M,nr}(X,\Z(i))\cong H^0(X,\mathcal H^i(\Z(i)))$,  
where $\mathcal H^i(\Z(i))$ denotes the Zariski sheaf associated to $U\mapsto H^i_M(U,\Z(i))$, see \eqref{eq:H_nr=H^0-curlyH}.
Moreover, $\mathcal K^M_i\stackrel{\cong}\to \mathcal H^i(\Z(i))$ by \cite{kerz} and so Theorem \ref{thm:unramified-Milnor-divisible} follows from Theorem \ref{thm:H_M-unramified-divisible}. 
\end{proof}

\begin{proof}[Proof of Corollary \ref{cor:unramified-Milnor-divisible}]
This follows directly from Theorem \ref{thm:unramified-Milnor-divisible} and the Gersten conjecture  for Milnor K-theory, proven by Kerz \cite{kerz}. 
\end{proof}

\section{Proof of Theorem \ref{thm:filtration-L-intro}} \label{sec:thm:filtration-L-intro}

\subsection{The motivic coniveau filtration} \label{sec:coniveau-filtration}

Let $X$ be a smooth variety over a field.
The coniveau filtration $N^\ast$ on motivic cohomology $H^i_M(X,\Z(n))$ is defined as follows:
$$
N^cH^i_M(X,\Z(n))\coloneq \ker(H_M^i(X,\Z(n))\to H_M^i(F_{c-1}X,\Z(n))),
$$
where $i,c,n\in \Z$  and $F_cX=\emptyset$ for $c<0$.
In other words, $\alpha\in H^i_M(X,\Z(n))$ lies in $N^c$ if and only if $\alpha$ vanishes away from a closed subset of codimension $\geq c$ in $X$.\footnote{We remark that this filtration does not coincide with the coniveau filtration on Chow groups introduced by Bloch, which filters elements in the Chow group by the codimension of closed subsets on which they are homologically trivial, cf.\ \cite{bloch-coniveau,Sch-refined}.}
 
\begin{lemma}\label{lem:N^{i-n}}
Let $X$ be a smooth variety over a field.
Then $N^{i-n}H^i_M(X,\Z(n))=H^i_M(X,\Z(n))$ and $ N^{n+1}H^i_M(X,\Z(n))=0$.
\end{lemma}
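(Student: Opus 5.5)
Both assertions come from the Gersten/coniveau picture of higher Chow groups. By definition, $N^cH^i_M(X,\Z(n))$ is the kernel of restriction to $F_{c-1}X$. So we must show two things. First, every class vanishes on $F_{i-n-1}X$. Second, a class vanishing on $F_nX$ is zero.

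For the first assertion, we may assume $i-n-1\geq 0$, since otherwise $F_{i-n-1}X=\emptyset$ and the claim is trivial. We show that $H^i_M(F_{j}X,\Z(n))=0$ for $j=i-n-1$. Take $U\subset X$ open containing all points of codimension $\leq j$. By \eqref{eq:localization-H_BM,M-Fj} and induction on $j$ (starting from $F_0X$), $H^i_M(F_jX,\Z(n))$ has a filtration whose graded pieces are subquotients of
$$
\bigoplus_{x\in X^{(p)}} H^{i-2p}_M(\kappa(x),\Z(n-p)), \qquad 0\leq p\leq j .
$$
Each of these groups vanishes as soon as $i-2p>n-p$, i.e.\ $i-n>p$, by \eqref{eq:Hi_M(X,n)=0-for-i} applied to $\Spec \kappa(x)$ (dimension $0$). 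This holds for all $p\leq j=i-n-1$. Hence $H^i_M(F_{i-n-1}X,\Z(n))=0$, so $N^{i-n}$ is everything.

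Alternatively, and more directly, $H^i_M(F_{j}X,\Z(n))=\colim_U \CH^n(U,2n-i)$. A cycle on $U\times\Delta^{2n-i}$ of codimension $n$ has support projecting to a subset of $U$ of dimension at most $d_X+2n-i-n$. That is, the projection has codimension at least $i-n$, so it misses $F_{i-n-1}X$ after shrinking $U$. To make this rigorous one needs the localization theorem of \cite{bloch-JAG}, so I would rather use the Gersten-type filtration argument above.

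For the second assertion, a class vanishing on $F_nX$ comes, by the localization sequence \eqref{eq:localization-H_BM,M}, from $H^{i-2c}_{BM,M}(Z,\Z(n-c))$ for some closed $Z$ of codimension $c\geq n+1$. We may enlarge $Z$ to be equi-dimensional of codimension exactly $n+1$ (replace it by a complete intersection containing it). Then $n-c<0$, and Bloch's cycle complex $z^{n-c}$ with negative codimension is zero by convention; concretely, $\CH^{n-c}(Z,\cdot)=0$. So the class is $0$.

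The main care point is the bookkeeping in the first part. One must check that the Gersten spectral sequence (or iterated localization over $F_pX$) really exhausts $H^i_M(F_jX)$ by the pieces $H^{i-2p}_M(\kappa(x),\Z(n-p))$ with $p\leq j$. For this I would cite the Gersten resolution of \cite[\S10]{bloch-motivic}, equivalently the vanishing $\mathcal H^q_M(\Z(n))=0$ for $q>n$ of Lemma \ref{lem:curlyH^iZ_motivic(n)=0}, applied to the $E_1$-terms of the coniveau spectral sequence.
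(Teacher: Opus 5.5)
Your proof is correct. For the second assertion you argue exactly as the paper does: a class in $N^{n+1}$ comes, via localization, from $H^{i-2c}_{BM,M}(Z,\Z(n-c))$ with $c\geq n+1$, and this group vanishes because $n-c<0$.

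For the first assertion your main argument differs from the paper's. The paper uses precisely what you list as your ``alternative''. A class in $\CH^n(X,2n-i)$ is a codimension $n$ cycle on $X\times\Delta^{2n-i}$. The closure of its projection to $X$ has codimension $\geq i-n$, and removing it kills the class.

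You instead show $H^i_M(F_{i-n-1}X,\Z(n))=0$ by iterating \eqref{eq:localization-H_BM,M-Fj}. Your bookkeeping is already sound. By \eqref{eq:localization-H_BM,M-Fj}, $H^i_M(F_pX,\Z(n))$ is an extension of a subgroup of $H^i_M(F_{p-1}X,\Z(n))$ by a quotient of $\bigoplus_{x\in X^{(p)}}H^{i-2p}_M(\kappa(x),\Z(n-p))$. These field groups vanish for $p<i-n$. So induction from $H^i_M(F_0X,\Z(n))=H^i_M(k(X),\Z(n))=0$ (as $i>n$) finishes the argument. Your closing paragraph about the coniveau spectral sequence is therefore not needed.

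Your hesitation about the direct argument is unwarranted. Once \eqref{eq:motivic=higher-Chow} is available compatibly with restriction to opens, which the paper uses throughout, restricting to an open $V$ is just flat pullback of cycles. A cycle whose support misses $V\times\Delta^{2n-i}$ restricts to the zero cycle, so no moving or localization is needed at that step.

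As for what each route buys: the paper's argument is shorter and purely cycle-theoretic. Yours is the formal coniveau argument, namely vanishing of the $E_1$-terms above the weight. It would apply verbatim to any theory with a Zariski-compatible localization sequence and the corresponding vanishing for fields. The price is that it invokes the localization sequence \eqref{eq:localization-H_BM,M-Fj} explicitly.
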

\begin{proof}
Classes in $H^i_M(X,\Z(n))=\CH^n(X,2n-i)$ are represented by codimension $n$ cycles on $X\times \mathbb A^{2n-i}$; such classes vanish if we remove suitable subsets of codimension $n-2n+i=i-n$ from $X$.
Hence, $N^{i-n}H^i_M(X,\Z(n))=H^i_M(X,\Z(n))$.
To see $ N^{n+1}H^i_M(X,\Z(n))=0$, we note that $N^cH^i_M(X,\Z(n))$ is generated by images of classes in $H^{i-2c}_{BM,M}(Z,\Z(n-c))$ with $Z\subset X$ closed of pure dimension $\dim X-c$, and $H^{\ast}_{BM,M}(Z,\Z(n))=0$ for $n<0$.
This concludes the proof of the lemma.
\end{proof}

By Lemma \ref{lem:N^{i-n}}, the motivic coniveau filtration is of the form
$$
H^i_M(X,\Z(n))=N^{i-n}H^i_M(X,\Z(n))\supset N^{i-n+1}\supset N^{i-n+2}\supset \dots \supset N^{n}\supset N^{n+1}=0.
$$
This filtration compares as follows to the filtration $L_\ast$ from the hypercohomology spectral sequence, defined by  
\begin{align} \label{eq:filtration-L-body}
 L_jH^i_M(X,\Z(n))\coloneq \im(H^i(X_\zar, \tau_{\leq j}\Z(n)) \to H^i(X_\zar, \Z(n)) ) .
\end{align}  

\begin{lemma} \label{lem:L-versus-N}
Let $X$ be a smooth variety over a field $k$. Then for all integers $i,c,n\in \Z$, 
$$
N^cH^i_M(X,\Z(n))=L_{i-c}H^i_M(X,\Z(n)).
$$
\end{lemma}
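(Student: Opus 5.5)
We compare both filtrations through the coniveau (Bloch--Ogus) spectral sequence. The plan is to identify the truncation tower of $\Z(n)$ with the Gersten resolution, using the Gersten conjecture for Bloch's cycle complex \cite[\S 10]{bloch-motivic}, \cite{bloch-JAG}. After passing to a perfect base field if necessary (as in the proof of Theorem \ref{thm:strengthen-thm:H_M-unramified-divisible}), the Cousin complex
$$
\mathcal H^q_M(\Z(n))\to \bigoplus_{x\in X^{(1)}}\iota_{x\ast}H^{q-1}_M(\kappa(x),\Z(n-1))\to \bigoplus_{x\in X^{(2)}}\iota_{x\ast}H^{q-2}_M(\kappa(x),\Z(n-2))\to\cdots
$$
is a flasque resolution of $\mathcal H^q_M(\Z(n))$. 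It follows that the hypercohomology spectral sequence of the Postnikov tower $\tau_{\leq j}\Z(n)$, with $E_2^{p,q}=H^p(X,\mathcal H^q_M(\Z(n)))$, agrees from $E_2$ on with the coniveau spectral sequence. The latter is obtained by filtering $X$ by the pro-schemes $F_cX$ and applying the localization sequence \eqref{eq:localization-H_BM,M-Fj}; this is Deligne's comparison of the canonical filtration with the filtration by codimension of supports, as in \cite{BO}.

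The concrete statement to prove is: a class $\alpha$ lies in $L_{i-c}$ if and only if it restricts to zero on $F_{c-1}X$.

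\emph{Inclusion $L_{i-c}\subset N^c$.} Take $\alpha$ coming from $H^i(X,\tau_{\leq i-c}\Z(n))$. For any open $U\supset F_{c-1}X$, its restriction lies in the image of $H^i(U,\tau_{\leq i-c}\Z(n))$. We show that this group vanishes in the colimit over such $U$. The colimit over $U$ of Zariski cohomology of $U$ with coefficients in a complex concentrated in degrees $\leq i-c$ can be computed on the pro-scheme $F_{c-1}X$, whose Zariski cohomological dimension is $\leq c-1$ (its points have codimension $\le c-1$). Hence the colimit $H^i(F_{c-1}X,\tau_{\leq i-c}\Z(n))$ has contributions only from $H^p(\mathcal H^q)$ with $p+q=i$, $q\le i-c$, $p\le c-1$, which is impossible. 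We justify the dimension bound using the Gersten resolution restricted to $F_{c-1}X$, which has length $c-1$.

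\emph{Inclusion $N^c\subset L_{i-c}$.} Take $\alpha$ vanishing on $U=X\setminus Z$ with $\operatorname{codim}Z\ge c$. Consider the triangle $\tau_{\le i-c}\Z(n)\to\Z(n)\to\tau_{>i-c}\Z(n)$. It suffices to show that the image of $\alpha$ in $H^i(X,\tau_{>i-c}\Z(n))$ vanishes. Since $\alpha$ is supported on $Z$, it lifts to $H^i_Z(X,\Z(n))$, so we reduce to proving $H^i_Z(X,\tau_{>i-c}\Z(n))=0$. Using the Gersten resolutions, $H^p_Z(X,\mathcal H^q)$ is computed by the part of the Cousin complex in codimension $\ge c$, so it vanishes for $p<c$. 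The terms with $p+q=i$ and $q>i-c$ have $p<c$, so they vanish, and the claim follows by a spectral sequence or d\'evissage on the truncation.

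The main obstacle is making this support and dimension bookkeeping rigorous for cohomology of truncated complexes. Both steps rest on the Gersten conjecture for the sheaves $\mathcal H^q_M(\Z(n))$ over possibly imperfect fields. I would first try to use the Gersten resolution of the whole complex $\Z(n)$ by Bloch's localization \cite{bloch-JAG}, which holds over any field, and so avoid the reduction to a perfect base field.
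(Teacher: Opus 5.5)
Your argument is correct, but it takes a different route from the paper's. The paper does not prove the two inclusions directly. It regards $L_{-j}$ as the filtration induced by the hypercohomology spectral sequence of the canonical filtration and renumbers it to start at $E_2^{p,q}=H^p(X,\mathcal H^q_M(\Z(n)))$. It then cites the Deligne--Paranjape result \cite{paranjape} that from $E_2$ on this agrees with the coniveau spectral sequence, and reads off $N^q=L_{i-q}$ by matching $E_\infty$-terms.

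Your first paragraph restates that identification, but your proof never uses it. Instead you argue as follows.
\begin{itemize}
\item $L_{i-c}\subset N^c$: the group $\colim_{U\supset F_{c-1}X}H^p(U,\mathcal H^q_M(\Z(n)))$ vanishes for $p\ge c$, because the Cousin complex on $F_{c-1}X$ has no terms in degrees $\ge c$.
\item $N^c\subset L_{i-c}$: $H^p_Z(X,\mathcal H^q_M(\Z(n)))=0$ for $p<\operatorname{codim}Z$, because the Cousin complex with supports in $Z$ starts in degree $\operatorname{codim}Z$.
\end{itemize}
Both steps use only exactness of the Gersten complexes, and both check out.

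The bookkeeping you list as the main obstacle is routine. The complex $\tau_{>i-c}\Z(n)$ is bounded, since $\mathcal H^q_M(\Z(n))=0$ for $q>n$ by Lemma \ref{lem:curlyH^iZ_motivic(n)=0}, so the supported hypercohomology spectral sequence converges. Since $X$ has finite Zariski cohomological dimension, each spectral sequence for $\tau_{\le i-c}\Z(n)|_U$ has finitely many columns. It therefore passes to the filtered colimit over $U$ by exactness of filtered colimits.

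Your route proves equality of the subgroups themselves. The paper's comparison of graded pieces tacitly needs the Deligne--Paranjape isomorphism to respect the filtrations on the abutment; this holds by décalage but is not spelled out. The paper's route buys the stronger statement that the two spectral sequences coincide from $E_2$ on, at the cost of relying on a citation.

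Drop the phrase ``after passing to a perfect base field''. The reduction to the perfect closure in the proof of Theorem \ref{thm:strengthen-thm:H_M-unramified-divisible} relies on the coefficients being invertible in $k$, and it does not apply with $\Z$-coefficients. It is also unnecessary: the paper uses Bloch's Gersten theorem \cite[Theorem 10.1]{bloch-motivic} over arbitrary fields, for instance in Lemma \ref{lem:curlyH^iZ_motivic(n)=0}. That is exactly the input your argument needs, so your fallback plan is the right one.
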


\begin{proof}
Note that $L_\ast$ is increasing. We  formally define the decreasing filtration $\tilde L^{j}\coloneq L_{-j}$.
The filtration $\tilde L^\ast$ is induced by the hypercohomology spectral sequence, see \cite[(1.4.5), (1.4.6)]{deligne-HodgeII}:
$$
\tilde E_1^{p,q}=H^{2p+q}(X_{\zar},\mathcal H_M^{-p}(\Z(n)))\Longrightarrow H_M^{p+q}(X,\Z(n)) .
$$
We use the renumbering $E_{r+1}^{p,q}\coloneq \tilde E_r^{-q,p+2q}$ to turn the above spectral sequence into one that starts at $E_2$, cf.\ \cite[(1.4.8)]{deligne-HodgeII}:
$$
E_2^{p,q}=\tilde E_1^{-q,p+2q}=H^{p}(X_{\zar},\mathcal H_M^{q}(\Z(n))) .
$$
By a result of Deligne and Paranjape, see \cite[Footnote to Remark 6.4]{BO} and \cite[Corollary 4.4]{paranjape}, 
the spectral sequence $E_r^{p,q}$ agrees from $r\geq 2$ onwards with the coniveau spectral sequence.
We have 
$$
{\rm gr}_{\tilde L}^{q}H^{i}_M(X,\Z(n))=\tilde E_{\infty}^{q,i-q}\quad \text{and}\quad
{\rm gr}_N^{q}H^{i}_M(X,\Z(n))=E_\infty^{q,i-q}.
$$
Via the reindexing $E_{r+1}^{a,b}\coloneq \tilde E_r^{-b,a+2b}$ we get $E_{\infty}^{a,b}\coloneq \tilde E_{\infty}^{-b,a+2b}$ and so
$$
{\rm gr}_N^{q}H^{i}_M(X,\Z(n))={\rm gr}_{\tilde L}^{-(i-q)}H^{i}_M(X,\Z(n)).
$$ 
Hence, $N^qH^i_M(X,\Z(n))=\tilde L^{q-i}H^i_M(X,\Z(n))=L_{i-q}H^i_M(X,\Z(n))$.
This proves the lemma.
\end{proof}

\subsection{A cycle class map} \label{subsec:cycle-class} 

\begin{lemma} \label{lem:cycle-class-H_BM}
Let $X$ be an equi-dimensional quasi-projective scheme over a field $k$.
Let $\ell$ be a prime invertible in $k$ and let $A\in \{\Z_\ell,\Q_\ell,\Q_\ell/\Z_\ell,\Z/\ell^r\}$.
Then for all $i,n\in \Z$, there is a canonical cycle class map
$$
\cl\colon H^i_{BM,M}(X,A(n))\longrightarrow H^i_{BM}(X,A(n)) ,
$$
that is induced by Geisser's isomorphism \eqref{eq:lem:cycle-class-H_BM:0} below.
In particular, the cycle class map is compatible with the localization sequence and coincides with the usual \'etale cycle class map if $X$ is smooth. 
\end{lemma}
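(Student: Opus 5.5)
The plan is to define the map through étale motivic Borel--Moore homology. Geisser's isomorphism identifies Bloch's cycle complex with torsion coefficients, computed in the étale topology, with the étale dualizing complex. Concretely, for a quasi-projective $X$ of dimension $d_X$ and $\ell$ invertible in $k$, Geisser (using Suslin--Voevodsky rigidity and Geisser--Levine) proves
\begin{align} \label{eq:lem:cycle-class-H_BM:0}
\Z/\ell^r{}^{BM}(n)_\et \cong \pi_X^!\mu_{\ell^r}^{\otimes n-d_X}[-2d_X] \quad \text{in } D(X_\et).
\end{align}
Here $\Z/\ell^r{}^{BM}(n)_\et$ denotes the étale sheafification of the complex from \eqref{eq:A^BM} with $\Z/\ell^r$-coefficients. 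Granting this, the map for $A=\Z/\ell^r$ is the composition
$$
H^i(X_\zar,\Z/\ell^r{}^{BM}(n))\longrightarrow H^i(X_\et,\Z/\ell^r{}^{BM}(n)_\et)\cong H^{i-2d_X}(X_\et,\pi_X^!\mu_{\ell^r}^{\otimes n-d_X})=H^i_{BM}(X,\Z/\ell^r(n)).
$$
The first arrow is the change-of-topology map $\id\to \R\pi_\ast\pi^\ast$.

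First I will check that \eqref{eq:lem:cycle-class-H_BM:0} is compatible with reduction $\Z/\ell^{r+1}\to \Z/\ell^r$ and with the inclusions $\Z/\ell^r\hookrightarrow \Z/\ell^{r+1}$. Passing to the colimit over $r$ and using \eqref{eq:H_BM,M-Ql/Zl} then gives the case $A=\Q_\ell/\Z_\ell$.

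For $A=\Z_\ell$ I will start from integral classes. By \eqref{eq:H_BM,M=Chow}, the maps $H^i_{BM,M}(X,\Z(n))\to H^i_{BM,M}(X,\Z/\ell^r(n))\to H^i_{BM}(X,\Z/\ell^r(n))$ form a compatible system in $r$. Working on the pro-étale site, where $\pi_X^!\widehat\Z_\ell=\R\lim_r\pi_X^!\Z/\ell^r$, this system lifts to the derived limit. The resulting map out of $H^i_{BM,M}(X,\Z(n))$ factors through $H^i_{BM,M}(X,\Z(n))\otimes\Z_\ell$, which I interpret as $H^i_{BM,M}(X,\Z_\ell(n))$. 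For $A=\Q_\ell$ I then tensor with $\Q_\ell$.

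Next comes compatibility with localization. The triangle behind \eqref{eq:localization-H_BM,M} comes from Bloch's moving lemma on the Zariski site and holds after étale sheafification. On the other side, \eqref{les:Gysin} comes from the triangle $i_\ast i^!\to\id\to Rj_\ast j^\ast$ applied to $\pi_X^!$. Geisser's isomorphism is natural for proper pushforward and open restriction. The boundary maps should then match, since both are induced by the same triangle $i_\ast(\cdot)_Z\to(\cdot)_X\to Rj_\ast(\cdot)_U$.

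For $X$ smooth, \eqref{eq:lem:cycle-class-H_BM:0} reduces via Poincaré duality $\pi_X^!=\pi_X^\ast(d_X)[2d_X]$ to the Geisser--Levine isomorphism \eqref{eq:geisser-levine}. That isomorphism is known to induce the usual cycle class map; one checks this on classes of smooth subvarieties via purity.

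The main obstacle is the $\Z_\ell$ case. Here one needs the limit of the finite-level maps to be well defined: the $\lim^1$ terms must not interfere with functoriality, and compatibility with localization has to survive $\R\lim$. I would first try to build the map at the level of complexes on $X_\proet$, namely as a morphism $\nu^\ast\Z(n)^{BM}\to\R\lim_r(\ldots)$. That makes all compatibilities formal.
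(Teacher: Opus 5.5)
Your route is the paper's: Geisser's duality isomorphism on $X_\et$ composed with the Zariski-to-\'etale change of topology, colimits for $\Q_\ell/\Z_\ell$, a map into a derived limit identified with pro-\'etale $\widehat\Z_\ell$-coefficients for $\Z_\ell$, and then $\otimes\Q_\ell$. Localization and the smooth case also come from the naturality of Geisser's isomorphism and $f^!=f^\ast(d_X)[2d_X]$, as in the paper.

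There is one missing step. Geisser's isomorphism \cite[Corollary 4.7(a)]{geisser-duality} is proven over a \emph{perfect} base field, but the lemma is stated for an arbitrary field $k$, and you quote the isomorphism without this hypothesis. The paper first reduces to $k$ perfect. The map $X_{k^{\mathrm{perf}}}\to X$ is a universal homeomorphism, so by topological invariance of the (pro-)\'etale site \cite[Lemma 5.4.2]{BS} the groups $H^i_{BM}(X,A(n))$ do not change. On the motivic side one uses \cite[Lemma 4.8]{kok-zhou}; for merely constructing the map, precomposing with flat pullback to $X_{k^{\mathrm{perf}}}$ suffices. Only after this is Geisser's isomorphism applied.

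The group-level version of your $\Z_\ell$ step also does not work as stated. A compatible system of maps into $H^i_{BM}(X,\Z/\ell^r(n))$ only gives a map into $\lim_r H^i_{BM}(X,\Z/\ell^r(n))$. That group is the quotient of $H^i_{BM}(X,\Z_\ell(n))$ by $\lim^1_r H^{i-1}_{BM}(X,\Z/\ell^r(n))$, which need not vanish over a general field, so there is no canonical lift.

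The complex-level construction you name as the fix is exactly what the paper does. The reduction maps give $\Z^{BM}(n)\otimes^{\mathbb L}\Z_\ell\to\R\lim_r f^!\mu_{\ell^r}^{\otimes n-d_X}[-2d_X]$ in $D(X_\et)$. Then $\R\lim_r f^!\mu_{\ell^r}^{\otimes n-d_X}\cong\R\nu_\ast f^!\widehat\Z_\ell(n-d_X)$ by \cite[Lemma 6.7.19]{BS} together with $\nu_\ast\nu^\ast\cong\id$ on bounded complexes \cite[Corollary 5.1.6]{BS}. Applying $\R\Gamma(X_\et,-)$ yields the map with $\Z_\ell$-coefficients.

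For this to make sense, the finite-level isomorphisms must be compatible as a map of inverse systems of complexes, not merely square by square in $D(X_\et)$. This is the compatibility you say you intend to check.
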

\begin{proof}
The case $A=\Z/\ell^r$ is contained in \cite[\S 4.1]{kok-zhou} and relies on the six operations in a suitable version of Voevodsky's triangulated category of motives.
The case $A=\Q_\ell/\Z_\ell$ can be deduced from this by direct limits.
Here we use Geisser's work \cite{geisser-duality} to give a more direct proof which also works for integral coefficients.
 
Topological invariance of the pro-\'etale site \cite[Lemma 5.4.2]{BS} together with \cite[Lemma 4.8]{kok-zhou} (see also \cite[Lemma 6.8]{Sch-refined}) allow us to pass to the perfect closure of $k$.
Hence, we may assume that $k$ is perfect.
In this case, let $f\colon X\to \Spec k$ be the structure map and let $d_X=\dim X$.
We then have a canonical isomorphism
$$
\Z^{BM}(n)\otimes^{\mathbb L} \Z/\ell^r\stackrel{\cong}\longrightarrow f^!(\Z^{BM}(n-d_X)\otimes^{\mathbb L} \Z/\ell^r)[-2d_X] ,
$$
in $D(X_\et)$, 
see \cite[Corollary 4.7(a)]{geisser-duality} and note that the complex $\Z^c(n)_X$ in \emph{loc.~cit.~}agrees by definition with the cycle complex $\Z^{BM}(d_X-n)[2d_X]$, cf.~\eqref{eq:A^BM}.
On $\Spec k$, we have $\Z^{BM}(n-d_X)\otimes^{\mathbb L} \Z/\ell^r\cong \mu_{\ell^r}^{\otimes{n-d_X}}$, see \eqref{eq:geisser-levine} or \cite{bloch-motivic}.
Hence, 
\begin{align} \label{eq:lem:cycle-class-H_BM:0}
\Z^{BM}(n)\otimes^{\mathbb L} \Z/\ell^r\stackrel{\cong}\longrightarrow f^!\mu_{\ell^r}^{\otimes{n-d_X}}[-2d_X] ,
\end{align}
in $D(X_\et)$.
We thus obtain a natural morphism
\begin{align} \label{eq:lem:cycle-class-H_BM:1}
\Z^{BM}(n)\otimes^{\mathbb L} \Z_\ell=\Z_\ell^{BM}(n) \longrightarrow R\lim f^!\mu_{\ell^r}^{\otimes{n-d_X}}[-2d_X].
\end{align}
Let now $\nu\colon X_\proet\to X_\et$ be the natural map of sites.
By \cite[Lemma 6.7.19]{BS}, we have
$ 
f^!\widehat \Z_\ell(n) \cong R\lim \nu^\ast f^! \Z/\ell^r(n) .
$ 
Applying $\nu_\ast$, we then get
$$
\nu_\ast f^!\widehat \Z_\ell(n-d_X) \cong R\lim \nu_\ast\nu^\ast f^! \mu_{\ell^r}^{\otimes{n-d_X}}\cong R\lim f^! \mu_{\ell^r}^{\otimes{n-d_X}},
$$
where the first isomorphism uses that  $\nu_\ast$ commutes with $R\lim$ and the second isomorphism uses that $\nu_\ast\nu^\ast \cong \id$ on bounded complexes, see \cite[Corollary 5.1.6]{BS}.
Combining this with \eqref{eq:lem:cycle-class-H_BM:1}, we obtain the cycle class map in the case $A=\Z_\ell$ after applying $R\Gamma(X_{\et},-)$ and composition with the natural map $H^i_{BM,M}(X,A(n))\to H^i(X_\et,A^{BM}(n))$.
Compatibility with the localization sequence can be deduced from \cite[Proposition 3.5(a)]{geisser}; 
compatibility with the usual \'etale cycle class map if $X$ is smooth follows from the construction. 
The case $A=\Q_\ell$ follows after $\otimes \Q_\ell$ and the cases $A\in\{\Z/\ell^r,\Q_\ell/\Z_\ell\}$ follow via similar arguments as above.
\end{proof}

\subsection{Proof of Theorem \ref{thm:filtration-L-intro}} 
Let $X$ be an equi-dimensional quasi-projective variety over a field $k$. 
Let $\ell$ be a prime invertible in $k$.
Recall that there is a cycle class map
\begin{align} \label{eq:cl-H_BM}
\cl\colon H^i_{BM,M}(X,\Z/\ell^r(n))\longrightarrow H^i_{BM}(X,\Z/\ell^r(n)) ,
\end{align}
which is compatible with the localization sequence and coincides with the usual \'etale cycle class map if $X$ is smooth, see \cite[Proposition 4.9]{kok-zhou} or Lemma \ref{lem:cycle-class-H_BM}. 
It follows from the Beilinson--Lichtenbaum conjectures (see Theorem \ref{thm:prelim-comparison-H_m-to-H_et}) that, for $X$ smooth, the cycle class map \eqref{eq:cl-H_BM} is an isomorphism for $i\leq n$ and injective for $i=n+1$.
Compatibility with the localization sequence then allowed Kok and Zhou to prove the following via the five lemma:

\begin{proposition}[{\cite[Proposition 4.9]{kok-zhou}}] \label{prop:kok-zhou}
Let $X$ be an equi-dimensional quasi-projective variety over a field $k$.  
Let $\ell$ be a prime invertible in $k$.  
Then the cycle class map  \eqref{eq:cl-H_BM} is an isomorphism for $i\leq n$ and injective for $i=n+1$.
\end{proposition}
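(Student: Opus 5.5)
We argue by induction on $\dim X$, combining the smooth case with the localization sequences \eqref{eq:localization-H_BM,M} and \eqref{les:Gysin} and the five lemma. The cycle class map of Lemma \ref{lem:cycle-class-H_BM} (with $\Z/\ell^r$-coefficients) is compatible with localization, and this is what makes the five lemma available.

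\emph{Reductions.} First we may assume $k$ is perfect. Topological invariance of the étale site and \cite[Lemma 4.8]{kok-zhou} identify both sides after passing to the perfect closure, since $\ell$ is invertible. We may also replace $X$ by its reduction. Then $X$ contains a smooth dense open subscheme $U$. Shrinking $U$ if necessary, and arguing as in the proof of Proposition \ref{prop:weights}, we may assume the complement $Z=X\setminus U$ is equi-dimensional of codimension $c\geq 1$, and in fact $c=1$.

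\emph{Smooth case.} For $U$ smooth and equi-dimensional, $H^i_{BM,M}(U,\Z/\ell^r(n))=H^i(U_\zar,\Z/\ell^r(n))$ and $H^i_{BM}(U,\Z/\ell^r(n))=H^i(U_\et,\mu_{\ell^r}^{\otimes n})=H^i(U_\zar,\R\pi_\ast\Z/\ell^r(n)_\et)$. Set $C=\Z/\ell^r(n)_\zar$ and $D=\R\pi_\ast\Z/\ell^r(n)_\et$. By Theorem \ref{thm:prelim-comparison-H_m-to-H_et}, $\tau_{\leq n}C\cong\tau_{\leq n}D$, and by Lemma \ref{lem:curlyH^iZ_motivic(n)=0}, $C=\tau_{\leq n}C$. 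Hence the cone of $C\to D$ is $\tau_{\geq n+1}D$, which is concentrated in degrees $\geq n+1$. Its hypercohomology therefore vanishes in degrees $\leq n$. The long exact sequence then shows that $H^i(C)\to H^i(D)$ is an isomorphism for $i\leq n$ and injective for $i=n+1$.

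\emph{Induction.} Compare the two localization sequences for $Z\subset X\supset U$:
\[
H^{i-1}_{BM,M}(U)\to H^{i-2c}_{BM,M}(Z,\Z/\ell^r(n-c))\to H^i_{BM,M}(X)\to H^i_{BM,M}(U)\to H^{i+1-2c}_{BM,M}(Z,\Z/\ell^r(n-c)),
\]
together with the analogous étale Borel--Moore sequence. The vertical maps are cycle class maps. Since $\dim Z<\dim X$, induction gives the statement for $Z$ with twist $n-c$. We have $i-2c\leq n-c$ exactly when $i\leq n+c$, which holds for $i\leq n+1$ because $c\geq 1$. So the map on the $Z$-term in degree $i-2c$ is an isomorphism. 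For $i\leq n$ we also have $i+1-2c\leq n-c$, so the map on the next $Z$-term is an isomorphism as well, and the $U$-terms are isomorphisms by the smooth case. The five lemma then gives an isomorphism for $X$.

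For $i=n+1$ we use the four-lemma variant for injectivity. It requires the map on $H^{i-1}(U)$ to be surjective, which holds because it is an isomorphism ($i-1\leq n$). It requires the map on the $Z$-term in degree $i-2c$ to be injective, which holds since it is an isomorphism. Finally it requires the map on $H^i(U)$ to be injective, which holds since it is injective for $i=n+1$. Together these give injectivity for $X$.

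The main obstacle is the compatibility of the cycle class map with the localization sequences, including the boundary maps. This is exactly what Lemma \ref{lem:cycle-class-H_BM} supplies, via Geisser's duality isomorphism.
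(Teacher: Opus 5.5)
Your argument is correct and follows the route the paper indicates when it cites Kok--Zhou. The smooth case comes from the Beilinson--Lichtenbaum comparison $\tau_{\leq n}\Z/\ell^r(n)_\zar\cong\tau_{\leq n}\R\pi_\ast\Z/\ell^r(n)_\et$, and the general case follows from the compatibility of the cycle class map with the localization sequences plus the five lemma. Your extra steps correctly fill in what the paper leaves implicit: passing to a perfect field and reduced $X$, inducting on $\dim X$ with a complement of pure codimension one, and using the four lemma for injectivity at $i=n+1$.
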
  

Using this, we can prove the following.

\begin{theorem} \label{thm:Hi_MXQ/Z-vanishing}
Let $k$ be a separably closed field and let $\ell$ be a prime invertible in $k$.
Let $X$ be an equi-dimensional quasi-projective variety over $k$ and let $i,n$ be integers with $i<n$.
Then $H^i_{BM,M}(X,\Z(n))\otimes \Q_\ell/\Z_\ell=0$.
\end{theorem}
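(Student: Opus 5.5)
Fix $i<n$ and $\alpha\in H^i_{BM,M}(X,\Z(n))$. By Lemma \ref{lem:A-tensor-Q/Z-elementary}, it suffices to show that for every $r\geq 1$ the image of $\alpha$ in $H^i_{BM,M}(X,\Z(n))/\ell^r$ comes from $\ell$-primary torsion. Equivalently, we show that the image $\bar\alpha\in H^i_{BM,M}(X,\Z/\ell^r(n))$, under the Bockstein injection $H^i_{BM,M}(X,\Z(n))/\ell^r\hookrightarrow H^i_{BM,M}(X,\Z/\ell^r(n))$, is divisible in a suitable sense.

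A cleaner route is to prove directly that the map
$$
H^i_{BM,M}(X,\Z(n))\otimes\Q_\ell/\Z_\ell\longrightarrow H^i_{BM,M}(X,\Q_\ell/\Z_\ell(n))
$$
is zero. This map is injective by the Bockstein sequence, since $\Q_\ell/\Z_\ell=\colim\Z/\ell^r$ and tensoring commutes with colimits. Because $i<n$, Proposition \ref{prop:kok-zhou} shows that the cycle class map $H^i_{BM,M}(X,\Q_\ell/\Z_\ell(n))\to H^i_{BM}(X,\Q_\ell/\Z_\ell(n))$ is an isomorphism, after passing to the direct limit over $r$. So it suffices to show that the composite
$$
H^i_{BM,M}(X,\Z(n))\to H^i_{BM}(X,\Q_\ell/\Z_\ell(n))
$$
obtained by tensoring with $\Q_\ell/\Z_\ell$ vanishes. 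By Lemma \ref{lem:cycle-class-H_BM}, this composite factors as
$$
H^i_{BM,M}(X,\Z(n))\to H^i_{BM,M}(X,\Z_\ell(n))\xrightarrow{\cl} H^i_{BM}(X,\Z_\ell(n))\to H^i_{BM}(X,\Z_\ell(n))\otimes\Q_\ell/\Z_\ell\hookrightarrow H^i_{BM}(X,\Q_\ell/\Z_\ell(n)).
$$
The $\Z_\ell$-module $H^i_{BM}(X,\Z_\ell(n))$ is finitely generated, because $k$ is separably closed. Hence it is enough to show that the image of $H^i_{BM,M}(X,\Z(n))$ in $H^i_{BM}(X,\Q_\ell(n))$ is zero: then the image in $H^i_{BM}(X,\Z_\ell(n))$ is torsion, and torsion dies after tensoring with $\Q_\ell/\Z_\ell$.

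For this I use weights. By \eqref{eq:limit-HiM} and its analogue for higher Chow groups of singular schemes, every class comes from $X_0\times_{k_0}L$ with $L/k_0$ finitely generated. So we may reduce to $k$ being the separable closure of a finitely generated field $k_0$ over which $X$ has a model; this also requires compatibility of $H^i_{BM}$ with such limits, which follows from Proposition \ref{prop:BM-base-change}. Then the image of $\cl$ consists of classes fixed by an open subgroup of $G_{k_0}$. It is therefore a subspace of $H^i_{BM}(X,\Q_\ell(n))$ on which Frobenius elements act trivially, and any nonzero such subspace would contribute weight $0$. By Proposition \ref{prop:weights}, the weights of $H^i_{BM}(X,\Q_\ell(n))$ lie in $\{i-2n,\dots,2i-2n\}$, and all of these are negative because $i<n$. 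Hence the image is zero.

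The main obstacle is the reduction to a finitely generated base field compatible with the cycle class map, together with the claim that the image is Galois-invariant up to finite index. The underlying point is that $\cl$ is equivariant and motivic classes are defined over finitely generated subfields. I would first try to handle this exactly as in the proof of Theorem \ref{thm:H_M-unramified-divisible}, spreading out the cycle over $L$ and replacing $k_0$ by $L$.
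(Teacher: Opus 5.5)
Your proposal is correct and is essentially the paper's proof. Both arguments use the same two ingredients. First, $H^i_{BM,M}(X,\Z(n))\otimes\Q_\ell/\Z_\ell\to H^i_{BM}(X,\Q_\ell/\Z_\ell(n))$ is injective, by the Bockstein sequence together with Proposition \ref{prop:kok-zhou}. Second, this map vanishes: after reducing to $k=k_0^{\sep}$ with $k_0$ finitely generated, the $\ell$-adic cycle class of each motivic class is fixed by an open subgroup of $\Gal(k/k_0)$, and all weights in Proposition \ref{prop:weights} are $\leq 2i-2n<0$, so its rational image is zero.

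The differences are cosmetic. You kill the map by factoring through $H^i_{BM}(X,\Z_\ell(n))_{tors}\otimes\Q_\ell/\Z_\ell=0$, as the paper does over finite fields, rather than using the paper's uniform $N$-torsion bound. Also, the reduction to $k_0^{\sep}$ can be done on the motivic statement itself, so you do not need Proposition \ref{prop:BM-base-change} at that step.
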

\begin{proof}
Since $H^i_{BM,M}(X,\Z(n))$ agrees with Bloch's higher Chow groups, each class is defined over a finitely generated field.
This allows us to reduce to the case where $X=X_0\times_{k_0}k$ is defined over a finitely generated field $k_0$ and $k$ is the separable closure of $k_0$.

By Lemma \ref{lem:cycle-class-H_BM}, we have a cycle class map
\begin{align} \label{eq:cl-H_BM-2}
\cl\colon H^i_{BM,M}(X,\Z(n))\longrightarrow H^i_{BM}(X,\Z_\ell(n)) .
\end{align}
Since $k$ is separably closed and $X$ is of finite type over $k$, %$H^i_{BM}(X,\Z/\ell^r(n))$ is finite and 
the target
$H^i_{BM}(X,\Z_\ell(n))$ is finitely generated as a $\Z_\ell$-module.
To see this, let $f\colon X\to \Spec k$ be the structure map and note that $Rf_\ast f^!\widehat \Z_\ell(n-d_X)$ is constructible \cite[\S 6.7]{BS}, hence is quasi-isomorphic to a perfect complex of $\Z_\ell$-modules (see \cite[Proposition 6.6.11]{BS}) and hence to a complex of finitely generated $\Z_\ell$-modules.

Let $G=\Gal(k/k_0)$.
By Proposition \ref{prop:weights}, $H^i_{BM}(X,\Q_\ell(n))$ is a mixed $\Q_\ell$-$G$-module of weights $w\leq 2i-2n<0$.
Hence, the cycle class map in \eqref{eq:cl-H_BM-2} is torsion.
Since $H^i_{BM}(X,\Z_\ell(n))$ is finitely generated as a $\Z_\ell$-module, there is an integer $N$ such that the image of
$$
H^i_{BM,M}(X,\Z(n))\longrightarrow H^i_{BM}(X,\Z/\ell^r(n))
$$
is $N$-torsion for all $r\geq 0$.
Taking direct limits, we find that the natural map
$$
H^i_{BM,M}(X,\Z(n))\otimes \Q_\ell/\Z_\ell \longrightarrow H^i_{BM}(X,\Q_\ell/\Z_\ell(n))
$$
is $N$-torsion as well.
Since the source of this map is divisible, we find that the map is in fact zero.
This map factors as follows
$$
H^i_{BM,M}(X,\Z(n))\otimes \Q_\ell/\Z_\ell \hookrightarrow H^i_{BM,M}(X,\Q_\ell/\Z_\ell(n)) \longrightarrow H^i_{BM}(X,\Q_\ell/\Z_\ell(n)) .
$$
The first arrow in this sequence is injective (for all values of $i,n$), because of the long exact sequences associated to the coefficient sequences $0\to \Z \to \Z\to \Z/\ell^r\to 0$ and because the direct limit functor is exact.
 The second arrow in the above sequence is injective for $i\leq n+1$, as can be seen by applying direct limits to the injection in Proposition \ref{prop:kok-zhou}, cf.~\eqref{eq:H_BM,M-Ql/Zl}.
Hence, the composition is injective.
However, as we have seen above, the composition is also zero, which implies $H^i_{BM,M}(X,\Z(n))\otimes \Q_\ell/\Z_\ell=0 $, as we want.
\end{proof}

We have the following analogue over finite fields.

\begin{theorem} \label{thm:Hi_MXQ/Z-vanishing-finite}
Let $k$ be a finite field and let $\ell$ be a prime invertible in $k$.
Let $X$ be an equi-dimensional quasi-projective variety over $k$ and let $i,n$ be integers with $i<n$.
Then $H^i_{BM,M}(X,\Z(n))\otimes \Q_\ell/\Z_\ell=0$.
\end{theorem}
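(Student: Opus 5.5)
We follow the strategy of the proof of Theorem \ref{thm:Hi_MXQ/Z-vanishing}, replacing the weight argument over a separable closure by the finiteness statement of Proposition \ref{prop:H_BM-vanishing-finite}. The plan has three steps: bound the image of the integral cycle class map, deduce that the induced map on $\Q_\ell/\Z_\ell$-tensored groups vanishes, and then use injectivity to conclude.

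\emph{Step 1: the image of the integral cycle class map is finite.} Lemma \ref{lem:cycle-class-H_BM} gives a cycle class map
$$
\cl\colon H^i_{BM,M}(X,\Z(n))\longrightarrow H^i_{BM}(X,\Z_\ell(n)).
$$
Since $k$ is finite and $i<n$, Proposition \ref{prop:H_BM-vanishing-finite} says that the target $H^i_{BM}(X,\Z_\ell(n))$ is finite. Hence there is an integer $N$, a power of $\ell$, that annihilates this target. Consequently, for every $r$, the image of the reduced map
$$
H^i_{BM,M}(X,\Z(n))\longrightarrow H^i_{BM}(X,\Z/\ell^r(n))
$$
is $N$-torsion. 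Here I need the compatibility of $\cl$ with reduction modulo $\ell^r$, which holds by the construction in Lemma \ref{lem:cycle-class-H_BM}.

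\emph{Step 2: the tensored map is zero.} Passing to the direct limit over $r$, the induced map
$$
H^i_{BM,M}(X,\Z(n))\otimes \Q_\ell/\Z_\ell\longrightarrow H^i_{BM}(X,\Q_\ell/\Z_\ell(n))
$$
is killed by $N$. Its source is $\ell$-divisible, so the image is both divisible and $N$-torsion, and therefore the map is zero.

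\emph{Step 3: the map is injective, so the source vanishes.} The map above factors as
$$
H^i_{BM,M}(X,\Z(n))\otimes \Q_\ell/\Z_\ell\hookrightarrow H^i_{BM,M}(X,\Q_\ell/\Z_\ell(n))\longrightarrow H^i_{BM}(X,\Q_\ell/\Z_\ell(n)).
$$
The first arrow is injective by the Bockstein sequences together with exactness of direct limits. The second arrow is injective for $i\le n+1$: this follows from Proposition \ref{prop:kok-zhou}, which holds over any field, combined with \eqref{eq:H_BM,M-Ql/Zl}. So the composite is both injective and zero, and hence $H^i_{BM,M}(X,\Z(n))\otimes\Q_\ell/\Z_\ell=0$.

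The only genuinely new input compared with the separably closed case is Step 1, i.e.\ the finiteness of $H^i_{BM}(X,\Z_\ell(n))$ over a finite field. That finiteness is exactly Proposition \ref{prop:H_BM-vanishing-finite}, so I do not expect a serious obstacle. One detail needs checking: that $H^i_{BM}(X,\Z_\ell(n))$ is the pro-\'etale group used in Lemma \ref{lem:cycle-class-H_BM}, and that the Hochschild--Serre argument applies to it. Over a finite field this holds because the $\Z/\ell^r$-groups are finite and Mittag--Leffler applies, as in \eqref{eq:H_proet=H_et}.
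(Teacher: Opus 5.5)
Your proposal is correct and follows essentially the same route as the paper. The paper also reduces, via the injective composite $H^i_{BM,M}(X,\Z(n))\otimes \Q_\ell/\Z_\ell \hookrightarrow H^i_{BM,M}(X,\Q_\ell/\Z_\ell(n)) \to H^i_{BM}(X,\Q_\ell/\Z_\ell(n))$ (Proposition \ref{prop:kok-zhou}), to showing that this composite vanishes; it does so by factoring through $H^i_{BM}(X,\Z_\ell(n))\otimes_{\Z_\ell}\Q_\ell/\Z_\ell$ and invoking the finiteness in Proposition \ref{prop:H_BM-vanishing-finite}, and your argument that the map has $N$-torsion image on a divisible source is a rephrasing of that factorization.
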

\begin{proof}
As in the proof of Theorem \ref{thm:Hi_MXQ/Z-vanishing}, we have natural maps 
$$
H^i_{BM,M}(X,\Z(n))\otimes \Q_\ell/\Z_\ell \hookrightarrow H^i_{BM,M}(X,\Q_\ell/\Z_\ell(n)) \longrightarrow H^i_{BM}(X,\Q_\ell/\Z_\ell(n)) ,
$$
where the first arrow is always injective and the second arrow, induced by \eqref{eq:H_BM,M-Ql/Zl} and \eqref{eq:cl-H_BM}, is injective for $i\leq n+1$, see Proposition \ref{prop:kok-zhou}.
It thus suffices to show that the above composition is trivial.
This map factors through $H^i_{BM}(X,\Z_\ell(n))\otimes_{\Z_\ell} \Q_\ell/\Z_\ell $ and so it suffices to show that the latter group is trivial.
This group is divisible and so it suffices to show that $H^i_{BM}(X,\Z_\ell(n))$ is finite for $i<n$, which is proven in Proposition \ref{prop:H_BM-vanishing-finite}.
This concludes the proof.
\end{proof}

Let $X$ be a quasi-projective variety.
The coniveau filtration $N^\ast$ on $H^i_{BM,M}(X,\Z(n))$ is defined as follows: $\alpha\in H^i_{BM,M}(X,\Z(n))$ lies in $N^c$ if and only if $\alpha$ vanishes away from a closed subset of codimension $\geq c$ in $X$.
If $X$ is smooth, then this agrees with the coniveau filtration on $H^i_{M}(X,\Z(n))$. 
We are finally in the position to prove Theorems \ref{thm:filtration-L-intro} and \ref{thm:filtration-intro-BM}, stated in the introduction.

\begin{proof}[Proof of Theorem \ref{thm:filtration-L-intro}]
By Lemma \ref{lem:L-versus-N}, assertion \eqref{eq:L^jH^iotimesQl/Zl} is equivalent to \eqref{eq:L^jH^iotimesQl/Zl-N}; it thus suffices to prove the latter.
There is a surjection
$$
\lim_{\substack{\longrightarrow\\ Z\subset X}} H_{BM,M}^{i-2j}(Z,\Z(n-j))\longrightarrow N^jH_{BM,M}^i(X,\Z(n)) , 
%\subset H_{BM,M}^i(X,\Z(n))
$$
where $Z\subset X$ runs through all closed equi-dimensional subschemes of pure codimension $j$.
By the right exactness of the tensor product, it thus suffices to prove that
$$
\left(\lim_{\substack{\longrightarrow\\ Z\subset X}} H_{BM,M}^{i-2j}(Z,\Z(n-j))\right)\otimes \Q_\ell/\Z_\ell \cong \lim_{\substack{\longrightarrow\\ Z\subset X}} H_{BM,M}^{i-2j}(Z,\Z(n-j))\otimes \Q_\ell/\Z_\ell 
$$
vanishes.
This follows from Theorems \ref{thm:Hi_MXQ/Z-vanishing} and \ref{thm:Hi_MXQ/Z-vanishing-finite}, because $j>i-n$ is equivalent to $i-2j<n-j$.
\end{proof}

\begin{proof}[Proof of Theorem \ref{thm:filtration-intro-BM}]
This follows, by the same argument as above, from Theorems \ref{thm:Hi_MXQ/Z-vanishing} and \ref{thm:Hi_MXQ/Z-vanishing-finite}.
\end{proof}

\begin{proof}[Proof of Corollary \ref{cor:filtration-L-intro-1}]
This is an immediate consequence of Theorem \ref{thm:filtration-L-intro} and the main result in \cite{kerz}.
\end{proof}

By Corollary \ref{cor:filtration-L-intro-1}, any smooth quasi-projective equi-dimensional scheme $X$ over a separably closed field satisfies 
\begin{align} \label{eq:before-example-sharp}
H^i_M(X,\Z(n))\otimes \Q_\ell/\Z_\ell=0 \quad \text{for $i<n$.}
\end{align}
The same result holds for finite fields by item \eqref{item:thm:Hi_MXQ/Z-vanishing-finite:2} in Theorem \ref{thm:filtration-intro-BM}.
The following example shows that these results are sharp. 

\begin{example} \label{example:sharp-1}
Let $k$ be a field and $\mathbb G_m=\mathbb A^1_k\setminus \{0\}$.
Then 
\begin{align} \label{eq:HiM(Gm^d)}
H^i_M((\mathbb G_m)^d,\Z(i))\otimes \Q_\ell/\Z_\ell \neq 0\quad \quad \text{for all $0\leq i\leq d $.}
\end{align}
The case $i=0$ is trivial and we may assume $i\geq 1$.
To see this, let $X$ be a smooth variety over $k$ and consider the long exact localization sequence 
$$
H^i_M(X\times \mathbb G_m,\Z(i)) \stackrel{\partial} \longrightarrow H^{i-1}_M(X ,\Z(i-1))\stackrel{\iota_\ast} \longrightarrow H^{i+1}_M(X\times \mathbb A_k^1 ,\Z(i)) .
$$
The composition of $\iota_\ast$ with the restriction to $X\times \{1\}$ is zero.
Since the restriction map $H^{i+1}_M(X\times \mathbb A_k^1 ,\Z(i)) \to H^{i+1}_M(X\times \{1\} ,\Z(i)) $ is an isomorphism by $\mathbb A^1$-homotopy invariance (see e.g.~\cite[p.~269, (ii)]{bloch-motivic}), we conclude that $\iota_\ast$ is zero and $\partial$ is surjective.
Combining this with the fact that $H^1_M(X,\Z(1))=H^0(X,\mathbb G_m)$, we conclude \eqref{eq:HiM(Gm^d)} by induction on $i$.
\end{example}

The next example shows that the vanishing in Theorem \ref{thm:filtration-L-intro} and in item \eqref{item:thm:Hi_MXQ/Z-vanishing-finite:1} of Theorem \ref{thm:filtration-intro-BM} are sharp as well.

\begin{example} \label{example:sharp-2}
There is a smooth quasi-projective variety $X$ such that for all $i,n\geq 1$ with $n\leq i\leq 2n$ we have
\begin{enumerate}
    \item $H^i_M(X,\Z(n))\otimes \Q_\ell/\Z_\ell \neq 0 $;
    \item $N^c H^i_M(X,\Z(n))\otimes \Q_\ell/\Z_\ell \neq 0 $ for $c=i-n$.
\end{enumerate} 
To see this, define $c\coloneq i-n$.
Then $n-c\geq 0$ and Example \ref{example:sharp-1} yields the existence of a smooth quasi-projective variety $Z$ with $H^{n-c}_M(Z,\Z(n-c))\otimes \Q_\ell/\Z_\ell \neq 0$.
Then, $X=\mathbb P^c\times Z$ satisfies the above non-vanishing properties by the projective bundle formula, see e.g.~\cite[p.~269, (iv)]{bloch-motivic}.
\end{example}

Finally, let us mention the following, which shows that, in contrast to Example \ref{example:sharp-1}, the vanishing result in \eqref{eq:before-example-sharp} can be improved under suitable assumptions on a smooth compactification, see also the results in Appendix \ref{sec:appendix-B} below.

\begin{proposition}
Let $X$ be a smooth quasi-projective equi-dimensional scheme over a field $k$, which is either separably closed or finite.
Assume that $X$ admits a smooth projective compactification $X\subset X^c$, such that $X^c\setminus X$ has codimension at least two in $X^c$.
Then
$$
H^i_M(X,\Z(n))\otimes \Q_\ell/\Z_\ell=0 \quad \text{for $i\leq n$.}
$$
\end{proposition}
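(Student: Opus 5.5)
Since $i<n$ is already covered by Theorems \ref{thm:Hi_MXQ/Z-vanishing} and \ref{thm:Hi_MXQ/Z-vanishing-finite}, only the case $i=n$ remains. I would assume $n\geq 1$: for $n=0$ the group $H^0_M(X,\Z(0))$ is free on the connected components, so $n\ge 1$ must be implicit. The plan is to reuse the mechanism of those two proofs. By the coefficient sequences and Proposition \ref{prop:kok-zhou} (injectivity holds for $i\leq n+1$, in particular for $i=n$), we have an injection
$$
H^n_M(X,\Z(n))\otimes\Q_\ell/\Z_\ell\hookrightarrow H^n_M(X,\Q_\ell/\Z_\ell(n))\hookrightarrow H^n(X_\et,\Q_\ell/\Z_\ell(n)).
$$
The composite factors through $H^n(X_\proet,\Z_\ell(n))\otimes\Q_\ell/\Z_\ell$ via the cycle class map of Lemma \ref{lem:cycle-class-H_BM}. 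So it suffices to show that this composite is zero, and the only new input is a weight computation for $H^n$ and $H^{n-1}$ of $X$ that uses the compactification $X^c$.

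\emph{Weights.} Let $Z=X^c\setminus X$. After adding suitable closed subsets of codimension $\geq 2$ not meeting $X$'s needs, i.e.\ replacing $Z$ by a pure-dimensional complete intersection containing it and shrinking $X$, I may assume $Z$ has pure codimension $c\geq 2$, as in the proof of Proposition \ref{prop:weights-H_j,nr}. Shrinking $X$ only enlarges the groups in a harmless way, but to be safe I would rather argue directly with $X$ and a stratification of $Z$ into equi-dimensional pieces, each of codimension $\geq 2$. The Gysin sequence \eqref{les:Gysin} gives
$$
H^q(X^c_{\bar k},\Q_\ell(n))\to H^q(X_{\bar k},\Q_\ell(n))\to H^{q+1-2c}_{BM}(Z_{\bar k},\Q_\ell(n-c)).
$$
By Deligne, the left term is pure of weight $q-2n$. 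By Proposition \ref{prop:weights}, the right term has weights at most $2(q+1-2c)-2(n-c)=2q+2-2c-2n$. For $q=n$ these bounds are $-n<0$ and $2-2c<0$. For $q=n-1$ they are $-n-1$ and $-2c$, also negative. Lemma \ref{lem:endo-eigenvalues-sequence} then shows that $H^n(X_{\bar k},\Q_\ell(n))$ and $H^{n-1}(X_{\bar k},\Q_\ell(n))$ have only negative weights.

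\emph{Separably closed case.} I would spread out to $k$ the separable closure of a finitely generated $k_0$, exactly as in Theorem \ref{thm:Hi_MXQ/Z-vanishing}. The image of $H^n_M(X,\Z(n))$ in $H^n(X,\Q_\ell(n))$ is then ind-mixed of weight $0$, so it vanishes by the previous step. Hence the image in the finitely generated $\Z_\ell$-module $H^n(X,\Z_\ell(n))$ is torsion, and therefore killed by a fixed $N$. The map after $\otimes\Q_\ell/\Z_\ell$ has divisible source and is $N$-torsion, so it is zero.

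\emph{Finite field case.} Hochschild--Serre, as in Proposition \ref{prop:H_BM-vanishing-finite}, sandwiches $H^n(X,\Z_\ell(n))$ between $H^{n-1}(X_{\bar k},\Z_\ell(n))_G$ and $H^n(X_{\bar k},\Z_\ell(n))^G$. Both are finite because Frobenius has no eigenvalue $1$ on the negative-weight $\Q_\ell$-modules above. So $H^n(X,\Z_\ell(n))\otimes\Q_\ell/\Z_\ell=0$, and the composite vanishes.

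The main obstacle is the weight step: one must make sure the boundary contributions from $Z$ really have negative weight, which is exactly where $\operatorname{codim}\geq 2$ enters through the bound $2-2c<0$. One must also handle $Z$ not being equi-dimensional; I would do this by filtering $Z$ by closed equi-dimensional pieces and applying the Gysin sequence stratum by stratum.
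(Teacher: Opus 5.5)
Your proposal is correct and follows essentially the paper's own proof. You inject $H^i_M(X,\Z(n))\otimes\Q_\ell/\Z_\ell$ into $H^i(X_\et,\Q_\ell/\Z_\ell(n))$ via Proposition~\ref{prop:kok-zhou}, and you use the localization sequence for $X\subset X^c$ together with Proposition~\ref{prop:weights} to show that $H^q(X_{\bar k},\Q_\ell(n))$ has only negative weights for $q\le n$; this kills the weight-zero image over a separably closed field, and over a finite field it gives finiteness of $H^i(X,\Z_\ell(n))$ via Hochschild--Serre. Your remark that $n\geq 1$ is needed is also right: for $(i,n)=(0,0)$ the group $H^0_M(X,\Z(0))\otimes\Q_\ell/\Z_\ell$ is nonzero, and the paper's argument tacitly uses $n\ge 1$ when it asserts negative weights. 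Your stratification of $Z$ supplies the equi-dimensionality required by Proposition~\ref{prop:weights}, a point the paper passes over.
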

\begin{proof}
Since $X$ is smooth, $H^i_M(X,\Z(n))=H^i_{BM,M}(X,\Z(n))$.

Let us first assume that $k$ is separably closed.
As in the proof of Theorem \ref{thm:Hi_MXQ/Z-vanishing}, we reduce to the case where $k$ is the separable closure of a finitely generated field $k_0\subset k$.
Let $i\leq n$.
Following the proof of Theorem \ref{thm:Hi_MXQ/Z-vanishing}, it then suffices to show that $H^i(X,\Q_\ell(n))$ has negative weights, which follows from Proposition \ref{prop:weights} together with the localization sequence applied to the inclusion $X\subset X^c$.

The case where $k$ is finite follows by similar arguments as in the proof of Theorem \ref{thm:Hi_MXQ/Z-vanishing-finite} from the fact that $H^i(X,\Z_\ell(n))$ is finite for $i\leq n$. 
The latter follows in turn from the same argument as in the proof of Proposition \ref{prop:H_BM-vanishing-finite}, together with the fact that $H^i(X_{\bar k},\Q_\ell(n))$ is a mixed Galois-module of negative weights for $i\leq n$, because $X^c\setminus X$ has codimension at least two in $X^c$.
This concludes the proof.
\end{proof}

\section{Divisibility phenomena of some Bloch--Ogus groups} \label{sec:factoring-the-map}

The main result in this section is the following 

\begin{theorem}\label{thm:Hi-curlyHj}
Let $X$ be a smooth quasi-projective equi-dimensional scheme over a separably closed field $k$.
Then the image of the natural map
$$
H^i(X_\zar,\mathcal H^j_M(\Z(n)))\longrightarrow \lim_{\substack{\longleftarrow\\ r}} H^i(X_\zar, \mathcal H^j_M(\Z/\ell^r(n)))
$$
\begin{enumerate}
\item is zero for $j\neq n$; \label{item:thm:Hi-curlyHj:1}  
\item is torsion for $j=n \geq 2$ and $i\in \{0,1\}$ if $X$ is smooth projective.
\label{item:thm:Hi-curlyHj:2}  
\end{enumerate}  
\end{theorem}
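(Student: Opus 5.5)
For item (1) I would argue at the level of sheaves. By Theorem \ref{thm:strengthen-thm:H_M-unramified-divisible}\eqref{item:thm:strengthen-thm:H_M-unramified-divisible:2}, the reduction map of Zariski sheaves $\mathcal H^j_M(\Z(n))\to \mathcal H^j_M(\Z/\ell^r(n))$ vanishes for every $r$ whenever $j\neq n$. By functoriality, the induced map $H^i(X_\zar,\mathcal H^j_M(\Z(n)))\to H^i(X_\zar,\mathcal H^j_M(\Z/\ell^r(n)))$ is therefore zero for each $r$. The map to the inverse limit is the limit of these compatible maps, so it is zero as well. This settles \eqref{item:thm:Hi-curlyHj:1}; it needs no projectivity, since Theorem \ref{thm:strengthen-thm:H_M-unramified-divisible} only assumes $X$ smooth over a separably closed field. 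I would first reduce to $X$ being a variety, using that $X$ is a disjoint union of its components.

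For item (2), set $j=n\geq 2$ and $i\in\{0,1\}$ with $X$ smooth projective. As in the earlier proofs, I first use \eqref{eq:limit-HiM} to reduce to the case where $k$ is the separable closure of a finitely generated field $k_0$ and $X$ has a model $X_0$ over $k_0$. By Lemma \ref{lem:curlyH^iZ_motivic(n)=0} and Theorem \ref{thm:prelim-comparison-H_m-to-H_et} (Beilinson--Lichtenbaum), we have $\mathcal H^n_M(\Z/\ell^r(n))\cong \R^n\pi_\ast\mu_{\ell^r}^{\otimes n}$ as Zariski sheaves. So the target becomes $\lim_r H^i(X_\zar,\mathcal H^n_\et(\mu_{\ell^r}^{\otimes n}))$, and the map factors through $H^i(X_\zar,\R^n\pi^{\proet}_\ast\widehat\Z_\ell(n))$. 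I would then compare this with $\lim_r$ via a Milnor-type $\lim^1$ sequence. The key point is that any $\lim^1$ or kernel contributions are torsion or otherwise harmless for the claim that the image is torsion. Concretely, it suffices to show that the composite
$$
H^i(X_\zar,\mathcal H^n_M(\Z(n)))\otimes\Q\longrightarrow H^i(X_\zar,\R^n\pi^{\proet}_\ast\widehat\Q_\ell(n))
$$
vanishes, because the $\Z_\ell$-lattice maps to the limit compatibly.

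The vanishing I would get from weights. Every class on the left is defined over some finite extension of $k_0$, so the image is an ind-mixed $G$-submodule of weight $0$, exactly as in the proof of Theorem \ref{thm:H_M-unramified-divisible}. On the other hand, the second part of Proposition \ref{prop:weights-H^p(X,curly-H^q)} with $p=i\leq 1$ and $q=n$ shows that the weights $w$ of the target satisfy
$$
i-n\leq w\leq \max(i-n,\,0-2)=\max(i-n,-2).
$$
Since $n\geq 2$ and $i\leq 1$, we get $w<0$, so $w\neq 0$ and the rational map is zero. This is precisely where projectivity, $i\leq 1$, and $n\geq 2$ are used.

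The main obstacle is the passage from $\lim_r$ with finite coefficients to the $\widehat\Z_\ell$ group. One needs to know that the kernel of $H^i(X_\zar,\R^n\pi^{\proet}_\ast\widehat\Z_\ell(n))\to \lim_r H^i(X_\zar,\mathcal H^n(\mu_{\ell^r}^{\otimes n}))$, or alternatively the relevant $\R^1\lim$ terms, does not destroy the conclusion. I would first try to argue directly: if a class $\alpha$ maps to zero rationally, then some multiple $N\alpha$ maps to zero in the $\widehat\Z_\ell$-group, hence to zero in each finite level. Thus the image of $\alpha$ in the limit is $N$-torsion, which is all the torsion statement requires. This works if the map to $\lim_r$ factors through the $\widehat\Z_\ell$-group, and that factorization I would check via $\R\pi_\ast\widehat\Z_\ell(n)=\R\lim\R\pi_\ast\mu_{\ell^r}^{\otimes n}$ together with the truncations.
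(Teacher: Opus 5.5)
Your proposal is correct and follows essentially the same route as the paper. Item (1) uses the sheaf-level vanishing of Theorem \ref{thm:strengthen-thm:H_M-unramified-divisible}; item (2) uses the Beilinson--Lichtenbaum identification of the bottom row of \eqref{eq:diagramm-Hi-curlyHj}, the factorization through $H^i(X_\zar,\R^n\pi^{\proet}_\ast\widehat\Z_\ell(n))$ given by Lemma \ref{lem:proet-pushforward}, and the weight bound $i-n\leq w\leq \max(i-n,-2)<0$ from Proposition \ref{prop:weights-H^p(X,curly-H^q)}. The $\lim^1$ worry is unnecessary, since that factorization alone shows each image is killed by a power of $\ell$, as you note at the end; and the reduction to $k=k_0^{\sep}$ is more naturally justified via the Gersten resolution of $\mathcal H^n_M(\Z(n))$ than by \eqref{eq:limit-HiM}, which concerns $H^i_M(X,\Z(n))$ rather than these Zariski cohomology groups.
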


\subsection{Around the cycle class map in $\ell$-adic pro-\'etale cohomology} 
Let $X$ be a smooth quasi-projective scheme over a field $k$ and $\pi^\et \colon X_\et\to X_\zar$ and $\pi^\proet \colon X_\proet\to X_\zar$ be the natural maps of sites.

\begin{lemma} \label{lem:proet-pushforward}
Let $\ell$ be a prime invertible in $k$.
We have the following canonical identifications in $D(X_\zar)$:
$$
 \R \lim \R \pi^\et_\ast \Z/\ell^r(n)_{\et}\cong
\R \pi^\et_\ast  \R \lim \Z/\ell^r(n)_{\et}\cong \R \pi_\ast^{\proet}\widehat \Z_\ell(n) .
$$
\end{lemma}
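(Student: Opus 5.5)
The plan is to prove the two identifications separately. In both, the basic inputs are that $\R\pi_\ast$ commutes with $\R\lim$, and the comparison between the \'etale and pro-\'etale sites from \cite{BS}.

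\textbf{First isomorphism.} Right derived functors of right adjoints commute with homotopy limits, so $\R\pi^\et_\ast$ commutes with $\R\lim$. Concretely, $\pi^{\et,\ast}$ is left adjoint to $\R\pi^\et_\ast$ on unbounded derived categories. The functor $\R\lim\colon D(\Ab(X_\et)^{\N})\to D(X_\et)$ is right adjoint to the constant-diagram functor, and likewise on $X_\zar$. The composite right adjoints $\R\pi^\et_\ast\circ \R\lim$ and $\R\lim\circ \R\pi^\et_\ast$ (the latter applied levelwise) are both right adjoint to the same composite of left adjoints. Pulling a constant system back along $\pi^\et$ gives a constant system, so these left adjoint composites agree. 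By uniqueness of adjoints we get a canonical isomorphism $\R\lim \R\pi^\et_\ast\Z/\ell^r(n)_\et\cong \R\pi^\et_\ast\R\lim \Z/\ell^r(n)_\et$. Here I use that $\Z/\ell^r(n)_\et\cong\mu_{\ell^r}^{\otimes n}$ by \eqref{eq:geisser-levine}, so the inverse system is the usual one with surjective transition maps.

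\textbf{Second isomorphism.} Factor $\pi^\proet=\pi^\et\circ\nu$ with $\nu\colon X_\proet\to X_\et$, so $\R\pi^\proet_\ast=\R\pi^\et_\ast\R\nu_\ast$. By definition $\widehat\Z_\ell(n)=\lim_r\nu^\ast\mu_{\ell^r}^{\otimes n}$. Since $X_\proet$ is replete and the transition maps are surjective, \cite[Proposition 3.1.10]{BS} identifies this limit with $\R\lim\nu^\ast\mu_{\ell^r}^{\otimes n}$. As $\R\nu_\ast$ is a right adjoint, it commutes with $\R\lim$, giving
$$
\R\nu_\ast\widehat\Z_\ell(n)\cong \R\lim \R\nu_\ast\nu^\ast\mu_{\ell^r}^{\otimes n}\cong \R\lim\mu_{\ell^r}^{\otimes n}.
$$
The last step uses $\id\cong\R\nu_\ast\nu^\ast$ on bounded below complexes, \cite[Proposition 5.2.6(2)]{BS} (cf.\ \cite[Corollary 5.1.6]{BS}). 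Applying $\R\pi^\et_\ast$ then yields the claim.

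\textbf{Main obstacle.} The only delicate point is the identification $\lim=\R\lim$ on $X_\proet$, which needs repleteness. Everything else is formal, and canonicity follows because every map used is a unit or counit of an adjunction.
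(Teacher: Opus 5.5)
Your proposal is correct and follows essentially the same route as the paper. Both factor $\pi^\proet=\pi^\et\circ\nu$, use $\R\nu_\ast\nu^\ast\cong\id$ from \cite[Proposition 5.2.6]{BS}, commute $\R\nu_\ast$ and $\R\pi^\et_\ast$ past $\R\lim$, and identify $\widehat\Z_\ell(n)=\lim\nu^\ast\mu_{\ell^r}^{\otimes n}$ with $\R\lim\nu^\ast\mu_{\ell^r}^{\otimes n}$ via repleteness of $X_\proet$. The only cosmetic difference is how the commutation of $\R\lim$ with pushforward is justified: you use uniqueness of adjoints, which is valid on unbounded derived categories because the left adjoints are exact. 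The paper instead invokes the composed-functor spectral sequence and the preservation of injectives.
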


\begin{proof}
The first isomorphism follows from the fact that $\R \lim$ and $\R \pi^\et_\ast$ commute, which in turn follows from Grothendieck's composed functor spectral sequence and the fact that $\lim\colon \Ab^\N\to \Ab$ and $\pi^\et_\ast\colon \Ab(X_\et)\to \Ab(X_\zar)$ take injectives to injectives, see e.g.\ \cite[Proof of Lemma A.1]{Sch-moving}.

It remains to prove the second isomorphism.
Let $\nu\colon X_\proet\to X_\et$ be the natural map of sites.
Since the adjunction map $\id\to \R \nu_\ast \nu^\ast$ is an equivalence (see \cite[Proposition 5.2.6.(2)]{BS}), we have
\begin{align*} % \label{eq:Rpi-Rlim}
\R \nu_\ast \nu^\ast \Z/\ell^r(n)_\et\cong   \Z/\ell^r(n)_\et .
\end{align*}
We then get
\begin{align} \label{eq:Rpi-Rlim}
\R \pi^\et_\ast  \R \lim \Z/\ell^r(n)_{\et}\cong \R \pi^\et_\ast  \R \lim \R  \nu_\ast \nu^\ast \Z/\ell^r(n)_\et\cong  \R \pi^\et_\ast \R \nu_\ast  \R \lim   \nu^\ast \Z/\ell^r(n)_\et ,
\end{align}
where we used that $\R \lim$ and $\R\nu_\ast $ commute by the Grothendieck spectral sequence and an argument as before, cf.\ \cite[Proof of Lemma A.1]{Sch-moving}. 
Recall further that $\Z/\ell^r(n)_\et \cong \mu_{\ell^r}^{\otimes n}$ by \cite{geisser-levine}, see \eqref{eq:geisser-levine}.  
Hence,  
$$
\widehat \Z_\ell(n)=\lim   \nu^\ast  \mu_{\ell^r}^{\otimes n}\cong  \R \lim   \nu^\ast  \mu_{\ell^r}^{\otimes n}\cong  \R \lim   \nu^\ast \Z/\ell^r(n)_\et ,
$$
where the first equality holds by definition and the second one follows from \cite[Propositions 3.1.10, 3.2.3, and 4.2.8]{BS}. 
If we plug this into \eqref{eq:Rpi-Rlim} and use $\pi^\proet=\pi^\et\circ \nu$, we get 
$$
\R \pi^\et_\ast  \R \lim \Z/\ell^r(n)_{\et}\cong \R \pi^\et_\ast \R \nu_\ast \widehat \Z_\ell(n)\cong \R \pi_\ast^{\proet} \widehat \Z_\ell(n).
$$
This proves the lemma.
\end{proof}

We consider the following composition of natural maps in $D(X_\zar)$:
\begin{align}  \label{eq:H_m-to-H_proet}
\Z(n)_{\zar}\to \R \lim \Z/\ell^r(n)_{\zar} \to \R \lim \R \pi^\et_\ast \Z/\ell^r(n)_{\et} \cong  \R \pi_\ast^{\proet} \widehat \Z_\ell(n)  .
\end{align}
Here, the first map is induced by the reduction modulo $\ell^r$ map $\Z(n)_{\zar}\to  \Z/\ell^r(n)_{\zar}$, the second map is induced by the natural adjunction map $\Z/\ell^r(n)_{\zar} \to \R \pi^\et_\ast (\pi^\et)^\ast\Z/\ell^r(n)_{\zar}=\R \pi^\et_\ast \Z/\ell^r(n)_{\et} $ and the identification $\R \lim \R \pi^\et_\ast \Z/\ell^r(n)_{\et} \cong  \R \pi_\ast^{\proet} \widehat \Z_\ell(n)$ is taken from Lemma \ref{lem:proet-pushforward} above. 
For each open subset $U\subset X$, there is a commutative diagram 
\begin{align} \label{eq:diagramm-HiM}
\xymatrix{
H^j_M(U,\Z(n))\ar[r]\ar[d]& H^j(U_{\proet}, \widehat \Z_\ell(n) ) \ar[d]\\
H^j_M(U,\Z/\ell^r(n))\ar[r]&
H^j(U_{\et},\mu_{\ell^r}^{\otimes n} )
}
\end{align}
where the horizontal map is induced by \eqref{eq:H_m-to-H_proet}, the vertical maps are the reduction maps and the lower horizontal map is the \'etale cycle class map.
The above diagram induces an analogous diagram of sheaves of abelian groups in the Zariski site of $X$. 
Taking cohomology and inverse limits over $r$, this induces the following commutative diagram 
\begin{align} \label{eq:diagramm-Hi-curlyHj}
\xymatrix{
H^i(X_\zar,\mathcal H^j_M(\Z(n)))\ar[r]\ar[d]& H^i(X_\zar,\R^j \pi^\proet_\ast \widehat \Z_\ell(n) )\ar[d]\\
\lim_r
H^i(X_\zar, \mathcal H^j_M(\Z/\ell^r(n))) \ar[r]&
\lim_r
H^i(X_\zar, \R^j \pi^\et_\ast \Z/\ell^r(n)_\et) 
}
\end{align}
where $\lim_r$ denotes the inverse limit over $r$. 
%We then have the following vanishing result.

\subsection{Proof of Theorem \ref{thm:Hi-curlyHj}} 
We are now in the position to prove Theorem \ref{thm:Hi-curlyHj}, stated above.

\begin{proof}[Proof of Theorem \ref{thm:Hi-curlyHj}]
Item \eqref{item:thm:Hi-curlyHj:1} follows from Theorem \ref{thm:strengthen-thm:H_M-unramified-divisible}, which asserts that the reduction map $\mathcal H^j_M(\Z(n))\to \mathcal H^j_M(\Z/m(n))$ is zero for $j\neq n$.

It remains to deal with the case where $X$ is smooth projective, $j=n$ and $i\in \{0,1\}$. 
 
Note that $k$ is the direct limit of separable closures of finitely generated fields.
A limit argument (based on the Gersten resolution of $\mathcal H^n_M(\Z(n))$, see \cite{bloch-motivic}) then reduces us to the case where $k$ is the separable closure of a finitely generated field $k_0$. 
(Here we do allow non-perfect fields $k$, over which the results from \cite{bloch-motivic} still apply; this could be avoided if we were willing to work with algebraic closures of finitely generated fields, instead of separable closures.) 

Since $j= n$, the lower horizontal map in \eqref{eq:diagramm-Hi-curlyHj} is an isomorphism (see Theorem \ref{thm:prelim-comparison-H_m-to-H_et}).
Moreover, by the Gersten conjecture for motivic cohomology (see \cite{bloch-motivic}), any class in $H^i(X_\zar,\mathcal H^n_M(\Z(n)))$ is defined over a finitely generated field and hence maps to an element of weight zero in 
$$
H^i(X_\zar, \R^n \pi^{\proet}_\ast \widehat \Q_\ell(n))=H^i(X_\zar, \R^n \pi^{\proet}_\ast \widehat \Z_\ell(n))\otimes_{\Z_\ell}\Q_\ell .
$$   
 
Proposition \ref{prop:weights-H^p(X,curly-H^q)} shows that 
$ 
H^i(X_\zar, \R^n \pi^{\proet}_\ast \widehat \Q_\ell(n))
$ 
is ind-mixed and its weights $w$ satisfy
$$
i-n\leq w\leq \max(i-n, -2) ,
$$ 
because $i\in \{0,1\}$.
Hence, $w<0$, because $i<n$.
This concludes the proof of the theorem.
\end{proof}

\subsection{Alternative proof of Theorem \ref{thm:H_M-unramified-divisible}} \label{subsec:thm:unramified-Milnor-divisible-2}
As an application of Theorem \ref{thm:Hi-curlyHj}, we give an alternative proof of Theorem \ref{thm:H_M-unramified-divisible}. 
We begin with some consequences of the work of Rost and Voevodsky \cite{Voevodsky}.

\begin{proposition}\label{prop:Tate-module}
Let $K$ be a field and let $\ell$ be a prime invertible in $K$. 
Then, for any $j\geq 0$, the group 
$$
\lim_{\substack{\longleftarrow\\ r}} H^j_{\et}(K,\mu_{\ell^r}^{\otimes j-1}) 
$$
is torsion-free.
\end{proposition}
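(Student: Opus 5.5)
The plan is to identify the torsion of the inverse limit with an inverse limit of kernels of multiplication by $\ell$, and then to show that these kernels vanish by combining the Bockstein sequence with the Bloch--Kato conjecture. Write $T\coloneq \lim_r H^j_\et(K,\mu_{\ell^r}^{\otimes j-1})$. Since $T$ is a $\Z_\ell$-module, it suffices to show that multiplication by $\ell$ on $T$ is injective. Because $\lim_r$ is left exact, the kernel of $\ell$ on $T$ equals $\lim_r H^j_\et(K,\mu_{\ell^r}^{\otimes j-1})[\ell]$, where the transition maps are the reduction maps. So the task is to show that this inverse system of $\ell$-torsion subgroups has vanishing limit.

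To analyse $H^j_\et(K,\mu_{\ell^r}^{\otimes j-1})[\ell]$, use the long exact sequence attached to
$$
0\to \mu_{\ell}^{\otimes j-1}\to \mu_{\ell^{r+1}}^{\otimes j-1}\to \mu_{\ell^r}^{\otimes j-1}\to 0 ,
$$
or, more directly, the sequence $0\to \mu_{\ell^r}\to\mu_{\ell^{r+1}}\xrightarrow{\ell^r}\mu_\ell\to 0$ twisted appropriately. In either form, the $\ell$-torsion of $H^j(K,\mu_{\ell^r}^{\otimes j-1})$ is controlled by the image of $H^{j-1}(K,\mu_\ell^{\otimes j-1})$ under the Bockstein map. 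The key input is Bloch--Kato (Rost--Voevodsky): $H^{j}(K,\mu_{\ell^r}^{\otimes j})$ is generated by symbols, and the maps $H^{j}(K,\mu_{\ell^r}^{\otimes j})\to H^j(K,\mu_{\ell^s}^{\otimes j})$ for $r\ge s$ are surjective. Equivalently, by Hilbert 90 in degree $j$ (a consequence of Bloch--Kato / Beilinson--Lichtenbaum), $H^{j+1}(K,\Z_\ell(j))$ is torsion-free, in the sense that $H^j(K,\Q_\ell/\Z_\ell(j))$ is divisible. The aim is to transport this to the twist $j-1$.

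The concrete route is as follows. By Beilinson--Lichtenbaum (Theorem~\ref{thm:prelim-comparison-H_m-to-H_et}), $H^j(K,\mu_{\ell^r}^{\otimes j-1})$ is identified in degree $j\le n+1$ with $n=j-1$, since the comparison is injective in degree $n+1$ and the étale groups fit accordingly. Rather than rely on that, I would use the standard consequence of Bloch--Kato that $H^{j+1}_\et(K,\Z(j))=0$ in Lichtenbaum cohomology, i.e.\ Hilbert 90. This gives an exact sequence
$$
0\to H^j_\et(K,\Z(j-1))\otimes\Z/\ell^r\to H^j_\et(K,\mu_{\ell^r}^{\otimes j-1})\to H^{j+1}_\et(K,\Z(j-1))[\ell^r]\to 0 .
$$
Here $H^j_\et(K,\Z(j-1))\cong H^j_M(K,\Z(j-1))=0$, since $j>j-1$ and Beilinson--Lichtenbaum for $\Z$-coefficients holds in degrees $\le n+1$. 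Hence $H^j_\et(K,\mu_{\ell^r}^{\otimes j-1})\cong H^{j+1}_\et(K,\Z(j-1))[\ell^r]$ compatibly in $r$, with the transition maps being multiplication by $\ell$. The inverse limit is then the $\ell$-adic Tate module $T_\ell H^{j+1}_\et(K,\Z(j-1))$, and Tate modules are always torsion-free.

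The main obstacle is justifying the vanishing $H^j_\et(K,\Z(j-1))=0$, i.e.\ the integral Beilinson--Lichtenbaum statement in degree $n+1$ (Hilbert 90 for the twist $j-1$). I would deduce it from the Bloch--Kato conjecture in the form $\tau_{\le n+1}\Z(n)_\zar\simeq\tau_{\le n+1}\R\pi_\ast\Z(n)_\et$ for fields, together with $H^{n+1}_M(K,\Z(n))=0$ by \eqref{eq:Hi_M(X,n)=0-for-i}. Failing that, I would instead show directly that the transition maps act on the $\ell$-torsion as multiplication by $\ell$, which suffices to kill the limit of the torsion subgroups.
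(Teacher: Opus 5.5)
Your concrete route in the third paragraph is correct. It ends the same way as the paper: you identify $H^j_{\et}(K,\mu_{\ell^r}^{\otimes j-1})$, compatibly in $r$, with the $\ell^r$-torsion of one fixed group, so that the reduction maps become multiplication by $\ell$, and you conclude because Tate modules are torsion-free. The difference lies in the choice of fixed group and in which input from Rost--Voevodsky is used.

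The paper stays inside Galois cohomology. The Bockstein sequence for $0\to\mu_{\ell^r}^{\otimes j-1}\to\mu_{\ell^{r+1}}^{\otimes j-1}\to\mu_{\ell}^{\otimes j-1}\to 0$ reduces injectivity of $H^j(K,\mu_{\ell^r}^{\otimes j-1})\to H^j(K,\mu_{\ell^{r+1}}^{\otimes j-1})$ to surjectivity of $K^M_{j-1}(K)/\ell^{r+1}\to K^M_{j-1}(K)/\ell$. This follows from the norm residue isomorphism in degree $j-1$, and it gives $H^j(K,\mu_{\ell^r}^{\otimes j-1})\cong H^j(K,\Q_\ell/\Z_\ell(j-1))[\ell^r]$.

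You instead use two ingredients:
\begin{enumerate}
\item the Lichtenbaum Bockstein sequence, i.e.\ the finite-level version of \eqref{eq:prelim-appendix-ses}, which relies on Geisser--Levine;
\item higher Hilbert 90, $H^{n+1}_L(K,\Z(n))=0$.
\end{enumerate}
You correctly obtain the second from the integral Beilinson--Lichtenbaum comparison in degree $n+1$ (recalled in Section~\ref{sec:appendix-prelim}) together with $H^{n+1}_M(K,\Z(n))=0$. This is heavier machinery than the statement needs. In exchange, it gives an explicit description of the limit as $T_\ell H^{j+1}_L(K,\Z(j-1))$. The two fixed groups agree: under Hilbert 90, the sequence \eqref{eq:prelim-appendix-ses} identifies $H^j(K,\Q_\ell/\Z_\ell(j-1))$ with $H^{j+1}_L(K,\Z(j-1))[\ell^\infty]$.

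Two small points. First, for $j=0$ your argument does not apply as written, since $\Z(-1)=0$ by convention while $\mu_{\ell^r}^{\otimes -1}\neq 0$. That case is trivial, because $H^0(K,\mu_{\ell^r}^{\otimes -1})\subset\mu_{\ell^r}^{\otimes -1}$, so the limit embeds into $\Z_\ell(-1)$. Second, the motivational second paragraph and the ``fallback'' in your last sentence contain imprecise claims, for instance the asserted equivalence with torsion-freeness of $H^{j+1}(K,\Z_\ell(j))$. They are best dropped, since the concrete route does not use them.
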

\begin{proof}
By the Bloch--Kato conjecture, proven by Rost and Voevodsky \cite{Voevodsky}, together with the Bockstein-sequence, we see that the natural map $H^j_{\et}(K,\mu_{\ell^r}^{\otimes j-1})\to H^j_{\et}(K,\mu_{\ell^{r+1}}^{\otimes j-1})$ that is induced by the inclusion $ \mu_{\ell^r}^{\otimes j-1}\hookrightarrow \mu_{\ell^{r+1}}^{\otimes j-1}$ is injective.
This induces a canonical isomorphism 
$$
H^j_{\et}(K,\mu_{\ell^r}^{\otimes j-1})\stackrel{\cong}\longrightarrow H^j_{\et}(K,\Q_\ell/\Z_\ell(j-1))[\ell^r] .
$$
Hence, 
$$
\lim_{\substack{\longleftarrow\\ r}} H^j_{\et}(K,\mu_{\ell^r}^{\otimes j-1}) 
$$
identifies to the Tate module of $H^j_{\et}(K,\Q_\ell/\Z_\ell(j-1))$, which, as any Tate module, is torsion-free, as we want.
\end{proof}

\begin{proposition} \label{prop:torsion-free}
Let $K$ be a field and let $\ell$ be a prime invertible in $K$.
Assume that $K$ contains all $\ell$-power roots of unity.
Then for any $n,j\geq 0$, the group 
$$
\lim_{\substack{\longleftarrow\\ r}} H^j_M(K,\Z/\ell^r(n))  
$$
is torsion-free.
\end{proposition}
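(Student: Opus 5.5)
We reduce to Proposition \ref{prop:Tate-module} by splitting into the cases $j\le n$ and $j>n$.

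\emph{Case $j>n$.} By Lemma \ref{lem:curlyH^iZ_motivic(n)=0}, applied to $X=\Spec K$, or directly by \eqref{eq:Hi_M(X,n)=0-for-i} with $\dim =0$, we have $H^j_M(K,\Z/\ell^r(n))=0$. The inverse limit is then zero and there is nothing to prove.

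\emph{Case $j\le n$.} Theorem \ref{thm:prelim-comparison-H_m-to-H_et} gives $\tau_{\le n}\Z/\ell^r(n)_\zar\simeq \tau_{\le n}\R\pi_\ast\Z/\ell^r(n)_\et$. Over $\Spec K$ the Zariski site is trivial, so using \eqref{eq:geisser-levine} this yields isomorphisms
$$
H^j_M(K,\Z/\ell^r(n))\cong H^j_\et(K,\mu_{\ell^r}^{\otimes n}).
$$
These isomorphisms are compatible with the reduction maps $\Z/\ell^{r+1}\to\Z/\ell^r$, since the comparison map is natural in the coefficients. So it suffices to show that $\lim_r H^j_\et(K,\mu_{\ell^r}^{\otimes n})$ is torsion-free.

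For this we change the twist. Since $K$ contains all $\ell$-power roots of unity, we may choose a compatible system of generators $\zeta_{\ell^r}$ of $\mu_{\ell^r}$, with $\zeta_{\ell^{r+1}}^\ell=\zeta_{\ell^r}$. This gives isomorphisms of Galois modules $\mu_{\ell^r}^{\otimes n}\cong\mu_{\ell^r}^{\otimes j-1}$ that are compatible with the transition maps $\mu_{\ell^{r+1}}^{\otimes m}\to\mu_{\ell^r}^{\otimes m}$, namely raising to the $\ell$-th power in each factor. Hence
$$
\lim_r H^j_\et(K,\mu_{\ell^r}^{\otimes n})\cong \lim_r H^j_\et(K,\mu_{\ell^r}^{\otimes j-1}).
$$
The right-hand side is torsion-free by Proposition \ref{prop:Tate-module}.

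When $j=0$, the twist $j-1=-1$ is negative. Proposition \ref{prop:Tate-module} still covers this, with negative twists read as duals, which are trivialized by roots of unity in the same way. Alternatively, we can argue directly: $H^0_M(K,\Z/\ell^r(n))$ is $\Z/\ell^r$ or $0$, so the inverse limit is $\Z_\ell$ or $0$, both torsion-free.

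The main obstacle is the compatibility bookkeeping. First, the transition maps in the inverse limit, namely the reductions $\Z/\ell^{r+1}(n)\to\Z/\ell^r(n)$, must match the étale reductions under the Beilinson--Lichtenbaum comparison and under Geisser--Levine's identification. Second, the twist-changing isomorphisms must form a morphism of inverse systems; this is exactly where the compatible choice of roots of unity is used.
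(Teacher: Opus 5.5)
Your argument is correct and is essentially the paper's proof. You split off $j>n$ via Lemma \ref{lem:curlyH^iZ_motivic(n)=0}. For $j\le n$ you use the Beilinson--Lichtenbaum comparison together with Geisser--Levine, retwist $\mu_{\ell^r}^{\otimes n}\cong\mu_{\ell^r}^{\otimes j-1}$ using the roots of unity in $K$, and conclude by Proposition \ref{prop:Tate-module}. The paper leaves the compatibility with the transition maps implicit, and your compatible system $\zeta_{\ell^{r+1}}^{\ell}=\zeta_{\ell^r}$ makes it explicit.
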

\begin{proof}
For $j>n$, the group $H^j_M(K,\Z/\ell^r(n))$ vanishes, see Lemma \ref{lem:curlyH^iZ_motivic(n)=0}. 
For $j\leq n$, the natural map
$$
H^j_M(K,\Z/\ell^r(n))\stackrel{\cong}\longrightarrow H^j_{\et}(K,\mu_{\ell^r}^{\otimes n})
$$
is an isomorphism by the Beilinson--Lichtenbaum conjecture, proven by Rost and Voevodsky, see Theorem \ref{thm:prelim-comparison-H_m-to-H_et} and the result of Geisser--Levine in \eqref{eq:geisser-levine}.
Since $K$ contains all $\ell^r$-th roots of unity, $\mu_{\ell^r}^{\otimes n}\cong \mu_{\ell^r}^{\otimes j-1}$.
The result therefore follows from Proposition \ref{prop:Tate-module}.
\end{proof}

We are now in the position to prove the following.

\begin{theorem} \label{thm:cor:Hi-curlyHj-1}
Let $k$ be a separably closed field and let $\ell$ be a prime invertible in $k$.
Let $X$ be a smooth projective variety over $k$.
Then for any $n\geq 1$ and $j,r\geq 0$, the map 
$$
H^0(X_\zar,\mathcal H^j_M(\Z(n)))\longrightarrow  H^0(X_\zar,\mathcal H^j_M(\Z/\ell^r(n)))
$$
that is induced by the reduction modulo $\ell^r$ map $\Z(n)\to \Z/\ell^r(n)$ vanishes.
\end{theorem}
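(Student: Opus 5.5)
The plan is to combine Theorem \ref{thm:Hi-curlyHj} with the torsion-freeness of Proposition \ref{prop:torsion-free}, working through the limit over $r$. The given map factors as
$$
H^0(X_\zar,\mathcal H^j_M(\Z(n)))\longrightarrow \lim_{\substack{\longleftarrow\\ s}} H^0(X_\zar,\mathcal H^j_M(\Z/\ell^s(n)))\longrightarrow H^0(X_\zar,\mathcal H^j_M(\Z/\ell^r(n))),
$$
where the second arrow is projection to the $r$-th term. It therefore suffices to show that the image of the first arrow vanishes.

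First we dispose of the easy cases.
\begin{itemize}
\item For $j>n$ the source vanishes by Lemma \ref{lem:curlyH^iZ_motivic(n)=0}.
\item For $j\neq n$ the first arrow is zero by item \eqref{item:thm:Hi-curlyHj:1} of Theorem \ref{thm:Hi-curlyHj}. In fact Theorem \ref{thm:strengthen-thm:H_M-unramified-divisible} already kills the sheaf map directly.
\item For $j=n=1$ we have $\mathcal H^1_M(\Z(1))=\mathcal O^\ast$ and $H^0(X,\mathcal O^\ast)=k^\ast$. This group is $\ell$-divisible because $k$ is separably closed, so its image in any $\Z/\ell^r$-coefficient group is zero.
\end{itemize}

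The main case is $j=n\geq 2$ with $i=0$. Here item \eqref{item:thm:Hi-curlyHj:2} of Theorem \ref{thm:Hi-curlyHj} says the image $I$ of $H^0(X_\zar,\mathcal H^n_M(\Z(n)))$ in $\lim_s H^0(X_\zar,\mathcal H^n_M(\Z/\ell^s(n)))$ is torsion. It then suffices to show this inverse limit is torsion-free, which forces $I=0$.

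To show torsion-freeness:
\begin{itemize}
\item Since $H^0$ commutes with inverse limits, $\lim_s H^0(X,\mathcal H^n_M(\Z/\ell^s(n)))$ is the global sections of the sheaf $\lim_s \mathcal H^n_M(\Z/\ell^s(n))$.
\item By the Gersten conjecture for $\mathcal H^n_M(\Z/\ell^s(n))$ (after passing to the perfect closure via \cite[Lemma 4.8]{kok-zhou}, or using Bloch's version), each $\mathcal H^n_M(\Z/\ell^s(n))$ injects into $\iota_{\eta,\ast}H^n_M(k(X),\Z/\ell^s(n))$.
\item Inverse limits are left exact, so $\lim_s H^0(X,\mathcal H^n_M(\Z/\ell^s(n)))$ embeds into $\lim_s H^n_M(k(X),\Z/\ell^s(n))$.
\item The latter is torsion-free by Proposition \ref{prop:torsion-free} applied to $K=k(X)$, which contains all $\ell$-power roots of unity because $k$ is separably closed.
\end{itemize}
A subgroup of a torsion-free group is torsion-free, so $I$ is both torsion and torsion-free, hence zero.

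The main obstacle is the compatibility of the Gersten injection with the inverse limit over $s$. The injections must be compatible with the transition maps, and left exactness of $\lim$ must apply at the level of global sections rather than only stalks. This should be fine because we only use injectivity into a constant-type pushforward from the generic point, and the Gersten maps are natural in the coefficients. A secondary point is that item \eqref{item:thm:Hi-curlyHj:2} requires $X$ smooth projective, which is exactly our hypothesis.
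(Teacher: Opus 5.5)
Your proposal is correct and is essentially the paper's proof. You use Theorem \ref{thm:Hi-curlyHj} to reduce to torsion-freeness of $\lim_r H^0(X_\zar,\mathcal H^j_M(\Z/\ell^r(n)))$, and get that from the Gersten injection into $H^j_M(k(X),\Z/\ell^r(n))$, left exactness of $\lim$, and Proposition \ref{prop:torsion-free}. Your separate treatment of $j=n=1$ via the $\ell$-divisibility of $H^0(X,\mathcal O^\ast)=k^\ast$ is a welcome extra, since item \eqref{item:thm:Hi-curlyHj:2} is only stated for $n\geq 2$ (although its weight argument in fact works for $i=0$ and any $n\geq 1$).
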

\begin{proof}
By Theorem \ref{thm:Hi-curlyHj}, it suffices to show that 
$$
\lim_{\substack{\longleftarrow\\ r}} H^0(X_\zar,\mathcal H^j_M(\Z/\ell^r(n)))
$$
is torsion-free.
By the Gersten conjecture for motivic cohomology with $\Z/\ell^r$-coefficients (see e.g.\ \cite[Theorem 24.11]{MVW}), the natural map
$$
H^0(X_\zar,\mathcal H^j_M(\Z/\ell^r(n)))\longrightarrow H^j_M(k(X),\Z/\ell^r(n))
$$
is injective.
Since the inverse limit functor is left exact, this induces an injection
$$
\lim_{\substack{\longleftarrow\\ r}} H^0(X_\zar,\mathcal H^j_M(\Z/\ell^r(n)))\hookrightarrow \lim_{\substack{\longleftarrow\\ r}} H^j_M(k(X),\Z/\ell^r(n)) .
$$
The result thus follows from Proposition \ref{prop:torsion-free}.
\end{proof}

By the Gersten conjecture for motivic cohomology, Theorem \ref{thm:cor:Hi-curlyHj-1} implies the following,  which by the Chinese remainder theorem and Lemma \ref{lem:H_M,nrZ/m} implies Theorem \ref{thm:H_M-unramified-divisible}.

\begin{corollary}
In the notation of Theorem \ref{thm:cor:Hi-curlyHj-1}, the reduction modulo $\ell^r$ map
$ 
H^i_{M,nr}(X,\Z(n))\to H^i_{M,nr}(X,\Z/\ell^r(n))
$ 
is zero.
\end{corollary}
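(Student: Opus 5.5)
The plan is to deduce the corollary directly from Theorem \ref{thm:cor:Hi-curlyHj-1} by identifying both unramified groups with $H^0$ of the corresponding Zariski cohomology sheaves, using the Gersten conjecture.

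First, by \eqref{eq:H_nr=H^0-curlyH}, which rests on Bloch's Gersten conjecture for higher Chow groups \cite{bloch-motivic,bloch-JAG}, there are canonical isomorphisms
$$
H^i_{M,nr}(X,\Z(n))\cong H^0(X_\zar,\mathcal H^i_M(\Z(n)))\quad\text{and}\quad H^i_{M,nr}(X,\Z/\ell^r(n))\cong H^0(X_\zar,\mathcal H^i_M(\Z/\ell^r(n))).
$$
For the torsion coefficients this identification also follows from the Gersten conjecture with $\Z/\ell^r$-coefficients \cite[Theorem 24.11]{MVW}. If $k$ is not perfect, I would first pass to the perfect closure via \cite[Lemma 4.8]{kok-zhou}, exactly as in the proof of Theorem \ref{thm:strengthen-thm:H_M-unramified-divisible}, or else cite Bloch's version, which holds over arbitrary fields.

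Second, I would check that the reduction map $H^i_{M,nr}(X,\Z(n))\to H^i_{M,nr}(X,\Z/\ell^r(n))$, defined on function fields as the restriction of the reduction map $H^i_M(k(X),\Z(n))\to H^i_M(k(X),\Z/\ell^r(n))$, matches under these isomorphisms the map on $H^0$ induced by the sheaf map $\mathcal H^i_M(\Z(n))\to\mathcal H^i_M(\Z/\ell^r(n))$. This follows from the commutative square comparing each sheaf with its generic stalk $\iota_{\eta,\ast}H^i_M(k(X),-)$, as in the diagram in the proof of Theorem \ref{thm:strengthen-thm:H_M-unramified-divisible}. The horizontal maps in that square are injective, so compatibility can be checked on the generic point, where it holds by construction.

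Third, I would apply Theorem \ref{thm:cor:Hi-curlyHj-1} with $j=i$. It gives that the map on $H^0$ vanishes for $n\geq 1$, so the corollary follows.

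The only genuinely delicate point is the compatibility in the second step, that is, that the residue-based definition of unramified classes matches the sheaf-theoretic description functorially in the coefficients. I expect this to be routine once the Gersten injectivity is in place. The case $n=0$ is excluded by the hypothesis $n\geq1$ of Theorem \ref{thm:cor:Hi-curlyHj-1}, which is inherited here through the notation.
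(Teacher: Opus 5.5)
Your proposal is correct and is essentially the paper's own (one-line) argument: via the Gersten conjecture, \eqref{eq:H_nr=H^0-curlyH} identifies both unramified groups with $H^0(X_\zar,\mathcal H^i_M(-))$, compatibly with reduction modulo $\ell^r$, and Theorem \ref{thm:cor:Hi-curlyHj-1} with $j=i$ then gives the vanishing. The detour through the perfect closure is unnecessary, since \eqref{eq:H_nr=H^0-curlyH} holds for arbitrary coefficients over any field.
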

 
\appendix

\section{Chow groups tensor $\Q_{\ell}/\Z_{\ell}$ may be large} 
\label{sec:Chow-tensor-Q/Z}
The following result follows from \cite{totaro-chow}, which in turn relies on a theorem of Bloch--Esnault \cite{BE} and earlier results of Schoen \cite{schoen-modn} and Rosenschon--Srinivas \cite{RS}.

\begin{theorem} \label{thm:Chow-tensor-Ql/Zl}
Let $X\coloneq JC$ be the Jacobian of a very general complex projective curve $C$ of genus $3$.
Then, for any subgroup $M\subset \CH^2(X)$ with finitely generated cokernel, we have $M\otimes \Q_\ell/\Z_\ell \neq 0$  for any prime $\ell$.
\end{theorem}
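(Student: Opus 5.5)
Assume, for a contradiction, that $M\otimes \Q_\ell/\Z_\ell=0$ for some subgroup $M\subset \CH^2(X)$ with finitely generated cokernel and some prime $\ell$. The plan is to show that this forces $\CH^2(X)/\ell$ to be finite. That contradicts the main result of \cite{totaro-chow}, which (building on \cite{BE,schoen-modn,RS}) says that for the Jacobian $X=JC$ of a very general complex curve of genus $3$ the group $\CH^2(X)/\ell$ is infinite for every prime $\ell$.

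\emph{Step 1: $M/\ell$ is finite.} By Lemma \ref{lem:A-tensor-Q/Z-elementary}, the hypothesis gives $M\subset \ell M+M[\ell^\infty]$. Hence the natural map $M[\ell^\infty]\to M/\ell M$ is surjective. The group $M[\ell^\infty]$ is a subgroup of $\CH^2(X)[\ell^\infty]$. By Merkurjev--Suslin (as used by Colliot-Th\'el\`ene--Sansuc--Soul\'e and Bloch), the latter injects into $H^3(X_\et,\Q_\ell/\Z_\ell(2))$. This is a group of cofinite type, since $X$ is smooth projective over $\C$. Any subgroup of a cofinite type $\ell$-primary group is again of cofinite type, i.e.\ of the form $(\Q_\ell/\Z_\ell)^s\oplus F$ with $F$ finite. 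Its quotient modulo $\ell$ is therefore finite. It follows that $M/\ell M$, being a quotient of $M[\ell^\infty]/\ell$, is finite.

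\emph{Step 2: $\CH^2(X)/\ell$ is finite.} Tensor the exact sequence $0\to M\to \CH^2(X)\to Q\to 0$ with $\Z/\ell$; here $Q$ is finitely generated by assumption. By right exactness this gives an exact sequence $M/\ell\to \CH^2(X)/\ell\to Q/\ell\to 0$. The group $Q/\ell$ is finite because $Q$ is finitely generated, and $M/\ell$ is finite by Step 1. Hence $\CH^2(X)/\ell$ is finite, which is the desired contradiction with \cite{totaro-chow}.

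\emph{Main obstacle.} Steps 1 and 2 are formal. The real input is the precise form of Totaro's theorem: one has to check that it gives infiniteness of $\CH^2(JC)/\ell$ for \emph{every} prime $\ell$, including $\ell=2$, and for Jacobians of very general genus $3$ curves specifically, rather than only for some related family of abelian threefolds. If \cite{totaro-chow} is stated for a very general principally polarized abelian threefold, I would conclude by noting that in dimension $3$ such a variety is the Jacobian of a very general genus $3$ curve, since the Torelli map $M_3\to A_3$ is dominant. The cofinite-type input in Step 1 only needs the injectivity of the torsion cycle map in codimension $2$, which is classical.
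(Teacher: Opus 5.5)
Your argument is correct, but it takes a different and shorter route than the paper.

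The paper does not work with $\CH^2(X)[\ell^\infty]$ directly. Instead:
\begin{enumerate}
\item It extracts from the \emph{proof} in \cite{totaro-chow} the bound $\ell^m\cdot N^1H^3(X_\et,\Z/\ell^r(2))\subset \Theta\cdot H^1(X_\et,\Z/\ell^r(1))$ for all $r$.
\item Via \cite[\S 18]{MS}, it deduces that $\Griff^2(X)[\ell^\infty]=\Griff^2(X)[\ell^m]$ is finite.
\item It passes to $A^2(X)=\CH^2(X)/\sim_{\alg}$. Algebraically trivial cycles are divisible, so $A^2(X)/\ell\cong \CH^2(X)/\ell$. Torsion-freeness of the cohomology of abelian varieties gives $A^2(X)[\ell^\infty]\subset \Griff^2(X)[\ell^\infty]$.
\item It runs the contradiction inside $A^2(X)$, using an infinite-dimensional subspace $V\subset A^2(X)/\ell$ that contains no nonzero reduction of a torsion class.
\end{enumerate}

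Both proofs rest on the same mechanism: the image of the $\ell$-primary torsion in the mod $\ell$ reduction is finite, while the mod $\ell$ reduction itself is infinite. You obtain the finiteness of that image from the classical fact that $\CH^2(X)[\ell^\infty]$ is of cofinite type. This follows from the injectivity of Bloch's map $\lambda^2$ for smooth projective varieties over separably closed fields. It is also derivable inside the paper, by combining Proposition \ref{weightL}(b) (which gives $H^4_L(X,\Z(2))[\ell^\infty]\cong H^3_\et(X,\Q_\ell/\Z_\ell(2))$) with the injectivity of $H^4_M(X,\Z(2))\to H^4_L(X,\Z(2))$ recalled in Section \ref{sec:appendix-prelim}.

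Your route uses only the \emph{statement} of Totaro's theorem. So it proves more generally that every smooth projective $X$ over a separably closed field with $\CH^2(X)/\ell$ infinite has the property in the theorem. It also recovers the statement modulo algebraic equivalence, since $A^2(X)[\ell^\infty]$ is a quotient of $\CH^2(X)[\ell^\infty]$ and hence again of cofinite type.

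The paper's route needs the internals of Totaro's argument. In exchange it yields the extra information, not needed for the theorem, that $\Griff^2(X)[\ell^\infty]$ is finite of bounded exponent.

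Your worry about the precise form of \cite{totaro-chow} is harmless: the paper cites it in exactly the form you use, infinitude of $\CH^2(JC)/\ell$ for all primes $\ell$.
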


\begin{proof} 
Totaro showed in \cite{totaro-chow} that $\CH^2(X)/\ell$ is infinite for all primes $\ell$.
In the process of the proof, Totaro showed that there is an integer $m$ such that for all $r$ we have
$$
\ell^m\cdot N^1H^3(X_\et,\Z/\ell^r(2))\subset \Theta \cdot H^1(X_\et,\Z/\ell^r(1)),
$$
see \cite[page 368]{totaro-chow}.
Here, $\Theta \cdot H^1(X_\et,\Z/\ell^r(1))$ denotes the image of the map $H^1(X_\et,\Z/\ell^r(1))\to H^3(X_\et,\Z/\ell^r(2)) $ given by multiplication with the first Chern class of the theta divisor of $X$ and $N^\ast$ denotes the coniveau filtration.
Since the reduction modulo $\ell^r$ map $H^1(X_\et,\Z_\ell(1))\to H^1(X_\et,\Z/\ell^r(1)) $ is surjective, this image is contained in
$$
N^1H^3(X_\et,\Z_\ell(2))\otimes \Z/\ell^r = \im(N^1H^3(X_\et,\Z_\ell(2))\to H^3(X_\et,\Z/\ell^r(2)) ) .
$$
Hence,
there is an integer $m$ such that for all $r$ we have
$$
\ell^m\cdot N^1H^3(X_\et,\Z/\ell^r(2))\subset N^1H^3(X_\et,\Z_\ell(2))\otimes \Z/\ell^r .
$$
This implies that $\ell^m$ kills the cokernel of the natural map
$$
N^1H^3(X_\et,\Q_\ell(2)) \longrightarrow N^1H^3(X_\et,\Q_\ell/\Z_\ell(2)) .
$$
It thus follows from \cite[\S 18]{MS} (see also \cite[Proposition 7.16 and Theorem 7.19]{Sch-refined}) that $\ell^m$ kills the $\ell$-primary torsion subgroup $\Griff^2(X)[\ell^\infty]$ of the Griffiths group $\Griff^2(X)$ of homologically trivial codimension 2 cycles modulo algebraic equivalence.
In other words,
$$
\Griff^2(X)[\ell^\infty]=\Griff^2(X)[\ell^m].
$$
By \cite[\S 18]{MS}, the latter is isomorphic to $N^1H^3(X_\et,\Z/\ell^m(2))/N^1H^3(X_\et,\Z_\ell(2))\otimes \Z/\ell^m$ (see also \cite[Proposition 7.16 and Theorem 7.19]{Sch-refined}), which is a finite group.

Let $A^2(X)\coloneq \CH^2(X)/\sim_{\alg}$ be the Chow group of codimension $2$ cycles modulo algebraic equivalence.
Since the subgroup of algebraically trivial cycles over the field $k=\C$ of complex numbers is divisible, $A^2(X)/\ell\cong \CH^2(X)/\ell$ is infinite by \cite{totaro-chow}.
By definition, $\Griff^2(X)\hookrightarrow A^2(X)$ is a finite index subgroup and $A^2(X)[\ell^\infty]\subset \Griff^2(X)[\ell^\infty]$ because the cohomology of an abelian variety is torsion-free.
The previous paragraph thus shows that there is an infinite dimensional subspace $V\subset A^2(X)/\ell$, such that no nontrivial element of $V$ is the reduction of an $\ell$-primary torsion element in $A^2(X)$. 

To prove the theorem, let us now assume for a contradiction that there is a subgroup $M\subset \CH^2(X)$ whose cokernel $Q$ is finitely generated and such that $M\otimes_\Z \Q_\ell/\Z_\ell=0$.
Mapping this to the Chow group  modulo algebraic equivalence $A^2(X)=\CH^2(X)/\sim_{\alg}$, we get a short exact sequence
$$
0\longrightarrow \bar M\longrightarrow A^2(X)\longrightarrow \bar Q\longrightarrow 0
$$
where $\bar Q$ is a finitely generated abelian group and where $M\twoheadrightarrow \bar M$ is surjective.
Since $\otimes_\Z \Q_\ell/\Z_\ell$ is right exact, $\bar  M\otimes_\Z \Q_\ell/\Z_\ell=0$ and $A^2(X)\otimes_\Z \Q_\ell/\Z_\ell\cong \bar  Q\otimes_\Z \Q_\ell/\Z_\ell$.

Since the above vector space $V$ is infinite-dimensional and $\bar  Q$ is finitely generated, there is a nonzero class $\alpha\in V\subset A^2(X)/\ell$ such that $\alpha$ maps to zero in $\bar  Q\otimes_\Z \Q_\ell/\Z_\ell$.
Since $A^2(X)\otimes_\Z \Q_\ell/\Z_\ell\cong \bar  Q\otimes_\Z \Q_\ell/\Z_\ell$, we find that $\alpha$ maps to zero in 
$$
A^2(X)\otimes_\Z \Q_\ell/\Z_\ell\cong \lim_{\substack{\longrightarrow \\ r}}A^2(X)/\ell^r .
$$ 
This implies that there is some integer $s$ such that $\ell^s\alpha=0$ in $A^2(X)/\ell^{s+1}$.
Let $\alpha'\in A^2(X)$ be a lift of $\alpha$, then we find that there is a class $\beta\in A^2(X)$ with
$\ell^s \alpha'=\ell^{s+1}\beta$.
Hence, $\alpha'-\ell \beta$ is $\ell^s$-torsion in $A^2(X)$.
But the class $\alpha'-\ell \beta$ agrees with $\alpha$ modulo $\ell$ and we get that $\alpha=0$, because $V$ does not contain a nonzero element which is the reduction of a torsion class.
This contradicts the fact that $\alpha$ is nonzero and hence finishes the proof. 
\end{proof} 

\begin{corollary} \label{cor:Chow-tensor-Ql/Zl}
Let $n\geq 3$.
Then there is a smooth complex projective variety $Y$ of dimension $n$ with the following property.
For any integer $2\leq i\leq n-1$ and any subgroup $M\subset \CH^i(Y)$ with finitely generated cokernel, we have $M\otimes \Q_\ell/\Z_\ell \neq 0$  for any prime $\ell$.
\end{corollary}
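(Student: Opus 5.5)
The plan is to take products of the Jacobian $X=JC$ from Theorem \ref{thm:Chow-tensor-Ql/Zl} with projective spaces and to use the projective bundle formula to move the codimension-$2$ phenomenon into each codimension $2\leq i\leq n-1$. Concretely, set $Y\coloneq X\times \CP^{n-3}$, which is smooth projective of dimension $n$. The projective bundle formula \cite[p.~269, (iv)]{bloch-motivic} gives
$$
\CH^i(Y)\cong \bigoplus_{a=0}^{n-3} \CH^{i-a}(X)\cdot h^a ,
$$
where $h$ is the hyperplane class. For $2\leq i\leq n-1$ we pick $a\coloneq i-2$. This satisfies $0\leq a\leq n-3$, so $\CH^2(X)$ appears as a direct summand of $\CH^i(Y)$, embedded via $p^\ast(-)\cdot h^{i-2}$. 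Let $\rho\colon \CH^i(Y)\to \CH^2(X)$ be the projection onto this summand.

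Now let $M\subset \CH^i(Y)$ have finitely generated cokernel, and suppose for contradiction that $M\otimes\Q_\ell/\Z_\ell=0$. Since $\rho$ is surjective, the image $\rho(M)\subset \CH^2(X)$ has cokernel that is a quotient of $\CH^i(Y)/M$, hence finitely generated. By right exactness of the tensor product, $M\twoheadrightarrow \rho(M)$ gives $\rho(M)\otimes\Q_\ell/\Z_\ell=0$. This contradicts Theorem \ref{thm:Chow-tensor-Ql/Zl} applied to the subgroup $\rho(M)$, for every prime $\ell$.

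The argument is nearly formal. The only points to check are the range of $a$ and that the summand decomposition is one of abelian groups, so that $\rho$ exists. Both follow directly from the projective bundle formula. When $n=3$ the construction reduces to $Y=X$ and $i=2$, which is exactly Theorem \ref{thm:Chow-tensor-Ql/Zl}.
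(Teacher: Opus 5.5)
Your proposal is correct and follows the paper's own proof: the paper also takes $Y=X\times\CP^{n-3}$ and combines Theorem \ref{thm:Chow-tensor-Ql/Zl} with the projective bundle formula. Your projection onto the summand $\CH^2(X)\cdot h^{i-2}$ and the right-exactness argument for $M\twoheadrightarrow\rho(M)$ simply spell out details that the paper leaves implicit.
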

\begin{proof}
    This follows directly from Theorem \ref{thm:Chow-tensor-Ql/Zl} and the projective bundle formula, applied to $Y=X\times \CP^{n-3}$.
\end{proof}

\section{Divisibility of the torsion subgroup in Milnor K-theory} \label{sec:merkurjev-conjecture}

The following result was conjectured by Merkurjev in \cite{merkurjev}; we deduce it from Voevodsky's proof of the Bloch--Kato conjecture.  
We use the result below to give another proof of Theorem \ref{thm:unramified-Milnor-divisible}. 

\begin{theorem} \label{thm:merkurjev-conjecture}
Let $\ell$ be a prime and let $F$ be a field of characteristic different from  $\ell$.
Assume further that $F$ contains all $\ell$-power roots of unity. 
Then the $\ell$-primary torsion subgroup $K^M_n(F)[\ell^\infty]\subset K^M_n(F)$ is $\ell$-divisible.
\end{theorem}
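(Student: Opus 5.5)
Let $x\in K^M_n(F)$ with $\ell^s x=0$ for some $s\geq 1$. The goal is to produce $y\in K^M_n(F)[\ell^\infty]$ with $x=\ell y$. The plan is to combine two ingredients. The first is the Bloch--Kato isomorphism $K^M_n(F)/\ell^r\cong H^n_\et(F,\mu_{\ell^r}^{\otimes n})$, proven by Rost and Voevodsky \cite{Voevodsky}. The second is the Bockstein/injectivity statement already used in the proof of Proposition \ref{prop:Tate-module}: since $F$ contains all $\ell$-power roots of unity, $\mu_{\ell^r}^{\otimes n}\cong \mu_{\ell^r}^{\otimes n-1}$. Hence the map $H^n_\et(F,\mu_{\ell^r}^{\otimes n})\to H^n_\et(F,\mu_{\ell^{r+1}}^{\otimes n})$ induced by $\mu_{\ell^r}\hookrightarrow\mu_{\ell^{r+1}}$ is injective. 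Transported through Bloch--Kato, this says that multiplication by $\ell$,
$$
K^M_n(F)/\ell^r\longrightarrow K^M_n(F)/\ell^{r+1},
$$
is injective for every $r\geq 1$. One has to check that the inclusion $\mu_{\ell^r}\hookrightarrow\mu_{\ell^{r+1}}$ corresponds to multiplication by $\ell$ on Milnor K-groups under the norm residue map, which is a direct symbol computation. The same argument with the $(n-1)$-st power twist is the standard input; in fact one only needs the injectivity for $K^M_n$ itself.

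\textbf{Main step: $K^M_n(F)[\ell]\subset \ell K^M_n(F)+\ell\cdot K^M_n(F)[\ell^2]$.} Injectivity of $\cdot\ell\colon K/\ell^r\to K/\ell^{r+1}$ for all $r$, with $K=K^M_n(F)$, means: if $\ell z\in \ell^{r+1}K$ then $z\in \ell^r K+K[\ell]$. The step I would carry out carefully is to turn this into divisibility of torsion. The cleanest route is the Tate-module formulation. Set $T=K[\ell^\infty]$ and apply the snake lemma to $0\to T\to K\to K/T\to 0$ under multiplication by $\ell^r$. This gives an exact sequence
$$
0\to T[\ell^r]\to K[\ell^r]\to 0\to T/\ell^r\to K/\ell^r\to (K/T)/\ell^r\to 0,
$$
using that $K/T$ has no $\ell$-torsion.

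It therefore suffices to show that $T\to K/\ell^r$ is zero, i.e.\ $T\subset \ell^r K$ for all $r$. Granting that, if $t\in T$ with $\ell^s t=0$, pick $u$ with $t=\ell u$. Then $\ell^{s+1}u=0$, so $u\in T$, and $T$ is $\ell$-divisible.

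\textbf{Showing $T\subset \ell^r K$.} Proceed by induction on the torsion order. Take $t$ with $\ell^s t=0$. Its class $\bar t\in K/\ell^r$ maps under the iterated injective map $\cdot\ell^{s}\colon K/\ell^r\to K/\ell^{r+s}$ to the class of $\ell^s t=0$. By injectivity, $\bar t=0$, so $t\in \ell^rK$. The injectivity here is exactly the composite of the injective maps from Bloch--Kato, so this closes the argument.

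The main obstacle I expect is the compatibility check in the first paragraph. One must verify that the Bockstein injectivity $H^n(F,\mu_{\ell^r}^{\otimes n})\hookrightarrow H^n(F,\mu_{\ell^{r+1}}^{\otimes n})$ matches multiplication by $\ell$ on $K^M_n(F)/\ell^r\to K^M_n(F)/\ell^{r+1}$ under the norm residue isomorphisms. The injectivity itself comes from the vanishing of the preceding map in the Bockstein sequence, i.e.\ the surjectivity of $H^{n-1}(F,\mu_{\ell^{r+1}}^{\otimes n})\to H^{n-1}(F,\mu_\ell^{\otimes n})$, which follows from surjectivity of symbols in degree $n-1$ plus the roots-of-unity hypothesis. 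I would verify this compatibility on symbols $\{a_1,\dots,a_n\}$ using that the Kummer classes satisfy $\delta_{\ell^{r+1}}(a)\mapsto$ the appropriate multiple of $\delta_{\ell^r}(a)$.
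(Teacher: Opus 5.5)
Your proposal is correct and follows essentially the same route as the paper: Bloch--Kato, the Bockstein sequence and the roots-of-unity twist identification give injectivity of $\cdot\ell\colon K/\ell^r\to K/\ell^{r+1}$, hence of the composite $\cdot\ell^s\colon K/\ell\to K/\ell^{1+s}$, so an $\ell^s$-torsion class is divisible by $\ell$ with a quotient that is again $\ell$-primary torsion, exactly as in Lemma \ref{lem:A[l^infty]}. One slip: injectivity of $\cdot\ell\colon K/\ell^r\to K/\ell^{r+1}$ means $\ell z\in\ell^{r+1}K\Rightarrow z\in\ell^rK$, not merely $z\in\ell^rK+K[\ell]$ (which holds automatically); your final step does use the correct form, and the snake-lemma detour is unnecessary.
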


We will need the following elementary lemma.

\begin{lemma} \label{lem:A[l^infty]}
  Let  $A$ be an abelian group and let $\ell$ be a prime number.
Suppose that for any integer $m\geq 1$ the map
$A/\ell \to A/\ell^{m+1}$ induced by multiplication by $\ell^m$ is injective.
Then the $\ell$-primary torsion subgroup $A[\ell^\infty]$ of $A$ is a divisible group.
\end{lemma}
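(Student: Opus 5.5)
We will show directly that every element $a\in A[\ell^\infty]$ lies in $\ell A[\ell^\infty]$. Iterating this then gives that $A[\ell^\infty]$ is $\ell$-divisible. For primes $p\neq \ell$ there is nothing to check, because multiplication by $p$ is bijective on the $\ell$-primary group $A[\ell^\infty]$. Hence $\ell$-divisibility of $A[\ell^\infty]$ is the same as divisibility, which is what the lemma asserts.

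Fix $a\in A[\ell^\infty]$ and choose $m\geq 1$ with $\ell^m a=0$. Consider the class $\bar a\in A/\ell$. Its image under the map $A/\ell\to A/\ell^{m+1}$ given by multiplication by $\ell^m$ is the class of $\ell^m a=0$, so it vanishes. The hypothesis says this map is injective, so $\bar a=0$ in $A/\ell$. Thus $a=\ell b$ for some $b\in A$.

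It remains to see that $b$ can be chosen in $A[\ell^\infty]$. This is automatic: $\ell^{m+1}b=\ell^m a=0$, so $b$ is $\ell$-primary torsion. Therefore $a\in \ell\,A[\ell^\infty]$, and so $A[\ell^\infty]=\ell A[\ell^\infty]$.

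The only step needing any care is where the hypothesis is used, namely applying the injectivity with the same exponent $m$ that kills $a$. The statement allows all $m\geq 1$, so there is no real obstacle. The case $a$ with $\ell a=0$ is covered by taking $m=1$.
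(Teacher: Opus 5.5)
Your proposal is correct and follows the paper's own proof: kill $a$ by $\ell^m$, use injectivity of multiplication by $\ell^m$ from $A/\ell$ to $A/\ell^{m+1}$ to write $a=\ell b$, and observe that $\ell^{m+1}b=0$. Your remark that primes $p\neq\ell$ act bijectively on the $\ell$-primary group $A[\ell^\infty]$ spells out the step from $\ell$-divisibility to divisibility, which the paper leaves implicit.
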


\begin{proof}
 Let   $a\in A[\ell^\infty]$  with $\ell^m\cdot a=0 \in A$ for some $m\geq 1$. 
The image of the class of $a$ in $A/\ell$
 under the map
$$ A/\ell \to  A/\ell^{m+1}$$ given by multiplication by $\ell^{m}$ is zero.
By the injectivity assumption we conclude that $a=\ell \cdot b \in A$, for some $b$, and
$b$ satisfies $\ell^{m+1}b=0$. 
Thus any element of  $A[\ell^\infty]$ is divisible by $\ell$ in $A[\ell^\infty]$.
It follows that $A[\ell^\infty]$ is a divisible group.
\end{proof}

\begin{proof}[Proof of Theorem \ref{thm:merkurjev-conjecture}]  
Let $F$ be a field that contains all $\ell$-power roots of unity.
We aim to prove that $K^M_n(F)[\ell^\infty]$ is $\ell$-divisible.
By Lemma \ref{lem:A[l^infty]}, it suffices to prove that the multiplication by $\ell^r$ map $K^M_n(F)/\ell \to K^M_n(F)/\ell^{r+1}$ is injective.
By the Bloch--Kato conjecture, proven in \cite{Voevodsky}, the latter identifies to the map 
\begin{align} \label{eq:torsion-Milnor-1}
H^{n}(F, \mu_{\ell}^{\otimes n})  \longrightarrow  H^{n}(F, \mu_{\ell^{r+1}}^{\otimes n}) 
\end{align}
induced by the inclusion $\mu_\ell^{\otimes n} \hookrightarrow \mu_{\ell^{r+1}}^{\otimes n}$.
 
Since $F$ contains all roots of unity, we may choose a compatible system $\zeta_{r}$ of primitive
$\ell^r$-th roots of unity.
Such a choice allows us to identify the map in \eqref{eq:torsion-Milnor-1} to the map 
\begin{align*} % \label{eq:torsion-Milnor-2}
H^{n}(F, \mu_{\ell}^{\otimes n-1})  \longrightarrow  H^{n}(F, \mu_{\ell^{r+1}}^{\otimes n-1}) 
\end{align*}
induced by $\mu_\ell^{\otimes n-1} \hookrightarrow \mu_{\ell^{r+1}}^{\otimes n-1}$.
The injectivity of that map is by the long exact Bockstein sequence equivalent to the surjectivity of the reduction mod $\ell^r$ map
$$
H^{n-1}(F, \mu_{\ell^{r+1}}^{\otimes n-1}) \longrightarrow H^{n-1}(F, \mu_{\ell^{r}}^{\otimes n-1}) .
$$
The latter is in turn a direct consequence of the Bloch--Kato conjecture, proven by Voevodsky \cite{Voevodsky}, which yields canonical isomorphisms 
$H^i(F,\mu_{\ell^r}^{\otimes i})\cong K^M_i(F)/\ell^r$ for all $i$ and $r$.
This concludes the proof of the theorem.
\end{proof}

\subsection{An alternative proof of Theorem \ref{thm:unramified-Milnor-divisible}}
  
 \begin{proof}[Proof of Theorem \ref{thm:unramified-Milnor-divisible}]
 By the Chinese remainder theorem, it suffices to prove that the natural map
 $$
 H^0(X,\mathcal K^M_n)\longrightarrow  H^0(X,\mathcal K^M_n/\ell^r)
 $$
 is zero for all primes $\ell$  invertible in $k$.
 Since $k$ is separably closed, it is infinite and so $\mathcal K_n^M \stackrel{\sim}\to \mathcal H^n_M(\Z(n))$ by \cite{kerz}.
 Similarly, the Bloch--Kato conjecture proven by Rost and Voevodsky \cite{Voevodsky} together with the respective Gersten conjectures proven in \cite{BO,bloch-motivic} shows $\mathcal K_n^M/\ell^r\stackrel{\sim}\to \mathcal H^n_M(\Z/\ell^r(n))$. 
 Hence, by Theorem \ref{thm:Hi-curlyHj}, we know that for any $\alpha\in  H^0(X,\mathcal K^M_n)$, there is some positive integer $N$ such that $N\alpha$ lies in the kernel of the map in question.
 Since any natural number coprime to $\ell$ is invertible in $\Z/\ell^r$, we can without loss of generality assume that $N=\ell^s$ for some non-negative  integer $s$.
 By the Gersten conjecture for Milnor K-theory \cite{kerz} and its mod $\ell^r$-reduction \cite{BO,Voevodsky}, we have 
a commutative diagram 
$$
\xymatrix{
 H^0(X_\zar,\mathcal H_M^n(\Z(n))) \ar[r] \ar@{^{(}->}[d] &  H^0(X_\zar,\mathcal H^n_M(\Z/\ell^r(n))) \ar@{^{(}->}[d] \\
  K^M_n(k(X)) \ar[r] & K^M_n(k(X))/\ell^r 
}
$$  
where the lower horizontal map is the reduction modulo $\ell^r$ map.
Using the vertical inclusions, we can regard $\alpha$ as an (unramified) element in Milnor K-theory $K^M_n(k(X))$ and we know that for all $r$ there is some class $\beta_r\in K^M_n(k(X))$ with $\ell^s \alpha=\ell^r\beta_r$.
We then get that $\alpha-\ell^{r-s}\beta_r\in K^M_n(k(X))$ is $\ell^s$-torsion for all $r\geq s$.
Since the torsion subgroup of $K^M_n(k(X))$ is $\ell$-divisible (see Theorem \ref{thm:merkurjev-conjecture}),  we can write $\alpha-\ell^{r-s}\beta_r=\ell^{r-s}\gamma_r$ for some $\gamma_r\in K^M_n(k(X))$ and so $\alpha$ maps to zero in $K^M_n(k(X))/\ell^{r-s} $ and hence, in view of the above commutative diagram (for $r-s$ in place of $r$) to zero in $H^0(X_\zar,\mathcal H^n_M(\Z/\ell^{r-s}(n) )) $.
This holds for all $r\geq s$ and so the statement in the theorem follows.
 \end{proof}

\section{Divisibility phenomena of Lichtenbaum motivic cohomology} \label{sec:appendix-B}

Let $X$ be a smooth variety over a field $k$. 
For an abelian group $A$, we consider the motivic complex $A(j)_{\et}\in D(X_\et)$ in the \'etale site of $X$ (cf.\ Section \ref{subsec:motivic-cohomology}) and define the Lichtenbaum or {\it \'etale motivic cohomology groups} by
   $$
   H^{i}_{L}(X,A(j)): = H^{i}(X_{\et},A(j)_{\et}) .
   $$
The  abelian group structure of $H^{i}_{L}(X,\Z(j))$ for smooth projective varieties  $X$ over a separably closed field  
was discussed in papers by  Rosenschon--Srinivas \cite[Proposition 3.1]{RS-JIMJ},  Geisser \cite[Theorem 1.1]{geisser}, and Kahn \cite{kahn-divisibility}.
Kahn's paper discusses results for open smooth varieties over a separably closed field  \cite[Theorem 1.3]{kahn-divisibility}.
Geisser's paper  discusses smooth projective varieties over  separably closed fields, finite fields,  local fields and also arithmetic schemes. Over a field of characteristic $p>0$, he also considers the $p$-primary torsion of the \'etale motivic cohomology groups.

The purpose of this appendix, which does not claim originality, is as follows.
For smooth projective varieties over a separably closed field and over a finite field, we describe how the work of Geisser--Levine, Suslin, Rost, and Voevodsky, together with weight arguments \`a la Deligne lead to a precise computation of the torsion structure of  the groups $H^{i}_{L}(X, \Z(j))$ for most pairs $(i,j)$, and for $H^{i}_{M}(X, \Z(j))$ for most pairs $(i,j)$  in the  range $i\leq j+1$.
For finite fields we use the work of Kerz--Saito \cite{kerz-saito} to obtain similar results for $j\geq \dim X$ and $i\neq 2j$.
We use this to give a positive answer to Question \ref{question:intro} for some values of $(i,j)$, see Propositions \ref{prop:appendix:separable-closed} and \ref{prop:appendix-finite-field} below.

\subsection{Preliminaries} \label{sec:appendix-prelim}
We will use the following well-known facts.  
Let $X$ be a smooth variety over a field $k$.
There is a natural map $H^i_{M}(X, \Z(j)) \to H^{i}_{L}(X, \Z(j))$ which is an isomorphism after tensoring with $\Q$, see \cite[Theorem 14.24]{MVW}.
In particular the kernel and the cokernel of this map are torsion groups.
This implies that for all primes $\ell$ the map
$H^i_{M}(X, \Z(j)) \otimes \Q_{\ell}/\Z_{\ell } \to H^{i}_{L}(X, \Z(j)) \otimes \Q_{\ell}/\Z_{\ell}$
is surjective.

Let $j \geq 0$ and $i\geq 0$.  
For $X$ smooth over a field, the  map $H^i_{M}(X, \Z(j)) \to H^{i}_{L}(X,\Z( j))$ is an isomorphism  if $i\leq j+1$,  and it is injective if $i=j+2$, see e.g.\ \cite[Proof of Corollary 1.4]{AS24}.
This is referred to as the integral Beilinson--Lichtenbaum conjecture, closely related to the statement $H^{n+1}_{L}(F,\Z(n))=0$ for $F$ an arbitrary field (higher Hilbert's theorem 90); see \cite[Theorem 6.6]{Voe-milnor}, \cite[Theorem 6.18]{Voevodsky}, \cite[Conjecture 1.22]{Riou}.

By the work of Geisser--Levine \cite[Theorem 1.5]{geisser-levine} (see \eqref{eq:geisser-levine}), the Bockstein sequence for \'etale motivic cohomology yields an exact sequence
$$ 0 \longrightarrow   H^{i}_{L}(X, \Z(j))/ \ell^r \longrightarrow  H^{i}_{\et}(X, \mu_{\ell^r}^{\otimes j} ) \longrightarrow  H^{i+1}_{L}(X, \Z(j))[\ell^r]  \longrightarrow  0$$
and then, after taking direct limits,
\begin{align} \label{eq:prelim-appendix-ses}
0 \longrightarrow   H^{i}_{L}(X, \Z(j))\otimes \Q_{\ell}/\Z_{\ell} \longrightarrow  H^{i}_{\et}(X, \Q_{\ell}/\Z_{\ell}(j))
\longrightarrow  H^{i+1}_{L}(X, \Z(j))[\ell^{\infty}]  \longrightarrow  0.
\end{align}

\subsection{A  lemma on abelian groups}

\begin{lemma}\label{formel}
 Let $A$ be an abelian group. Assume $A \otimes \Q_{\ell}/\Z_{\ell}=0$
  and assume that the $\ell$-primary torsion group $A[\ell^{\infty}]$ is an extension of
  a group $F$ of finite exponent by a divisible group. Then there is a natural 
  surjective map $A \to F$, compatible with the given map $A[\ell^{\infty}] \to F$ and whose kernel is the maximal $\ell$-divisible subgroup of $A$.
\end{lemma}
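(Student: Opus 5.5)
We use the exact sequence $0\to D\to A[\ell^\infty]\xrightarrow{p} F\to 0$, where $D$ is divisible and $F$ has finite exponent, say $\ell^m F=0$. We may take $F$ to be $\ell$-primary: it is a quotient of $A[\ell^\infty]$, so $\ell^m$ works. By Lemma \ref{lem:A-tensor-Q/Z-elementary}, item \eqref{item:lem:A-tensor-Q/Z-elementary:3}, the hypothesis $A\otimes\Q_\ell/\Z_\ell=0$ gives $A=\ell^r A+A[\ell^\infty]$ for every $r$. The plan is to define the map $A\to F$ by writing $a=\ell^m b+t$ with $t\in A[\ell^\infty]$ and setting $\phi(a):=p(t)$.

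\textbf{Step 1: $\phi$ is well defined.} Suppose $\ell^m b+t=\ell^m b'+t'$. Then $\ell^m(b-b')=t'-t$ is $\ell$-primary torsion, so $c:=b-b'$ also lies in $A[\ell^\infty]$. Hence $p(t'-t)=\ell^m p(c)=0$, because $\ell^mF=0$. So $\phi$ does not depend on the decomposition. It is additive, and on $A[\ell^\infty]$ it agrees with $p$ (take $b=0$). Surjectivity follows from the surjectivity of $p$.

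\textbf{Step 2: the kernel is the maximal $\ell$-divisible subgroup $A_{\rm div}$.} This is the main point. First, every $\ell$-divisible subgroup lies in $\ker\phi$: for $a\in A_{\rm div}$, write $a=\ell^m b$ with $b\in A$, so that $t=0$ and $\phi(a)=0$.

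Conversely, I will show that $\ker\phi$ is itself $\ell$-divisible. Let $a\in\ker\phi$ and write $a=\ell^m b+t$ with $p(t)=0$, so $t\in D$. Since $D$ is divisible, $t=\ell d$ for some $d\in D$. Then
$$a=\ell(\ell^{m-1}b+d).$$
It remains to check that $a':=\ell^{m-1}b+d$ lies in $\ker\phi$. Decompose $b=\ell b_1+s$ with $s\in A[\ell^\infty]$, using $A=\ell A+A[\ell^\infty]$. This gives
$$a'=\ell^m b_1+(\ell^{m-1}s+d),$$
so $\phi(a')=p(\ell^{m-1}s)+p(d)=\ell^{m-1}p(s)$, since $p(d)=0$.

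This term need not vanish, so the naive choice of $a'$ must be adjusted by a torsion element. We have the freedom to replace $b$ by $b+c$ and $t$ by $t-\ell^m c$ for any $c\in A[\ell^\infty]$. Choose $c$ with $p(c)=-p(s)$, which is possible since $p$ is surjective. This changes $a'$ to $a'+\ell^{m-1}c-\ell^{m-1}c$ in the original decomposition; more directly, replace $a'$ by $a'':=a'-\ell^{m-1}s$.

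Then
$$\ell a''=a-\ell^m s.$$
Now $\ell^m s\in A[\ell^\infty]$ with $p(\ell^m s)=0$, so $\ell^m s\in D$. Since $D$ is divisible, $\ell^m s=\ell e$ for some $e\in D$. Setting $a''':=a''+e$, we get $\ell a'''=a$, and
$$\phi(a''')=\phi(a')-\ell^{m-1}p(s)+p(e)=0.$$
Thus every element of $\ker\phi$ is $\ell$ times an element of $\ker\phi$, so $\ker\phi$ is $\ell$-divisible. Combined with the first inclusion, $\ker\phi=A_{\rm div}$.

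Naturality of $\phi$ follows from its construction.

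The main obstacle is exactly this bookkeeping in Step 2: producing an $\ell$-th root of $a$ that again lies in $\ker\phi$. It is resolved by correcting the naive root with torsion elements, using the divisibility of $D$ and the fact that $\ell^m$ kills $F$.
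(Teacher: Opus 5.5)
Your proof is correct, and it takes a different route from the paper.

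The paper argues homologically. Tensoring $0\to\Z\to\Z[1/\ell]\to\Q_\ell/\Z_\ell\to 0$ with $A$ and using the hypothesis gives a short exact sequence $0\to A[\ell^\infty]\to A\to A\otimes\Z[1/\ell]\to 0$. Pushing this out along $A[\ell^\infty]\to F$ yields an extension of the uniquely $\ell$-divisible group $A\otimes\Z[1/\ell]$ by the bounded $\ell$-group $F$. Such an extension splits in a unique way, and the composite $A\to A'\to F$ is the desired map. Its kernel is an extension of $A\otimes\Z[1/\ell]$ by the divisible group $B$, hence $\ell$-divisible.

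You instead use the equivalent form $A=\ell^mA+A[\ell^\infty]$ of the hypothesis (item (3) of Lemma \ref{lem:A-tensor-Q/Z-elementary}) and write the map down explicitly as $\ell^mb+t\mapsto p(t)$.

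What your version buys:
\begin{enumerate}
\item It is completely elementary.
\item It makes uniqueness, and hence naturality, transparent. Any homomorphism $A\to F$ extending $p$ must kill $\ell^mA$, so it is forced to equal your $\phi$. It would be better to say this explicitly rather than ``naturality follows from its construction''.
\end{enumerate}

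What the paper's version buys: the $\ell$-divisibility of the kernel falls out of the extension structure, whereas you verify it by hand.

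That part of your Step 2 can be streamlined, and the digression about replacing $b$ by $b+c$ can be dropped. Write $b=\ell b_1+s$ with $s\in A[\ell^\infty]$. Then $a=\ell^{m+1}b_1+(\ell^ms+t)$ with $\ell^ms+t\in\ker p=D=\ell D$. Hence $a=\ell(\ell^mb_1+d')$ for some $d'\in D$, and $\ell^mb_1+d'\in\ker\phi$ directly. This element is exactly your $a'''$, with $d'=d+e$.

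Finally, take $m\geq 1$ so that $\ell^{m-1}$ makes sense; this is harmless since $m$ can always be enlarged.
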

  \begin{proof} The exact sequence
 $$ 0 \longrightarrow   \Z \longrightarrow    \Z[1/\ell] \longrightarrow  \Q_{\ell}/\Z_{\ell} \longrightarrow  0$$
induces the following exact sequence
 $$ 0 \longrightarrow  A[\ell^{\infty}] \longrightarrow  A \longrightarrow  A \otimes \Z[1/\ell] \longrightarrow  A\otimes \Q_{\ell}/\Z_{\ell} \longrightarrow  0.$$
 Under our hypothesis, this gives the exact sequence
  $$ 0 \longrightarrow  A[\ell^{\infty}] \longrightarrow  A \longrightarrow  A \otimes \Z[1/\ell] \longrightarrow    0.$$
By assumption, there is an exact sequence
 $$ 0 \longrightarrow  B \longrightarrow  A[\ell^{\infty}] \longrightarrow  F \longrightarrow  0,$$
 with $B$ $\ell$-divisible and $F$ a group of exponent a finite power of $\ell$.
 
The arrow $A[\ell^{\infty}] \to F$ gives rise to the
following commutative diagram of exact sequences
  $$\xymatrix{
  &  0\ar[d]   &  0\ar[d]   \\
 &  B  \ar[r]\ar[d] & B  \ar[d]   &     &\\
 0 \ar[r] & A[\ell^{\infty}] \ar[r] \ar[d] & A \ar[r]\ar[d]& A \otimes \Z[1/\ell] \ar[r]\ar[d] &   0 \\
 0 \ar[r] & F \ar[r] \ar[d] & A' \ar[r]  \ar[d] & A \otimes \Z[1/\ell] \ar[r] &   0 \\
 &  0    &  0    \\
 }$$
 Since $F$ is an $\ell$-primary group of finite exponent, the
 lower 
 sequence is split in a unique way. This produces a surjective map
 $A' \to F$ whose kernel is the   $\ell$-divisible group
 $A \otimes \Z[1/\ell]$. The kernel $A''$ of the composite, surjective map $A \to A' \to F$
is an extension of $A\otimes \Z[1/\ell]$ by $B$. These two groups are
 $\ell$-divisible, hence so is $A''$. 
  \end{proof}

\subsection{Motivic cohomology of smooth projective varieties over a separably closed field}

\begin{proposition}\label{weightL}
Let $k$ be a separably closed field. Let $X$ be a 
smooth, projective, geometrically integral variety over $k$.
Let  $\ell$ be a prime invertible in  $k$ and let $i,j \geq 0$ be non-negative integers.
\begin{enumerate}[(a)]
    \item If $i \neq 2j$, then $H^{i}_{L}(X,\Z(j))  \otimes \Q_{\ell}/\Z_{\ell}=0$.
    \item For  $i \neq 2j+1$ and $i\geq 1$, we have
$$H^{i}_{L}(X,\Z(j))[\ell^{\infty}] \simeq H^{i-1}_{\et}(X, \Q_{\ell}/\Z_{\ell} (j)).$$
The group $H^{i}_{L}(X,\Z(j))[\ell^{\infty}]$ is an extension of the finite group
$H^{i}_{\et}(X,\Z_{\ell}(j))_{tors}$ by the divisible group
$H^{i-1}_{\et}(X, \Q_{\ell}(j))/H^{i-1}_{\et}(X, \Z_{\ell}(j))$.
\end{enumerate} 
\end{proposition}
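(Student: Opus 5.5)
The plan is to work with the short exact sequence \eqref{eq:prelim-appendix-ses},
$$
0 \to H^{i}_{L}(X,\Z(j))\otimes \Q_\ell/\Z_\ell \to H^{i}_{\et}(X,\Q_\ell/\Z_\ell(j)) \to H^{i+1}_{L}(X,\Z(j))[\ell^\infty]\to 0,
$$
and with the standard structure of $H^i_\et(X,\Q_\ell/\Z_\ell(j))$. For $X$ smooth projective over separably closed $k$, the group $H^i_{\et}(X,\Z_\ell(j))$ is finitely generated over $\Z_\ell$. Hence $H^i_\et(X,\Q_\ell/\Z_\ell(j))$ is an extension of the finite group $H^{i+1}_\et(X,\Z_\ell(j))_{tors}$ by the divisible group $D^i:=H^i_\et(X,\Q_\ell(j))/H^i_\et(X,\Z_\ell(j))$, which is the image of $H^i_\et(X,\Z_\ell(j))\otimes\Q_\ell/\Z_\ell$. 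Parts (a) and (b) then come down to identifying the image of the injection above.

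For (a), I first reduce via \eqref{eq:limit-HiM} (Lichtenbaum cohomology also commutes with such limits) to the case where $k$ is the separable closure of a finitely generated field $k_0$ over which $X$ has a model. Next I need the key claim: the composite
$$
H^i_L(X,\Z(j))\to \lim_r H^i_\et(X,\mu_{\ell^r}^{\otimes j}) = H^i_\et(X,\Z_\ell(j))
$$
has torsion image when $i\neq 2j$. Every class of $H^i_L(X,\Z(j))$ is defined over a finitely generated extension of $k_0$, so its image in $H^i_\et(X,\Q_\ell(j))$ is Galois-invariant up to a finite-index subgroup, i.e.\ it has weight $0$. By Deligne, $H^i_\et(X,\Q_\ell(j))$ is pure of weight $i-2j\neq 0$, so the image vanishes rationally. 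Since $H^i_\et(X,\Z_\ell(j))$ is finitely generated, the image in each $H^i_\et(X,\mu_{\ell^r}^{\otimes j})$ is killed by a fixed $N$. The map $H^i_L\otimes\Q_\ell/\Z_\ell\to H^i_\et(X,\Q_\ell/\Z_\ell(j))$ is therefore $N$-torsion on a divisible group, hence zero. But it is injective by \eqref{eq:prelim-appendix-ses}, which proves (a).

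The main obstacle I expect is justifying that the cycle class map from Lichtenbaum cohomology factors through $H^i_\et(X,\Z_\ell(j))$ and is Galois-equivariant, which is what makes the weight argument work. I would obtain this from the compatible reductions $\Z(j)_\et\to\Z/\ell^r(j)_\et\cong\mu_{\ell^r}^{\otimes j}$ of \eqref{eq:geisser-levine}, passing to $\R\lim$ as in Lemma \ref{lem:proet-pushforward}. A fallback is to use only $H^i_M$: the map $H^i_M\otimes\Q_\ell/\Z_\ell\to H^i_L\otimes\Q_\ell/\Z_\ell$ is surjective (see \S\ref{sec:appendix-prelim}), and $H^i_M$ is Chow-theoretic, so the argument of Theorem \ref{thm:Hi_MXQ/Z-vanishing} applies verbatim, with pure weight $i-2j\neq 0$ replacing negativity.

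For (b), I apply the sequence with $i-1$ in place of $i$. Since $i\neq 2j+1$ means $i-1\neq 2j$, part (a) gives $H^{i-1}_L(X,\Z(j))\otimes\Q_\ell/\Z_\ell=0$, so $H^{i-1}_\et(X,\Q_\ell/\Z_\ell(j))\cong H^i_L(X,\Z(j))[\ell^\infty]$. The extension structure then follows from the description of $H^{i-1}_\et(X,\Q_\ell/\Z_\ell(j))$ recalled above: it is the finite group $H^i_\et(X,\Z_\ell(j))_{tors}$ extended by the divisible group $H^{i-1}_\et(X,\Q_\ell(j))/H^{i-1}_\et(X,\Z_\ell(j))$.
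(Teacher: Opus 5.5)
Your proposal is correct. Part (b) is exactly the paper's argument.

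For (a), your fallback is essentially the paper's proof. The paper takes $\alpha\in H^i_M(X,\Z(j))\otimes\Q_\ell/\Z_\ell$ and descends it to a model $X_0$ over a finitely generated field via the higher Chow group description \eqref{eq:motivic=higher-Chow}. It then observes that the image of $\alpha$ in $H^i_{\et}(X,\Q_\ell/\Z_\ell(j))$ lies in the Galois invariants. These invariants are finite for $i\neq 2j$ by Deligne (in the form of \cite[Theorem 1.5]{CTR}), so divisibility forces the image to vanish. The surjection $H^i_M\otimes\Q_\ell/\Z_\ell\to H^i_L\otimes\Q_\ell/\Z_\ell$ and the injection in \eqref{eq:prelim-appendix-ses} then conclude.

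You instead run the weight step $\ell$-adically, as in Theorem \ref{thm:Hi_MXQ/Z-vanishing}. Purity of $H^i_{\et}(X,\Q_\ell(j))$ makes the image in the finitely generated $\Z_\ell$-module $H^i_{\et}(X,\Z_\ell(j))$ torsion. It is therefore uniformly $N$-torsion, and so zero on the divisible group. This is the ``inverse limit'' variant that the remark after the proof attributes to \cite{geisser}, whereas the paper uses ``direct limits''. Both are equally effective here.

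The one soft spot is your primary route, which applies this argument directly to Lichtenbaum classes. Your parenthetical claim is that $H^i_L(X,\Z(j))$ commutes with limits over base fields, so that every Lichtenbaum class is defined over a finitely generated field. This is not covered by \eqref{eq:limit-HiM} and needs its own argument. One option is a five-lemma argument on the triangle $\Z(j)\to\Q(j)\to\Q/\Z(j)$, combining the rational comparison $H^\ast_M(-,\Q(j))\cong H^\ast_L(-,\Q(j))$ with descent for torsion coefficients. That is precisely the input the paper packages into the surjectivity of $H^i_M\otimes\Q_\ell/\Z_\ell\to H^i_L\otimes\Q_\ell/\Z_\ell$. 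Routing through $H^i_M$, as in your fallback, therefore gives descent for free.

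By contrast, the step you flag as the main obstacle is routine. Since $H^i_{\et}(X,\mu_{\ell^r}^{\otimes j})$ is finite, the inverse limit over $r$ of these groups already equals $H^i_{\et}(X,\Z_\ell(j))$, so no derived-limit formalism is needed. Galois equivariance is just functoriality under the automorphisms of $X$ over $X_0$.
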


  \begin{proof}
Let $k_{0}$ be a field of finite type, $k/k_{0}$ a separable closure and $G=\Gal(k/k_{0})$.
Let  $X_{0}/k_{0}$ be smooth, projective, geometrically integral.
Let  $\ell$ be a prime invertible in  $k$ and let $i,j \geq 0$ be integers.

Let  $\alpha\in H^{i}_{M}(X,\Z(j)) \otimes \Q_{\ell}/\Z_{\ell} $.
Up to replacing $k_{0}$ by a finite extension and $X_{0}$ by the corresponding base change, we may assume that   $\alpha$ is in the image of a class in $H^{i}_{M}(X_{0},\Z(j)) \otimes \Q_{\ell}/\Z_{\ell}$.
(This uses \eqref{eq:motivic=higher-Chow}.) 
The image of the composite map
$$H^{i}_{M}(X_{0},\Z(j))  \otimes \Q_{\ell}/\Z_{\ell} \longrightarrow  H^{i}_{M}(X,\Z(j))  \otimes \Q_{\ell}/\Z_{\ell}
\longrightarrow  H^{i}_{L}(X,\Z(j))  \otimes \Q_{\ell}/\Z_{\ell} $$$$
 \longrightarrow   H^{i}_{L}(X, \Q_{\ell}/\Z_{\ell}(j)) =   H^{i}_{\et}(X, \Q_{\ell}/\Z_{\ell}(j))$$
 is invariant under $G$.
 From Deligne's theory of weights \cite{deligne}, we know (see \cite[Theorem 1.5]{CTR})  that the group of $G$-invariants of $H^{i}_{\et}(X, \Q_{\ell}/\Z_{\ell}(j))$ is finite if $i\neq 2j$.
As the group $H^{i}_{M}(X_{0},\Z(j))  \otimes \Q_{\ell}/\Z_{\ell}$ is divisible, the composite map vanishes.
Thus the composite map
$$H^{i}_{M}(X,\Z(j))    \otimes \Q_{\ell}/\Z_{\ell} \longrightarrow  H^{i}_{L}(X,\Z(j))  \otimes \Q_{\ell}/\Z_{\ell} \longrightarrow  
H^{i}_{\et}(X, \Q_{\ell}/\Z_{\ell}(j) )$$
vanishes if $i\neq 2j$. 
The LHS map is onto and the RHS map is injective, see Section \ref{sec:appendix-prelim} above.
We conclude that the group
$H^{i}_{L}(X,\Z(j))  \otimes \Q_{\ell}/\Z_{\ell}$
vanishes if  $i\neq 2j$.   
 
From this and the exact sequence \eqref{eq:prelim-appendix-ses}, we deduce that for  $i\neq 2j$ there is a natural isomorphism 
$$
H^{i+1}_{L}(X,\Z(j))[\ell^{\infty}] \simeq H^{i}_{\et}(X, \Q_{\ell}/\Z_{\ell} (j)).
$$
By the Bockstein sequence in \'etale cohomology, this group is an extension of $H^{i+1}_{\et}(X,\Z_{\ell}(j))_{tors}$ by the divisible group
$H^{i}_{\et}(X, \Q_{\ell}(j))/H^{i}_{\et}(X, \Z_{\ell}(j))$, as we want.
\end{proof}

 \begin{remark} {\rm
 The idea to use coincidence of motivic cohomology and \'etale motivic cohomology 
with rational coefficients, so that one may then use  the representation of classes
over some small subfield, is  in \cite[Proof of Thm. 1.1]{geisser} and \cite[Prop. 3.1]{RS-JIMJ}.
Geisser uses inverse limits. Here we used direct limits.}
 \end{remark}

Upon use of Lemma \ref{formel}, we deduce:

\begin{proposition}\label{HiL} Let $X$ be a smooth projective variety over a separably closed field $k$.
Let $\ell$ be a prime invertible in  $k$.  Let $j \geq 0$. Assume $i\geq 1$ and  $i\neq 2j, 2j+1$.  
 \begin{enumerate}[(a)]
     \item There is a natural surjective map $H^{i}_{L}(X,\Z(j)) \to  H^{i}_{\et}(X,\Z_{\ell}(j))_{tors}$ whose kernel  $B_{\ell}$ is the maximal $\ell$-divisible subgroup of  $H^{i}_{L}(X,\Z(j))$.
     \item The  $\ell$-primary torsion subgroup  of $B_{\ell}$ is the group
$H^{i-1}_{\et}(X, \Q_{\ell}(j))/ H^{i-1}_{\et}(X, \Z_{\ell}(j))$,
which is the maximal divisible subgroup of $H^{i-1}_{\et}(X, \Q_{\ell}/\Z_{\ell}(j))$.
\item If ${\rm char}(k)=0$,  the group $H^{i}_{L}(X,\Z(j)) $
is an extension of the finite group $\oplus_{\ell}  H^{i}_{\et}(X,\Z_{\ell}(j))_{tors}$
by a divisible subgroup whose $\ell$-primary torsion is  $H^{i-1}_{\et}(X, \Q_{\ell}(j))/ H^{i-1}_{\et}(X, \Z_{\ell}(j))$.
 \end{enumerate} 
\end{proposition}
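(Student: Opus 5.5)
We apply Lemma \ref{formel} to $A\coloneq H^{i}_{L}(X,\Z(j))$ and then check its two hypotheses using Proposition \ref{weightL}. Since $i\neq 2j$, part (a) of Proposition \ref{weightL} gives $A\otimes \Q_\ell/\Z_\ell=0$. Since $i\geq 1$ and $i\neq 2j+1$, part (b) gives $A[\ell^\infty]\cong H^{i-1}_{\et}(X,\Q_\ell/\Z_\ell(j))$. This group is an extension of the finite group $F\coloneq H^{i}_{\et}(X,\Z_\ell(j))_{tors}$, which has finite $\ell$-power exponent, by the divisible group $H^{i-1}_{\et}(X,\Q_\ell(j))/H^{i-1}_{\et}(X,\Z_\ell(j))$. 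Lemma \ref{formel} then produces a natural surjection $A\to F$ whose kernel $B_\ell$ is the maximal $\ell$-divisible subgroup of $A$, which proves (a). For naturality, the map $A[\ell^\infty]\to F$ should be the Bockstein map $H^{i-1}_{\et}(X,\Q_\ell/\Z_\ell(j))\to H^{i}_{\et}(X,\Z_\ell(j))_{tors}$. The finiteness of $F$ is standard, since $H^i_\et(X,\Z_\ell(j))$ is a finitely generated $\Z_\ell$-module.

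For (b), we return to the construction in the proof of Lemma \ref{formel}. There, $B_\ell$ is an extension of $A\otimes\Z[1/\ell]$ by $B\coloneq\ker(A[\ell^\infty]\to F)$. Since $A\otimes\Z[1/\ell]$ has no $\ell$-torsion, we get $B_\ell[\ell^\infty]=B$. Under the isomorphism of Proposition \ref{weightL}(b), $B$ is the kernel of the Bockstein map, namely $H^{i-1}_{\et}(X,\Q_\ell(j))/H^{i-1}_{\et}(X,\Z_\ell(j))$, which is the image of $H^{i-1}_{\et}(X,\Q_\ell(j))$. This image is divisible, and it has finite index because $F$ is finite. A divisible subgroup of finite index in a group of cofinite type is the maximal divisible subgroup, because the quotient of $H^{i-1}_\et(X,\Q_\ell/\Z_\ell(j))$ by its maximal divisible subgroup is finite and the maximal divisible subgroup contains every divisible subgroup.

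For (c), assume ${\rm char}(k)=0$, so that every prime $\ell$ is invertible in $k$. We take the product of the maps in (a) over all $\ell$, giving
\[
A\to\prod_\ell H^i_\et(X,\Z_\ell(j))_{tors}.
\]
The target is in fact the direct sum, because the torsion of $H^i_\et(X,\Z_\ell(j))$ vanishes for all but finitely many $\ell$. We can reduce to $k=\C$ by invariance of étale cohomology under extension of separably closed fields, and then Artin comparison identifies this torsion with the $\ell$-part of the finite group $H^i(X(\C),\Z)_{tors}$. The total map is surjective, since the factors are finite groups of coprime orders and each component map is surjective; this is the Chinese remainder argument. Its kernel $D=\bigcap_\ell B_\ell$ must then be shown divisible, and this is the step needing the most care. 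For each $\ell$, the group $B_\ell$ is $\ell$-divisible and the $\ell$-primary component of $A/B_\ell$ is all of $F_\ell$, while the other components of $A/D$ have order prime to $\ell$. Hence $D$ has index prime to $\ell$ in $B_\ell$, and it follows that $D$ is $\ell$-divisible for every $\ell$, hence divisible. Finally, $D[\ell^\infty]=B_\ell[\ell^\infty]$, because an $\ell$-primary torsion element of $B_\ell$ maps to zero in $F_{\ell'}$ for every $\ell'\neq\ell$. By (b), this equals $H^{i-1}_\et(X,\Q_\ell(j))/H^{i-1}_\et(X,\Z_\ell(j))$.
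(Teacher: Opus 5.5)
Your proposal is correct and takes the same route as the paper, which proves this statement by applying Lemma~\ref{formel} to $A=H^i_L(X,\Z(j))$, with $A\otimes\Q_\ell/\Z_\ell=0$ and the Bockstein description of $A[\ell^\infty]$ supplied by Proposition~\ref{weightL}; your arguments for (b) and (c) fill in details the paper leaves implicit, namely $B_\ell[\ell^\infty]=\ker(A[\ell^\infty]\to F)$, the finiteness of $\bigoplus_\ell H^i_{\et}(X,\Z_\ell(j))_{tors}$ via Artin comparison, and the divisibility of $\bigcap_\ell B_\ell$. One point in (b) should be tightened: the two facts you cite only show that the image of $H^{i-1}_{\et}(X,\Q_\ell(j))$ is contained in the maximal divisible subgroup, and the reverse inclusion holds because the maximal divisible subgroup modulo this image is a divisible subgroup of the finite group $F$, hence zero.
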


\begin{remark} {\rm
 If   $i\leq j+1$,  then $H^{i}_{M}(X,\Z(j)) \simeq H^{i}_{L}(X,\Z(j))$ and so the structure results of Proposition \ref{HiL} hold for $H^{i}_{M}(X,\Z(j)) $ in place of $H^{i}_{L}(X,\Z(j))$.} 
 \end{remark}

\begin{remark} {\rm
If   $i=j+2$,  the map  $H^{i}_{M}(X,\Z(j)) \to H^{i}_{L}(X,\Z(j))$ is injective.
For $i\neq 2j, 2j+1$, 
 the group $H^{j+2}_{M}(X,\Z(j))    \{  \ell^{\infty}  \}$
 injects into  $H^{j+1}_{\et}(X, \Q_{\ell}/\Z_{\ell}(j))$.
For $i=4$, $j=2$,  this gives an injective  map 
 $\CH^2(X) [\ell^{\infty}] \to H^3_{\et}(X,  \Q_{\ell}/\Z_{\ell}(2))$ first considered by Bloch.}
\end{remark}

\begin{proposition} \label{prop:appendix:separable-closed}
 Let $X/k$ be smooth, projective, connected of dimension $d$ over a separably closed field $k$.
Let $\ell$ be a prime invertible in  $k$.  Let $i\geq 0$ and $j\geq 2$.
If one of the following  hypotheses holds:
  \begin{enumerate}[(a)]
\item  $i>2j$,
\item  $i \leq j+1$, or
\item  $d\leq j$ and $i\neq 2j$,
\end{enumerate}
then $H^{i}_{M}(X,\Z(j)) \otimes \Q_{\ell}/\Z_{\ell} =0$.
\end{proposition}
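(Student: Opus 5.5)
Each case reduces to earlier results: case (b) to Corollary \ref{cor:filtration-L-intro-1} and Proposition \ref{weightL}, cases (a) and (c) to the vanishing of motivic cohomology above its top degree. I first establish two easy vanishing ranges and then treat each hypothesis.

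\textbf{Case (a): $i>2j$.} Via \eqref{eq:motivic=higher-Chow}, $H^i_M(X,\Z(j))\cong \CH^j(X,2j-i)$. A negative cycle index means this group is zero. So $H^i_M(X,\Z(j))=0$, and the tensor product vanishes trivially.

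\textbf{Case (b): $i\le j+1$.} I split into subcases.
\begin{enumerate}
\item If $i<j$, the claim is exactly the last assertion of Corollary \ref{cor:filtration-L-intro-1}, or Theorem \ref{thm:Hi_MXQ/Z-vanishing}.
\item If $j\le i\le j+1$, the integral Beilinson--Lichtenbaum statement recalled in Section \ref{sec:appendix-prelim} gives $H^i_M(X,\Z(j))\cong H^i_L(X,\Z(j))$.
\begin{itemize}
\item Since $j\ge 2$, we have $i\le j+1<2j$, so $i\neq 2j$.
\item Proposition \ref{weightL}(a) then gives $H^i_L(X,\Z(j))\otimes\Q_\ell/\Z_\ell=0$.
\end{itemize}
\end{enumerate}
The hypothesis $j\ge 2$ is used exactly here: it ensures $j+1<2j$, so the degree $2j$ never falls in this range.

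\textbf{Case (c): $d\le j$ and $i\neq 2j$.}
\begin{itemize}
\item If $i>2j$, case (a) applies.
\item If $i<2j$, then $i>d+j$ is impossible to use directly. Instead I use \eqref{eq:Hi_M(X,n)=0-for-i}, which gives $H^i_M(X,\Z(j))=0$ for $i>d+j$.
\item Since $d\le j$, every $i$ with $i>d+j$ satisfies $i>2d$.
\end{itemize}
The remaining range, $j+2\le i\le d+j$ with $i\ne 2j$, is the main obstacle.

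\textbf{The main obstacle.} For $j+2\le i\le d+j$ I would argue as in the proof of Proposition \ref{weightL}(a). The composite
\[
H^i_M(X,\Z(j))\otimes\Q_\ell/\Z_\ell\to H^i_L(X,\Z(j))\otimes\Q_\ell/\Z_\ell\to H^i_\et(X,\Q_\ell/\Z_\ell(j))
\]
vanishes by weights. But the first map need not be injective, so this gives nothing directly. I would therefore try to show that the cycle map $H^i_M(X,\Q_\ell/\Z_\ell(j))\to H^i_\et$ is injective when $d\le j$. This holds because the cone of $\Z/\ell^r(j)_\zar\to R\pi_*\mu_{\ell^r}^{\otimes j}$ lives in degrees $\ge j+2$, and its hypercohomology on a $d$-dimensional scheme (Zariski cohomological dimension $d$) only affects total degrees $\ge j+2$.

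This naive count is not enough. The honest version uses the coniveau/Gersten description: classes of $H^i_M$ with $i\ge j+2$ are supported in codimension $\ge i-j\ge 2$. By Lemma \ref{lem:N^{i-n}}, $N^{i-j}=H^i_M$. So the group is a quotient of $\varinjlim_Z H^{i-2c}_{BM,M}(Z,\Z(j-c))$ with $c=i-j$, and $\dim Z=d-c$. Then:
\begin{itemize}
\item $i-2c<j-c$, i.e.\ $i<2j$, lets Theorem \ref{thm:filtration-intro-BM}(2) give vanishing whenever $i-2c<j-c$, i.e.\ $c>i-j$.
\item With $c=i-j$ the degree is exactly the borderline $i-2c=j-c$.
\end{itemize}
So I would instead use the degree bound: since $Z$ has dimension $d-c\le j-c$, Kok--Zhou's Proposition \ref{prop:kok-zhou} gives injectivity of the cycle map at this borderline degree $n$. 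Combining this with the weight argument of Theorem \ref{thm:Hi_MXQ/Z-vanishing} for a projective $Z$ (where weights are $\le 0$ and the $G$-invariants are finite for $i\ne 2j$) should finish the proof. Making this weight step precise, in particular at the borderline degree, is the step I expect to be hardest.
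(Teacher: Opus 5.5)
Cases (a) and (b) are fine. In (b) the split at $i<j$ is unnecessary: for every $i\le j+1$, the integral Beilinson--Lichtenbaum isomorphism $H^i_M(X,\Z(j))\cong H^i_L(X,\Z(j))$ together with Proposition~\ref{weightL}(a) already suffices, and this is the paper's argument.

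The gap is in case (c), in the range $j+2\le i<2j$. Your coniveau route tries to show that the \emph{source} of the surjection $\varinjlim_Z H^{m}_{BM,M}(Z,\Z(m))\to H^i_M(X,\Z(j))$, with $m=2j-i$ and $\dim Z=d-i+j$, dies after $\otimes\Q_\ell/\Z_\ell$. You use Kok--Zhou injectivity on $Z$ together with a weight argument on $Z$. The weight step is false. For projective $Z$, the weights of $H^m_{BM}(Z,\Q_\ell(m))$ only lie in $[-m,\,2(d-j)]$. So for $d=j$ weight $0$ can occur when $Z$ is singular, and then the $G$-invariants of $H^m_{BM}(Z,\Q_\ell/\Z_\ell(m))$ are infinite.

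Concretely, take $d=j=3$, $i=5$ (e.g.\ $X=\mathbb P^3$), and let $Z$ be a nodal rational curve. Then $H^1_{BM,M}(Z,\Z(1))=\CH_0(Z,1)\cong k^*\oplus\Z$. The factor $\Z$ is generated by the coordinate $t$ on $Z\setminus\{\mathrm{node}\}\cong\mathbb G_m$, whose divisor on the normalization cancels at the node. Hence $\CH_0(Z,1)\otimes\Q_\ell/\Z_\ell\cong\Q_\ell/\Z_\ell$, detected by the weight-$0$ loop class in $H^1_{BM}(Z,\Q_\ell/\Z_\ell(1))$. This class survives even in $\varinjlim_Z$, because its component at the generic point of $Z$ is not an $\ell$-th power times a root of unity. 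So no argument that kills the source can work: only the image in $H^i_M(X,\Z(j))$ dies, and injectivity on $Z$ says nothing about what happens after pushforward to $X$. For $d<j$ your route could be repaired using that $H^k$ of a proper variety has weights $\le k$, which gives weights $\le 2(d-j)<0$. The bound ``weights $\le 0$'' that you state is not enough.

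The missing idea is injectivity on $X$ itself, which you considered and then dropped. The paper uses Suslin's theorem: for $d\le j$ and $i\le 2j$, the map $H^i_M(X,\Z/\ell^r(j))\to H^i_\et(X,\mu_{\ell^r}^{\otimes j})$ is an isomorphism. Hence $H^i_M(X,\Z(j))\otimes\Q_\ell/\Z_\ell\hookrightarrow H^i_\et(X,\Q_\ell/\Z_\ell(j))$. The weight argument of Proposition~\ref{weightL} on $X$ then shows this injection is zero: $X$ is pure of weight $i-2j\neq0$, so the $G$-invariants are finite, while the source is divisible.

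Your cone argument gives the same injectivity once the count is done correctly. What matters is not the Zariski cohomological dimension of $X$, but that $R^q\pi_*\mu_{\ell^r}^{\otimes j}=0$ for $q>d$. Indeed, its stalks inject into $H^q(k(X),\mu_{\ell^r}^{\otimes j})$, and $\operatorname{cd}_\ell k(X)\le d$ because $k$ is separably closed. Hence for $d\le j$ the cone $\tau_{\ge j+1}R\pi_*\mu_{\ell^r}^{\otimes j}$ vanishes, and $\Z/\ell^r(j)_\zar\to R\pi_*\mu_{\ell^r}^{\otimes j}$ is a quasi-isomorphism.
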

\begin{proof}
Statement (a) is clear because $H^{i}_{M}(X,\Z(j))=0$ for $i>2j$.
Under the hypothesis $i\leq j+1$, the map
$H^{i}_{M}(X,\Z(j)) \to H^{i}_{L}(X,\Z(j))$ is an isomorphism.
Statement (b) then comes from  Proposition \ref{weightL}. 

For any smooth, connected, quasi-projective  $X$ over $k$, for $d\leq j$ and $i \leq 2j$, Suslin  \cite[Corollary 3, p.~254]{suslin}
proved that the maps
$$
H^{i}_{M}(X, \Z/\ell^r(j)) 
\longrightarrow  H^{i}_{\et}(X,\mu_{\ell^r}^{\otimes j})
$$
are isomorphisms. 
For $d\leq j$, and any $i\geq 0$, one then gets injections
$$ 
H^{i}_{M}(X,\Z(j) ) \otimes  \Q_{\ell}/\Z_{\ell} \hookrightarrow   H^{i}_{\et}(X,\Q_{\ell}/\Z_{\ell}(j)).
$$
For $X$ smooth, connected, and projective,
and  $i\neq 2j$, a weight argument as before gives that this map is zero, which proves (c). 
\end{proof}

\subsection{Motivic cohomology of smooth projective varieties over  a finite field}

\begin{lemma}
Let $X$ be a smooth, projective, geometrically integral variety over
a finite field $\F$.  
Let $\ell$ be a prime invertible in  $\F$ and let $i \geq 0$.
If  $i \neq 2j+1, 2j+2$, then $H^{i}_{L}(X, \Z(j))[\ell^{\infty}] \simeq H^{i}_{\et}(X, \Z_{\ell}(j))_{tors}$.
\end{lemma}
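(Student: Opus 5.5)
Over a finite field $\F$, the route is the exact sequence \eqref{eq:prelim-appendix-ses} with $i-1$ in place of $i$:
$$
0\longrightarrow H^{i-1}_{L}(X,\Z(j))\otimes\Q_\ell/\Z_\ell\longrightarrow H^{i-1}_{\et}(X,\Q_\ell/\Z_\ell(j))\longrightarrow H^{i}_{L}(X,\Z(j))[\ell^\infty]\longrightarrow 0 .
$$
The plan is to compute the middle term by weights, compare it with the torsion of integral $\ell$-adic cohomology via the Bockstein sequence, and check that the first term does not obstruct the identification.

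\textbf{Step 1 (finiteness).} Let $\bar\F$ be an algebraic closure and $G=\Gal(\bar\F/\F)\cong\widehat\Z$. Hochschild--Serre gives
$$
0\to H^{i-2}(X_{\bar\F},\Q_\ell(j))_G\to H^{i-1}(X,\Q_\ell(j))\to H^{i-1}(X_{\bar\F},\Q_\ell(j))^G\to 0 .
$$
By Deligne, $H^m(X_{\bar\F},\Q_\ell(j))$ is pure of weight $m-2j$. It has no invariants or coinvariants unless $m=2j$. So $H^{i-1}(X,\Q_\ell(j))=0$ unless $i-1=2j$ or $i-2=2j$, i.e.\ unless $i\in\{2j+1,2j+2\}$ (cf.\ \cite[(28)]{CTSS}). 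Under the hypothesis, $H^{i-1}_{\et}(X,\Z_\ell(j))$ is finitely generated (finite cohomology mod $\ell^r$ plus Mittag--Leffler) with zero rational part, hence finite.

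\textbf{Step 2 (Bockstein).} The étale Bockstein sequence gives
$$
0\to H^{i-1}_{\et}(X,\Z_\ell(j))\otimes\Q_\ell/\Z_\ell\to H^{i-1}_{\et}(X,\Q_\ell/\Z_\ell(j))\to H^{i}_{\et}(X,\Z_\ell(j))[\ell^\infty]\to 0 .
$$
The first term vanishes by Step 1. Hence $H^{i-1}_{\et}(X,\Q_\ell/\Z_\ell(j))\cong H^i_{\et}(X,\Z_\ell(j))_{tors}$, and in particular it is finite.

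\textbf{Step 3 (main obstacle).} It remains to show that $H^{i-1}_L(X,\Z(j))\otimes\Q_\ell/\Z_\ell=0$. This group is divisible and injects into the group computed in Step 2, which is finite. A divisible subgroup of a finite group is zero, so it vanishes. The sequence \eqref{eq:prelim-appendix-ses} then gives $H^i_L(X,\Z(j))[\ell^\infty]\cong H^{i-1}_{\et}(X,\Q_\ell/\Z_\ell(j))\cong H^i_{\et}(X,\Z_\ell(j))_{tors}$.

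The subtle point is checking that \eqref{eq:prelim-appendix-ses} and the Bockstein sequences are valid over $\F$ itself, not only over $\bar\F$. This needs Geisser--Levine \eqref{eq:geisser-levine} over a non-closed field and the Mittag--Leffler condition for finite fields. The edge case $i=0$ is trivial; there $H^{-1}=0$.
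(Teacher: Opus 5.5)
Your proof is correct and is essentially the paper's argument. The paper also uses the exact sequence \eqref{eq:prelim-appendix-ses}, indexed with $i$ where you use $i-1$, together with the finiteness of $H^{i-1}_{\et}(X,\Q_\ell/\Z_\ell(j))$ coming from Deligne's weights, and the fact that a divisible subgroup of a finite group vanishes. The only difference is that the paper quotes this finiteness directly from \cite[Theorem 2]{CTSS} and leaves the Bockstein identification with $H^i_{\et}(X,\Z_\ell(j))_{tors}$ implicit, whereas you derive both from Hochschild--Serre and the Bockstein sequence.
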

\begin{proof} 
Assume  $i\neq 2j, 2j+1$.  By Deligne's results on the Weil conjectures,
 the group $H^{i}_{\et}(X, \Q_{\ell}/\Z_{\ell}(j))$
is then finite  \cite[Theorem 2, p.~780]{CTSS}.
We conclude from \eqref{eq:prelim-appendix-ses} that  $$H^{i}_{L}(X, \Z(j))\otimes \Q_{\ell}/\Z_{\ell} =0$$ 
and there is an isomorphism of finite groups
$H^{i+1}_{L}(X, \Z(j))[\ell^{\infty}] \simeq H^{i+1}_{\et}(X, \Z_{\ell}(j))_{tors}$, as we want. 
\end{proof}

 Upon use of  Lemma \ref{formel}  we deduce:

\begin{proposition}\label{HiLfini}
Let $X$ be a smooth, projective, geometrically integral variety over a finite field $\F$. 
Let $\ell$ be a prime invertible in $\F$.  
If  $i \neq  2j, 2j+1, 2j+2$, the group $H^{i}_{L}(X,\Z(j)) $ is an extension of
the finite group  $H^{i}_{\et}(X,\Z_{\ell}(j))_{tors}$ by the maximal $\ell$-divisible subgroup of $H^{i}_{L}(X,\Z(j)) $, and that subgroup is uniquely $\ell$-divisible.
\end{proposition}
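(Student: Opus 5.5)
The plan is to apply Lemma \ref{formel} to $A=H^{i}_{L}(X,\Z(j))$, exactly as Proposition \ref{HiL} was deduced from Proposition \ref{weightL}. We must verify two hypotheses. First, $A\otimes \Q_\ell/\Z_\ell=0$. Second, $A[\ell^\infty]$ is an extension of a group $F$ of finite exponent by a divisible group. We will take $F=H^{i}_{\et}(X,\Z_\ell(j))_{tors}$ and the divisible part to be zero.

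For the first hypothesis, use the short exact sequence \eqref{eq:prelim-appendix-ses} in degree $i$. It embeds $A\otimes\Q_\ell/\Z_\ell$ into $H^{i}_{\et}(X,\Q_\ell/\Z_\ell(j))$. For $i\neq 2j,2j+1$ the latter group is finite by \cite[Theorem 2, p.~780]{CTSS}, which rests on Deligne's weights and the Hochschild--Serre sequence over $\F$. A divisible group injecting into a finite group is zero, so $A\otimes\Q_\ell/\Z_\ell=0$. This uses only $i\neq 2j,2j+1$.

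For the second hypothesis, apply the preceding lemma. Since $i\neq 2j+1,2j+2$, it gives $A[\ell^\infty]\simeq H^{i}_{\et}(X,\Z_\ell(j))_{tors}$. This group is finite because $H^i_{\et}(X,\Z_\ell(j))$ is a finitely generated $\Z_\ell$-module over a finite field. Hence $A[\ell^\infty]$ is finite, and we may take $F=A[\ell^\infty]$ with divisible part $B=0$. The excluded set $\{2j,2j+1,2j+2\}$ in the statement is exactly the union of the two exclusions.

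Lemma \ref{formel} then produces a natural surjection $A\to F$ whose kernel is the maximal $\ell$-divisible subgroup $A''$ of $A$. It remains to show that $A''$ is uniquely $\ell$-divisible, i.e.\ that $A''[\ell]=0$. The surjection restricts to the given isomorphism $A[\ell^\infty]\to F$, so $A''\cap A[\ell^\infty]=0$, and therefore $A''$ has no $\ell$-torsion. Combined with $\ell$-divisibility, this gives unique $\ell$-divisibility.

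The only delicate point is checking that the map $A\to F$ from Lemma \ref{formel} really restricts to the identity on $A[\ell^\infty]=F$. This holds by the lemma's construction: with $B=0$, the diagram in its proof has $A'=A$, and the splitting is the unique one extending the inclusion of $F$. Beyond that, the argument is bookkeeping with the cited finiteness results.
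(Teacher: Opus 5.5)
Your proposal is correct and is exactly the paper's argument. The paper applies Lemma~\ref{formel} to $A=H^i_L(X,\Z(j))$ with $B=0$ and $F=A[\ell^\infty]\simeq H^i_{\et}(X,\Z_\ell(j))_{tors}$, getting $A\otimes\Q_\ell/\Z_\ell=0$ from the finiteness of $H^i_{\et}(X,\Q_\ell/\Z_\ell(j))$ via \eqref{eq:prelim-appendix-ses} and the finite torsion from the preceding lemma; your deduction of unique $\ell$-divisibility from $A''\cap A[\ell^\infty]=0$ is equivalent to reading off from that lemma's proof that $A''\cong A\otimes\Z[1/\ell]$ when $B=0$.
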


\begin{remark}{\rm
If $i\leq j+1$, the isomorphism $H^{i}_{M}(X,\Z(j)) \simeq H^{i}_{L}(X,\Z(j))$ implies that we may for $i\leq j+1$ replace $H^{i}_{L}(X,\Z(j))$ by $H^{i}_{M}(X,\Z(j))$ in Proposition \ref{HiLfini}.}
 \end{remark}

\begin{remark} {\rm
If   $i=j+2$,  the map  $H^{i}_{M}(X,\Z(j)) \to H^{i}_{L}(X,\Z(j))$ is injective.
Thus for $i\neq 2j, 2j+1$, the group $H^{j+2}_{M}(X,\Z(j)) [\ell^{\infty}]$ injects into  $H^{j+1}_{\et}(X, \Q_{\ell}/\Z_{\ell}(j))$.
For $i=4$, $j=2$,  this gives an injective  map 
 $\CH^2(X) [\ell^{\infty}] \to H^3_{\et}(X,  \Q_{\ell}/\Z_{\ell}(2))$ 
 into a finite group, which was  used in \cite{CTSS}.}
\end{remark}

 We also have:
 \begin{proposition} \label{prop:appendix-finite-field}
 Let $X$ be smooth, projective, connected of dimension $d$ over a finite field $\F$.
Let $\ell$ be a prime invertible in  $\F $. Let $i\geq 0$ and $j\geq 2$.
If one of the following  hypotheses holds:
\begin{enumerate}[(1)]
    \item $i>2j$,
\item $i \leq j+1$,
\item $d\leq j$ and $i\neq 2j$,
\end{enumerate} 
then $H^{i}_{M}(X,\Z(j)) \otimes \Q_{\ell}/\Z_{\ell} =0.$
\end{proposition}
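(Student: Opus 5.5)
We treat the three cases in the order of the statement, following the proof of Proposition \ref{prop:appendix:separable-closed} but replacing the weight argument over a separably closed field by the vanishing and finiteness results available over a finite field. Case (1) is immediate, since $H^i_M(X,\Z(j))=\CH^j(X,2j-i)=0$ for $i>2j$.

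For case (2), $i\leq j+1$, the map $H^i_M(X,\Z(j))\to H^i_L(X,\Z(j))$ is an isomorphism (integral Beilinson--Lichtenbaum, Section \ref{sec:appendix-prelim}). So it suffices to show $H^i_L(X,\Z(j))\otimes\Q_\ell/\Z_\ell=0$. By \eqref{eq:prelim-appendix-ses} this group injects into $H^i_{\et}(X,\Q_\ell/\Z_\ell(j))$. For $i\neq 2j,2j+1$ the target is finite by Deligne's theorem and the Hochschild--Serre argument of \cite[Theorem 2, p.~780]{CTSS}. A divisible group injecting into a finite group is zero, which settles these values of $i$. Since $j\geq 2$, we have $j+1<2j$, so the condition $i\leq j+1$ already excludes $i=2j$ and $i=2j+1$; hence case (2) is complete. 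Alternatively, for $i<j$ one can simply invoke Theorem \ref{thm:Hi_MXQ/Z-vanishing-finite}, and the preceding argument handles $i\in\{j,j+1\}$.

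For case (3), $d\leq j$ and $i\neq 2j$, case (1) lets us assume $i<2j$. Suslin's theorem \cite[Corollary 3, p.~254]{suslin} holds over any field, in particular over $\F$ when $d\leq j$. It gives isomorphisms $H^i_M(X,\Z/\ell^r(j))\cong H^i_{\et}(X,\mu_{\ell^r}^{\otimes j})$ for $i\leq 2j$. Passing to the direct limit and using the Bockstein sequence, we obtain an injection $H^i_M(X,\Z(j))\otimes\Q_\ell/\Z_\ell\hookrightarrow H^i_{\et}(X,\Q_\ell/\Z_\ell(j))$. One point needs checking here: the injectivity of $H^i_M\otimes\Q_\ell/\Z_\ell\to H^i_M(\Q_\ell/\Z_\ell)$ always holds, and the comparison map is an isomorphism in the relevant degree, so the composite is injective. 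For $i\neq 2j,2j+1$ the target is finite by \cite{CTSS}, so the divisible source vanishes; since $i<2j$, this covers every remaining $i$.

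The main obstacle I expect is confirming that Suslin's comparison applies verbatim over a finite field, or more precisely over any field, with the stated range $i\leq 2j$. If it does not, I would fall back on Kerz--Saito \cite{kerz-saito}, which is exactly what the appendix introduction cites for finite fields when $j\geq\dim X$, to obtain the needed injectivity into étale cohomology. The finiteness input from \cite{CTSS} is only needed in the form that $H^i_{\et}(X,\Q_\ell/\Z_\ell(j))$ is finite for $i\notin\{2j,2j+1\}$, and this is standard.
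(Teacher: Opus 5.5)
Cases (1) and (2) are fine and agree with the paper. For (2), the paper invokes Proposition~\ref{HiLfini}, and that proposition rests on exactly your argument: the isomorphism $H^i_M\cong H^i_L$ for $i\le j+1$, the injection coming from \eqref{eq:prelim-appendix-ses}, finiteness of $H^i_{\et}(X,\Q_\ell/\Z_\ell(j))$ for $i\neq 2j,2j+1$ by \cite{CTSS}, and $j+1<2j$.

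The problem is case (3). Your claim that Suslin's comparison \cite[Corollary 3, p.~254]{suslin} ``holds over any field'' is false. Suslin proves it over algebraically closed fields, which is why the paper uses it only in the separably closed case.

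Over a general field it already fails for $X=\Spec k$. Take $d=0\le j$ with $j\ge 1$ and $i=j+1\le 2j$. Then $H^{j+1}_M(k,\Z/m(j))=0$, while $H^{j+1}_{\et}(k,\mu_m^{\otimes j})\neq 0$ for $k=\C(t_1,\dots,t_{j+1})$.

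Over $\F$ the situation is as follows. By Beilinson--Lichtenbaum, the cone of $\Z/m(j)_{\zar}\to R\pi_\ast\mu_m^{\otimes j}$ is $\tau_{\geq j+1}R\pi_\ast\mu_m^{\otimes j}$. Since $\operatorname{cd}_\ell \F(X)=d+1$, the Bloch--Ogus sheaves $\mathcal H^q_{\et}(\mu_m^{\otimes j})$ vanish for $q>d+1$.
\begin{itemize}
\item For $j>d$ the comparison therefore does follow cheaply.
\item For $j=d$ the cone is $\mathcal H^{d+1}_{\et}(\mu_m^{\otimes d})[-d-1]$. Its Zariski cohomology is the Kato homology of $X$, and the vanishing you need is Kato's conjecture, a deep theorem of Kerz--Saito.
\end{itemize}

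So what you call a fallback is the actual proof. The needed input is \cite[Theorem 9.3]{kerz-saito}: $H^i_M(X,\Z/\ell^r(j))\cong H^i_{\et}(X,\mu_{\ell^r}^{\otimes j})$ for $X$ smooth projective over $\F$, $d\le j$ and $0\le i\le 2j$. With this input, the rest of your case (3) is exactly the paper's argument: pass to the colimit, use injectivity of $H^i_M(X,\Z(j))\otimes\Q_\ell/\Z_\ell\to H^i_M(X,\Q_\ell/\Z_\ell(j))$, and use finiteness of the target for $i<2j$.
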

\begin{proof}
As before (1) is clear because $H^{i}_{M}(X,\Z(j))=0$ if $i>2j$.
Statement (2) is a consequence  of Proposition \ref{HiLfini}.

For any smooth, connected, projective  $X$ over $\F$, for $d\leq j$ and $0 \leq i \leq 2j$, Kerz and Saito \cite[p.\ 254, Theorem 9.3]{kerz-saito} proved that the maps
$H^{i}_{M}(X, \Z/\ell^r(j)) \to H^{i}_{\et}(X,\mu_{\ell^r}^{\otimes j})$
are isomorphisms. 
For $d\leq j$, and any $i\geq 0$, one then gets injections
$ H^{i}_{M}(X,\Z(j) ) \otimes \Q_{\ell}/\Z_{\ell} \hookrightarrow   H^{i}_{\et}(X,\Q_{\ell}/\Z_{\ell}(j)).$
For $X$ smooth, connected, and projective, and  $i <2j$, a weight argument gives that $H^{i}_{\et}(X,\Q_{\ell}/\Z_{\ell}(j))$ is a finite group, hence the map is zero.
This gives  statement (3) and hence concludes the proof.
 \end{proof}

\section*{Acknowledgements}  
Thanks to Theodosis Alexandrou, Ofer Gabber and Tam\'as Szamuely for references, to Olivier Wittenberg for a useful question, and to Federico Scavia for comments. 
Lin Zhou pointed out that item \eqref{item:thm:Hi-curlyHj:1} in Theorem \ref{thm:Hi-curlyHj}, which we had originally only proven modulo torsion, follows from item \eqref{item:thm:strengthen-thm:H_M-unramified-divisible:1} in Theorem \ref{thm:strengthen-thm:H_M-unramified-divisible}.  
This project was initiated during the first-named author’s visit to Leibniz University Hannover in the autumn of 2024, supported by his Humboldt Research Award. 
The support of the Alexander von Humboldt Foundation is gratefully acknowledged.  
During the last stages of the writing, the first-named author
enjoyed the hospitality of Lodha Mathematical Sciences Institute (Mumbai).
This project has received funding from the European Research Council (ERC) under the European Union's Horizon 2020 research and innovation programme under grant agreement No 948066 (ERC-StG RationAlgic).


\begin{thebibliography}{HKLR} 


%\bibitem[Stacks24]{stacks-project}
%The Stacks project authors, {\em  The Stacks project},
% \url{https://stacks.math.columbia.edu}, 2024.


\bibitem[Ale23]{alexandrou} Th.\ Alexandrou, {\em Torsion in Griffiths groups}, arXiv:2303.04083, to appear in Algebraic Geometry.
%
%\bibitem[Ale24]{Ale-zero-cycles}
%Th.\ Alexandrou, {\em Two Cycle Class Maps on Torsion Cycles}, International Mathematics Research Notices (2024),  rnae138. 

%\bibitem[AS23]{alexandrou-schreieder} Th.\ Alexandrou and S.\ Schreieder, {\em On Bloch's map for torsion cycles over non-closed fields}, Forum of Mathematics, Sigma (2023), Vol. 11:e53 1--21.

\bibitem[AS24]{AS24} Th.\ Alexandrou and S.\ Schreieder, {\em Truncated pushforwards and refined unramified cohomology}, Advances in Mathematics \textbf{458} (2024) 109979,
\url{https://doi.org/10.1016/j.aim.2024.109979}.


\bibitem[AZ25]{alexandrou-zhou} Th.\ Alexandrou and L.\ Zhou, {\em Torsion higher Chow cycles modulo l}, Preprint 2025, arXiv:2503.20004.

\bibitem[BaS26]{balkan-Sch}
S.\ Balkan and S.\ Schreieder, {\em Cycle conjectures and birational invariants over finite fields},
 Sel.~Math.~New Ser.~\textbf{32}, 37 (2026). \url{https://doi.org/10.1007/s00029-026-01142-0}
%Preprint 2024, arXiv:2406.14438.

%\bibitem[Be82]{Be} A.\ A.\ Beilinson, {\em Letter to C. Soul\'e}, November 1, 1982.

\bibitem[BhS15]{BS} B.\ Bhatt and P.\ Scholze, {\em The pro-\'etale topology of schemes},  Ast\'erisque \textbf{369} (2015),  99--201.

\bibitem[BO74]{BO}
S.\ Bloch and A.\ Ogus, {\em Gersten's conjecture and the homology of schemes}, Ann.\ Sci.\ \'Ec.\ Norm.\ Sup\'er., \textbf{7} (1974), 181--201.

%\bibitem[Blo79]{bloch-compositio}
%S.\ Bloch, {\em Torsion algebraic cycles and a theorem of Roitman}, Compositio Mathematica \textbf{39} (1979), 107--127.

\bibitem[Blo85]{bloch-coniveau}
S.\  Bloch, {\em Algebraic cycles and values of L-functions II}, Duke Math.\ J.\ \textbf{52} (1985), 379--397.
%
\bibitem[Blo86]{bloch-motivic}
S.\ Bloch, {\em Algebraic cycles and higher K-theory},
Adv. in Math.,\ \textbf{61} (1986), 267--304. 

\bibitem[Blo94]{bloch-JAG}
S.\ Bloch, {\em The moving lemma for higher Chow groups}, J.\ Algebraic Geom.\ \textbf{3} (1994), 537--568.

\bibitem[BE96]{BE}
S.\ Bloch and H.\ Esnault, {\em The coniveau filtration and non-divisibility for algebraic cycles}, Math.\ Ann.\ \textbf{304} (1996), 303--314.
 
%\bibitem[CD16]{cd}
%D.-C.\ Cisinski and F.\ D\'eglise, {\em \'Etale motives}, Compositio Mathematica \textbf{152} (2016), 556--666. 
 
\bibitem[CT95]{CT}
J.-L.\ Colliot-Th\'el\`ene, {\em Birational invariants, purity and the Gersten conjecture}, K-theory and algebraic geometry: connections with quadratic forms and division algebras (Santa Barbara, CA, 1992), 1--64, Proc.\ Sympos.\ Pure Math.\, \textbf{58}, AMS, Providence, RI, 1995.
 
\bibitem[CTSS83]{CTSS}
J.-L.\ Colliot-Th\'el\`ene, J.-J.\ Sansuc, and C. Soul\'e, {\em Torsion dans le groupe de Chow de codimension deux}, Duke
Math.\ J.\ \textbf{50} (1983), 763--801.

\bibitem[CTR85]{CTR}
J.-L.\ Colliot-Th\'el\`ene and W.\ Raskind, {\em $\mathcal K_2$-Cohomology and the Second Chow Group}, Math.\ Ann.\ \textbf{270} (1985), 165--199. 

%\bibitem[CT93]{CTTrento} J.-L.\ Colliot-Th\'el\`ene, {\em Cycles de torsion et $K$-th\'eorie alg\'ebrique},
% in Arithmetic Algebraic Geometry, Trento, 1991, ed. E. Ballico,
%LNM 1553, Springer-Verlag, 1993.
    
%
%\bibitem[CTO89]{CTO}
%J.-L.\ Colliot-Th\'el\`ene and M.\ Ojanguren, {\em Vari\'et\'es unirationnelles non rationnelles : au-del\`a de l'exemple d'Artin et Mumford}, Invent.\ Math.\ \textbf{97} (1989), 141--158.
%
%\bibitem[CTHK97]{CTHK}
%J.-L.\ Colliot-Th\'el\`ene, R.\ Hoobler, and B.\ Kahn, {\em  The Bloch-Ogus-Gabber theorem}, Algebraic K-theory (Toronto, ON, 1996), 31--94,  Fields Inst.\  Commun., \textbf{16}, Amer. \ Math.\  Soc., Providence, RI, 1997.
% 
%  
%\bibitem[CTV12]{CTV}
%J.-L.\ Colliot-Th\'el\`ene and C.\ Voisin, {\em Cohomologie non ramifi\'ee et conjecture de Hodge enti\`ere}, Duke Math.\ J.\ \textbf{161} (2012), 735--801.

%\bibitem[Ga94]{gabber}
%O.\ Gabber, {\em Gersten's conjecture for some complexes of vanishing cycles}, Manuscripta Math.\ \textbf{85} (1994), 323--344.

\bibitem[Del71]{deligne-HodgeII}
P.\ Deligne, {\em Th\'eorie de Hodge II}, Publ.\ Math.\ I.H.\'E.S.,  \textbf{40} (1971), 5--58.
 
\bibitem[Del74]{deligne}
P.\ Deligne, {\em La conjecture de Weil, I}, Publ.\ Math.\ I.H.\'E.S., \textbf{43} (1974) 273--307.

\bibitem[Del80]{deligneII}
P.\ Deligne, {\em La conjecture de Weil, II}, Publ.\ Math.\ I.H.\'E.S., \textbf{52} (1980) 137--252. 

\bibitem[deJ96]{deJong}
J.\ de Jong, {\em Smoothness, semi-stability and alterations}, Publ.\ Math.\ I.H.\'E.S., \textbf{83} (1996) 51--93.

\bibitem[Dia21]{diaz}
H.\ Diaz, {\em Nondivisible cycles on products of very general Abelian varieties}, J.\ Algebraic Geom.\ \textbf{30} (2021), 407--432.
 
\bibitem[Eke90]{Eke}
T.\ Ekedahl, {\em On the adic formalism}, Grothendieck Festschrift, Vol.\ II, Progr.\ Math.\ \textbf{87}, Birkh\"auser, 1990, 197--218.

%\bibitem[FS02]{F-S}
%E.\ M.\ Friedlander and A.\ Suslin, {\em The spectral sequence relating algebraic K-theory to motivic cohomology}, Ann.\ Scient.\ Éc.\ Norm.\ Sup.\ \textbf{35} (2002), 773--875.

%\bibitem[Gab83]{gabber}
%O.\ Gabber, {\em Sur la torsion dans la cohomologie 1-adique d'une variété}, C.R.\ Acad.\ Sc.\ Paris \textbf{297} (1983), 179--182.


%
%\bibitem[GL00]{geisser-levine-inventiones}
%T.\  Geisser and  M.\ Levine,  {\em The K-theory of fields in characteristic p}, Invent.\  Math.\ \textbf{139} (2000), 459--493.
%
\bibitem[GL01]{geisser-levine}
T.\ Geisser and M.\ Levine, {\em The Bloch--Kato conjecture and a theorem of Suslin–Voevodsky}, J.\ reine angew.\ Math.\ \textbf{530} (2001), 55--103. 

\bibitem[Gei10]{geisser-duality}
T.~Geisser, {\em Duality via cycle complexes},
Annals of Mathematics, \textbf{172} (2010), 1095--1126.

\bibitem[Gei17]{geisser}
T.\ Geisser, {\em On the structure of  \'etale motivic cohomology}, J.\ Pure Appl.\ Algebra \textbf{221} (2017), 1614--1628.


% \bibitem[G57]{grothendieck}
% A.\ Grothendieck, {\em Sur quelques points d'alg\'ebre homologique}, Tohoku Math.\ J.\ \textbf{9} (1957), 119--221.
% 
% \bibitem[G85]{gros}
%M.\ Gros, {\em Classes de Chern et de cycles en cohomologie de Hodge-Witt logarithmique}, M\'emoires de la SMF, $2^e$ s\'erie, tome \textbf{21} (1985).
%
%\bibitem[GS88]{gros-suwa}
%M.\ Gros and S.\ Suwa, {\em La conjecture de Gersten pour les faisceaux de Hodge-Witt logarithmiques}, Duke Math.\ J.\ \textbf{57} (1988), 615--628
%

\bibitem[Hub97]{Huber}
A.\ Huber, {\em Mixed perverse sheaves for schemes over number fields}, Compositio Math.~\textbf{108} (1997), 107--121.

%
%\bibitem[Il79]{illusie}
%L.\ Illusie, {\em Complexe de de Rham-Witt et cohomologie cristalline}, Ann.\ Sci.\ l'\'ENS \textbf{12} (1979), 501--661.

\bibitem[Jan88]{jannsen}
U.\ Jannsen, {\em Continuous \'etale cohomology}, Math.\ Ann.\ \textbf{280} (1988), 207--245.

\bibitem[Jan90]{jannsen-book}
U.\ Jannsen, {\em  Mixed Motives and Algebraic K-Theory}, Lecture Notes in Mathematics 1400, Springer, 1990.

\bibitem[Jan10]{jannsen-weights}
U.\ Jannsen, {\em Weights in Arithmetic Geometry}, Japanese Journal of Mathematics \textbf{5} (2010) 73--102.
%arXiv:1003.0927.

%\bibitem[JS09]{jannsen-saito}
%U.\ Jannsen and S.\ Saito, {\em Kato homology and motivic cohomology over finite fields},  arXiv:0910.2815.
 
\bibitem[Kah09]{kahn-divisibility}
B.\ Kahn, {\em Divisibility properties of motivic cohomology}, Preprint 2009, arXiv:1801.06010.



\bibitem[Kah12]{kahn}
B.\ Kahn, {\em Classes de cycles motiviques \'etales}, Algebra \& Number Theory \textbf{6} (2012), 1369--1407.
%



%\bibitem[Kah24]{kahn24}
%B.\ Kahn, {\em An $l$-adic norm residue epimorphism theorem}, Preprint 2024, arXiv:2409.10248.

\bibitem[Ker09]{kerz}
M.\ Kerz, {\em The Gersten conjecture for Milnor K-theory},  Invent.\  math.\  \textbf{175} (2009), 1--33.

\bibitem[KS12]{kerz-saito}
M.\ Kerz and S.\ Saito, {\em  Cohomological Hasse principle and motivic cohomology for arithmetic schemes}, Publ.\ Math.\ I.H.\'E.S.\ \textbf{115} (2012),  123--183. 


%\bibitem[Kok23]{kok} K.\ Kok, {\em On the failure of the integral Hodge/Tate conjecture for products with projective hypersurfaces}, Preprint 2023, arXiv:2305.08961.

\bibitem[KZ23]{kok-zhou}
K.\ Kok and L.\ Zhou, {\em Higher Chow groups with finite coefficients and refined unramified cohomology}, Advances in Mathematics \textbf{458} (2024) 109972, \url{https://doi.org/10.1016/j.aim.2024.109972}.
%to appear in Advances in Mathematics.

%\bibitem[KZ24]{kok-zhou2}
%K.\ Kok and L.\ Zhou, {\em On the functoriuality of refined unramified cohomology}, Preprint 2024. 


\bibitem[Lev04]{levine}
M.\ Levine, {\em K-theory and motivic cohomology of schemes, I}, Preprint (2004), \url{https://www.esaga.uni-due.de/f/marc.levine/publ/KthyMotI12.01.pdf}

%\bibitem[Ma17]{Ma}
%S.\ Ma, {\em Torsion 1-cycles and the coniveau spectral sequence}, Documenta Math.\ \textbf{22} (2017), 1501--1517.

%\bibitem[Ma22]{Ma2}
%S.\ Ma, {\em Unramified cohomology, integral coniveau filtration and Griffiths group}, Ann.\ K-Theory \textbf{7} (2022), 223--236.

\bibitem[MVW06]{MVW}
C.\ Mazza, V.\ Voevodsky and C.\ Weibel, {\em Lecture Notes on Motivic Cohomology}, American Mathematical Society (AMS); Cambridge, MA: Clay Mathematics Institute, 2006.

\bibitem[Mer88]{merkurjev}
A.~S. Merkurjev, {\em Torsion in the Milnor K-groups of fields}, (Russian) Math. USSR-Sb. {\bf 59} (1988), no.~1, 95--112; translated from Mat. Sb. (N.S.) {\bf 131(173)} (1986), no.~1, 94--112, 127. %; MR0868603


\bibitem[MS82]{MS}
A.~S.\ Merkurjev and A.~A.\ Suslin, {\em $K$-cohomology of Severi--Brauer varieties  and norm residue homomorphism},
Izv.\ Akad.\ Nauk SSSR \textbf{46} (1982), 1011--1146.

 
%\bibitem[Mil80]{milne}
%J.S.\ Milne, {\em \'Etale cohomology}, Princeton University Press, Princeton, NJ, 1980.

\bibitem[Mor25]{morel}
S.\ Morel, {\em Mixed $\ell$-adic complexes for schemes over number fields}, Doc.\ Math.\ \textbf{30} (2025), 105--181.
 
%\bibitem[Qui73]{quillen}
%D.\ Quillen,  {\em Higher algebraic K-theory, I},  Lecture Notes in Mathematics \textbf{341}, Springer, Berlin, 1973, 77--139.

\bibitem[Par96]{paranjape}
K.~H.~Paranjape, {\em Some Spectral Sequences for Filtered Complexes and Applications}, J.~Algebra \textbf{186} (1996), 793--806. 
 
\bibitem[Riou14]{Riou} J. Riou, 
La conjecture de Bloch--Kato (d'apr\`es M. Rost et V. Voevodsky),  
S\'eminaire Bourbaki, Volume 2012/2013,  SMF,
Ast\'erisque 361, 421--463, Exp. No. 1073 (2014). 

\bibitem[RS16]{RS-JIMJ}
A.\ Rosenschon and V.\ Srinivas, {\em \'Etale motivic cohomology and algebraic cycles}, J.\ Inst.\ Math.\ Jussieu\ \textbf{15} (2016), 511--537.

%\bibitem[RS18]{RS}
%A.\ Rosenschon and A.\ Sawant, {\em Rost nilpotence and \'etale motivic cohomology}, Adv.\ Math.\ \textbf{330} (2018), 420--432.

\bibitem[RS10]{RS}
A.\ Rosenschon and V.\ Srinivas, {\em The Griffiths group of the generic abelian 3-fold}, Cycles, motives and Shimura varieties, 449--467. Tata Inst.\ Fund.\ Res., Mumbai, 2010.

\bibitem[Sca24]{scavia}
F.\ Scavia, {\em Varieties over $\overline{\Q}$ with infinite Chow groups modulo almost all primes}, J.\ London Math.\ Soc.\ \textbf{110} (2024), no. 4, Paper No. e12994, 20 pp.

\bibitem[Schoe02]{schoen-modn}
C.\ Schoen, {\em Complex varieties for which the Chow group mod n is not finite}, J.\ Alg.\ Geom.\ \textbf{11} (2002), 41--100.


\bibitem[Sch21]{Sch-survey}
S.\ Schreieder, Unramified cohomology, algebraic cycles and rationality,
in: G. Farkas et al. (eds), Rationality of Varieties, Progress in Mathematics 342, Birkhäuser (2021), 345--388.

\bibitem[Sch23]{Sch-refined}
S.\ Schreieder, {\em Refined unramified cohomology of schemes}, Compositio Mathematica, \textbf{159} (2023), 1466--1530.
%
\bibitem[Sch25]{Sch-griffiths}
S.\ Schreieder, {\em Infinite torsion in Griffiths groups},  
J.\ Eur.\ Math.\ Soc.\ \textbf{27} (2025), 2571--2601.
DOI 10.4171/JEMS/1419.
% 
%
\bibitem[Sch24]{Sch-moving}
S.\ Schreieder, {\em A moving lemma for cohomology with support}, Special volume in honour of C. Voisin, Article No. 20 (2024), 50 pages.

\bibitem[Su99]{suslin} A. Suslin, {\em Higher Chow groups and \'etale cohomology}, in: Cycles, Transfer, and Motivic Homology Theories, Annals of Math.\ Studies, Princeton University Press, Princeton, 1999.

%\bibitem[SV99]{suslin-voevodsky}
\bibitem[SV00]{suslin-voevodsky}
A.\ Suslin and V.\ Voevodsky, {\em Bloch--Kato conjecture and motivic cohomology with finite coefficient}s,  
In The arithmetic and geometry of algebraic cycles (Banff,
AB, 1998), pp. 117--189, NATO Sci. Ser. C Math. Phys. Sci. 548, Kluwer Acad.
Publ., Dordrecht, 2000.
%%
%in Cycles, Transfer, and Motivic Homology Theories, Annals of Math.\ Studies, Princeton University Press, Princeton, 1999.


\bibitem[Tot16]{totaro-chow}
B.\ Totaro, {\em Complex varieties with infinite Chow groups modulo 2}, Ann.\ of Math.\ \textbf{183} (2016), 363--375.

%\bibitem[VSF00]{v-s-f}
%V.\ Voevodsky, A.\ Suslin and E.\ M.\ Friedlander {\em Cycles, transfers and motivic homology theories}, Ann.\ of Math.\ Studies, vol \textbf{143}, Princeton Univ. Press, 2000.




\bibitem[Voe02]{voevodsky-imrn}
V.\ Voevodsky, {\em Motivic cohomology groups are isomorphic to higher Chow groups in any characteristic}, Int.\ Math.\ Res.\ Not.\ \textbf{7} (2002), 351--355.

\bibitem[Voe03]{Voe-milnor}
V.\ Voevodsky, {\em Motivic cohomology with $\Z/2$-coefficients}, Publ.\ Math.\ IHES \textbf{98} (2003), 59--104.

\bibitem[Voe11]{Voevodsky}
V.\ Voevodsky, {\em On motivic cohomology with $\Z/l$-coefficients}, Ann.\ of Math.\ \textbf{174} (2011), 401--438.


%\bibitem[Voi12]{Voi-unramified}
%C.\ Voisin, {\em Degree $4$  unramified cohomology with finite coefficients and torsion codimension  $3$ cycles}, in Geometry and Arithmetic, (C.\ Faber, G.\ Farkas, R.\ de Jong Eds), Series of Congress Reports, EMS 2012, 347--368.

\end{thebibliography}
\end{document}